\documentclass[leqno,12pt]{article}

 \usepackage{amsmath}
\usepackage{amscd}
\usepackage{amsopn}
\usepackage{amsthm}
\usepackage{amsfonts,amssymb}
\usepackage{amsfonts,bbm}
\usepackage{latexsym}
\usepackage{hyperref}

\usepackage{color}

 \setlength{\textheight}{24cm}
\setlength{\topmargin}{-1cm}
\setlength{\textwidth}{15cm}
 \setlength{\hoffset}{-5.5mm}
 \setlength{\voffset}{-7mm}

\makeatletter
\@addtoreset{equation}{section}
\setcounter{secnumdepth}{3}

\makeatother

\newtheorem{lemma}{LEMMA}[section]
\newtheorem{proposition}[lemma]{PROPOSITION}
\newtheorem{corollary}[lemma]{COROLLARY}
\newtheorem{theorem}[lemma]{THEOREM}
\newtheorem{remark}[lemma]{REMARK}
\newtheorem{remarks}[lemma]{REMARKS}

\newtheorem{examples}[lemma]{EXAMPLES}
\newtheorem{definition}[lemma]{DEFINITION}

\newcommand{\real}{\mathbbm{R}}
\newcommand{\nat}{\mathbbm{N}}

\newcommand{\ganz}{\mathbbm{Z}}

\newcommand{\limn}{\lim_{n \to \infty}}
\newcommand{\sumn}{\sum_{n=1}^{\infty}}

\renewcommand{\a}{\alpha}

\newcommand{\g}{\gamma}

\newcommand{\vp}{\varphi}
\newcommand{\ve}{\varepsilon}

\newcommand{\reald}{{\real^d}}

\newcommand{\on}{\quad\text{ on }}
\newcommand{\und}{\quad\mbox{ and }\quad}
\newcommand{\inv}{^{-1}}
\newcommand{\ov}{\overline}

\newcommand{\V}{\mathcal V}  
\newcommand{\W}{\mathcal W}

\newcommand{\C}{\mathcal C}  
\newcommand{\E}{{\mathcal E}}
\newcommand{\F}{\mathcal F}
\newcommand{\G}{\mathcal G}
\renewcommand{\H}{{\mathcal H}}
\newcommand{\B}{\mathcal B}
\renewcommand{\S}{\mathcal S}
\newcommand{\M}{\mathcal M}

\newcommand{\U}{{\mathcal U}}

\newcommand{\supp}{\operatorname*{supp}}

\newcommand{\itemframe}%
{\setlength{\parskip}{10pt}\begin{enumerate} \setlength{\topsep}{10pt}%
\setlength{\itemsep}{15pt}\setlength{\parsep}{5pt}}


\newcommand{\uc}{{U^c}}

\newcommand{\vc}{{V^c}}
\newcommand{\wc}{{W^c}}

\newcommand{\exc}{\E_{\mathbbm P}}

\newcommand{\intoi}{\int_0^\infty}

\newcommand{\wilde}{\widetilde}

\newcommand{\schluss}{\end{frame}\end{document}}

\newcommand{\bbx}{\B_b(X)}

\newcommand{\px}{\mathcal P(X)}


\newcommand{\splus}{\mathcal S^+(X)}
\newcommand{\ur}{{}^U\! R}
\newcommand{\urh}{{}^U\! \hat R}

\newcommand{\pr}{\mathcal P'}
\newcommand{\hyperU}{{}^\ast \mathcal H^+(U)}

\newcommand{\ex}{\mathcal E_{\mathbbm P}}

\title{Positive harmonically bounded solutions\\ for semi-linear  equations\footnotetext{MSC2020: Primary 31C05, 35J61, 35K58; Secondary 60J45, 60J35, 45K05.\\ Keywords: semi-linear equation, balayage space, Green function, Hunt process.\\The second named author was partially supported by NCN grant 2017/27/B/ST1/01339.}
} 
\author{Krzysztof Bogdan and Wolfhard Hansen}

\begin{document}

\maketitle


\begin{abstract}
 For open sets $U$ in some space $X$,
we are interested in positive solutions to semi-linear equations
$ Lu=\vp(\cdot,u)\mu$ on $U$. 
Here $L$  may be an elliptic or parabolic operator
of second order (generator of a diffusion process) or an
  integro-differential operator (generator of a jump process), 
 $\mu$ is a positive measure on $U$ and $\vp$ is an arbitrary
measurable real function on $U\times \real^+$
such that the functions $t\mapsto \vp(x,t)$, $x\in U$, are continuous,
increasing and vanish at $t=0$.
  
More precisely, given a measurable function $h\ge 0$ on $X$ which is $L$-harmonic on $U$,
that is, continuous real on $U$ with $Lh=0$ on $U$,
we give necessary and sufficient conditions
for the existence of positive solutions $u$ such that $u=h$ on  
$X\setminus U$ and $u$ has the same ``boundary behavior'' as $h$ on~$U$
(Problem~1) or, alternatively,  $u\le h$ on $U$, but $u\not\equiv 0$ on $U$ (Problem~2). 

We show that these problems are equivalent to  problems of the existence of
positive solutions to certain integral equations $u+K\vp(\cdot,u)=g$ on $U$, $K$ being 
a~potential kernel. We solve them in the general setting of
balayage spaces $(X,\W)$ which, in probabilistic terms, corresponds to the setting of  transient Hunt processes with strong Feller resolvent.

\end{abstract}

\section{Introduction}

In this  paper we are looking for positive solutions  to  semi-linear equations  
\begin{equation}\label{main-equ}
Lu= \vp(\cdot,u)\mu  \on  U 
\end{equation} 
for linear operators $L$ which may be  elliptic or parabolic differential operators of second order (generators of diffusion processes)
as well as   
integro-differential operators (generators of jump processes). Here
$U$ is a (non-empty) open set in some space~$X$,
$\mu$ is a measure on $U$ and $\vp$ is a real
function on $U\times \real^+ $ such that the functions $t\mapsto
\vp(x,t)$, $x\in X$, are continuous, \emph{increasing}  and vanish at
$0$.\footnote{In \cite{bogdan-hansen-semi} we discuss 
 real solutions $u$ for $\vp\colon U\times \real\to \real$ 
such that the functions $t\mapsto \vp(x,t)$ need \emph{not} be increasing.}

Given a~function $h\ge 0$ on $X$ which is ($L$-)harmonic on $U$,
that is, continuous real on $U$ with $Lh=0$ on $U$, may or may not vanish on~$X\setminus U$, but is
\emph{not} identically $0$ on $U$,
we are interested in the following. 
\begin{itemize}
\item Problem 1:
Is there a (unique) positive solution  $u$ such that $h-u$
    is a potential on~$U$ (with respect to $L$) and $u=h$ on  $X\setminus U$?
  \item    Problem 2:
    Are there solutions $0\le u\le h$ with  $u \not\equiv 0$ on $U$, $u=h$ on
 $X\setminus U$?
\end{itemize}
In both cases, the solution $u$ is ``harmonically bounded'', $u\le h$. In Example~\ref{fi-examples}(1), we have $U=X$, $h$ is a minimal harmonic function,  Problem 1 may fail  to have a~solution, but Problem 2 has solutions bounded by a small multiple of $h$.
 
We shall illustrate our results with the following \emph{standard examples}:
\begin{itemize} 
 \item[\rm (i)] Classical case: 
   $L=\Delta:=\sum_{j=1}^d \partial^2/\partial_{x_j}^2$, $X=\real^d$ with 
   $d\ge 3$,\\
   or $X=\reald\setminus A$ with  $d\le 2$ and closed non-polar set $A$.
\item[\rm (ii)] Riesz potentials:  $L=-(-\Delta)^{\a/2}$,
$X=\reald$, $d\ge 1$,  $0<\a<2\wedge d$. 
    \item[\rm (iii)] Heat equation:
      $L=\Delta-\partial/\partial_{x_{n+1}}$, $X=\real^{d+1}$, $d\ge 1$. 
    \end{itemize}

Under some additional assumptions,
 the first property of the solution $u$ in Problem~1
is equivalent to the convergence $h-u\to 0$ along {\it regular
sequences} in $U$ which tend to the boundary~$\partial U$ (see
Proposition  \ref{xns}). So it can be considered as a~boundary
condition for $u$.
The condition $u=h$ on $X\setminus U$ can be ignored if $U=X$
or $L$ is a \emph{local} operator, as in the standard examples (i)   and (iii).
Let us also note that Problem 2 is a relaxation of Problem 1, certainly
when $1_Uh$ is \textit{not} a potential on~$U$. This is also true if
Proposition \ref{Lup} below holds, as in the standard examples.   

 Our results for the standard examples are given below in Theorems \ref{Q1-prob}, \ref{eqhs} and \ref{lehs}. 
 Note that in the setting of the standard examples we have 
  a Green function $G_U$ of $U$ and an associated Hunt process on~$U$ (the Brownian motion,
the isotropic $\a$-stable L\'evy process or the space-time Brownian motion, all killed when
exiting $U$). These can be used to present the following answer to Problem 1 in the special case when $h$ is a constant (see  Theorems \ref{eqhs} and \ref{lehs} for the general case).
Below, we let $\tau_V:=\inf\{t\ge 0\colon X_t\notin V\}$ be the first exit time of open set $V\subset X$ and we
denote by $\U(U)$ the set of all open~$V$ with compact closure in $U$.
\begin{theorem}\label{Q1-prob} 
Let $c\in (0,\infty)$. Assuming that $\mu\ge 0$ is a Radon measure on $U$ such that,
for every compact  $A$ in $U$, the potential   $G_U(1_A\mu):=\int_A
    G_U(\cdot,y)\,d\mu(y)$  is continuous real and
  $G_U(1_A\vp(\cdot, c)\mu)< \infty$,  the following statements are equivalent:
\begin{itemize} 
\item[\rm(1)] 
  There exists a {\rm(}unique{\rm)}  Borel function  $0\le u\le c$ on   $X$ such that 
$u=c$ on $U^c$,   
\begin{equation*} 
      Lu=\vp(\cdot,u)\mu \quad \mbox{ on }U, 
    \end{equation*}
  and $\mathbb{E}^x u(X_{\tau_V})\to c$, $x\in U$, as $V\in \U(U)$ increases to $U$. 
\item[\rm(2)]
For every $\eta\in (0,1)$, there is  a Borel set $B$ in   $U$ with
$G_U(1_{B^c}\vp(\cdot,\eta       c)\mu)< \infty $ and such that,  
 starting  at  $x\in U$, the probability of $(X_t)$ hitting
 $B$ after exiting from $V\in \U(U)$ tends to $0$ as $V\uparrow U$.
 \end{itemize}
The function $u$ in $(1)$ is continuous if the functions
$G_U(1_A\vp(\cdot, c)\mu)$, $A$ compact in~$U$,   are continuous.
Moreover, $(2)$ holds if, for every $\eta\in (0,1)$, there is a
compact~$B$ in~$U$ such that  $G_U(1_{B^c}\vp(\cdot,\eta       c)\mu)< \infty $.
\end{theorem}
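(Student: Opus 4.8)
The plan is to turn Problem~1 into a fixed point equation for a Green potential operator, to solve it by an oscillating monotone iteration together with the domination principle, and to recognize the boundary condition $\mathbb E^x u(X_{\tau_V})\to c$ as the probabilistic smallness in~(2). For the reformulation, recall that in each standard example a positive constant is $L$-harmonic on $U$ ($Lc=0$), $G_U$ is the Green kernel of the process killed on leaving $U$, and, since $G_U(1_A\mu)$ is continuous real for every compact $A\subset U$, the distributional and the potential-theoretic meanings of $Lu=\vp(\cdot,u)\mu$ coincide. I would first show that a Borel function $0\le u\le c$ on $X$ with $u=c$ on $U^c$ satisfies the two conditions in~(1) if and only if
\begin{equation*}
 u+G_U\bigl(\vp(\cdot,u)\mu\bigr)=c\quad\text{on }U. \tag{$\star$}
\end{equation*}
Indeed, $c-u\ge 0$ and $L(c-u)=-\vp(\cdot,u)\mu\le 0$, so (using $\vp(\cdot,u)\le\vp(\cdot,c)$ and the hypotheses) $c-u$ is a nonnegative supersolution on $U$ with Riesz decomposition $c-u=G_U(\vp(\cdot,u)\mu)+k$, $k\ge 0$ harmonic on $U$; since $\mathbb E^x p(X_{\tau_V})\to 0$ for any potential $p$ while $\mathbb E^x k(X_{\tau_V})=k(x)$ for $k$ harmonic (as $V\uparrow U$ in $\U(U)$), the requirement $\mathbb E^x u(X_{\tau_V})\to c$ is equivalent to $k\equiv 0$, i.e.\ to~$(\star)$; conversely~$(\star)$ yields $Lu=\vp(\cdot,u)\mu$ on $U$ and, $G_U(\vp(\cdot,u)\mu)$ being a potential, $\mathbb E^x u(X_{\tau_V})\to c$. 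Uniqueness follows from the domination principle: if $u_1,u_2$ solve~$(\star)$, set $D:=\{u_1>u_2\}$ and $\rho:=(\vp(\cdot,u_2)-\vp(\cdot,u_1))\mu$; then the positive part $\rho^+$ is carried by $D^c$, $u_1-u_2=G_U(\rho^+)-G_U(\rho^-)$, and on $\supp\rho^+\subseteq D^c$ one has $G_U(\rho^+)\le G_U(\rho^-)$ since $u_1\le u_2$ there; as $G_U(\rho^-)$ is excessive, domination gives $G_U(\rho^+)\le G_U(\rho^-)$ everywhere, hence $u_1\le u_2$, and symmetrically $u_1=u_2$. Finally, if each $G_U(1_A\vp(\cdot,c)\mu)$, $A$ compact, is continuous real, then so is the potential $G_U(\vp(\cdot,u)\mu)\le c$, whence $u=c-G_U(\vp(\cdot,u)\mu)$ is continuous.

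For $(1)\Rightarrow(2)$: let $u$ be the solution, $\eta\in(0,1)$, and put $B:=U\cap\{u\le\eta c\}$, a Borel subset of $U$. On $B^c$ we have $\vp(\cdot,\eta c)\le\vp(\cdot,u)$, so $G_U(1_{B^c}\vp(\cdot,\eta c)\mu)\le G_U(\vp(\cdot,u)\mu)=c-u\le c<\infty$. Let $q_B(x):=\mathbb P^x(X\text{ hits }B)$ be the (excessive) reduced function of $1$ on $B$. Since $c-u$ is a potential with $c-u\ge(1-\eta)c$ on $B$, the excessive function $\frac{1}{(1-\eta)c}(c-u)$ is $\ge 1$ on $B$, so $q_B\le\frac{1}{(1-\eta)c}(c-u)$ on $U$; as $X_{\tau_V}\in U$ for $V\in\U(U)$, the strong Markov property gives
\begin{equation*}
 \mathbb P^x\bigl(X\text{ hits }B\text{ after }\tau_V\bigr)=\mathbb E^x\bigl[q_B(X_{\tau_V})\bigr]\le\frac{1}{(1-\eta)c}\bigl(c-\mathbb E^x u(X_{\tau_V})\bigr)\longrightarrow 0\quad(V\uparrow U),
\end{equation*}
which is~(2). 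For the last assertion, if $B$ may be chosen relatively compact in $U$ then $q_B$ is itself a potential, so $\mathbb E^x[q_B(X_{\tau_V})]\to 0$ automatically and~(2) holds with that $B$.

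The direction $(2)\Rightarrow(1)$ is the main point and the main obstacle: it consists in solving~$(\star)$ and checking the boundary condition. Since $\vp(x,\cdot)$ is only increasing, the operator $\Phi(v):=\bigl(c-G_U(\vp(\cdot,v)\mu)\bigr)^+$ on $\{0\le v\le c\}$ is \emph{antitone}, so the iteration $u_0:=c$, $u_{n+1}:=\Phi(u_n)$ only oscillates: $u_1\le u_3\le\cdots$, $u_0\ge u_2\ge\cdots$, $u_{2k+1}\le u_{2k}$, and by continuity of $t\mapsto\vp(x,t)$ and monotone convergence for $G_U$ the limits $\uf:=\lim_k u_{2k+1}$ and $\of:=\lim_k u_{2k}$ satisfy $0\le\uf\le\of\le c$, $\of=\Phi(\uf)$, $\uf=\Phi(\of)$. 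One must then prove $\uf=\of$, that the common value $u$ satisfies $G_U(\vp(\cdot,u)\mu)\le c$ (so that~$(\star)$ holds without the truncation), and that $\mathbb E^x u(X_{\tau_V})\to c$; extending $u$ by $c$ on $U^c$ and invoking the reformulation and uniqueness above then finishes. Condition~(2) enters exactly here: for $\eta\in(0,1)$ and its set $B=B_\eta$, on $B^c$ the measure $\vp(\cdot,\eta c)\mu$ has finite Green potential, which renders~$(\star)$ non-degenerate on $\{u\ge\eta c\}$ and lets the domination argument above run there to force $\of-\uf=0$ off $B$; at the same time, comparing $c\,q_{B_\eta}$ with $c-u_n$ along the iteration (as in the previous step) and using that the hitting probability of $B_\eta$ after $\tau_V$ tends to $0$ shows that the set on which $u$ drops below $\eta c$ cannot obstruct $\mathbb E^x u(X_{\tau_V})\to c$; letting $\eta\downarrow 0$ then pins down $\uf=\of=u$, equation~$(\star)$, and the boundary condition. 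I expect the delicate points to be (i) the domination step closing the gap between $\uf$ and $\of$ in the presence of the truncation $(\cdot)^+$, and (ii) the control, uniform as $\eta\downarrow 0$, of the region $\{u<\eta c\}$.
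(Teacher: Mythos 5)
Your reformulation of Problem~1 as the integral equation $u+G_U(\vp(\cdot,u)\mu)=c$ on $U$, the uniqueness argument via the domination principle, the continuity assertion, and the implication $(1)\Rightarrow(2)$ with $B=U\cap\{u\le\eta c\}$ are all correct and align with the paper (see Propositions~\ref{Lup}, \ref{converse}, \ref{uniqueness}, \ref{bal-converse} and the reduction to Theorem~\ref{eqhs} applied with $h\equiv c$). The observation that the reduced function of $c$ on $B$ is $c$ times the hitting probability of $B$, and that $\urh^B_c\in\pr(U)$ translates into the probabilistic condition in~(2), is also the right bridge.

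The serious gap is the direction $(2)\Rightarrow(1)$. Your plan iterates the antitone map $\Phi(v):=\bigl(c-G_U(\vp(\cdot,v)\mu)\bigr)^+$ starting from $u_0=c$. This does not get off the ground under the stated hypotheses: only $G_U(1_A\vp(\cdot,c)\mu)<\infty$ for compact $A$ is assumed, so $G_U(\vp(\cdot,c)\mu)$ may be identically $+\infty$ on $U$ (as in Example~\ref{fi-examples}(1) with $h=\g-\a$, where $G(\vp(\cdot,h)\mu)=\infty$ even though the solution exists). Then $u_1=\Phi(c)=0$, $u_2=\Phi(0)=c$, and the oscillation is maximal: $\uf=0$, $\of=c$, with no mechanism to close the gap. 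You acknowledge that the two ``delicate points'' (closing the gap $\uf=\of$ in the presence of the truncation, and controlling $\{u<\eta c\}$ as $\eta\downarrow 0$) are unresolved; they are precisely where the theorem lives, and the antitone-iteration framework does not appear to supply the needed compactness or monotonicity. The paper proceeds very differently: it truncates the nonlinearity itself to $\vp_n:=1_{V_n}\vp\wedge n$, solves each truncated equation by Schauder's theorem (Corollary~\ref{surjective-1}), exploits that $(\vp_n)$ increasing implies the solutions $u_n$ are \emph{decreasing} (Proposition~\ref{uniqueness}), defines $T^\vp h:=\inf_n u_n$, and identifies $T^\vp h+K^\vp T^\vp h$ with the harmonic function $P^\vp h$ (Propositions~\ref{tphi}, \ref{first-properties}). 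Condition~(2) is then used, via Lemma~\ref{A-lemma} and Proposition~\ref{converse-P}, to force $P^\vp h=h$; a second monotone approximation $\eta_n h\uparrow h$ (Lemma~\ref{vnv}) handles the passage from $\eta<1$ to $\eta=1$. This double monotone scheme is essential for the statement as given, and it is what your sketch is missing.
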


Let us stress that, without further restriction on $\vp$,  the condition
(2) is optimal (see Examples \ref{fi-examples}).
  
In the \emph{general} framework of balayage spaces we answer Problems 1
  and 2 by Theorems \ref{main-general}, \ref{gleh} and \ref{h0h-result} below.
For Problem 1 they extend and simplify results that were obtained   in \cite[Theorem 7.19]{GH1}  
 for harmonic spaces (in particular, for differential
operators~$L$ on mani\-folds)  in the linear
case $\vp(x,t) = \rho(x) t$ and in \cite{MH1} under rather restrictive
assumptions on $\vp$. Let us note that Problem 2 has been discussed  in \cite{chrouda-fredj}
 for the fractional Laplacian~$L$, Lebesgue measure
 on~$U=\reald$ and $\vp(x,t)=\rho(x) f(t)$, however the equivalence asserted in \cite[Theorem~1]{chrouda-fredj} only holds 
 if $\a>1$, see Remark \ref{ch-fr}. 
 
The literature of the semilinear equation is extremely rich, with much
emphasis on special cases.
We mention the seminal contributions  \cite{MR0098239},
\cite{MR0091407}, \cite{MR0206537}
and \cite{BS}, the monograph \cite{MV} on elliptic (local) operators,
the paper \cite{Chen-W-Z}
with early probabilistic results, the contribution \cite{GH1} on
Riemannian manifolds (more generally, harmonic spaces), which
inspired a large part of our development, and recent results on the
fractional
Laplacian \cite{MR3393247, bogdan-jahros-kania}, 
\cite{baalal-hansen}, \cite{MH1}, \cite{MH2}, \cite{chrouda-fredj}. For general functions $\vp$ the corresponding Dirichlet problem has been studied in Baalal-Hansen \cite{baalal-hansen}.

The content of the paper is as follows. In Section~\ref{model-section} we illustrate our findings
by  applying them to the Laplacian, fractional Laplacian, and the heat equation.
In Section~\ref{sec:bs} we present the general setting of
\emph{balayage spaces} $(X,\W)$, $X$ being a~locally compact space
with countable base and $\W$ being a~convex cone of positive
(hyperharmonic)  functions on $X$.
  Up to  Doob's conditioning, the generic example for~$\W$  is
the set $\E_{\mathbbm P}$ of excessive functions for a~sub-Markov
semigroup  $\mathbbm P$ on $X$ such that~$\ex$  is linearly
separating, every function in $\ex$ is
the supremum of its continuous real minorants in $\ex$ (for example, by
$\mathbbm P$ having a strong Feller resolvent), and there exist continuous two real functions $u,v\in\ex$ such
that the ratio $u/v$ vanishes at infinity. We also discuss the
existence of corresponding Hunt processes and a characterization of
balayage spaces by families of harmonic kernels.
 In Section~\ref{dom-principle}, inspired by considerations in  \cite{baalal-hansen}, we present the
domination principle and develop its consequences, in particular uniqueness and comparison of solutions for (\ref{main-equ}).

Let us digress that in the setting of Section \ref{model-section}, solving
(\ref{main-equ}) with given boundary/exterior conditions is equivalent
to solving the integral equation
\begin{equation}\label{int-equ}
  u +\int G_U(\cdot,y) \vp(y,u(y))\,d\mu(y)=h,
  \end{equation} 
 where $G_U$  is the Green function for $L$ on $U$, $h\ge 0$  is a  function
 on $X$, continuous on $U$, with $Lh=0$ on $U$; see
 also Section~\ref{sec:Gf}, in particular Proposition~\ref{Lup-general}, for general Green functions.
 Hence in the general setting of a~balayage space $(X,\W)$ we focus on equations
  \begin{equation}\label{K-equ}
    u+K\vp(\cdot,u)=h,
  \end{equation}
  where $K$ is a \emph{potential kernel} on $U$. 
  
  Dealing with positive
  functions $u$ and increasing functions $t\mapsto \vp(\cdot,t)$, already a very simple application of Schauder's fixed
  point theorem, given in Section~\ref{app-schauder}, is sufficient to
  deal with (\ref{K-equ}). This is worked out in Section \ref{sl-general}, leading to
  complete answers for Problems 1 and 2. 
   The special case, where $1_Uh$
  is not harmonic on $U$ (relevant only if $L$ is non-local, as in the second
  standard example) is investigated in depth  in Section \ref{sec:noth}. 
As mentioned, in Section \ref{sec:Gf} we discuss
general balayage spaces admitting a Green function.
 
In the Appendix \ref{sec:A} we first provide proofs of auxiliary results
used in the paper  and not easily found in the literature. In
particular,  we detail properties of products of sub-Markov
semigroups and  the existence of   corresponding Hunt processes. Moreover, we elaborate on the construction of the Green function and space-time Green function for a given transition density.

In a second paper \cite{bogdan-hansen-semi} we  
address the
 more general problem of finding real-valued solutions to equations
 $Lu=\vp(\cdot,u)\mu+\nu$, where $\nu$ is a signed measure on $U$
 and $\vp$ is a real function on $U\times \real$ such that the
 functions $t\mapsto \vp(x,t)$, $x\in X$, vanish for~$t=0$, are
 continuous, but need not satisfy $\vp(\cdot, s)\le
 \vp(\cdot, t)$ for $s\le t$.

Throughout the paper (and in \cite{bogdan-hansen-semi}) we shall use
the following notation.
Given a~locally compact space $X$  with countable base of open sets, 
let $\B(X)$   denote the set of all Borel measurable numerical functions on $X$.
 Here and below  numerical means having values in $[-\infty,\infty]$,
real means having values in $(-\infty,\infty)$,
positive means $\ge 0$, strictly positive means $>0$ (similarly for increasing and strictly increasing). 
Given any set~$\F$ of numerical functions,
we add the superscript~$+$ or the subscript~$r, b$, respectively, to denote the  
  set of all the functions in $\F$ which are positive or real, bounded, respectively. For instance, $\B_r(X)$ are Borel real-valued functions on $X$ and $\B^+(X)$ are Borel positive functions on $X$ (that is with values in $[0,\infty]$).
Let $\C(X)$ be the set of all continuous functions in $\B_r(X)$ and let
$\C_0(X)$    be the set of all    functions in $\C(X)$ which vanish at
infinity.

In the following let $U$ always be a non-empty open set in
$X$  (where $U=X$ is an interesting particular case) and let
\begin{equation*}
          \V(U):=\{V\colon \mbox{$V$ open and relatively compact
            in $U$}\}. 
\end{equation*} 
 As usual, $u|_U$ is the restriction of the function $u$ to the set $U$. 
 Occasionally, we shall identify, without explicit mention, functions on
$U$ with functions on~$X$ which vanish on $\uc:=X\setminus U$.
 Further notation is explained as we proceed.

 \section{Details for the standard examples}\label{model-section}
 In this section we consider our standard examples (i), (ii), (iii) and an arbitrary nonempty open subset $U$ of $X$.
In each of the examples, let  $\H^+(U)$ denote the set of all positive functions $h$ which
are harmonic  on~$U$,
that is, $h\in\B^+(X) $, $h|_U\in\C(U)$,
and  $Lh=0$ in the distributional sense or -- equivalently~-- $H_Vh=h$
for every $V\in \V(U)$.\footnote{It suffices to know that every $x\in U$ has a fundamental system of neighborhoods $V\in \V(U)$
with~$H_Vh(x)=h(x)$, see \cite[III.4.4]{BH}.}
Here $H_V$, for any open $V$ in $X$,  denotes the corresponding  harmonic kernel for~$V$. So,
for every $x\in V$, $H_V(x,\cdot)$ is the \hbox{($L$--)}harmonic
measure for $x$ with respect to $V$. The measure is supported by $V^c$ (by the
boundary~$\partial V$ of~$V$ for (i) and (iii)). Furthermore,
$H_V(x,\cdot)=\delta_x$, if $x\in X\setminus U$. 

Let us digress that, in probabilistic terms, if
$(X_t,t\ge 0)$ is the corresponding stochastic process, that is,
the Brownian motion for~(i), the isotropic $\alpha$-stable L\'evy process for
(ii) (\cite[Section 1.1.2-3]{MR2569321}) and the space-time Brownian motion for~(iii), then
\begin{equation*}
H_Vf(x)=\mathbbm E^x(f\circ X_{\tau_V}), \qquad f\in \B^+(\reald),\,x\in \reald,
\end{equation*}
recalling that $\tau_V=\inf\{t\ge 0: X_t\not\in V\}$ is the time of the first exit of $(X_t)$ from $V$
(see, e.g., \cite{MR1671973}, \cite{BH} for more details).

Let ${}^\ast \H^+(U)$ be the set of all functions $w\in\B^+(X)$
     which are \emph{{\rm(}$L$--{\rm)}hyperharmonic on $U$},  that is, are lower
     semicontinuous  on $U$ and satisfy $H_Vw\le w$ for all  $V\in \V(U)$.
  A function $s\in {}^\ast \H^+(U)$ is  \emph{superharmonic on $U$} 
     if, for
     every $V\in \V(U)$, we have $(H_Vs)|_V\in\C(V)$ and hence $H_Vs\in\H^+(V)$.
     It is a \emph{potential on $U$}, if, in addition,
      \begin{equation}\label{HVq}
        \lim_{V\in \V(U),\, V\uparrow U} H_Vs=0.
      \end{equation}     
Here, the left-hand side is the largest harmonic minorant of $s$ on $U$ and \eqref{HVq} in particular requires that $s$ be zero on~$X\setminus U$.    
Let $\pr(U)$ denote the set of all  potentials on $U$.

We recall the \emph{global} Green functions~$G$ (with constants $c>0$) such
that 
\begin{equation}\label{LG}
  LG(\cdot,y)=-\delta_y, \qquad y\in X,
  \end{equation}
namely the  Newtonian kernel for  (i), if $d\ge 3$,    and the Riesz kernel for (ii):
\begin{equation*}
  G(x,y)=c|x-y|^{\a-d},\quad x,y\in\reald,
\end{equation*}
and, for (iii)  and points  $x= (x',r)$ and $y=  (y',s)$ in
$\reald\times \real$,   
\begin{equation*}
  G(x,y) =   \begin{cases}       \dfrac c  { (r-s)^{d/2}} \,
    \exp \left(-  \dfrac{|x-y|^2}{4(r-s)}\right)
        \quad\mbox{ if } r>s,\\
                       0  \quad\mbox{ if } r\le s.
    \end{cases}
  \end{equation*}
  One can verify \eqref{LG} by Fourier analysis, see also \cite{MR2892584}, \cite{Evans} and \cite{bib:La}.
  The functions $G(\cdot,y)$, $y\in X$, are potentials on $X$ and are
  harmonic on  $X\setminus \{y\}$.
The global Green function $G$ for $X$ yields the Green function $G_U$ for $U$ by
\begin{equation}\label{GUG}
  G_U(\cdot,y) :=G(\cdot,y)-H_UG(\cdot,y), \qquad y\in U.
\end{equation}
We might note that, in the classical case, we can also get a Green function for bounded~$U$ by  (\ref{GUG}) with $G(x,y):=-c|x-y|$
if $d=1$ and $G(x,y):=-c\ln |x-y|$ for $d=2$; see also \cite{MR2256481} for the case of Riesz potentials with $\alpha\ge d=1$.

Let $\M^+(U)$ denote  the set of all (positive) Radon measures 
on~$(U,\B(U))$. For $\nu\in \M^+(U)$ we define the Green potential
$G_U\nu :=\int G_U(\cdot,y)\,d\nu (y)$ on $U$. Defining
 \begin{equation*}
        \M'(U):=\{\nu\in\M^+(U)\colon G_U\nu<\infty\},
     \end{equation*} 
      we have  (see     \cite[Theorem 4.1]{HN-representation}),
      \begin{equation}\label{p-rep}
     \mathcal P_r'(U)=\{G_U\nu\colon \nu\in\M'(U)\}.
      \end{equation}
  By  (\ref{LG}) and (\ref{GUG}), $LG_U(\cdot,y)=-\delta_y$ on $U$
        for every $y\in U$. Accordingly,
          \begin{equation}\label{dist}
        LG_U\nu =-\nu  \mbox{ on } U, \qquad \nu\in\M'(U),
      \end{equation}
since for $f\in C^\infty_c(U)$ we have by Fubini's theorem,
$$
\int G_U\nu(x)L f(x)\,dx=\int \int G_U(x,y)L f(x) \,dx \,d\nu(y)=-\int f (y)\,d\nu(y),
$$      
because $Lf$ is bounded. See also
\cite[Lemma~5.3]{MR1825645} for (ii) or \cite[Proof of Lemma~6]{MR2892584}.
  
       Let us fix $\mu\in\M ^+(U)$ which is \emph{locally Kato}, that
       is,  
       satisfies $G_U(1_A\mu)\in \C(U)$ for every compact $A$ in $U$.
         Let  $\vp\colon U\times \real^+\to \real^+$ be Borel
             measurable such that the functions $t\mapsto \vp(x,t)$,
             $x\in U$, are continuous, vanish at $0$ and increase.
             
              In the following let $h$ always be a function in $\B^+(X)$
             which is harmonic on $U$ and not identically $0$ on
               $U$. 
Let us observe that the ``boundary
               conditions'' in ($1$), ($1'$), ($1''$) of the next proposition imply, in particular, that $u=h$ on $X\setminus U$. 
             
                    \begin{proposition}\label{Lup}
           For each $u\in
               \B^+(X)$  the  following  properties are equivalent:
               \begin{itemize}
               \item[\rm(1)]
                      $Lu=\vp(\cdot,u)\mu$ on $U$ and    $h-u\in \pr(U)$.\footnote{See Proposition \ref{xns} for a
                 characterization under additional assumptions.} 
                 \item[\rm($1'$)]
                                  $Lu=\vp(\cdot,u)\mu$ on $U$ and
                                  $h-u\in \pr_r(U)-\pr_r(U)$.
               \item[\rm($1''$)]
                         $Lu=\vp(\cdot,u)\mu$ on $U$,
                                  $u|_U+G_U(\vp(\cdot,u)\mu)\in\C(U)$,
                                 $\lim_{V\in \V(U),\, V\uparrow
                                    U} H_V|h-u|=0$. 
                               \item[\rm(2)]
                  $u+G_U(\vp(\cdot,u)\mu)=h$.
               \end{itemize}
The function $u\in\B^+(X)$ having these properties is uniquely
determined by $h$.
\end{proposition}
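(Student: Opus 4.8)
The plan is to prove the chain of equivalences by showing $(2)\Rightarrow(1)\Rightarrow(1')\Rightarrow(1'')\Rightarrow(2)$, together with uniqueness, exploiting that the linear equation $LG_U\nu=-\nu$ on $U$ holds for $\nu\in\M'(U)$ by \eqref{dist} and the representation \eqref{p-rep} of $\mathcal P'_r(U)$ as Green potentials. The key point to keep in mind throughout is that condition (2) is really a statement about the measure $\nu:=\vp(\cdot,u)\mu$: since $u\le h$ and $t\mapsto\vp(x,t)$ is increasing, $\nu\le\vp(\cdot,h)\mu$, but to even write down $G_U\nu$ we need $\nu\in\M'(U)$, i.e.\ $G_U\nu<\infty$; this is exactly guaranteed by (2) because $G_U\nu=h-u\le h<\infty$ $\mu$-a.e., and more precisely $G_U\nu=h-u$ is a potential on $U$ as the difference is dominated by $h$. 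Note also that (2) forces $u=h$ on $\uc$ since $G_U(\vp(\cdot,u)\mu)$, being a Green potential on $U$, vanishes on $\uc$.

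First I would do $(2)\Rightarrow(1)$: assuming $u+G_U(\vp(\cdot,u)\mu)=h$, set $\nu:=\vp(\cdot,u)\mu$. From $G_U\nu=h-u\le h$ and $h$ harmonic (hence locally bounded, or at least not identically $+\infty$) on $U$ we get $G_U\nu<\infty$, so $\nu\in\M'(U)$; then \eqref{p-rep} gives $G_U\nu\in\mathcal P'_r(U)\subset\pr(U)$, so $h-u\in\pr(U)$, and \eqref{dist} gives $LG_U\nu=-\nu$ on $U$, whence $Lu=L(h-G_U\nu)=Lh+\nu=\vp(\cdot,u)\mu$ on $U$ because $Lh=0$ on $U$. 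This is (1). The implication $(1)\Rightarrow(1')$ is immediate since $\pr(U)\supset\pr_r(U)$ is not quite what we want — rather, any potential $p\in\pr(U)$ with $p\le h<\infty$ on $U$ is real on $U$, hence lies in $\pr_r(U)\subset\pr_r(U)-\pr_r(U)$; here I would use that $h-u\le h$ is finite on $U$ (indeed $u\ge 0$) to conclude $h-u\in\pr_r(U)$. For $(1')\Rightarrow(1'')$: from $Lu=\vp(\cdot,u)\mu$ on $U$ and $h-u\in\pr_r(U)-\pr_r(U)$, one shows that $u|_U+G_U(\vp(\cdot,u)\mu)$ is harmonic on $U$, hence continuous; more carefully, writing $h-u=p_1-p_2$ with $p_i\in\pr_r(U)$, one has (again by \eqref{p-rep}) $p_i=G_U\sigma_i$, and comparing Laplacians on $U$ via \eqref{dist} identifies $\sigma_1-\sigma_2$ with $\vp(\cdot,u)\mu$ up to a harmonic correction; the "harmonically bounded" hypothesis together with $\lim_{V\uparrow U}H_V|h-u|\le\lim H_V(p_1+p_2)=0$ gives the last condition in $(1'')$, using \eqref{HVq} applied to each potential $p_i$.

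The substantive implication is $(1'')\Rightarrow(2)$, and I expect this to be the main obstacle. Here one must bootstrap from the local/distributional statement $Lu=\vp(\cdot,u)\mu$ on $U$ to the global integral identity. The strategy: first check $\vp(\cdot,u)\mu\in\M'(U)$. For this, use that $u|_U+G_U(\vp(\cdot,u)\mu)\in\C(U)$ is superharmonic (indeed, it should be shown harmonic on $U$): apply $L$ and use \eqref{dist}: $L\bigl(u+G_U(\vp(\cdot,u)\mu)\bigr)=\vp(\cdot,u)\mu-\vp(\cdot,u)\mu=0$ on $U$, so $w:=u+G_U(\vp(\cdot,u)\mu)$ is harmonic on $U$; but to write $G_U(\vp(\cdot,u)\mu)$ at all one needs $\vp(\cdot,u)\mu$ to be a Radon measure on $U$ — this follows since $u$ is Borel, $\mu\in\M^+(U)$ and $\vp$ is measurable. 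Finiteness $G_U(\vp(\cdot,u)\mu)<\infty$: localize, $G_U(1_A\vp(\cdot,u)\mu)\le G_U(1_A\vp(\cdot,\|u\|_{\infty,A'})\mu)$ is not available without a bound on $u$, so instead one argues that $w-u=G_U(\vp(\cdot,u)\mu)$ is the difference of the harmonic function $w$ and $u\ge 0$, hence $\le w<\infty$ on $U$, which already gives $\vp(\cdot,u)\mu\in\M'(U)$ and $G_U(\vp(\cdot,u)\mu)\in\pr_r(U)$ by \eqref{p-rep}. Then $h-u$ and $G_U(\vp(\cdot,u)\mu)$ are both in $\pr_r(U)-\pr_r(U)$ (the former by the hypothesis that $\lim H_V|h-u|=0$ and $h-u$ real on $U$, the latter just shown), and their difference $h-w$ satisfies $L(h-w)=0$ on $U$, so $h-w$ is harmonic on $U$; finally $\lim_{V\uparrow U}H_V|h-w|\le\lim H_V|h-u|+\lim H_V G_U(\vp(\cdot,u)\mu)=0+0=0$ by \eqref{HVq}. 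A harmonic function on $U$ whose balayages $H_V$ tend to $0$ is itself $0$ (it equals its own largest harmonic minorant, which is the limit $\lim H_V(h-w)$; more directly $h-w=H_V(h-w)\to 0$), so $h=w$, i.e.\ (2).

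For uniqueness: if $u_1,u_2\in\B^+(X)$ both satisfy (2), then $u_i+G_U(\vp(\cdot,u_i)\mu)=h$ with $u_i\le h$, and subtracting, $u_1-u_2=G_U\bigl((\vp(\cdot,u_2)-\vp(\cdot,u_1))\mu\bigr)$. On the set $\{u_1>u_2\}$ we have $\vp(\cdot,u_1)\ge\vp(\cdot,u_2)$ by monotonicity, so the right side is $\le 0$ there while the left side is $>0$ — this is the heart of the argument, but to run it cleanly one invokes the domination principle developed in Section~\ref{dom-principle} (referenced in the introduction for exactly this purpose: "uniqueness and comparison of solutions for \eqref{main-equ}"): the measure $\lambda:=(\vp(\cdot,u_1)-\vp(\cdot,u_2))^+1_{\{u_1>u_2\}}\mu$ satisfies $G_U\lambda\le$ a potential carried by $\{u_1>u_2\}$ that is dominated there by $(u_1-u_2)^+$ plus $G_U$ of the opposite-sign part, and the domination principle forces $(u_1-u_2)^+=0$; symmetrically $u_1=u_2$. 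Since the domination principle and its consequences are already available from the earlier section, I would simply cite it, remarking that $\mu$ being locally Kato ensures the Green potentials involved are finite and the principle applies.
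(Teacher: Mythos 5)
Your proof is correct and follows essentially the same route as the paper: both rely on the representation \eqref{p-rep} of real potentials as Green potentials, the distributional identity \eqref{dist}, and the characterization \eqref{HVq}, with uniqueness delegated to the domination-principle machinery of Section~\ref{dom-principle} (the paper cites Proposition~\ref{uniqueness}(i)). The one structural difference is that you cycle through $(2)\Rightarrow(1)\Rightarrow(1')\Rightarrow(1'')\Rightarrow(2)$, whereas the paper proves $(1')\Rightarrow(2)$ directly: writing $h-u=G_U\nu-G_U\nu'$ and applying \eqref{dist} gives $\nu-\nu'=\vp(\cdot,u)\mu$, hence $h=u+G_U(\vp(\cdot,u)\mu)$ at once, with no need to pass through $(1'')$.

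Two small inaccuracies in your write-up are worth flagging, though neither breaks the argument. In your $(1')\Rightarrow(1'')$ step you say $\sigma_1-\sigma_2$ equals $\vp(\cdot,u)\mu$ ``up to a harmonic correction''; in fact there is no correction, since $Lh=0$ and the decomposition \eqref{direct-sum} makes the identification exact --- which is exactly why $(1')\Rightarrow(2)$ is immediate. And in your $(1'')\Rightarrow(2)$ step, the parenthetical claim that $\lim_V H_V|h-u|=0$ together with $h-u$ real on $U$ forces $h-u\in\pr_r(U)-\pr_r(U)$ is neither needed nor evidently true for an arbitrary continuous function; happily the rest of that paragraph is precisely the paper's argument (set $g:=h-u-G_U(\vp(\cdot,u)\mu)$, show $g$ is real continuous on $U$ with $Lg=0$, hence harmonic, hence $g=H_Vg\to 0$), so that extraneous claim can simply be dropped.
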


\begin{proof}
  (1)\,$\Rightarrow$\,($1'$): Trivial.
  
  ($1'$)\,$\Rightarrow$\,(2):
  By (\ref{p-rep}), there exist
  $\nu, \nu'\in\M'(U)$ such that $G_U\nu-G_U\nu'=h-u $. By~(\ref{dist}),
  $\nu-\nu'=L(u-h)=Lu=\vp(\cdot,u)\mu$ on $U$, and hence
  \begin{equation*}
    h=u+G_U\nu-G_U\nu'=u+G_U(\vp(\cdot,u)\mu).
    \end{equation*} 

    (2)\,$\Rightarrow$\,(1) and ($1''$):
Using $Lh=0$ on $U$, $h|_U\in\C(U)$, (\ref{dist}),  (\ref{p-rep}) and~(\ref{HVq}). 

  ($1''$)\,$\Rightarrow$\,(2): The function
  $g:=h-(u+G_U(\vp(\cdot,u)\mu)$ vanishes on $X\setminus U$, is
  real continuous on $U$, and satisfies $Lg=0$. Hence $g$ is harmonic
  on $U$. In particular, for every $V\in \V(U)$,
  \begin{equation*}
    g= H_Vg=H_V(h-u)-H_V G_U(\vp(\cdot,u)\mu),
  \end{equation*}
  where the right side tends to $0$ as $V\uparrow U$. Thus $g=0$. 

  The uniqueness of $u$ in (2) follows by Proposition~\ref{uniqueness}(i).
\end{proof}

As we shall see, $G_U(\vp(\cdot,h)\mu)<\infty$ is sufficient
for the existence of a solution $u$. On the other hand, by Proposition \ref{Lup}(2),  $G_U(\vp(\cdot,u)\mu)<\infty$ is necessary, although implicit.
Inspired by the literature of the subject, e.g., \cite{chrouda-fredj}, we will show that $G_U(\vp(\cdot,\eta h)\mu)<\infty$, with all $0<\eta<1$, is sufficient. In fact, in Theorem~\ref{eqhs} we even prove that the existence of solutions is \emph{characterized} by this being true
for
some measure $\mu'\le \mu$ such
that the difference $\mu-\mu'$ is supported by a sufficiently small set $A$.
  
To prepare Theorem~\ref{eqhs}, for subsets  $A$ of $U$ we define the \emph{reduced functions},
     \begin{eqnarray*}\label{def-rah}
               \ur_h^A &:=&\inf\{ w\in {}^\ast\H^+(U)\colon w\ge h
               \mbox{ on } A\},\\ 
\urh_h^A(x)&:=&\liminf\nolimits_{y\to x}  \ur_h^A(y)\,,\quad x\in X.
             \end{eqnarray*}
Extending definitions in \cite{GH1} we would say that $A$ is \emph{$h$-thin in $U$} if  $\urh_h^A\in \pr(U)$, \emph{$h$-thick in $U$} if  $\ur_h^A=h|_U$.
             
 Taking into account that $h|_U\in \hyperU\cap C(U)$,   Proposition
 \ref{Lup}  and the next  result yield the implications (i) $\Rightarrow$ in Theorems \ref{eqhs} and \ref{lehs} below.

 \begin{proposition}\label{converse}
   Let $u\in\B^+(U)$ with   $u+G_U(\vp(\cdot,u)\mu)=s\in \hyperU \cap    \C(U)$ and
let  $\eta\in (0,1)$,  $A:=\{   u<\eta s \}$. Then
 $G_U(1_{U\setminus A}\vp(\cdot, \eta s)\mu)\le s$ and $\urh_s^A\in  \pr(U)$.
          \end{proposition}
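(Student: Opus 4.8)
The plan is to exploit the two facts that $u \ge 0$ and that $t \mapsto \vp(\cdot, t)$ is increasing, playing them against the identity $u + G_U(\vp(\cdot,u)\mu) = s$. First I would establish the inequality $G_U(1_{U\setminus A}\vp(\cdot,\eta s)\mu) \le s$. On the set $U\setminus A = \{u \ge \eta s\}$ we have, by monotonicity of $\vp$, that $\vp(\cdot, \eta s) \le \vp(\cdot, u)$ pointwise, hence $1_{U\setminus A}\vp(\cdot,\eta s)\mu \le \vp(\cdot,u)\mu$ as measures on $U$. Applying the positive kernel $G_U$ (which is monotone in its measure argument) gives $G_U(1_{U\setminus A}\vp(\cdot,\eta s)\mu) \le G_U(\vp(\cdot,u)\mu) = s - u \le s$, using $u \ge 0$. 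That is the easy half.

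For the second assertion, $\urh_s^A \in \pr(U)$, the key observation is that on $A = \{u < \eta s\}$ we can write $s = u + G_U(\vp(\cdot,u)\mu) \le \eta s + G_U(\vp(\cdot,u)\mu)$, so that $(1-\eta)s \le G_U(\vp(\cdot,u)\mu)$ on $A$, i.e. $s \le \frac{1}{1-\eta} G_U(\vp(\cdot,u)\mu)$ on $A$. Now $G_U(\vp(\cdot,u)\mu)$ is a potential on $U$ (it lies in $\pr(U)$ whenever it is finite, by \eqref{p-rep} applied to the measure $\vp(\cdot,u)\mu \in \M'(U)$, which is legitimate since $u + G_U(\vp(\cdot,u)\mu) = s$ is real and continuous on $U$, forcing $G_U(\vp(\cdot,u)\mu) < \infty$). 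The function $\frac{1}{1-\eta}G_U(\vp(\cdot,u)\mu)$ thus belongs to $\hyperU$ and dominates $s$, hence $h$ in the notation of the reduced function, on the set $A$; therefore it dominates the reduced function $\ur_s^A$, and being already lower semicontinuous (indeed, a potential), it dominates the regularized reduction $\urh_s^A$ as well. Since $\urh_s^A \le \frac{1}{1-\eta} G_U(\vp(\cdot,u)\mu) \in \pr(U)$ and $\urh_s^A \in \hyperU$ is superharmonic, $\urh_s^A$ is a potential on $U$ (a hyperharmonic minorant of a potential whose largest harmonic minorant vanishes must itself have vanishing largest harmonic minorant — this is standard in balayage-space theory, cf. the characterization \eqref{HVq}). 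Hence $\urh_s^A \in \pr(U)$.

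The step I expect to require the most care is verifying that $G_U(\vp(\cdot,u)\mu)$ is genuinely a \emph{potential} on $U$ and not merely a finite superharmonic function — one must check it is real continuous on $U$ (or at least superharmonic in the sense of the excerpt) and that its largest harmonic minorant vanishes, which is where \eqref{p-rep} and the representation $\pr_r'(U) = \{G_U\nu : \nu \in \M'(U)\}$ enter. Granting that, the monotonicity of potentials under minorization closes the argument; the inequality $s \le \frac{1}{1-\eta} G_U(\vp(\cdot,u)\mu)$ on $A$ together with $s|_U \in \C(U)$ makes $\ur_s^A$ and its regularization transparently controlled, so the only subtlety is the bookkeeping between $\ur_s^A$, $\urh_s^A$, and the lower-semicontinuous representative of the potential bound.
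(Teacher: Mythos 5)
Your proof is correct and follows essentially the same route as the paper's: on $U\setminus A=\{u\ge\eta s\}$, monotonicity of $\vp$ gives $G_U(1_{U\setminus A}\vp(\cdot,\eta s)\mu)\le G_U(\vp(\cdot,u)\mu)=s-u\le s$, and on $A$ the identity forces $(1-\eta)s\le q:=G_U(\vp(\cdot,u)\mu)$, so $(1-\eta)^{-1}q\ge\ur_s^A\ge\urh_s^A$ and hence $\urh_s^A\in\pr(U)$. (Minor slip: you write that the bound ``dominates $h$'' — you mean $s$; and the paper's footnote notes $\urh_s^A=\ur_s^A$ since $A$ is finely open, which streamlines the last bookkeeping step.)
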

                    \begin{proof} 
        Clearly,   $q:=G_U(\vp(\cdot, u)\mu)$ is contained in
               $\pr(U)$ and   $(1-\eta)\inv q \ge \ur_s^A\ge  \urh_s^A$.  Thus  $\urh_s^A\in \pr(U)$. 
     \footnote{In fact, $\urh_s^A=\ur_s^A$, since $A$ is finely open;
       see (\ref{fine-reg}).} Of course, $G_U(1_{U\setminus A}\vp(\cdot,\eta s)\mu)
                         \le  G_U(1_{U\setminus A} \vp(\cdot,    u)\mu)\le    s$.
   \end{proof}

 We  say that  $\vp$ is \emph{$h$-proper} (\emph{locally
       $h$-Kato, respectively}), if  $G_U(1_A\vp(\cdot,h)\mu)<\infty$
($G_U(1_A\vp(\cdot,h)\mu)\in\C(U)$, respectively) 
 for every compact $A$ in~$U$.

Provided $\vp$ is $h$-proper, 
  the general Theorems \ref{main-general},  
\ref{gleh} and \ref{h0h-result}, proved later in the paper, yield  complete answers to Problems~1
and~2 for our standard examples, as we now present in Theorems~\ref{eqhs} and~\ref{lehs}. 
                        

\begin{theorem}\label{eqhs}
  Let $\vp$ be $h$-proper. The
      following statements are equivalent.
\begin{itemize} 
\item[\rm(1)] 
There exists a {\rm(}unique{\rm)}  function $u\in\B^+(X) $ {\rm(}continuous
on $U$ if $\vp$ is locally $h$-Kato{\rm)}  such that  
\begin{equation*} 
      Lu=\vp(\cdot,u)\mu \mbox{ on }U \und h-u\in\pr(U).
\end{equation*}
\item[\rm (2)] There exists a {\rm(}unique{\rm)} $u\in \B^+(X)$  {\rm(}continuous
on $U$ if $\vp$ is locally $h$-Kato{\rm)}  such that  $u+G_U(\vp(\cdot,u)\mu)=h$.
\item[\rm(3)]
For every $\eta\in (0,1)$, there exists 
      a  {\rm (}finely open{\rm )} Borel set $A$ in   $U$ such that
\begin{equation} \label{suff-model}
                                G_U(1_{A^c}\vp(\cdot,\eta h)\mu)< \infty \und
                              \urh_h^A\in \pr(U). 
                            \end{equation}
\end{itemize} 
Further, $(1)$ holds if $G_U(\vp(\cdot,\eta h)\mu)<  \infty$
  for all $\eta\in (0,1)$  
  or~$h|_U$~is a~potential on~$U$.
\end{theorem}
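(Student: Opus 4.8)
The plan is to assemble Theorem~\ref{eqhs} from Proposition~\ref{Lup}, Proposition~\ref{converse}, and the general balayage-space results Theorems~\ref{main-general}, \ref{gleh} and~\ref{h0h-result}, applied with the potential kernel $K:=G_U$. The equivalence $(1)\Leftrightarrow(2)$ is just the equivalence of~(1) and~(2) in Proposition~\ref{Lup}. For the parenthetical continuity claim, note that a solution $u$ of~(2) satisfies $u\le h$, so monotonicity of $t\mapsto\vp(\cdot,t)$ gives $\vp(\cdot,u)\mu\le\vp(\cdot,h)\mu$; hence $\vp(\cdot,u)\mu$ is locally Kato once $\vp$ is locally $h$-Kato, and since $G_U(\vp(\cdot,u)\mu)=h-u\le h<\infty$ on~$U$ this finite, locally Kato Green potential is continuous on~$U$ (as in the continuity part of Theorem~\ref{main-general}), whence $u|_U=h|_U-G_U(\vp(\cdot,u)\mu)\in\C(U)$.

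For $(2)\Rightarrow(3)$, fix $\eta\in(0,1)$ and let $u$ solve~(2). Since $h$ is harmonic on~$U$, the function $s:=h|_U$ lies in $\hyperU\cap\C(U)$ and $u+G_U(\vp(\cdot,u)\mu)=s$. Because $h-u=G_U(\vp(\cdot,u)\mu)\in\pr(U)$ is finely continuous and $h|_U\in\C(U)$, the restriction $u|_U$ is finely continuous, so $A:=\{u<\eta s\}=\{u<\eta h\}$ is a finely open Borel subset of~$U$ (this is the situation of Proposition~\ref{converse} and its footnote). Proposition~\ref{converse} then yields $G_U(1_{A^c}\vp(\cdot,\eta h)\mu)=G_U(1_{U\setminus A}\vp(\cdot,\eta s)\mu)\le s<\infty$ on~$U$ together with $\urh_h^A=\urh_s^A\in\pr(U)$, which is exactly~\eqref{suff-model}.

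The implication $(3)\Rightarrow(2)$ is the existence statement and is the crux of the theorem. Given, for each $\eta\in(0,1)$, a finely open Borel set $A\subset U$ satisfying~\eqref{suff-model}, we feed the data $(G_U,\mu,\vp,h,A)$ into the general Theorems~\ref{main-general}, \ref{gleh} and~\ref{h0h-result}: here $G_U$ is a potential kernel on~$U$ (Section~\ref{sec:Gf}), and~\eqref{suff-model} is precisely the hypothesis under which those theorems produce, via Schauder's fixed point theorem on a suitable order interval of positive Borel functions, a solution $u\in\B^+(X)$ of the integral equation~\eqref{K-equ}, namely $u+G_U(\vp(\cdot,u)\mu)=h$; by Proposition~\ref{Lup} this is~(2), equivalently~(1), and it is unique by Proposition~\ref{Lup}. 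I expect the main obstacle to be exactly this reduction: matching the concrete data to the hypotheses of the general theorems and checking that their conclusion transcribes back into the distributional equation $Lu=\vp(\cdot,u)\mu$ on~$U$ with $h-u\in\pr(U)$. Once the general machinery together with Proposition~\ref{Lup} and~\eqref{p-rep} are in place, however, this part is essentially bookkeeping.

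Finally, for the two sufficient conditions in the last sentence of the theorem. If $G_U(\vp(\cdot,\eta h)\mu)<\infty$ for every $\eta\in(0,1)$, take $A=\emptyset$: then $\urh_h^\emptyset=0\in\pr(U)$ and $G_U(1_{A^c}\vp(\cdot,\eta h)\mu)=G_U(\vp(\cdot,\eta h)\mu)<\infty$, so~(3) holds. If $h|_U\in\pr(U)$, take $A=U$, which is open and hence finely open: then $\urh_h^U=h|_U\in\pr(U)$, while $G_U(1_{A^c}\vp(\cdot,\eta h)\mu)=G_U(0)=0$ because $\mu$ is carried by~$U$, so~(3) holds again. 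In either case~(1) follows from the equivalence established above.
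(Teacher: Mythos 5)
Your proof follows the paper's intended route: $(1)\Leftrightarrow(2)$ via Proposition~\ref{Lup}, $(1)\Rightarrow(3)$ via Proposition~\ref{converse} applied with $s=h|_U$, and $(3)\Rightarrow(1)$ by translating condition~(3) into condition~(4) of the general Theorem~\ref{main-general} through the potential-kernel reduction of Section~\ref{sl-general} (the references to Theorems~\ref{gleh} and \ref{h0h-result} are superfluous here; those serve Theorem~\ref{lehs}). The one slightly loose spot is your continuity argument: that $G_U(\vp(\cdot,u)\mu)$ is finite and locally Kato does not by itself yield continuity on $U$ — you need the local splitting $G_U(\vp(\cdot,u)\mu)=G_U(1_V\vp(\cdot,u)\mu)+G_U(1_{U\setminus V}\vp(\cdot,u)\mu)$ on $V\in\V(U)$ together with the domination principle (Lemma~\ref{dom-princ}), as carried out in Proposition~\ref{first-properties}(2), but this is indeed where your appeal to "the continuity part of Theorem~\ref{main-general}" lands.
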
 

Of course the solutions to $(1)$ and $(2)$ are the same function, by Proposition \ref{Lup}. The last statement in Theorem~\ref{eqhs} follows on taking $A=X$ or
$A=\emptyset$ in \eqref{suff-model}, respectively. Moreover,
$G_U(1_{A^c}\vp(\cdot,\eta h)\mu)<\infty$ for every $\eta\in (0,1)$ if
$G_U(1_{A^c}\vp(\cdot, h)\mu)<\infty$, and in many cases the converse holds as
well (see Remark~\ref{44'}). See, however, the next example where the
converse fails.

\begin{examples}\label{fi-examples}
  {\rm
 Let us consider the standard examples (i) and (ii). \\
(1) Let 
$U:=\reald$, so that $G_U(x,y)=G(x,y)\approx |x-y|^{\a-d}$, $0<\a \le 2$ and
$\a<d$. Let $\mu$ be Lebesgue measure on~$\reald$ and
$B_r:=\{x\in \reald\colon |x|<r\}$, $r>0$.
We fix $\g\in\real$ and define
\begin{equation}\label{power-t}
   \vp(x,t):=1_{B_1^c}(x)|x|^{-\g} (|x|^{c}-1), \qquad x\in\reald, \, c\ge 0. 
\end{equation} 
Since~$\int_1^\infty r^{d-1} r^{\a-d} r^{c-\g}\,dr<\infty$ if and only
if $d-1+\a-d+c-\g<-1$, we see that
\begin{equation}\label{int}
                G(\vp(\cdot,c) \mu)<\infty \quad\mbox{ if and only if }\quad      c<\g-\a.
          \end{equation}
Let $h\in\H^+(\reald)$, $h\not\equiv 0$,
to wit, $h$ is an arbitrary strictly
positive constant.  Obviously,
$\vp$ is locally $h$-Kato.           
So, by Theorem \ref{eqhs} and \eqref{int},  Problem 1 has a positive
answer if 
 $\g>\a$ and $h\le \g-\a$.  We stress that in the limiting case $h=\g-\a$ the solution exists because we have
$G(\vp(\cdot,\eta h)\mu)<\infty$ for every $\eta\in (0,1)$, even though
$G(\vp(\cdot, h)\mu)=\infty$. By \eqref{int}, replacing $h$ by $h\wedge (\g-\a)$, we also see that Problem 2 has a positive answer if $\g>\a$.

For a converse, suppose that $\a>1$ and there exists
$u\in \B^+(\reald)$ satisfying~(1) in
  Theorem \ref{eqhs}. By uniqueness of $u$ and radial symmetry 
  of $\mu$ and the functions $x\mapsto \vp(x,t)$, for all $t\ge 0$, the
  function $u$ is as well invariant under rotations of~$\reald$. By
  Proposition~\ref{Lup},
  \begin{equation*}
    u+G(\vp(\cdot,u)\mu)=h,
  \end{equation*}
where the rotationally invariant potential $q:=G(\vp(\cdot,u)\mu)(x)$ tends to zero at infinity. The latter is
  trivial if $\a=2$.
  If $1<\a<2$ then it follows from  \cite[Theorem~1]{mizuta}, 
  because the Riesz capacity of the  unit
  sphere is positive for $\a>1$ \cite[Corollary 2.2]{MR233426}.

  So, for each $\eta\in (0,1)$, there exists $R>0$ such that $u(x)<\eta
  h$ for every $x\in B_R^c$, and we obtain that
  \begin{equation*}
G(1_{B_R^c}  \vp(\cdot,\eta h)\mu)\le G(1_{B_R^c}  \vp(\cdot,u)\mu)
   \le h<\infty.
    \end{equation*} 
Hence    $G(\vp(\cdot,\eta h)\mu)<\infty$, so $\eta h<\g-\a$, by (\ref{int})\footnote{The same is true if $\varphi$ is radial, locally $h$-Kato.}.
Thus, assuming that $\a>1$,
Problem 1   has a positive answer if and only if $\g>\a$ and
$h\le \g-\a$.
By replacing $h$ by $h\wedge (\g-\a)$ again, it follows that Problem 2  has a positive answer if and only if $\g>\a$. 

\noindent
(2)
If we still let $h$  be a constant $c>0$ on $\reald$, have $\a<2$,
but take  $U:=B_1$, 
then, for any $\vp$ which is $c$-proper, 
Problem 1 has a positive answer (with
$\sup u(\reald)=c$), since $c1_U$ is a potential on $U$ and we may
apply Theorem \ref{eqhs} with $A=U$!
 
\noindent
(3)      
Finally, let us again consider the case $\a<2$, $U:=B_1$, but take $\vp(x,t):=t^\g$ and
      \begin{equation*}
        h(x):=1_U(x)  (1-|x|^2)^{\a/2-1},
      \end{equation*}
see \cite[p. 56]{MR2569321}.
By \cite[Corollary 3.2]{MR1825645},  for every $x\in U$,
$G_U(x,y)\approx   (1-|y|)^{\a/2}$ for small $1-|y|$.
 Since $\int_0^\ve s^{\a/2} s^{(\a/2-1)\g} \,ds<\infty$        if  
       $\a/2+\g(\a/2-1)>-1$, 
Problem~1   has a positive answer  if
      \begin{equation*} 
         \g < \frac {1+\a/2}{1-\a/2},
       \end{equation*}
which reproves part of \cite[Theorem 1.7]{MR3393247}.
     }
  \end{examples}

\begin{theorem}\label{lehs} 
 Suppose that  $\vp$ is locally $h$-Kato and let
    $\G_h$ denote the set of all functions $g\in\H^+(U)$ such that $g\le h$ on
    $X$, $g=h$ on $X\setminus U$ and  $g\not\equiv 0$ on~$U$.
    Then the following   statements are equivalent {\rm(}with the same functions $u$ in $(1)$ and $(2)${\rm)}. 

\begin{itemize}
\item[\rm(1)]
There exists  $u\in\B^+(X)$ with  $u\le h$ on $X$, $u=h$ on
  $U^c$,  $u|_U\in \C(U)\setminus \{0\}$ and
  \begin{equation*}
    Lu=\vp(\cdot,u)\mu \on  U.
  \end{equation*}
   \item[\rm(2)]
There exists $u\in \B^+(X)$ such that $u|_U\in\C(U)$ and
  $u+G_U(\vp(\cdot,u)\mu)\in \G_h(U)$.
 \item[\rm(3)]
  There exists $g\in \G_h(U)$ such that, for every $\eta\in (0,1)$,
  there is a  {\rm (}finely open{\rm )} Borel set $A$  in $U$ with 
\begin{equation*} 
       G_U(1_{A^c}\vp(\cdot,\eta g)\mu)<\infty     \und   {}^U\!  \hat R_g^A\in\pr(U). 
 \end{equation*}
  \end{itemize}
Moreover, the following property $(4)$ implies $(1)$\,-\,$(3)$ and,
if $1_Uh$ is harmonic on~$U$, the converse holds as well.                             
\begin{itemize}
\item[\rm (4)]    There exists $g\in \G_h$ such that, for every
    $\eta\in (0,1)$, there is a  {\rm (}finely open{\rm )} Borel set $A$ in $U$ with
\begin{equation*} 
                                G_U(1_{A^c}\vp(\cdot,\eta g)\mu)< \infty \und
                                {}^U\!\hat R_g^A\not\equiv   g|_U.
\end{equation*}
\end{itemize}
    \end{theorem}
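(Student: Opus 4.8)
My plan is to derive Theorem~\ref{lehs} by running Proposition~\ref{Lup} and Theorem~\ref{eqhs} with a harmonic minorant $g\in\G_h(U)$ in the role that $h$ plays there, and to obtain the statements about $(4)$ from the general Theorems~\ref{gleh} and~\ref{h0h-result}. For $(1)\Rightarrow(2)$ I would mimic the proof of Proposition~\ref{Lup}, starting from $w:=h-u\ge0$: it is real and continuous on $U$, vanishes on $U^c$, and on each $V\in\V(U)$ differs from the finite potential $G_V(\vp(\cdot,u)\mu)$ (finite since $\vp$ is locally $h$-Kato and $u\le h$) by an $L$-harmonic function, so $w$ is superharmonic on $U$. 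In its Riesz decomposition $w=q+s$, with $s$ the greatest harmonic minorant of $w$ on $U$, the potential part $q\in\pr_r(U)$ satisfies $Lq=-Lw=\vp(\cdot,u)\mu$ on $U$, so \eqref{p-rep} and \eqref{dist} force $q=G_U(\vp(\cdot,u)\mu)<\infty$; hence $g:=u+G_U(\vp(\cdot,u)\mu)=h-s$ is harmonic on $U$, lies between $u$ and $h$, equals $h$ on $U^c$ (as $w$ and $q$ vanish there), and satisfies $g\ge u\not\equiv0$ on $U$, i.e.\ $g\in\G_h(U)$. For $(2)\Rightarrow(1)$ I would reverse this: from $u+G_U(\vp(\cdot,u)\mu)=g\in\G_h(U)$ with $u|_U\in\C(U)$, the potential $G_U(\vp(\cdot,u)\mu)=g-u$ is real on $U$, so $\vp(\cdot,u)\mu\in\M'(U)$ and \eqref{dist} together with $Lg=0$ yields $Lu=\vp(\cdot,u)\mu$ on $U$; moreover $u=g=h$ on $U^c$, $0\le u\le g\le h$, and $u\not\equiv0$ on $U$ (else $\vp(\cdot,u)=0$ and $g=u=0$ on $U$). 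Uniqueness of the $u$ attached to a given $g$ is Proposition~\ref{uniqueness}(i), and this construction shows the solution sets in $(1)$ and $(2)$ coincide.

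For $(2)\Leftrightarrow(3)$ I would observe that any $g\in\G_h(U)$ has $g\le h$, so $\vp$ is $g$-proper, and in fact locally $g$-Kato: for compact $A\subset U$ the lower semicontinuous finite potentials $G_U(1_A\vp(\cdot,g)\mu)$ and $G_U\bigl(1_A(\vp(\cdot,h)-\vp(\cdot,g))\mu\bigr)$ add up to the continuous $G_U(1_A\vp(\cdot,h)\mu)$, so both are continuous. Since such a $g$ is harmonic on $U$ and $\not\equiv0$ there, Theorem~\ref{eqhs} applies with $g$ replacing $h$, and statement $(2)$ here is exactly ``there is $g\in\G_h(U)$ for which Theorem~\ref{eqhs}$(2)$ holds'' (the continuity $u|_U\in\C(U)$ being automatic from local $g$-Kato), which by Theorem~\ref{eqhs} is equivalent to ``there is $g\in\G_h(U)$ for which Theorem~\ref{eqhs}$(3)$ holds'', i.e.\ to statement $(3)$, since ${}^U\!\hat R_g^A=\urh_g^A$.

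The remaining task---showing $(4)\Rightarrow(1)$--$(3)$, and the converse when $1_Uh$ is harmonic on $U$---is where I expect the real difficulty and where I would appeal to the general Theorems~\ref{gleh} and~\ref{h0h-result}. Since $(4)$ only asserts that $A$ is \emph{not $g$-thick}, one must manufacture from such a $g$ a smaller harmonic minorant $g'\in\G_h(U)$ for which the genuine thinness condition of $(3)$ holds; the mechanism is to exhaust $U$ by $V\in\V(U)$, solve the localized equations $u_V+G_V(\vp(\cdot,u_V)\mu)=g$ by the Schauder argument of Section~\ref{app-schauder} (the potentials involved being finite because $\vp$ is locally $h$-Kato and $u_V\le g\le h$), and pass to the decreasing limit $u:=\lim_{V\uparrow U}u_V$; the non-thickness of $A$ for $g$ is precisely what prevents $u+G_U(\vp(\cdot,u)\mu)$ from vanishing on $U$. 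For the converse one needs $1_Uh$ harmonic on $U$ so that the exterior values of $h$ on $U^c$ do not obstruct the passage between a solution and its associated harmonic minorant---an obstruction that arises only for non-local $L$. The main obstacle is exactly this: controlling the limit $u$ and matching the dichotomy ``$g$-thin versus merely not $g$-thick'' with ``$(3)$ versus $(4)$''.
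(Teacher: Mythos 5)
Your proposal is correct, and the overall architecture is the same as the paper's: the equivalence $(2)\Leftrightarrow(3)$ is obtained by applying Theorem~\ref{eqhs} with some $g\in\G_h(U)$ in the role of $h$ (noting that $\vp$ is locally $g$-Kato, exactly as you argue by decomposing $G_U(1_A\vp(\cdot,h)\mu)$ into a sum of two l.s.c.\ functions), and the statements about $(4)$ are delegated to Theorems~\ref{gleh} and~\ref{h0h-result}, which the paper also does without re-proving them inside this proof.

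Where you genuinely diverge is in $(1)\Rightarrow(2)$. The paper exhausts $U$ by relatively compact $V_n\uparrow U$, observes that $h_n:=u+G_{V_n}(\vp(\cdot,u)\mu)$ is harmonic on $V_n$, bounded by $h$, and increases to $u+G_U(\vp(\cdot,u)\mu)$, which is therefore harmonic on $U$, $\le h$, and equal to $h$ on $U^c$, so it belongs to $\G_h(U)$. You instead work directly with $w:=h-u$: you show $w$ is a continuous positive superharmonic function on $U$ vanishing on $U^c$ (by comparing locally with $G_V(\vp(\cdot,u)\mu)$), apply the Riesz decomposition $w=q+s$ in $(U,\W_U)$, and use \eqref{p-rep} and \eqref{dist} to identify $q=G_U(\vp(\cdot,u)\mu)$, so that $g:=h-s=u+G_U(\vp(\cdot,u)\mu)\in\G_h(U)$. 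Both routes are sound. Yours is more structural (it invokes the representation theorem for real potentials and the additivity of $L$ on $\mathcal P_r'(U)+\H^+(U)$); the paper's is more elementary, using only the monotone convergence of $G_{V_n}(\vp(\cdot,u)\mu)\uparrow G_U(\vp(\cdot,u)\mu)$ and the harmonicity of increasing limits. Your argument buys conceptual transparency (the target $g$ is exhibited at once as $h$ minus the harmonic part of $h-u$), at the cost of slightly heavier machinery; the paper's buys self-containment within the exhaustion technique already used throughout Section~\ref{sl-general}. Your $(2)\Rightarrow(1)$ coincides with the paper's (Proposition~\ref{Lup} applied with $g$ in place of $h$, plus the easy observation that $u\not\equiv 0$ on $U$ because $\vp(\cdot,0)=0$).
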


\begin{proof} $(1)\,\Rightarrow\,(2)$:
Let $V_n$, $n\in\nat$, be open sets having compact closure in $U$ such
that $V_n\uparrow U$ as $n\to \infty$. Since $\vp$ is locally
$h$-Kato, we have $G_U(1_{V_n}\vp(\cdot,u)\mu)\in \C(U)$, and hence
$ G_{V_n}(\vp(\cdot,u)\mu)=G_U(1_{V_n}\vp(\cdot,u)\mu)-
  H_{V_n}G_U(1_{V_n}\vp(\cdot,u)\mu)\in \C(V_n)
$
for every $n\in\nat$.
The functions $h_n:=u+ G_{V_n}(\vp(\cdot,u)\mu)$, $n\in\nat$, satisfy
 $Lh_n=0$ on $V_n$. Therefore $h_n\in \H^+(V_n)$. Moreover, $h-h_n\ge
 -G_{V_n}(\vp(\cdot,u)\mu)$, hence $h-h_n\ge 0$. The functions $h_n$ do not vanish on $U$ and they increase to the (harmonic) function
 $ u+G_U(\vp(\cdot,u)\mu)\le h$.

The remaining implications follow from Theorem \ref{eqhs},
  Proposition \ref{Lup} (observe that $\vp$ is locally $g$-Kato for
  every $g\in \G_h$)  and Theorem \ref{gleh}.
\end{proof}

    As mentioned in the Introduction, we can interpret the 
    condition  on $h-u$ in Problem 1 as a boundary
    condition, as follows. Recall that sequence  $(x_n)$ in $U$
    converging     to a point $z\in\partial  U$ is called \emph{regular}, provided
        $\limn H_Uf(x_n)=f(z)$ for every $f\in\C(X)$ with compact support
        (and hence for every
   $f\in\C(X)$ which is bounded by a function $s\in
   {}^\ast \H^+(X)\cap \C(X)$; see Remark \ref{reg-property}).
The set~$U$  is  \emph{regular},
   if every sequence in $U$ converging to a point in $\partial U$ is regular.
                           
   \begin{proposition}\label{xns}
 Suppose that $U$ is relatively compact, $h $ is bounded on~$U$,
    and $G(1_U\vp(\cdot,h)\mu)\in\C(X)$. Let  $u\in \B^+(X)$ such
     that      $u\le h$ on~$X$, $u=h$ on~$X\setminus U$, and
     $Lu=\vp(\cdot,u)\mu$ on~$U$.  
 
    Then  $h-u\in \pr(U)$ if  and only if $u$ is continuous on $U$ and
    \begin{equation}\label{xn}
      \lim\nolimits_{n\to\infty} (h-u)(x_n)=0
    \end{equation}
    for every regular sequence in  $U$ which converges to a point in $\partial U$.
      \end{proposition}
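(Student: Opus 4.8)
The plan is to set $q:=G_U(1_U\vp(\cdot,u)\mu)=G_U(\vp(\cdot,u)\mu)$ (finite, since $\vp(\cdot,u)\le\vp(\cdot,h)$ and $G(1_U\vp(\cdot,h)\mu)$ is real-valued) and to observe that, by the argument of Proposition \ref{Lup}, the hypotheses $u\le h$, $u=h$ on $X\setminus U$, $Lu=\vp(\cdot,u)\mu$ on $U$ give $u+q=h$ on $U$ exactly when $h-u=q\in\pr(U)$; so one direction is essentially Proposition \ref{Lup}. The real content is to translate membership ``$q\in\pr(U)$'' into the two assertions: $u\in\C(U)$ and $q(x_n)\to 0$ along every regular sequence. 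First I would note that $q$ is by construction a potential-type object: $q=G_U(\vp(\cdot,u)\mu)$ is a Green potential of a measure $\vp(\cdot,u)\mu$ which is locally finite and locally Kato (dominated by $\vp(\cdot,h)\mu$), hence $q\in\splus$ and in fact $q|_U\in\C(U)$ under the standing continuity hypothesis $G(1_U\vp(\cdot,h)\mu)\in\C(X)$ — this uses that $G_U(1_A\vp(\cdot,u)\mu)$ is continuous for compact $A\subset U$ and a standard exhaustion/Dini argument to pass to $A=U$. Consequently $h-u=q$ is continuous on $U$ iff $u$ is, which disposes of the continuity half of the equivalence.

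The remaining and central step is the boundary characterization: for $q=G_U(\vp(\cdot,u)\mu)$ a continuous potential on the relatively compact set $U$, one has $q\in\pr(U)$ iff $q(x_n)\to 0$ along every regular sequence converging to $\partial U$. The forward implication is the definition of ``regular'': if $q\in\pr_r(U)$ then (\ref{HVq}) holds and, extending $q$ by $0$ to $X\setminus U$, $q$ becomes a difference of two bounded continuous superharmonic functions on $X$ (using boundedness of $h$ and $q\le h$), so $q\in\C(X)$ and is dominated by a function in ${}^\ast\H^+(X)\cap\C(X)$; then regularity of the sequence applied to $H_U q$ gives $q(x_n)=H_Uq(x_n)\to q(z)=0$. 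Here I would lean on Remark \ref{reg-property} for the extension of the regularity test from compactly supported $f$ to $f$ bounded by a continuous hyperharmonic function.

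For the converse — this is the part I expect to be the main obstacle — I assume $q$ is continuous on $U$, bounded, and $q(x_n)\to 0$ along all regular sequences, and I must show $\lim_{V\uparrow U}H_V q=0$. The strategy is: $H_V q$ decreases as $V\uparrow U$ to some $s\in\H^+(U)$ with $0\le s\le q$, and $s$ is the largest harmonic minorant of $q$ on $U$; it suffices to show $s\equiv 0$. Extend $q$ by $0$ outside $U$; since $U$ is relatively compact and $q$ is bounded and continuous on $U$ with $q\to 0$ radially along regular approach, one checks $q$ is continuous on all of $\ov U$ (irregular boundary points form a polar set, so they do not obstruct the $0$-boundary-value in the relevant sense), so $q$ is a bounded continuous potential on $X\setminus(\ov U^c)$-type region whose harmonic extension $H_U q$ has boundary values $q|_{\partial U}$ quasi-everywhere, i.e.\ except on a polar set. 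At regular boundary points $z$ the boundary value is $\lim q(x_n)=0$; hence $H_Uq=0$ at a set whose complement in $\partial U$ is polar, and by the maximum principle / the fact that $H_U$ of a function vanishing quasi-everywhere on $\partial U$ (and of a potential-class function) vanishes, we get $H_Uq\equiv 0$, hence $s\le H_Uq$ gives... more precisely $s$ is harmonic on $U$, bounded, with $\fliminf$ boundary values $0$ at regular points and polar exceptional set, so $s\equiv 0$ by the minimum principle for the balayage space $(X,\W)$. Then $H_Vq\downarrow 0$, i.e.\ (\ref{HVq}), so $q\in\pr_r(U)\subset\pr(U)$, and since $h-u=q$ we are done. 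The delicate points to get right are (a) that the exceptional boundary set is genuinely polar and (b) invoking the correct minimum principle in the balayage-space setting (Section \ref{dom-principle}); both are available from the general theory but need to be cited carefully.
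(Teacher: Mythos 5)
Your converse direction attacks the wrong statement. Under the hypotheses $Lu=\vp(\cdot,u)\mu$ on $U$ and $u=h$ on $\uc$, Proposition~\ref{Lup} turns $h-u\in\pr(U)$ into the identity $u+q=h$, where $q:=G_U(\vp(\cdot,u)\mu)$ --- you note this. But you then pose the central step as ``$q\in\pr(U)$ iff $q(x_n)\to 0$''. That equivalence is vacuous here: $q$ is a finite Green potential and hence automatically in $\pr(U)$, and $q(x_n)\to 0$ along regular sequences holds unconditionally (via Remark~\ref{reg-property} applied to $q=p-H_Up$, with $p:=G(1_U\vp(\cdot,u)\mu)\in\C(X)$ by the domination argument of Lemma~\ref{dom-princ}). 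Your sketch for the converse (extending $q$ by zero, polar exceptional boundary sets, a minimum principle applied to $q$) reasons entirely about properties of $q$ in isolation and never touches what actually needs proof, namely that ``$u\in\C(U)$ together with $(h-u)(x_n)\to 0$'' forces the equation $u+q=h$.

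The paper closes this gap by forming $\wilde h:=u+q$, which is continuous on $U$ and satisfies $L\wilde h=0$ (since $Lq=-\vp(\cdot,u)\mu$), hence is harmonic on $U$. From $u\le h$ one has $\wilde h\le h+q$; since $q\in\pr(U)$ and $\wilde h$ is harmonic, this yields $\wilde h\le h$. Now $h-\wilde h$ is bounded, vanishes on $\uc$, and tends to $0$ along every regular sequence --- this combines the hypothesis $(h-u)(x_n)\to 0$ with the unconditional $q(x_n)\to 0$. The boundary uniqueness theorem \cite[III.4.3]{BH} then gives $h=\wilde h$, so $h-u=q\in\pr(U)$. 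This comparison step --- forming $\wilde h$ and invoking uniqueness --- is the idea your proposal is missing.
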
 
   
      \begin{proof}
        Clearly,  $p:=G(1_U\vp(\cdot,u)\mu)\in \C(X)$,
          since $ \vp(\cdot,u)\le \vp(\cdot,h)$ on $U$. Hence 
          \begin{equation*}
            q:= G_U(\vp(\cdot,u)\mu)=p-H_Up\in\C_b(U)
       \end{equation*} 
           satisfies   $\limn q(x_n)=0$ for  every regular sequence $(x_n)$
           in~$U$  (see Remark \ref{reg-property}). 

           (a) Suppose that $h-u\in \mathcal P'(U)$. Then $u+q=h$, by
  Proposition  \ref{Lup}.  So $u$ is
          continuous  on $U$ and (\ref{xn}) holds.
          
        (b) Suppose now  that $u$ is continuous on $U$ and  (\ref{xn})
        holds.
        Then  $\wilde h:=u+q$ is continuous on $U$ and $L\wilde
        h=0$. So $\wilde h$ is harmonic on $U$. Having $\wilde h\le h
        + q$ this implies that $\wilde h\le h$. Of course,
        $h-\wilde h =0$ on $X\setminus U$ and $h-\wilde h$ is bounded
        on $U$. By (\ref{xn}), $ \limn (h-\wilde
        h)(x_n)=0$ for every regular sequence~$(x_n)$ in $U$.
 Hence    $h-\wilde h=0$, by         \cite[III.4.3]{BH}, so
 $h-u\in\mathcal P'(U)$, by Proposition \ref{Lup}.
      \end{proof}




                              


      \begin{remark}\label{ch-fr}
        {\rm
          Let us consider the situation studied in
          \cite{chrouda-fredj}, that is, in our standard
          example (ii) (Riesz potentials, $0<\a<2\wedge d$)  we have
          $U=\reald$ and $\vp(x,t)=\rho(x)\phi(t)$ with locally bounded
          $\rho\in\B^+(\reald)$ and increasing $\phi\in \C([0,\infty))$ with
          $\phi(0)=0$. 

          Moreover, let us assume that $\rho$ is radially symmetric. Theorem 1
          in \cite{chrouda-fredj} states that there exists a
          non-trivial (radially symmetric) bounded solution $u\ge 0$ to $-(-\Delta^{\a/2}) u=\rho(x)
          \phi(u)$ provided $\int_0^\infty r^{\a-1}
          \rho(r)\,dr<\infty$. In view of our Theorem \ref{eqhs}, the
          result even holds under the weaker assumption that $\int_A G( \cdot,y)\rho(y)\,dy< \infty$  for every
          compact $A$ in $\reald$.

          In \cite[Theorem 1]{chrouda-fredj} it   also claimed that the
          converse holds. However, the proof referring to a result of
          \cite{mizuta}
is only valid if $\a>1$. In fact, the converse itself
          fails if $\a\le 1$ as the following example shows.

          Suppose 
          that $\a\le 1$. For $n\in\nat$ and $\ve\in (0,1)$ let 
          \begin{equation*}
            B_n:=\{x\in\reald\colon |x|<n\}, \quad
            S_n:=\partial B_n \und A_n(\ve):=\{x\in B_n\colon |x|>n-\ve\}.
          \end{equation*}
           For the moment, let us fix $n$. Since $S_n$ is polar
           (see \cite{MR233426}), there is
           a~hyperharmonic function  $w_n\ge 0$ on~$\reald$ 
           such that $w_n>1$ on $S_n$ and $w_n\le 2^{-n}$ on~$B_{n-1}$
           (taking $B_0:=\emptyset$).  Since $w_n$
     is lower semicontinuous, there exists $\ve_n\in (0,1)$ with~$w_n>1$ on~$A_n:=A_n(\ve_n)$. 
         Clearly, $q_n:=R_1^{A_n}\le 1$ on $\reald$, $q_n$ is a~potential
         (continuous and harmonic on the complement of $A_n$) and
         $q_n\le 2^{-n}$ on~$A_{n-1}$. Therefore $q:=\sumn q_n$ is a
         potential on $\reald$, $q\le 2$.
         Let 
     \begin{equation*}
       A:=\bigcup\nolimits_{n\in\nat} A_n, \quad
       \beta_n:=n^{1-\a} \lambda(A_n)\inv  \und  \rho:=\sum\nolimits \beta_n 1_{A_n} \lambda,
     \end{equation*}
     $\lambda$ being Lebesgue measure on $\reald$.
   Clearly, $\rho$ is locally bounded, radially symmetric, 
      \begin{equation*}
     \int_0^\infty r^{\a-1} \rho(x)\,dr=\infty  \und  \int_{A^c} G(\cdot,y)\rho(y)\,dy=0.
   \end{equation*}
Moreover, $p:= R_1^A\le \sumn q_n=q$. Hence  $p$ is a  potential on
$\reald$, $p\le 1$ on~$\reald$, $p=1$ on $A$ (it is easily seen that
$p$ is continuous, harmonic  outside $ \ov A$, and $\liminf_{x\to
  \infty} p(x)=0$).

Then, by Theorem \ref{eqhs}, there is a unique
function~$u\in \B^+(X)$ (continuous real and radially symmetric) such
that $u+\int G(\cdot,y) \rho(y)\phi(u(y))\,dy=1$ and hence
$-(-\Delta)^{\a/2}u=\rho\phi(u)$.

Thus the implication $(d)\Rightarrow (a)$ in \cite[Theorem
1]{chrouda-fredj} indeed does not hold for $\alpha\le 1$.
}
 \end{remark}

      \section{Balayage spaces}\label{sec:bs}
In this section we shall recall the definition of a balayage space and
various facts which we shall use later on (for further details see \cite{BH}
or \cite{H-course}). The reader who is familiar
with this concept or is mainly interested in our standard examples (or
similar ones) may pass directly to Section \ref{dom-principle} and perhaps
go back whenever needed.

      \subsection{Definition}\label{definition}
       
Let $X$ be a locally compact space with countable base of open sets. 
For every   numerical function $g$ on~$X$,
let~$\hat g$ denote the largest lower semicontinuous minorant of~$g$, that is,
$\hat g(x):=\liminf_{y\to x} g(y)$, $x\in X$.

Let $\W$ be a convex cone of positive numerical functions on~$X$. We consider the following properties that
may hold for $\W$:
\begin{itemize}
\item[\rm (C)] Continuity:
Every $w\in \W$ is the supremum of its minorants in $$\splus:=\W\cap\C(X).$$
\item[\rm (S)] Separation:
$\W$ is  linearly separating, that is, for all $x\ne y$ and  $\g>0$, there exists a function $v\in \W$ 
such that  $v(x)\ne \g v(y)$.
\item[\rm (T)] Transience:
There exist strictly positive $u,v\in\splus$ such that~$u/v\in\C_0(X)$.
\end{itemize} 
Of course, (C) implies that the functions in $\W$ are lower semicontinuous, in particular, Borel measurable. 

We call $(X,\W)$ a \emph{balayage space}  
if the following hold:
\begin{itemize} 
\item  [{\rm (B$_0$)}] 
$\W$ has the properties  (C), (S) and  (T). 
\item   [{\rm (B$_1$)}] 
If $v_n\in \W$, $v_n\uparrow v$, then $v\in \W$.
\item  [{\rm (B$_2$)}] 
If $\V\subset \W$, then $\widehat{\inf \V}^f\in\W$. 
\item   [{\rm (B$_3$)}] 
If $u,v',v''\in\W$ and  $u\le v'+v''$, then  there exist $u',u''\in\W$ such that  $u=u'+u''$, $u'\le v'$ and  $u''\le v''$. 
\end{itemize}
Here $(B_1)$ means that $\W$ has to be closed under increasing limits. $(B_2)$ implies that $\W$ is closed under finite infima, whereas arbitrary infima have to be regularized 
 taking the greatest lower semicontinuous minorant with respect to the \hbox{($\W$-)}\emph{fine topology}, hence the superscript $f$.
By definition, the ($\W$-)\emph{fine topology} is the smallest topology on~$X$ 
that contains the initial topology and such that every function in $\W$ is continuous. Since the  functions in $\W$
are lower semicontinuous, $(B_2)$ eventually implies that
\begin{equation}\label{fine-reg}
  \widehat{\inf \V}^f=\widehat{\inf \V} \qquad\mbox{for all }\V\subset \W.
\end{equation}
The subtlety of requiring the finely lower semicontinuous regularization in $(B_2)$ 
amounts to a convergence property for
increasing sequences of harmonic functions.
Finally, $(B_3)$ is some lattice property (Riesz property).

For this and  a detailed exposition of  the theory of balayage spaces the reader may consult \cite{BH,H-course}.
The theory covers the potential theory for a large class of elliptic or parabolic differential equations
of second order. Namely, if $(X,\H)$ is a~$\mathcal P$-harmonic space and  ${}^\ast\!\H^+(X)$ denotes
the convex cone of positive hyperharmonic functions on~$X$, then $(X, {}^\ast\!\H^+(X))$  is  a balayage space. 
For details the reader may consult \cite[Section 7.1]{GH1}, see also below.

Moreover, the
  isotropic $\a$-stable L\'evy processes with $0<\alpha< 2\wedge d$
(and the corresponding Riesz  potentials) and many other transient L\'evy processes lead to balayage spaces
(see the next subsection and \cite[Sections V.3 and V.4]{BH} or \cite[Section~3.1]{H-course}).

 \begin{remark}\label{scaling}
   {\rm
     Occasionally, it is useful to use an \emph{$s$-transform  {\rm(}Doob's
     conditioning{\rm)} }     of the balayage space 
     $(X,\W)$ by  some $s\in \splus$, $s>0$: Defining $\widetilde \W:= \{v/s: v\in \W\}$
     we clearly get a~balayage space $(X,\widetilde \W)$, and  have
     $1\in\widetilde \W$.
    }
\end{remark}

\subsection{The generic example}

As we shall explain in a
  moment, in view of Remark \ref{scaling} and  
Theorems \ref{char-bal}, \ref{Hunt-converse} below,  
the set $\exc$ of excessive functions for a sub-Markov semigroup $\mathbbm P$
with $\exc$ satisfying (B$_0$) may be considered as the  generic
example of a convex cone $\W$
such that $(X,\W)$ is a~balayage space.

As usual, we call $K:(x,A)\mapsto [0,\infty]$
a kernel on $X$ if $K(x,\cdot)$ is a Borel measure on $X$ for every
$x\in X$ and
the function $K(\cdot,A)$ is Borel measurable for every Borel measurable $A\subset X$. 
We then write $Kf(x)=\int_X f(y) K(x,dy)$  for $x\in X$ and $f\in \B^+(X)$. For instance, if $\psi\in \B^+(X)$ then $M_\psi f(x):=f(x)\psi(x)$, $f\in \B^+(X)$, $x\in X$, defines a kernel, the \textit{multiplication kernel}. 
    
 Let $\mathbbm P=(P_t)_{t>0}$ be a sub-Markov semigroup on $X$, that is, we have kernels~$P_t$
 on $X$ 
such that $P_t1\le 1$ and $P_s P_t=P_{s+t}$ for all $s,t>0$. We assume that $\mathbbm P$ is
\emph{right continuous}, that is, 
$\lim_{t\to 0} P_tf=f$ (pointwise) for every $f\in \C(X)$ with compact support.
Let $\E_{\mathbbm P}$ denote
the corresponding convex cone of excessive functions,  
\begin{equation*}
  \E_{\mathbbm P}
  :=\{u\in \B^+(X)\colon \sup\nolimits_{t>0} P_tu=u\}.
\end{equation*}
The following holds (see, for example, \cite[Corollary 2.3.8]{H-course}).

\begin{theorem}\label{char-bal}
$(X,\E_{\mathbbm P})$ is a balayage space if and only it $\E_{\mathbbm P}$ satisfies $(B_0)$, that is,
{\rm(C), (S)}  and {\rm(T)} hold.
\end{theorem}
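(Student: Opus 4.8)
**Proof plan for Theorem \ref{char-bal} ($(X,\E_{\mathbbm P})$ is a balayage space iff $\E_{\mathbbm P}$ satisfies $(B_0)$).**

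The direction ``balayage space $\Rightarrow (B_0)$'' is immediate: $(B_0)$ is part of the definition of a balayage space, so nothing is to be shown. The substance is the converse: assuming $\E_{\mathbbm P}$ has the properties (C), (S), (T), one must verify axioms $(B_1)$, $(B_2)$, $(B_3)$ for $\W:=\E_{\mathbbm P}$. The plan is to obtain each of these from general, semigroup-intrinsic properties of the cone of excessive functions of a right-continuous sub-Markov semigroup, so that the only ``external'' input needed is $(B_0)$.

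For $(B_1)$, if $v_n\uparrow v$ with $v_n\in\E_{\mathbbm P}$, then for each $t>0$ monotone convergence gives $P_tv=\sup_n P_tv_n\le\sup_n v_n=v$, and $\sup_{t>0}P_tv\ge\sup_{t>0}P_tv_n\to v_n$ for every $n$, whence $\sup_{t>0}P_tv=v$; so $v\in\E_{\mathbbm P}$. For $(B_3)$, the Riesz decomposition property, I would invoke the classical specific-order decomposition for excessive functions: given $u\le v'+v''$ in $\E_{\mathbbm P}$, set $u':=\inf$ over the natural ``reduced/réduite'' construction of excessive functions dominated by both $u$ and $v'$ (i.e.\ the largest excessive minorant of $u\wedge v'$ appropriately regularized), and $u'':=u\ominus u'$ in the specific order; one checks $u'+u''=u$, $u'\le v'$, $u''\le v''$. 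This is standard potential theory of excessive functions and uses only that $\E_{\mathbbm P}$ is closed under increasing limits (just proved) and under the réduite operation. For $(B_2)$, given $\V\subset\E_{\mathbbm P}$, the réduite / balayage machinery again produces $\widehat{\inf\V}^f$ as the largest finely lower semicontinuous excessive minorant of $\inf\V$; one must check it lies in $\E_{\mathbbm P}$, which follows from the fact that a decreasing net of excessive functions has an excessive ``regularization'' in the fine topology — a hallmark of the Hunt/balayage theory.

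The real obstacle, and the place where (C), (S), (T) genuinely enter, is this: all of the above manipulations (réduites, fine topology, regularization of infima) require that $\E_{\mathbbm P}$ be rich enough to be honestly a potential-theoretic object — in particular that it separates points in the linear sense, that excessive functions are approximable from below by continuous ones, and that there exist strictly positive potentials tending to $0$ at infinity so that the cone is ``transient'' and the fine topology is well-behaved. Concretely, $(B_0)$ is exactly what guarantees that the abstract excessive-function theory of $\mathbbm P$ specializes to the balayage-space axioms rather than to some degenerate situation. So the plan is: (1) dispatch $(B_1)$ by hand; (2) cite/derive the réduite calculus for $\E_{\mathbbm P}$ under hypothesis $(B_0)$; (3) deduce $(B_2)$ and $(B_3)$ from it. The hard part is not any single computation but assembling the standard theory in the right order and making sure every invocation of fine-topology regularization is licensed by (C)+(S)+(T); I would lean on the reference \cite[Corollary 2.3.8]{H-course} (and the surrounding development in \cite{BH}) for the technical core rather than reproving it.
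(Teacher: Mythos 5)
The paper does not actually prove this theorem; it states it and refers to \cite[Corollary 2.3.8]{H-course}, which rests on \cite{BH} — exactly the source you propose to lean on for the technical core. So at that level your plan and the paper agree, and your direct verification of $(B_1)$ (monotone convergence together with $P_t v_n\le v_n$ and $\sup_{t>0}P_t v_n=v_n$) is correct as written.

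One concrete flaw, though: the construction you sketch for $(B_3)$ does not work. If $u'$ is taken to be the largest excessive minorant of $u\wedge v'$, there is no reason that $u-u'$ is excessive, so the specific difference $u\ominus u'$ you invoke may not even be defined; nor is there any reason that $u'\ge(u-v'')^+$, which is what you would need in order to conclude $u-u'\le v''$. Indeed $(u-v'')^+$ is generally \emph{not} excessive, so the largest excessive minorant of $u\wedge v'$ can fall strictly below it, and your proposed decomposition fails at both required inequalities. The genuine Riesz decomposition for $\E_{\mathbbm P}$ (a Mokobodzki-type theorem) is more delicate and does not reduce to a single réduite of a pointwise infimum; since you defer to the reference anyway, it would be cleaner simply to cite that result rather than offer a construction that does not go through. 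It is also worth making the division of labor explicit rather than lumping everything under ``the réduite calculus is licensed by $(B_0)$'': the closure under increasing limits $(B_1)$ and the decomposition property $(B_3)$ are essentially internal to the excessive-function theory of a right-continuous sub-Markov semigroup, whereas (C), (S), (T) are what ensure the fine topology is well-behaved, rule out degenerate cones (via (T)), and make the infimum-regularization in $(B_2)$ land back in $\E_{\mathbbm P}$. Stating where each hypothesis actually bites would turn your plan into something closer to a proof outline one could follow.
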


\begin{remarks}\label{resolvent}
  {\rm
 1.  Let $\mathbbm V=(V_\lambda)_{\lambda>0}$ be the corresponding resolvent, that is,
$V_\lambda:=\int_0^\infty e^{-\lambda t} P_t\,dt$, $\lambda>0$.
For every $u\in \E_{\mathbbm P}$, $\lambda V_\lambda u\uparrow u$ as $\lambda\uparrow \infty$.
So (C) holds if the kernels
$V_\lambda$ (or even the kernels $P_t$) are \emph{strong Feller},  that is,  map $\B_b(X)$ into~$\C_b(X)$.
Further, (S) holds if the \emph{potential kernel $V_0:=\lim_{\lambda\to \infty} V_\lambda=\int_0^\infty P_t\,dt$
  of~$\mathbbm{P}$} is \emph{proper}, that is, there exists $g\in\B(X)$ with  $g>0$ and $V_0g<\infty$
(see \cite[Lemma 2.3.5]{H-course}).

2.  Let $\mathbbm T:=(T_t)_{t>0}$ be the semigroup 
  of \emph{uniform translation to the left on $\real$}, that is, $T_t(x,\cdot):=\delta_{x-t}$.
  It is right continuous and has a strong Feller resolvent.
  Moreover, $\E_{\mathbbm T}$ is the set of all left-continuous increasing functions in $\B^+(\real)$,
  the functions $1,\exp$ linearly separate points in $\real$ and $(1\wedge \exp)/(1+\exp)$ vanishes
  at infinity. So $(\real, E_{\mathbbm T})$ is a balayage space.

3. As trivial as $\mathbbm T$ is, its product $\mathbbm P\otimes
  \mathbbm T$ with a   sub-Markov semigroup $\mathbbm P$ is of interest. This is because 
 $(X\times \real,\E_{\mathbbm   P\otimes\mathbbm T})$ is a balayage space provided
$(X,\E_{\mathbbm P})$ is a   balayage space and  $\mathbbm P$ is \emph{strong Feller},
 that is, the kernels $P_t$ are strong Feller (if $\mathbbm P$ is the
 Brownian semigroup on $X=\reald$, $d\ge 3$, then $\E_{\mathbbm
   P\otimes\mathbbm T}$ is the set of positive hyperharmonic functions
 in our standard example (iii) given by the heat equation). For details see Section~\ref{s.psms}.
 The use of such products may even help getting a Hunt 
 process associated to $\mathbbm P$ on $X$ if $(X,\E_{\mathbbm P})$ is
 not a balayage space and only the resolvent of
 $\mathbb P$ is strong Feller,  see Section \ref{s.cHp}.
 
 4. A sufficient condition for (T) for  $\mathbbm P\otimes
  \mathbbm T$ is given in Remark~\ref{lambda-sup} below.
}
  \end{remarks}

  \subsection{Potentials}

Ine the following, let $(X,\W)$ always be a balayage space and
\begin{equation}\label{def-potential}
  \px:=\{p\in\splus\colon \exists\  w\in \splus, \, w>0,
  \,  \hbox{with } p/w\in\C_0(X)\}.   
\end{equation} 
We recall some basic notions and facts, 
often referring 
to the paper \cite{H-equi-compact}, 
because we also need its results on compactness of potential kernels. In particular, see \eqref{char-px} and \eqref{char-pxc} below for characterizations of $\px$.

Clearly, $\px$ is a convex cone and $p \wedge w \in\px$ if   
$p\in \px$,  $w\in\splus$. Moreover, 
the following holds (see \cite[(1.1)]{H-equi-compact} and \cite[Proposition 1.2.1]{H-course}):
For every $w\in\W$, there exists a sequence $(p_n)$ in $\px$ such that $p_n\uparrow w$.
Furthermore, if $(p_n)$ is a sequence in $\px$ such that $s:=\sum_{n\in\nat} p_n\in\C(X)$, then $s\in\px$.
Also, for every $p\in\px$, there exists $q\in\px$, $q>0$, such that $p/q\in\C_0(X)$. 

For every numerical function $f$ on $X$, we have a \emph{reduced function}
\begin{equation}\label{def-r-f}
  R_f:=\inf\{w\in\W\colon w\ge f\},
\end{equation}
which, by $(B_2)$, is contained in $\W$ if $f$ is finely lower semicontinuous.
 For $A\subset X$ and $u\in\W$, the \emph{reduction of $u$ on $A$} is defined by
\begin{equation}\label{d.red} 
  R_u^A:=R_{1_A u}=\inf\{v\in\W\colon v\ge u\mbox{ on }A\},
\end{equation}
and we have $R_u^A\in\W$ if $A$ is finely open. By
  \cite[VI.1.8]{BH},
  \begin{equation}\label{fnAu}
    R_u^A= \sup\nolimits_{n\in\nat} R_{f_n}
  \end{equation}
  for every sequence $(f_n)$ of numerical functions on $X$ with
  $f_n\uparrow 1_Au$ as $n\to\infty$.

  \begin{remark}\label{RAopen}
    {\rm
      If $u\in\W$ is real continuous
    on some  neighborhood  of $A$, then 
\begin{equation}\label{RAo} 
  R_u^A=\inf \{R_u^W\colon \mbox{ $W$ open, $A\subset W$}\}.
\end{equation} 
Indeed,  let $s_0\in\S^+(X)$, $s_0>0$. If $\ve>0$ and $w\in\W$ with
$w\ge u$ on $A$, then $w+\ve s_0>u$ on an open neighborhood
$W$ of $A$, hence $R_u^W\le w+\ve s_0$ (cf.\ \cite[VI.1.2]{BH}).
}
\end{remark}

Let $p\in\px$. 
There exists a smallest closed set $C(p)$  in~$X$, called  \emph{carrier} or \emph{superharmonic support}  of $p$,
such that
\begin{equation}\label{dom}
p=  R_p^{C(p)}, 
\end{equation}
that is, if $w\in \W$ and $w\ge p$ on $C(p)$, then $w\ge p$ on $X$
(see \cite[II.6.3]{BH} or \hbox{\cite[Proposition 4.1.6]{H-course})}.

There is a~unique kernel $K_p$ on $X$, called \emph{associated potential kernel}
or \emph{potential kernel for $p$}, such that the following hold (see \cite[II.6.17]{BH}):
\begin{itemize}
\item
  $K_p1=p$.
\item
   For every $f\in \B_b^+(X)$,    $K_pf\in \px$ and $C(K_pf)\subset \supp(f)$.
 \end{itemize}

 The domination principle 
 in the next lemma 
 will be absolutely essential 
 later on.
 For the convenience of the reader we include a proof.
 
 \begin{lemma}\label{dom-princ}
   \begin{itemize}
     \item[\rm (i)]
       For every $f\in\B^+(X)$, there exists a sequence $(p_n)$ in $\px$ such that $C(p_n)$ is a
       compact in $\{f>0\}$,
       $p_{n+1}-p_n\in \px$   for every $n\in\nat$, and $K_pf=\limn p_n\in \W$.
       If $f,g\in\B^+(X)$, $f\le g$ and $K_pg\in \C(X)$, then $K_pf\in\px$.
  \item[\rm (ii)] \emph{Domination principle:} 
  If  $f,g\in \B^+(X)$ and  $w\in \W$, then 
 \begin{equation}\label{domi}
   K_pf\le K_pg+w \mbox{ on } \{f>0\} \quad\mbox{ implies that }\quad
   K_pf\le K_pg+w \mbox{ on } X. 
 \end{equation}
\end{itemize}
\end{lemma}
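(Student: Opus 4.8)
The plan is to derive both parts from the two characterising properties of the potential kernel $K_p$ recalled just above — $K_p1=p$, and $C(K_pf)\subset\supp f$ with $K_pf\in\px$ for $f\in\B_b^+(X)$ — combined with monotone convergence for the measures $K_p(x,\cdot)$, axiom~(B$_1$), and the carrier identity \eqref{dom}.

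\emph{Part (i).} First I would observe that $K_pf\in\W$ for every $f\in\B^+(X)$: since $f\wedge n\in\B_b^+(X)$, $f\wedge n\uparrow f$, and $K_p$ is a kernel, monotone convergence gives $K_p(f\wedge n)\uparrow K_pf$ pointwise, and as each $K_p(f\wedge n)\in\px\subset\W$, axiom~(B$_1$) yields $K_pf=\sup_nK_p(f\wedge n)\in\W$. To obtain the sequence $(p_n)$ with $C(p_n)$ a \emph{compact subset of} $\{f>0\}$ I would approximate that (merely Borel) set from within by compacts. Here one uses that $K_p1=p$ is real-valued, so every $K_p(x,\cdot)$ is a finite measure, hence Radon since $X$ has a countable base, and moreover that the family $\bigl(K_p(x,\cdot)\bigr)_{x\in X}$ is dominated by a single $\sigma$-finite measure $\mu$, namely the measure associated with $p$ in the sense of \cite{BH}; inner regularity then supplies an increasing sequence of compacts $L_n\subset\{f>0\}$ with $\mu\bigl(\{f>0\}\setminus\bigcup_nL_n\bigr)=0$, whence $K_p(x,\cdot)\bigl(\{f>0\}\setminus\bigcup_nL_n\bigr)=0$ for every $x$. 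Setting $p_n:=K_p\bigl(1_{L_n}(f\wedge n)\bigr)$, we get $p_n\in\px$ and $C(p_n)\subset\supp\bigl(1_{L_n}(f\wedge n)\bigr)\subset L_n$, a compact subset of $\{f>0\}$; the functions $1_{L_n}(f\wedge n)$ increase to $1_{\bigcup_nL_n}f$, which agrees with $f$ off a $K_p(x,\cdot)$-null set for every $x$, so $p_{n+1}-p_n=K_p\bigl(1_{L_{n+1}}(f\wedge(n+1))-1_{L_n}(f\wedge n)\bigr)\in\px$, the argument being a bounded positive Borel function, and $p_n\uparrow K_pf$. Finally, if $f\le g$ with $K_pg\in\C(X)$, then $K_pf\le K_pg$ is real-valued, $K_pf\in\W$ is lower semicontinuous, and $K_p(g-f)=K_pg-K_pf\in\W$ is lower semicontinuous as well, so $K_pf=K_pg-K_p(g-f)$ is upper semicontinuous, hence $K_pf\in\C(X)$; since $K_pf=\sum_{n\ge1}(p_n-p_{n-1})$ with $p_0:=0$ and all summands in $\px$, the quoted fact that a series of potentials in $\px$ with continuous sum lies in $\px$ gives $K_pf\in\px$.

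\emph{Part (ii).} By part (i) write $K_pf=\sup_np_n$ with $(p_n)$ increasing in $\px$ and each $C(p_n)$ a compact subset of $\{f>0\}$. On $C(p_n)$ we have $p_n\le K_pf\le K_pg+w$, the second inequality being the hypothesis, which holds on $\{f>0\}\supset C(p_n)$. The function $v:=K_pg+w$ lies in $\W$ — because $K_pg\in\W$ by part (i) and $\W$ is a convex cone containing $w$ — and satisfies $v\ge p_n$ on $C(p_n)$, hence $v\ge p_n$ on all of $X$ by \eqref{dom}. Letting $n\to\infty$ gives $K_pf=\sup_np_n\le K_pg+w$ on $X$, which is \eqref{domi}.

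\emph{Where the difficulty lies.} Everything is routine once one disposes, in part~(i), of the one genuinely delicate point: producing a \emph{single} increasing sequence of compacts inside the Borel set $\{f>0\}$ that exhausts it up to a $K_p(x,\cdot)$-null set \emph{simultaneously for all} $x$. This cannot be done by soft means (the union of those compacts is $\sigma$-compact, whereas $\{f>0\}$ need not be), and forces one to invoke a common $\sigma$-finite dominating measure for the kernels $K_p(x,\cdot)$ together with its inner regularity. The remaining steps use only monotone convergence, axiom~(B$_1$), and the carrier identity \eqref{dom}.
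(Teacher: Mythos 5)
Your part (ii) and your treatment of the continuity statement in (i) are correct and coincide with the paper's argument: write $K_pg$ as the sum of the two lower semicontinuous functions $K_pf$ and $K_p(g-f)$, conclude both are continuous, and use that a convergent series of potentials in $\px$ with continuous sum lies in $\px$; for (ii), bound $p_n$ by $K_pg+w$ on $C(p_n)$ and apply \eqref{dom}.

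The gap is in your construction of the sequence $(p_n)$. You assert that the family $\bigl(K_p(x,\cdot)\bigr)_{x\in X}$ is dominated by a single $\sigma$-finite measure, ``the measure associated with $p$ in the sense of \cite{BH},'' and then use inner regularity of that one measure. But no such canonical dominating measure is supplied by the axioms of a balayage space: potentials in $\px$ are not in general of the form $G\mu_p$ for a Green function, and the cited result \cite[II.6.17]{BH} constructs the kernel $K_p$, not a reference measure dominating all $K_p(x,\cdot)$ simultaneously. As stated, this step is unjustified, and it is precisely the point where all the work lies. Your concluding remark that the needed exhaustion ``cannot be done by soft means'' is in fact the opposite of what happens in the paper: the authors form the upward directed family $\F=\{(f\wedge a)1_L\colon a>0,\ L\ \text{compact in}\ \{f>0\}\}$, note that inner regularity of each \emph{individual} finite Radon measure $K_p(x,\cdot)$ already gives $\sup_{\vp\in\F}K_p\vp=K_pf$ pointwise, and then invoke Choquet's topological lemma (a soft, second-countability argument applied to the directed family of lower semicontinuous functions $K_p\vp$) to extract a countable increasing subfamily with the same supremum. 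That route avoids any common dominating measure altogether. To repair your proof you should either replace the dominating-measure step by the Choquet extraction, or else supply a proof (or a precise citation) that such a $\sigma$-finite dominating measure exists for an arbitrary $p\in\px$ in a general balayage space.
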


 \begin{proof}  (i) Let $f\in\B^+(X)$ and $\F=\{(f\wedge a)1_L\colon  a>0, L\mbox{ compact in }\{f>0\} \}$.
  For every $\vp \in \F$, $K_p\vp\in\px$ and $C(K_p\vp)\subset \supp(\vp)\subset \{f>0\}$.
  Since $K_p$ is a~kernel, we have   $ \sup \{K_p\vp\colon \vp\in \F\}=K_pf$.
  Hence, by a~topological lemma of Choquet, there exist a sequence $(a_n)$ in~$(0,\infty) $ and
  a sequence $(L_n)$ of compacts in~$\{f>0\}$ such that $\sup_{n\in\nat} K_p((f\wedge a_n) 1_{L_n} )=K_pf$.
  Of course, we may assume without loss of generality that both sequence are increasing. Then the potentials
  $p_n:= K_p((f\wedge a_n) 1_{L_n} )$, $n\in\nat$, have the desired properties.  Suppose now that $g\in\B^+(X)$
    such that  $f\le g$ and $K_pg\in\C(X)$. There exists a function $f'\in\B^+(X)$ with $f+f'=g$. Then the continuous function $K_pg$ is the sum of the two lower semicontinuous functions~$K_pf$ and~$K_pf'$. So these functions are continuous as well. Thus $K_pf$,
    being a sum of functions in $\px$, is contained in $\px$, see the remarks after \eqref{def-potential}.

(ii) Immediate consequence of (i), (\ref{dom}) and \eqref{d.red}.
 \end{proof}

 \begin{remarks} \label{green+transform}
   { \rm  
(1) Of course, if $1\in \W$, then the domination principle implies the \emph{complete maximum principle}, see \cite[II.7]{BH}. 

(2)  If there is a Green function $G $ for $(X,\W)$ and a measure $\mu\ge 0$
on~$X$ such that
    $
      p=G \mu:=\int G (\cdot,y)\,d\mu(y)
    $
    {\rm(}see {\rm \cite{HN-representation}} for such a  representation{\rm)}, then
    $
    K_pf=G (f\mu)$
    for every $f\in\B^+(X)$.

   (3) Suppose that $p\in\px$ and that $(X,\widetilde \W)$ (see Remark
      \ref{scaling}) is obtained by an $s$-transformation       of       $(X,\W)$, that is,
     $\widetilde \W=(1/s) \W$ for some strictly positive      $s\in\S^+(X)$
      (where we may assume that $p/s\in \C_0(X)$, see
      (\ref{def-potential})).

    Then clearly   $ (1/s) \px$  is the cone of continuous real
      potentials for       $\widetilde \W$  and the corresponding potential kernel for
      $\wilde p:=p/s$ is given       by
      \begin{equation*}
        {\wilde K}_{\wilde p}f=(1/s)K_pf, \qquad f\in\B^+(X).
        \end{equation*} 
      }
    \end{remarks}
    
 We also recall that for every open set~$U$ in~$X$
there is  the~\emph{harmonic kernel}~$H_U$ determined~by
\begin{equation}\label{HV}
  H_Up=R_p^{X\setminus U}= \inf\{ w\in \W\colon w\ge p\mbox{ on } X\setminus U\}
\end{equation}
for every $p\in \px$ {\rm(}see \cite[p.\,98 and II.5.4]{BH} or
\cite[Section 4.2]{H-course}{\rm)}
and,  by (\ref{fnAu}), satisfies $H_Uw=R_w^{X\setminus   U}$ for every $w\in \W$
  (choose $p_n\in\px$, $p_n\uparrow w$).
\begin{proposition}\label{VssU}
  If $V$ is an open subset of $U$,  then 
\begin{equation*} 
  H_VH_U=H_U,  \und  H_U(x,\cdot) \ge H_V(x,\cdot)|_\uc, \mbox{ for
    every $x\in V$}. 
\end{equation*} 
\end{proposition}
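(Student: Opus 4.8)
\textbf{Proof plan for Proposition \ref{VssU}.}

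The plan is to verify both assertions by testing against an arbitrary $p\in\px$ and then extending to all of $\W$ by the increasing-limit argument indicated after \eqref{HV}. For the first identity, I would start from the defining relation \eqref{HV}, namely $H_Up=R_p^{X\setminus U}$ and $H_Vq=R_q^{X\setminus V}$ for $q\in\px$. Since $V\subset U$ we have $X\setminus U\subset X\setminus V$, so a function $w\in\W$ with $w\ge H_Up$ on $X\setminus V$ is in particular $\ge H_Up$ on $X\setminus U$; but $H_Up=R_p^{X\setminus U}$ is the \emph{smallest} such $w$ and agrees with $p$ on $X\setminus U$, so comparing the two infima gives $H_V(H_Up)=R_{H_Up}^{X\setminus V}$. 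To evaluate this I would use that $H_Up$ is harmonic on $U$ (so its reduction on the smaller complement $X\setminus V$ leaves it unchanged on the part of $U$ outside $V$) together with $H_Up=p=R_p^{X\setminus U}$ on $X\setminus U\subset X\setminus V$. The clean way is: $R_{H_Up}^{X\setminus V}\le H_Up$ trivially (it is a reduction of $H_Up$), and conversely $H_Up=R_p^{X\setminus U}\le R_{R_p^{X\setminus V}}^{X\setminus U}= R_p^{X\setminus U}$ wait --- more directly, $R_p^{X\setminus V}\ge R_p^{X\setminus U}=H_Up$ since $X\setminus U\subset X\setminus V$, and $R_{H_Up}^{X\setminus V}\le R_{R_p^{X\setminus V}}^{X\setminus V}=R_p^{X\setminus V}$; combined with $R_{H_Up}^{X\setminus V}\ge H_Up$ on $X\setminus V\supset X\setminus U$ and the minimality defining $H_Up$, one gets $R_{H_Up}^{X\setminus V}\ge H_Up$ everywhere, hence equality $H_VH_Up=H_Up$. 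Then pass to $w\in\W$ via $p_n\uparrow w$, $p_n\in\px$, using that $H_U$ and $H_V$ commute with increasing suprema.

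For the second assertion, fix $x\in V$ and $p\in\px$. I want $H_Up(x)\ge \int_\uc p\,dH_V(x,\cdot)$, i.e.\ $H_Up(x)\ge H_V(1_\uc p)(x)$. Now $H_Up\in\W$ and $H_Up=p\ge 1_\uc p$ on $X\setminus U=\uc$; more to the point, on $X\setminus V$ we have $H_Up\ge 1_\uc p$ (on $\uc$ equality, on $U\setminus V$ trivially since $1_\uc p=0$ there). Therefore, by the minimality in \eqref{HV} applied to $V$ and the function $1_\uc p$, $H_V(1_\uc p)=R_{1_\uc p}^{X\setminus V}\le H_Up$, and evaluating at $x$ gives the claimed inequality of measures once $p$ ranges over enough functions in $\px$ to separate measures. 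Since $\px$ contains, for every $f\in\B_b^+(X)$, the potential $K_qf$ for a fixed reference potential $q$ with $C(K_qf)\subset\supp f$, and such functions determine a Radon measure on $\uc$, the pointwise inequality $\int f\,dH_U(x,\cdot)\big|_\uc\ge \int f\,dH_V(x,\cdot)\big|_\uc$ for all such $f$ yields $H_U(x,\cdot)\ge H_V(x,\cdot)$ as measures on $\uc$. (One should be slightly careful that $H_Up$ is read off $p$ only on $\uc$ via $H_Up=K_p\text{-type}$ potentials, but the monotonicity $H_V(1_\uc p)\le H_Up$ holds for \emph{every} $p\in\px$, which is all that is needed.)

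The routine part is the bookkeeping with reductions and the extension from $\px$ to $\W$; I expect the only place needing care --- the ``main obstacle'' --- is making the second statement a statement about \emph{measures restricted to $\uc$} rather than about functions on all of $X$: one must note that $H_V(x,\cdot)$ is supported by $X\setminus V$ and may charge $U\setminus V$, so the inequality $H_Up\ge H_V(1_\uc p)$ compares $H_U(x,\cdot)$ (supported in $\uc$) with the \emph{$\uc$-part} of $H_V(x,\cdot)$ only; writing $H_Vp = H_V(1_\uc p)+H_V(1_{U\setminus V}p)$ and discarding the second (nonnegative) term is exactly the source of the restriction $|_\uc$ and of the inequality (rather than equality). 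With that observed, both lines of the proposition follow from \eqref{HV}, the inclusion $X\setminus U\subset X\setminus V$, and harmonicity of $H_Up$ on $U$.

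\beweisende
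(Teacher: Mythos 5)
Your argument for the identity $H_VH_U=H_U$ is essentially sound: you take $p\in\px$, write $H_VH_Up=R_{H_Up}^{\vc}$, get ``$\le H_Up$'' because $H_Up\in\W$ is itself a competitor in the infimum, and get ``$\ge H_Up$'' because any $w\in\W$ with $w\ge H_Up$ on $\vc\supset\uc$ satisfies $w\ge p$ on $\uc$ (as $H_Up=p$ there) and hence $w\ge R_p^{\uc}=H_Up$ everywhere; then extend to $\W$ via \eqref{fnAu}. The paper instead obtains the ``$\le$'' direction via Remark~\ref{RAopen}, writing $R_{R_p^\uc}^\vc\le R_{R_p^W}^\vc\le R_p^W$ for all open $W\supset\uc$ and letting $W\downarrow\uc$. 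Both work; your route is a bit more direct, provided one is willing to take $H_Up\in\W$ as known.

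The second part, however, contains a genuine error. You assert $H_V(1_\uc p)=R_{1_\uc p}^{\vc}$, invoking ``the minimality in \eqref{HV}''. But \eqref{HV} identifies $H_Vg$ with $R_g^{\vc}$ only for $g\in\px$ (and, by \eqref{fnAu}, for $g\in\W$); it does not do so for arbitrary $g\in\B^+(X)$, and $1_\uc p$ is not superharmonic. The identity is in fact false: take Brownian motion in $\real^3$, $U=B(0,2)$, $V=B(0,1)$. Then $H_V(x,\cdot)$ lives on $\partial V\subset U$, so $H_V(1_\uc p)\equiv 0$ on $V$, whereas $R_{1_\uc p}^{\vc}=R_p^{\uc}=H_Up$, which is generally non-zero. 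So your computation only proves $R_{1_\uc p}^{\vc}\le H_Up$, which is not the desired estimate. The remark at the end (decomposing $H_Vp=H_V(1_\uc p)+H_V(1_{U\setminus V}p)$ and ``discarding'' a term) compares $H_V(1_\uc p)$ with $H_Vp$, not with the smaller function $H_Up$, so it does not close the gap either.

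The repair is short and is precisely what the paper does: since $1_\uc p\le H_Up$ pointwise (equality on $\uc$, and $H_Up\ge 0$ off $\uc$), monotonicity of the kernel $H_V$ together with the already-proved first identity gives $H_V(1_\uc p)\le H_V(H_Up)=H_Up$. Even better, the paper tests directly against all $f\in\B^+(X)$ rather than against $p\in\px$: for $x\in V$,
\begin{equation*}
H_Uf(x)=H_VH_Uf(x)=\int H_Uf\,dH_V(x,\cdot)\ge\int_\uc H_Uf\,dH_V(x,\cdot)=\int_\uc f\,dH_V(x,\cdot),
\end{equation*}
using $H_Uf=f$ on $\uc$ (because $H_U(y,\cdot)=\delta_y$ for $y\in\uc$). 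This immediately yields the measure inequality $H_U(x,\cdot)\ge H_V(x,\cdot)|_\uc$ and avoids the digression about separating measures with potentials of the form $K_qf$.
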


\begin{proof} 
The equality  is easily established using  (\ref{RAo}) and the trivial relation
\begin{equation*} 
  R_p^\uc=R_{  R_p^\uc}^\vc
  \le \inf \{R_{R_p^W}^\vc\colon  \uc\subset W\mbox{    open}  \}
  \le \inf\{R_p^W\colon  \uc\subset W\mbox{    open} \}  
\end{equation*}
(cf.\ \cite[VI.9.1]{BH}). Further, for every $f\in\B^+(X)$,
$H_VH_Uf(x) =\int H_Uf(y) \,H_V(x,dy) \ge \int_\uc  f(y)\,  H_V(x,dy $, 
since $H_Uf=f$ on $\uc$.
\end{proof}


As in our standard examples, a sequence $(x_n)$ in $U$ which converges to a
point $z\in \partial U$ is called \emph{regular} if $\limn H_Uf(x_n)=f(z)$
for every $f\in \C(X)$ with compact support.
The open set~$U$ is \emph{regular} if every such sequence is regular.

\begin{remark}\label{reg-property}
{\rm   Let $U$ be an open set in $X$ and let $(x_n)$ be a  regular sequence
  in $U$ converging to $z\in \partial U$. Then $\lim\nolimits_{n\to
    \infty} H_Uf(x_n)=f(z)$
  for every $f\in\B^+(X)$ with $\lim_{x\to z} f(x)=f(z)$ and
  $f\le s$ for some $s\in\S^+(X)$.

Indeed, choosing $f'\in\C^+(X)$ with compact support with $f'\le f$ and $f'(z)=f(z)$
we get $ f(z) = f'(z)=\lim\nolimits_{n\to\infty} H_Uf'(x_n)\le
\liminf_{n\to\infty}  H_Uf(x_n)$ and
 \begin{multline*}
    (s-f)(z) \le  \liminf\nolimits_{n\to\infty}H_U(s-f)(x_n)\\\le
 \liminf\nolimits_{n\to\infty}   (s(x_n)-H_Uf(x_n))=
 s(z)-\limsup\nolimits_{n\to\infty}H_Uf(x_n).
\end{multline*}
}
\end{remark}

The next result collects elements of \cite[II.8.6, proof of  IV.8.1, VI.3.14, VI.8.2]{BH}. We recall that 
$p\in \px$ is \emph{strict} if and only if  $K_p1_W\ne 0$ for every finely
open Borel set $W\ne \emptyset$ (see \cite[I.1.5]{BH} for the existence and \cite[VI.8.2]{BH} for this characterization).

 \begin{theorem}\label{Hunt-converse}
Let $1\in \W$ and $p\in \mathcal P_b(X)$ be strict. Then
  there exists a  Hunt process 
  $\mathfrak X$ on $X$ such that its transition semigroup $\mathbbm P=(P_t)_{t>0}$ satisfies
  \begin{equation*}
    \E_{\mathbbm P}=\W \und \int_0^\infty P_t\,dt =K_p.
  \end{equation*}
The operator $K_p$ and the resolvent kernels are strong Feller.
  If $\tau_V$ is the \emph{exit time} of an open set $V\subset X$,  that is
$\tau_V:=\inf\{t\ge 0\colon X_t\notin V\}$, then
\begin{equation}\label{HVtau}
H_Vf(x)=\mathbbm E^x(f\circ X_{\tau_V}), \qquad f\in \B^+(X),\,x\in X.
\end{equation}  
\end{theorem}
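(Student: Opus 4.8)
The plan is to assemble the statement from the quoted parts of \cite{BH}, the only real task being to match the analytic and probabilistic normalizations so that all four assertions hold for one and the same process. Since $1\in\W$, the balayage space $(X,\W)$ is associated (in the sense of Theorem~\ref{char-bal}) with a sub-Markovian, right continuous resolvent; and because $p\in\mathcal P_b(X)\subset\px\subset\splus$ is a \emph{strict} (hence bounded and continuous) potential, one can choose this resolvent $\mathbbm V=(V_\lambda)_{\lambda>0}$ so that its potential kernel is precisely $V_0=\lim_{\lambda\to\infty}V_\lambda=K_p$, with $\E_{\mathbbm P}=\E_{\mathbbm V}=\W$ --- this is contained in \cite[II.8.6]{BH} and the proof of \cite[IV.8.1]{BH}, strictness ($K_p1_W\ne0$ for every nonempty finely open Borel $W$) being exactly what prevents $\E_{\mathbbm V}$ from being strictly larger than $\W$. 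First I would record this together with the \emph{strong Feller} property of the kernels: given $f\in\bbx$, apply Lemma~\ref{dom-princ}(i) with $g:=\|f^+\|_\infty\,1$ and with $g:=\|f^-\|_\infty\,1$ (legitimate since $1\in\W$ and $K_pg=\|f^\pm\|_\infty\,p\in\C(X)$) to get $K_pf^\pm\in\px\subset\C(X)$, bounded by $\|f^\pm\|_\infty\,p$; hence $K_pf\in\C_b(X)$, i.e.\ $K_p=V_0$ is strong Feller. For $\lambda>0$ the resolvent equation $V_\lambda f=V_0\bigl(f-\lambda V_\lambda f\bigr)$ exhibits $V_\lambda f$ as $V_0$ applied to the bounded function $g:=f-\lambda V_\lambda f$ (using $\lambda V_\lambda1\le1$), so the strong Feller property of $V_0$, applied to $g^+$ and $g^-$, gives that every $V_\lambda$ is strong Feller.

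Next I would invoke the existence of a Hunt process: a sub-Markovian, right continuous, strong Feller resolvent on a locally compact space with countable base whose cone of excessive functions satisfies (C), (S), (T) admits a realization by a Hunt process $\mathfrak X$; this is (the proof of) \cite[IV.8.1]{BH}, a Ray--Knight type path regularization. Then $\mathfrak X$ has a transition semigroup $\mathbbm P=(P_t)_{t>0}$ whose resolvent is $\mathbbm V$, so that $\E_{\mathbbm P}=\E_{\mathbbm V}=\W$ and $\int_0^\infty P_t\,dt=V_0=K_p$, while the strong Feller property of $K_p$ and of the resolvent kernels has already been checked.

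It remains to prove \eqref{HVtau}. The idea is to identify the analytically defined harmonic kernel $H_V=R^{X\setminus V}$ of \eqref{HV} with the hitting operator of the finely closed set $X\setminus V$ for $\mathfrak X$: for $p\in\px$ one has the r\'eduite--hitting-probability identity $R_p^{X\setminus V}(x)=\mathbbm E^x\bigl(p(X_{\tau_V})\bigr)$, where $\tau_V$ is the first exit time of $V$, equivalently the hitting time of $X\setminus V$; this is \cite[VI.3.14]{BH}, strictness entering again via \cite[VI.8.2]{BH} to make hitting of finely closed sets behave correctly. One then passes from $\px$ to $\W$ through $H_Vw=\sup_n R_{p_n}^{X\setminus V}$ with $p_n\uparrow w$ in $\px$ (cf.\ \eqref{fnAu}), and from $\W$ to all of $\B^+(X)$ because both sides of \eqref{HVtau} are kernels agreeing after monotone approximation by functions in $\W$. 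The two genuinely substantial ingredients are thus external --- the Hunt process construction \cite[IV.8.1]{BH} and the r\'eduite/hitting identification \cite[VI.3.14]{BH} --- and the step I expect to require the most care is precisely this last identification of the purely analytic object $H_V$ with the exit distribution of $\mathfrak X$, since it is here that the probabilistic meaning of balayage and the role of strictness are actually used; everything else amounts to bookkeeping to guarantee $V_0=K_p$, that $\mathbbm P$ is recovered from $\mathbbm V$, and that $\tau_V$ is the first exit from $V$.
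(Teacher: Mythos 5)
Your proposal is correct and follows essentially the same route as the paper: the paper proves nothing itself but records Theorem~\ref{Hunt-converse} as a compilation of \cite[II.8.6, proof of IV.8.1, VI.3.14, VI.8.2]{BH}, which are precisely the references you invoke and in the same roles (resolvent with $V_0=K_p$ and $\E_{\mathbbm V}=\W$, Ray--Knight type construction of the Hunt process, r\'eduite--hitting identity, and the strictness characterization). The one genuinely additional element in your write-up is the direct derivation of the strong Feller property of $K_p$ from Lemma~\ref{dom-princ}(i) using $1\in\W$ and $K_p1=p\in\C_b(X)$, and its propagation to the $V_\lambda$ via the resolvent equation; this is a clean self-contained argument for a claim the paper simply delegates to \cite{BH}.
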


Let ${}^\ast\H(U)$ denote the set of  functions $u\in \B(X)$  which are \emph{hyperharmonic on $U$},
that is, are lower semicontinuous on $U$ and satisfy
\begin{equation*}
  -\infty <H_Vu(x)\le u(x) \quad\mbox{ for all } x\in V\in \V(U).
\end{equation*}
A function $u\in {}^\ast\H(U)$ is called  \emph{superharmonic on $U$}    if
 $(H_Vs)|_V\in \C(V)$ for every $V\in \V(U)$.
We note that (see \cite[II.5.5 and  VI.2.6]{BH} or \cite[Propositions
4.1.7 and 4.2.9]{H-course})
\begin{equation}\label{HW}
  {}^\ast\H^+(X)=\W  
\end{equation}
and $\S^+(X)$ is the set of all continuous real superharmonic
functions on $X$. A~function $u$
is called   is  \emph{subharmonic on $U$}    if $-u$ is superharmonic on $U$.

Further, $\H(U):={}^\ast\H(U)\cap (-{}^\ast\H(U))$  is the set of functions
which are \emph{harmonic on~$U$},
that is, 
\begin{equation*}
  \H(U) =\{h\in\B(X)\colon h|_U\in \C(U), \ H_Vh=h\mbox{ for every }V\in \V(U)\}.
\end{equation*}
For every $p\in \px$, the set $C(p)$ is the smallest closed set $A$
in $X$ such that $p$ is harmonic on $X\setminus A$ (see \cite[III.6.12]{BH}). 
  By \cite[III.2.8 and III.1.2]{BH}, we have the following characterizations of $\px$:
    \begin{eqnarray}   
      \px& =&\{p\in \splus\colon \mbox{If $h\in \H^+(X)$ and $h\le p$, then $h=0$.}\}\label{char-px}\\
         & = &\{p\in\splus\colon \inf \{H_U p\colon
               U\in \V(X)\}=0\}.\label{char-pxc}
    \end{eqnarray}        
    In particular,
    \begin{equation}\label{direct-sum}
      (\px-\px)\cap \H(X)=\{0\}.
    \end{equation}
    Indeed, if $p,q\in\px$ and $p-q=h\in\H(X)$,
    then $-H_Uq\le H_Uh=h\le H_Up$
    for every  $U\in \V(X)$,  hence $h=0$, by (\ref{char-pxc}). Further, we have a \emph{Riesz decomposition}:
    \begin{equation}\label{Riesz}
      \splus=\H^+(X)\oplus \px.
    \end{equation}

   By definition, a superharmonic function $s\ge 0$ on $X$ is
   a~\emph{potential}  
     if
     \begin{equation*}
       \inf\{H_Us\colon U\in \V(X)\}=0.
     \end{equation*}
     So $\px$ is the set of all   continuous real potentials on $X$. 
Let $\pr(X)$ denote the set of all potentials on
$X$. 

The following lemma will be used in Sections \ref{sec:A} and \ref{sec:Gf}

\begin{lemma}\label{superharmonic-base} 
 Let $w\in \B^+(X)$  and let $U$ be an open set in $X$ such that $H_U
 w\le w$ and the sets $V\in \U(U)$ satisfing $H_V w\le w$ and $(H_V
 w)|_V\in \C(V)$ cover $U$. Then $H_Uw\in \H^+(U)$.
\end{lemma}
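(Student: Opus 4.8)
The plan is to verify the three conditions defining $\H^+(U)$: that $H_Uw\in\B^+(X)$ (clear, $H_U$ being a kernel), that $H_{V'}(H_Uw)=H_Uw$ for all $V'\in\V(U)$, and that $(H_Uw)|_U$ is real and continuous. The second condition is immediate from Proposition~\ref{VssU}, since $H_{V'}H_U=H_U$ for every open $V'\subseteq U$, so the whole content is the regularity of $H_Uw$ on $U$. Realness is quick: given $x\in U$, choose (by hypothesis) $V\in\U(U)$ with $x\in V$, $H_Vw\le w$ and $(H_Vw)|_V\in\C(V)$; then, by Proposition~\ref{VssU} and the hypothesis $H_Uw\le w$,
\[
  H_Uw=H_V(H_Uw)\le H_Vw<\infty\quad\text{on }V,
\]
so $H_Uw$ is real on the neighborhood $V$ of $x$, and $x\in U$ was arbitrary.

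Continuity being local, it suffices to show $(H_Uw)|_V\in\C(V)$ for one such $V$ around each point. The idea is to split $H_Vw|_V$ into two positive, lower semicontinuous summands whose sum is the continuous function $H_Vw|_V$. For $x\in V$ the measure $H_V(x,\cdot)$ is carried by $V^c$, so on $V$ one has $H_Vw=H_V(1_{V^c}w)$ and, by Proposition~\ref{VssU}, $H_Uw=H_V(H_Uw)=H_V(1_{V^c}H_Uw)$. Put $g:=1_{U\setminus V}(w-H_Uw)\in\B^+(X)$, which is well defined because $H_Uw<\infty$ on $U$ by the previous paragraph; since $H_Uw=w$ on $X\setminus U$ and $H_Uw\le w$ everywhere, we obtain the pointwise identity $1_{V^c}w=1_{V^c}H_Uw+g$ of positive Borel functions. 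Applying $H_V$ and restricting to $V$ gives
\[
  H_Vw=H_Uw+H_Vg\quad\text{on }V,
\]
all three terms being real on $V$ (the left side by hypothesis, hence also $H_Vg$). I would then invoke the standard fact (see \cite{BH}) that $H_Vf$ is lower semicontinuous on $V$ for every $f\in\B^+(X)$; taking $f=1_{V^c}H_Uw$ and $f=g$ shows that $H_Uw$ and $H_Vg$ are both lower semicontinuous on $V$. Since their sum $H_Vw$ is continuous there, each is at once lower and upper semicontinuous, hence continuous, on $V$. Thus $(H_Uw)|_V\in\C(V)$, and altogether $H_Uw\in\H^+(U)$.

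The routine ingredients are the identity $H_{V'}H_U=H_U$ and the $V^c$-bookkeeping; the one substantive input is the lower semicontinuity of $H_Vf|_V$ for positive Borel $f$, which is precisely what allows the hypothesis $(H_Vw)|_V\in\C(V)$ to upgrade ``lsc'' to ``continuous'' for the two summands. Note that the argument really uses only $H_Uw\le w$ and the continuity of $H_Vw$ on the covering sets; the remaining hypothesis $H_Vw\le w$ just records that $w$ is superharmonic on each such $V$. The main obstacle I anticipate is pinning down that lower semicontinuity statement cleanly within the balayage-space formalism -- i.e.\ identifying $H_Vf$ on $V$ with (the restriction of) a function in $\W$, equivalently showing the reductions $R_f^{V^c}$ coincide with their lower semicontinuous regularizations on $V$ -- which calls for a careful citation to \cite{BH}.
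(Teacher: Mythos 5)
Your proof is correct and follows essentially the same route as the paper's: fix a covering member $V$, decompose $H_Vw=H_Uw+H_Vf$ on $V$ with $f\in\B^+(X)$ satisfying $H_Uw+f=w$, and deduce continuity of each summand from the lower semicontinuity of $H_Vf|_V$ (for $f\in\B^+(X)$) combined with the continuity of the sum $(H_Vw)|_V$. The paper's version is more compressed -- it skips the explicit realness check and the $1_{V^c}$-bookkeeping you carry out via $g$ -- but the key input and the decomposition are identical.
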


\begin{proof} Let $V\in \U(U)$. Since $H_Uw\le w$, we may choose $f\in\B^+(X)$ such that
  $H_Uw+f=w$. Then $H_V w=H_VH_Uw+ H_Vf=H_U w+H_Vf$,
  where $H_Uw$ and $H_Vf$ are lower semicontinuous on $U$.
  So both are real continuous  on $U$ by our assumption on $w$.
Thus $(H_Uw)|_U\in \C(U)$, and hence $H_U w\in \H^+(U)$, by (\ref{VssU}).
  \end{proof}

    \subsection{Families of harmonic kernels}\label{char-harmonic-kernels}

    In this section we characterize balayage spaces by properties of
   their harmonic kernels for open sets from a base for the topology
   of $X$. Let us first note that, by  (\ref{HV}), (\ref{HW}),
   \cite[III.
   4.2.8]{BH} or \cite[Theorem~5.1.2 and
         Corollary 5.2.8]{H-course}), we know the following. 
      
       \begin{theorem}\label{W-to-HU}
         If $(X,\W)$ is a balayage space,   then the family $(H_V)_{V\in  \U(X)}$
  of harmonic kernels  has following properties. 
         \begin{itemize}
        \item [\rm ($H_0$)]
        $H_V1_V=0$ and $H_V(x,\cdot)=\delta_x$ for every $x\in \vc$. 
        \item [\rm ($H_1$)]
       For each $x\in X$, $\lim_{V\downarrow \{x\}} H_Vf(x)=f(x)$ for  $f\in \C_c(X)$
        or  $\liminf\limits_{y\to x} R_1^{\{x\}}(y)\ge 1$.
      \item [\rm ($H_2$)]
       If \  $\ov V\subset U\in \U(X)$,  then $H_VH_U=H_U$.
     \item [\rm ($H_3$)]
 For every $f\in \bbx$ with compact support,      $H_Vf$ is continuous on $V$.       
     \item [\rm ($H_4$)]
          For each $x\in V$, there exists $w\in {}^\ast\H^+(V)$ such
       that        $w(x)<\infty$ and,    for every purely
       irregular sequence $(x_n)$ in $V$, $\limn
       w(x_n)=\infty$.\footnote{ A  sequence $(x_n)$
        in $V$ is  \emph{purely irregular} if it converges to a point
        $z\in\partial V$ and does not contain any regular
        subsequence.} 
 \item [\rm ($H_5$)] The set
       ${}^\ast\H^+(X)$  is linearly separating and   there exists 
      $s_0\in {}^\ast\H^+(X)\cap \C(X)$, $s_0>0$, such that
the functions $H_Vs_0$ are continuous on $V$. 
\end{itemize}
 \end{theorem}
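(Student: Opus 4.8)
\emph{Approach.} The plan is to verify the six items $(H_0)$–$(H_5)$ one at a time, using only the identification $H_Vw=R_w^{X\setminus V}$ for $w\in\W$ (from (\ref{HV}) and (\ref{fnAu})), Proposition~\ref{VssU}, the equality ${}^\ast\H^+(X)=\W$ in (\ref{HW}), and the regularity theory of balayage spaces collected in \cite[Chapters III, IV]{BH} (equivalently \cite[Chapter~5]{H-course}); the proof thus amounts to assembling known facts. I would start with the ``soft'' items. For $(H_0)$: if $p\in\px$, $x\in V^c$, and $w\in\W$ with $w\ge p$ on $X\setminus V$, then $w(x)\ge p(x)$, whence $R_p^{X\setminus V}(x)=p(x)$; since $\px$ is linearly separating this gives $H_V(x,\cdot)=\delta_x$ on $V^c$, and since $R_p^{X\setminus V}$ is harmonic on $V$ the measure $H_V(x,\cdot)$ is carried by $V^c$ for every $x$, so $H_V1_V=0$. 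For $(H_2)$: this is immediate from the first assertion of Proposition~\ref{VssU}, since $\ov V\subset U$ implies $V\subset U$. For $(H_5)$: $\W={}^\ast\H^+(X)$ is linearly separating by (S), and for the pair $u,v\in\splus$ provided by transience (T) one takes $s_0:=v\in\splus\subset{}^\ast\H^+(X)\cap\C(X)$, which is strictly positive with $H_Vs_0=R_{s_0}^{X\setminus V}\le s_0<\infty$ harmonic, hence continuous, on $V$.

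\emph{The continuity and regularity items.} For $(H_3)$: for $p\in\px$ the reduction $H_Vp=R_p^{X\setminus V}$ is harmonic on $V$, hence in $\C(V)$; the extension to arbitrary $f\in\bbx$ with compact support is the strong-Feller-type continuity of $H_V$ inside $V$ (a standard consequence of (C) for $\W$ together with the relative compactness of $V$), which I would quote, reducing to $f\ge0$ by writing $f=f^+-f^-$. Property $(H_1)$ is the basic dichotomy for a point $x$: either $x$ is regular, i.e.\ $H_Vf(x)\to f(x)$ for $f\in\C_c(X)$ as $V\downarrow\{x\}$, or $x$ is finely isolated, which is precisely the condition $\liminf_{y\to x}R_1^{\{x\}}(y)\ge1$ (if $\{x\}$ is finely open then $R_1^{\{x\}}\in\W$, so it is lower semicontinuous with value $1$ at $x$); that this dichotomy is exhaustive is part of the regular-point theory, which I would cite.

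\emph{The main obstacle.} I expect $(H_4)$ to be the real work. For a fixed $x\in V$ with $V$ relatively compact one must exhibit a \emph{single} $w\in{}^\ast\H^+(V)$ that is finite at $x$ yet diverges along \emph{every} purely irregular sequence in $V$. The plan is to use that the irregular part $I$ of $\partial V$ is semipolar, to take a sequence of open neighborhoods $W_n$ of $I$ (or of its totally thin pieces) shrinking towards $I$, and to set $w:=\sum_n c_nR_{s_0}^{W_n}$ on $V$, with constants $c_n>0$ chosen so that $\sum_n c_nR_{s_0}^{W_n}(x)<\infty$; the local behavior of the reductions $R_{s_0}^{W_n}$ near the irregular points then forces $w(x_n)\to\infty$ along purely irregular sequences, while the scaling keeps $w(x)$ finite. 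The delicate step is the bookkeeping separating \emph{purely} irregular sequences (no regular subsequence) from merely irregular ones, so that the divergence is pinned down exactly on the intended set; this is the content of \cite[III.4.2.8]{BH} (equivalently \cite[Theorem~5.1.2 and Corollary~5.2.8]{H-course}), on which I would rely.
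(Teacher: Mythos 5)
The paper itself does not prove Theorem~\ref{W-to-HU}; it simply quotes it from the references, prefacing the statement with the remark that it follows by \eqref{HV}, \eqref{HW}, and \cite[III.4.2.8]{BH} or \cite[Theorem~5.1.2 and Corollary~5.2.8]{H-course}. Your proposal takes essentially that same route — a citation-based verification — while going further than the paper in actually unpacking what each item requires: $(H_0)$, $(H_2)$, $(H_5)$ and the potential case of $(H_3)$ are spelled out from \eqref{HV}, Proposition~\ref{VssU}, \eqref{HW} and (S), (T), while the genuinely delicate items, the strong Feller extension in $(H_3)$, the regular/finely-isolated dichotomy in $(H_1)$, and the Evans-type bound in $(H_4)$, are, as in the paper, deferred to the cited results. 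The extra detail is sound and does not change the argument in substance. One small wrinkle worth flagging: in $(H_0)$ your inference that $H_V(x,\cdot)$ is carried by $V^c$ ``since $R_p^{X\setminus V}$ is harmonic on $V$'' is not by itself a valid deduction; harmonicity of $H_Vp$ on $V$ does not say where the representing measure sits. The correct justification is that the balayage measure $\ve_x^{X\setminus V}$ is supported by the base of $X\setminus V$, which for $V$ open is contained in $V^c$ (this is part of the construction of the harmonic kernels in \cite[II.5]{BH}, to which you are in any case appealing). With that caveat, your sketch matches the paper's intent, and your outline of $(H_4)$ is a reasonable anticipation of the stronger Proposition~\ref{evans} that follows the theorem in the text.
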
 

 In fact, property $(H_4)$ may be considerably improved. 

\begin{proposition}\label{evans}
 Let $(X,\W)$ be a balayage space and $U$ an open set in $X$.
  There exists a continuous real \emph{Evans function} $w$ on $U$,
  that is, $w\in \S^+(U)$ such that $\limn w(x_n)=\infty$ for every
  purely   irregular sequence $(x_n)$
  in $U$ converging to a boundary point $z\in \partial U$.
\end{proposition}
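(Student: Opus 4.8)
The plan is to build the Evans function on $U$ by exhausting $U$ with a sequence of relatively compact open sets and summing suitably normalized potentials whose carriers sit near the boundary, so that the sum blows up exactly along purely irregular sequences. First I would fix an increasing sequence $(U_n)$ in $\U(U)$ with $\ov{U_n}\subset U_{n+1}$ and $\bigcup_n U_n=U$. The key observation is that a sequence $(x_n)$ in $U$ converging to $z\in\partial U$ is regular precisely when $H_Uf(x_n)\to f(z)$ for all $f\in\C_c(X)$; by Remark~\ref{reg-property} and property $(H_4)$ of Theorem~\ref{W-to-HU}, near $z$ one can detect irregularity by a single hyperharmonic function $w_z$ with $w_z(z')<\infty$ at one given point $z'\in U$ but $w_z(x_n)\to\infty$ along every purely irregular sequence. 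The task is to upgrade this pointwise, $z$-dependent statement to one function $w\in\S^+(U)$ that is continuous real on $U$ and simultaneously works for all purely irregular sequences.

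The main step is a reduction/localization argument. For each $n$, consider the set $A_n:=U\setminus U_n$ (the ``collar near the boundary'' at level $n$) and form the reduced function ${}^U\! \hat R_{s_0}^{A_n}$ relative to $U$, where $s_0\in\S^+(X)\cap\C(X)$, $s_0>0$, is the function from $(H_5)$; alternatively one works with ${}^U\! R$ of a fixed strict potential on $U$. Because $A_n$ shrinks toward $\partial U$, these reductions are potentials on $U$ (they are dominated by $s_0$ and their largest harmonic minorant on $U$ vanishes), they are continuous real on the open set $U_n$ where they coincide with harmonic functions, and — crucially — along any purely irregular sequence $(x_n)$ in $U$ each of them stays bounded below by a fixed positive amount eventually. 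To get divergence I would instead normalize: pick $p_n\in\px$ (or ${}^U$-potentials) with $C(p_n)$ compact in $U\setminus U_{n}$, $p_n\le 1$, say, but arranged so that $p_n\ge 1-\tfrac1n$ (or $\ge c>0$) on $U\setminus U_{n+1}$, while $p_n\le 2^{-n}$ on $U_{n-1}$; the latter uses that the carrier of $p_n$ is far from $U_{n-1}$ together with the characterization $(\ref{char-pxc})$ of potentials. Then $w:=\sum_{n\ge 1} p_n$ is finite and continuous on each $U_{n}$ (only finitely many terms are non-negligible there, the tail being $\le\sum 2^{-k}$), hence $w\in\C(U)$, and $w$ is superharmonic on $U$ as an increasing limit of superharmonic functions, so $w\in\S^+(U)$. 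If $(x_n)$ is purely irregular converging to $z\in\partial U$, then for each $N$ we eventually have $x_n\in U\setminus U_{N+1}$, whence $w(x_n)\ge\sum_{k\le N} p_k(x_n)\ge N\cdot c'$ for a fixed $c'>0$ once $n$ is large, forcing $w(x_n)\to\infty$. The one point to be careful about is that $p_n\ge c>0$ on the collar $U\setminus U_{n+1}$ can only be guaranteed on the \emph{finely} interior part, or rather on an open subcollar — so I would choose $p_n$ via a reduction ${}^U\! R$ over an open neighborhood of $U\setminus U_{n+1}$ contained in $U\setminus \ov{U_n}$, using $(\ref{RAo})$-type arguments and the regularity/separation provided by $(H_5)$, to make $p_n\equiv(\text{that harmonic value})$ and in particular bounded below by a positive constant there.

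The part I expect to be the main obstacle is exactly establishing the \emph{uniform} lower bound $p_n\ge c>0$ on a fixed open neighborhood of the collar $U\setminus U_{n+1}$ while keeping $p_n$ small on $U_{n-1}$: this is where one needs that reductions of a fixed strictly positive superharmonic function over sets approaching $\partial U$ do not degenerate, which ultimately rests on $(H_4)$/$(H_5)$ (there is no ``thin'' obstruction because $s_0>0$ everywhere), combined with the semi-continuity needed to pass from the closed collar to an open one. Once those $p_n$ are in hand, finiteness, continuity, superharmonicity of $w$ and the divergence along purely irregular sequences are straightforward from the properties of potentials recalled above, in particular from $K_p$ being a kernel, from $(\ref{fnAu})$, and from Lemma~\ref{superharmonic-base}. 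I would conclude by noting $w\in\S^+(U)$ is the desired Evans function, which in particular sharpens $(H_4)$ to a single function valid on all of $U$.
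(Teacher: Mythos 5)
Your construction hits a genuine obstruction at exactly the spot you flagged as the main obstacle. You ask for $p_n\in\px$ (or continuous real potentials on $U$) with carrier $C(p_n)$ compact in $U\setminus U_n$ and, simultaneously, $p_n\ge c>0$ on the entire outer collar $U\setminus U_{n+1}$. This is impossible whenever $\partial U$ has any regular point. A potential $q$ on $U$ that is harmonic off a compact $K\subset U$ necessarily satisfies $\lim q(x_k)=0$ along every regular sequence $(x_k)\to z\in\partial U$ (this is exactly the behavior encoded in \eqref{char-pxc} and Remark~\ref{reg-property}: regularity forces $H_Vq(x_k)\to q(z)$, and potentials have $\inf_V H_V q=0$). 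In the classical case $U=B(0,1)\subset\real^3$ with the Laplacian, $p_n$ would be a Green potential with compactly supported charge, which tends to $0$ everywhere on the unit sphere, so $p_n\ge c$ on an outer collar cannot hold. Notice also that, had your $p_n$ existed, your argument would force $w(x_k)\to\infty$ along \emph{every} boundary sequence, regular or not, since any sequence leaving $U$ eventually escapes each $U_{N+1}$; this is another symptom that the construction cannot tell regular and irregular boundary points apart, whereas an Evans function must be allowed to stay bounded along regular sequences.

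The paper avoids this by not asking for a uniform lower bound on any collar. Instead it fixes a strict $p\in\px$ and sets $p_n:=R_{f_n}$ for continuous $f_n$ with $0\le f_n\le p$, $f_n=p$ in a neighborhood of $U^c$ and $f_n\downarrow 1_{U^c}p$. Then $p_n=p$ near $U^c$, so each summand $p_n-H_Up$ agrees with the \emph{fixed} potential $p-H_Up$ near the boundary; that function vanishes along regular sequences but has strictly positive $\liminf$ along every purely irregular one (this is where strictness of $p$ and property $(H_4)$ come in, via the cited Lemma~4.2.12 of \cite{H-course}). Since $p_n\downarrow H_Up$, one passes to a subsequence with $p_n-H_Up\le 2^{-n}$ on $V_n$ for an exhaustion $(V_n)$ of $U$, which gives convergence and continuity of $w=\sum_n(p_n-H_Up)$ on $U$. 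The summands are boundary-calibrated by agreeing with a single fixed function, rather than by a uniform lower bound on a shrinking collar; that difference is what makes the argument go through while yours does not.
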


\begin{proof} Let $p\in\px$ be strict and let $f_n\in\C(X)$,
  $n\in\nat$,   such that  $0\le f_n\le p$ on $X$, $f_n=p$ in
  a~neighborhood of $\uc$ and $f_n\downarrow 1_\uc p$.
  Then $p_n:=R_{f_n}\in\px$ and $p_n\downarrow H_Up$.
  Let $(V_n)$ be an exhaustion of $U$. Passing to a subsequence
  we may assume that   $   w_n := p_n -H_U p  \le 2^{-n}$ on $V_n$. Then
  \begin{equation*}
    w:=\sum\nolimits_{n=1}^\infty (p_n-H_Up) \in\S^+(U).
  \end{equation*}
It follows as  in the  proof of \cite[Lemma 4.2.12]{H-course}    that
$w$ is an Evans function for~$U$.
  \end{proof} 
    
  Moreover, we have the following converse to Theorem~\ref{W-to-HU}  (see \cite[III.6.11 and III.4.4]{BH} or
\cite[Theorem 5.3.11, Propositions 5.2.8 and  5.3.12]{H-course}).

  \begin{theorem}\label{HU-to-W}
    Let $(H_U)_{U\in \U_0}$  be a family of kernels on $X$, where
    $\U_0\subset \U(X)$ is  a base for the topology of~$X$. For   open
   $W$ in    $X$, let $\U_0(W):=\{U\in \U_0\colon \ov U\subset W\}$ and 
   \begin{equation*}
        {}^\ast\H_0^+(W):=\{w\in\B^+(X)\colon \mbox{$w$ is  l.s.c.\ on $W$,
          $H_U w\le w$    for  all  $U\in \U_0(W)$}\}.
          \end{equation*}     
Suppose that $(H_U)_{U\in\U_0}$ satisfies $(H_0)$ -- $(H_5)$ with
  $\U_0$ in place of $\U$ and ${}^\ast\H_0^+$ in~place of 
  ${}^\ast\H^+$.

  Then  $(X,{}^\ast\! \H_0^+(X))$ is a balayage
  space such that $H_U$ is the corresponding harmonic kernel   for every
  $U\in \U_0$.   The convex cones  ${}^\ast\! \H_0^+(W)$  
 do not change if we
  replace $\U_0$ by a smaller base for the topology of $X$.
  \end{theorem}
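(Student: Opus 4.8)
The plan is to set $\W:={}^\ast\H_0^+(X)$ and verify directly that $(X,\W)$ satisfies $(B_0)$--$(B_3)$, and then to identify, for each $U\in\U_0$, the harmonic kernel of $(X,\W)$ with the given $H_U$. Record first the easy points. Since $\ov U\subset X$ for every $U\in\U_0$, $\W$ is a convex cone of positive functions that are l.s.c.\ on all of $X$ with $H_Uw\le w$ for every $U\in\U_0$, and $(B_1)$ is immediate: an increasing limit of l.s.c.\ functions is l.s.c.\ and $H_U\bigl(\sup_n v_n\bigr)=\sup_n H_Uv_n\le\sup_n v_n$ by monotone convergence for kernels. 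Next, from the consistency relation $(H_2)$ together with $(H_0)$, $(H_1)$, $(H_3)$ one obtains the sheaf property of the hyperharmonic cones ${}^\ast\H_0^+$ and a consistent extension of $(H_U)_{U\in\U_0}$ to harmonic kernels $H_V$ for \emph{all} $V\in\U(X)$ (the Dirichlet solution $H_Vf$, $f\in\C_c(X)$, being built by the Perron method from ${}^\ast\H_0^+$, with $(H_3)$ furnishing interior continuity and $(H_2)$ harmonicity); consequently every $w\in{}^\ast\H_0^+(W)$ satisfies $H_Vw\le w$ for \emph{all} relatively compact open $V$ with $\ov V\subset W$, not only for $V\in\U_0$.

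The substance lies in three ingredients. (a) \emph{Minimum principle and regularity.} From the Evans-type functions furnished by $(H_4)$ and the pointwise minimum principle in $(H_1)$ one derives the boundary minimum principle: a function $w\ge0$ hyperharmonic on a relatively compact open $V$, bounded below, with $\liminf w(x_n)\ge0$ along every regular sequence in $V$ converging to $\partial V$, is $\ge0$ on $V$. In particular $H_Vf$, $f\in\C_c(X)$, is the \emph{unique} Dirichlet solution on $V$, which secures uniqueness throughout. (b) \emph{The axioms $(B_0)$.} Property (C) is obtained from the interior continuity statements $(H_3)$ and $(H_5)$ (producing enough continuous superharmonic minorants of a given $w\in\W$); property (S) is the linear separation in $(H_5)$; and property (T) follows by a gluing/exhaustion construction from $s_0$ and the kernels $H_V$, furnishing strictly positive $p,q\in\splus$ with $p/q\in\C_0(X)$. (c) \emph{Fine regularization and Riesz property.} As remarked in the text, $(B_2)$ --- that $\widehat{\inf \V}^f\in\W$ for $\V\subset\W$ --- reduces to a convergence property for increasing sequences of harmonic functions, which follows from $(H_3)$, $(H_5)$ and the minimum principle by a Dini/Harnack-type argument; $(B_3)$, the Riesz decomposition property, is then obtained from the sheaf structure and the resulting good behaviour of reduced functions $R_u^A$, which lie in $\W$ for finely open $A$.

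This makes $(X,\W)$ a balayage space. Since $H_Vw\le w$ for $w\in\W$ and, by (a), $H_Vf$ ($f\in\C_c(X)$) is the unique Dirichlet solution on $V\in\U_0$, the given $H_U$ coincides with the harmonic kernel of $(X,\W)$ for $U$, cf.\ \eqref{HV}. For the last assertion, let $\U_1\subset\U_0$ be a smaller base; it plainly satisfies $(H_0)$--$(H_5)$ as well (the hyperharmonic cones only enlarge), so by the part just proved $(X,{}^\ast\H_1^+(X))$ is a balayage space whose harmonic kernels extend $(H_U)_{U\in\U_1}$. A consistent family of harmonic kernels on $\U(X)$ is determined by its restriction to any base --- by uniqueness of Dirichlet solutions from the minimum principle --- so it coincides with the family of $(X,\W)$; and since hyperharmonicity may be tested along a fundamental system of relatively compact open neighborhoods of each point (cf.\ \cite[III.4.4]{BH}), ${}^\ast\H_0^+(W)={}^\ast\H_1^+(W)$ for every open $W$.

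I expect the main obstacle to be ingredients (b) and (c): extracting the \emph{global} structural properties (C), (T) and especially $(B_2)$ from the purely \emph{local} hypotheses $(H_0)$--$(H_5)$ imposed only on a base, i.e.\ bootstrapping from harmonic kernels on small sets to a coherent potential theory on all of $X$ --- precisely the content of the cited \cite[III.6.11 and III.4.4]{BH} and \cite[Theorem 5.3.11, Propositions 5.2.8 and 5.3.12]{H-course}.
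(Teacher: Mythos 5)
The paper does not actually prove Theorem~\ref{HU-to-W}: it simply cites \cite[III.6.11 and III.4.4]{BH} and \cite[Theorem 5.3.11, Propositions 5.2.8 and 5.3.12]{H-course}. So there is no ``paper's own proof'' to compare against; what you have written is a road map for a direct argument, and you should be aware that the entire technical content lies in the cited references.

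Your outline correctly identifies the shape of such a proof --- set $\W:={}^\ast\H_0^+(X)$, verify $(B_0)$--$(B_3)$, extend the harmonic kernels consistently from the base $\U_0$ to all of $\U(X)$ via Perron solutions controlled by Evans functions from $(H_4)$, then invoke uniqueness of Dirichlet solutions to match $H_U$ with the harmonic kernel of $(X,\W)$. You also rightly flag that (b) and (c) are where the work is. But as written, those two ingredients are not arguments, and two of your claims there are not merely incomplete but misleading. First, the claim that $(B_3)$ is ``obtained from the sheaf structure and the resulting good behaviour of reduced functions $R_u^A$, which lie in $\W$ for finely open $A$'' is circular: the fact that $R_u^A\in\W$ for finely open $A$ is a consequence of the balayage-space axioms (in particular of $(B_2)$), not something you have prior to them; and the Riesz decomposition $(B_3)$ is among the hardest axioms to verify from harmonic-kernel data --- it does not fall out of a sheaf property. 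Second, $(B_2)$ is not merely ``a convergence property for increasing sequences of harmonic functions'' settled by ``a Dini/Harnack-type argument'': the fine lower semicontinuous regularization of arbitrary infima must be shown to again lie in $\W$, and the proof in \cite{BH} goes through a substantial development of thinness, base operations, and the convergence axiom that cannot be compressed into $(H_3)$, $(H_5)$ and a minimum principle. The verification of $(T)$ by ``a gluing/exhaustion construction'' is likewise a placeholder, not an argument --- $(H_5)$ only gives one continuous superharmonic $s_0$ with continuous $H_Vs_0$, and producing a second function $q$ with $s_0/q\in\C_0(X)$ requires a genuine construction. In short: right skeleton, but the passages you yourself single out as the obstacle are not proved, and one of your auxiliary claims is circular; to make this a proof you would have to reproduce (or cite, as the paper does) the machinery of \cite[Chapter III]{BH}.
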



  By the results above, it is justified to call $(H_U)_{U\in
  \U_0}$ in Theorem \ref{HU-to-W} a \emph{family of harmonic
  kernels}. 
 
     A family $(H_U)_{U\in \U_0}$  is a family of \emph{regular}
  harmonic kernels if instead of $(H_4)$ we have the following
  stronger property (absence of purely irregular sequences):
  \begin{itemize}
  \item[\rm $(H_4')$]
    $H_Vf$ is continuous at $\partial V$ for every $f\in\C(X)$ with
    compact support.
  \end{itemize}
   
        Let us note that, for every balayage space $(X,\W)$,  the
        intersection      of any two regular sets is regular
        (see \cite[VII.3.2.4]{BH}). Hence the following lemma can be
        useful         to show that a modification of harmonic kernels
        for  a balayage space leads to another balayage space
        (see \cite{H-modification}).

        \begin{lemma}\label{fin-inter}
          Suppose that $\U_0\subset \V(X)$ is a base for the topology
          which is stable under finite intersections.
          Then the statements of Theorem \ref{W-to-HU} hold
          if $(H_4)$  and $(H_5)$ are replaced by $(H_4')$ and   the following property:
                   \begin{itemize}
                   \item[\rm ($H_5'$)] There exists   $s_0\in  \C^+(X)$ such that, 
                                                   for all~$V\in\U_0$, $H_Vs_0<s_0$ on $V$. 
\end{itemize} 
\end{lemma}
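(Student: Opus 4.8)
The plan is to check that the family $(H_U)_{U\in\U_0}$ satisfies the hypotheses $(H_0)$--$(H_5)$ of Theorem \ref{HU-to-W} in the form with $\U_0$ and ${}^\ast\H_0^+$ in place of $\U(X)$ and ${}^\ast\H^+$. Then Theorem \ref{HU-to-W} yields that $(X,{}^\ast\H_0^+(X))$ is a balayage space with harmonic kernel $H_U$ for each $U\in\U_0$, and hence, by Theorem \ref{W-to-HU}, the statements of that theorem hold for the full family $(H_V)_{V\in\U(X)}$. Since $(H_0)$, $(H_1)$, $(H_2)$, $(H_3)$ are among the assumptions, it suffices to derive $(H_4)$ and $(H_5)$ from $(H_4')$, $(H_5')$ and the stability of $\U_0$ under finite intersections.

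First I record the elementary consequences of $(H_5')$. For $U\in\U_0$, property $(H_0)$ gives $H_Us_0=s_0$ on $U^c$ and $(H_5')$ gives $H_Us_0<s_0$ on $U$, so $H_Us_0\le s_0$ on $X$; as $s_0$ is continuous this shows $s_0\in{}^\ast\H_0^+(X)\cap\C(X)$ and, likewise, $s_0\in{}^\ast\H_0^+(V)$ for every $V\in\U_0$. Moreover, $\U_0$ being a base, each $x\in X$ lies in some $V\in\U_0$, whence $0\le H_Vs_0(x)<s_0(x)$; thus $s_0>0$ on $X$. Next, $(H_4')$ together with $(H_0)$ shows that no $V\in\U_0$ admits a purely irregular sequence: for $z\in\partial V$ one has $H_Vf(z)=f(z)$ whenever $f\in\C(X)$ has compact support, and $(H_4')$ gives $\lim_{V\ni x\to z}H_Vf(x)=H_Vf(z)=f(z)$, so every sequence in $V$ converging to a point of $\partial V$ is regular. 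Hence $(H_4)$ holds: its clause on purely irregular sequences is vacuous, and $w:=s_0\in{}^\ast\H_0^+(V)$ is real, hence finite at each point.

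It remains to prove $(H_5)$, that is, that ${}^\ast\H_0^+(X)$ is linearly separating and that $H_Vs_0\in\C(V)$ for every $V\in\U_0$; the latter is the heart of the matter. Fix $V\in\U_0$ and choose $f_n\in\C(X)$ with compact support, $f_n\uparrow s_0$. By monotone convergence for the kernel $H_V$ and by $(H_3)$, $H_Vs_0=\sup_n H_Vf_n$ is lower semicontinuous on $V$. For upper semicontinuity I localize: given $x_0\in V$, using that $\U_0$ is a finite-intersection-stable base of the locally compact space $X$, I choose $W\in\U_0$ with $x_0\in W$ and $\ov W\subset V$, so that $(H_2)$ gives $H_WH_V=H_V$ and hence $H_Vs_0=H_W(H_Vs_0)$ on $W$; I then argue that $H_Vs_0$ is continuous on $W$ by a Dini-type argument on the compact $\ov W$, applied to the non-negative sequence $H_Vs_0-H_Vf_n=H_V(s_0-f_n)$ which decreases pointwise to $0$. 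Here the \emph{strict} inequality $H_Ws_0<s_0$ from $(H_5')$, combined with the regularity of $W$ just established, supplies the uniform control on the $s_0$-masses of the measures $H_V(x,\cdot)$, $x\in\ov W$, that is needed to upgrade this pointwise decrease to a uniform one on $\ov W$; being a uniform limit of functions continuous on $W$, $H_Vs_0$ is then continuous on $W$, and $W$ ranges over a base of $V$. For the linear separation of ${}^\ast\H_0^+(X)$ one argues as usual from the strictly positive $s_0$ together with reductions of small compactly supported functions, which produce at any two distinct points a second element of ${}^\ast\H_0^+(X)$ whose pair of values is linearly independent from that of $s_0$.

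The principal obstacle is the continuity of $H_Vs_0$ on $V$: this is exactly where $(H_5')$ must be used in its \emph{strict} form, not merely through the super-mean-value inequality $H_Vs_0\le s_0$. The strictness is what prevents the $s_0$-masses of the harmonic measures from escaping as the base point varies inside $V$, and so converts the lower semicontinuous $H_Vs_0$ into a continuous function; the stability of $\U_0$ under finite intersections and the regularity furnished by $(H_4')$ enter precisely to make this localized Dini/minimum-principle argument go through (in the spirit of the fact, recalled before the lemma, that intersections of regular sets are regular). Everything else is routine manipulation with the properties $(H_i)$.
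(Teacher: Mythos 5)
Your proof takes a fundamentally different route from the paper's, and the route does not go through. The paper never attempts to prove that $H_Vs_0$ is continuous on $V$; it observes that, to establish $(H_5)$, it suffices to show that $H_Vs_0$ is \emph{hyperharmonic}, i.e.\ that $H_Vs_0\in{}^\ast\H_0^+(X)$, and then $s_0$ and $H_Vs_0$ already linearly separate any $x\in V$, $y\notin V$ because $H_Vs_0(x)<s_0(x)$ (strict, by $(H_5')$) while $H_Vs_0(y)=s_0(y)$ (by $(H_0)$). To get hyperharmonicity the paper uses the finite-intersection stability of $\U_0$ in an essential, concrete way: for $U\in\U_0$, one checks that $H_UH_Vs_0\le H_Vs_0$ on $U^c$ (by $(H_0)$) and on $V^c$ (since $H_UH_Vs_0\le H_Us_0\le s_0=H_Vs_0$ there), hence on $(U\cap V)^c=U^c\cup V^c$; because $U\cap V\in\U_0$ and $H_{U\cap V}(x,\cdot)$ is carried by $(U\cap V)^c$, one then gets $H_UH_Vs_0=H_{U\cap V}H_UH_Vs_0\le H_{U\cap V}H_Vs_0=H_Vs_0$ everywhere, using $(H_0)$ and $(H_2)$. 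This single computation is exactly where the intersection-stability hypothesis enters, and it is entirely absent from your argument.

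Your attempted Dini argument is the main gap. You want to upgrade the pointwise decrease $H_V(s_0-f_n)\downarrow 0$ to a uniform one on the compact $\ov W$, but Dini's theorem needs the terms of the decreasing sequence to be upper semicontinuous, whereas $H_V(s_0-f_n)=H_Vs_0-H_Vf_n$ is the difference of a lower semicontinuous function and a continuous one, hence only lower semicontinuous. The claim that the \emph{strictness} of $H_Ws_0<s_0$ ``supplies the uniform control on the $s_0$-masses of $H_V(x,\cdot)$'' is unjustified: a strict pointwise inequality gives no uniform tightness of the family of measures $\{H_V(x,\cdot)\colon x\in\ov W\}$ at infinity, and nothing in $(H_0)$--$(H_3)$, $(H_4')$, $(H_5')$ forces it. Your sketch of linear separation via reductions of compactly supported functions is also vaguer and less economical than the paper's two-line observation. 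In short: the correct $(H_4)$ step is fine, but the heart of the argument should be the super-mean-value inequality for $H_Vs_0$ via the identity $(U\cap V)^c=U^c\cup V^c$, not a continuity upgrade.
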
 

\begin{proof} Let us choose $s_0$ according to  ($H_5'$). Let $x,y\in X$, $x\ne y$.
 Taking $V\in \U_0$ with $x\in V$, $y\notin V$, the functions  $H_Vs_0$ and $s_0$
  linearly separate~$x$ and~$y$. So it suffices to show that $H_Vs_0\in {}^\ast\H^+(X)$.

  By ($H_4'$), $H_Vs_0$ is lower semicontinuous.  Let $U\in \U_0$.
      By ($H_0$),  $H_UH_Vs_0=H_Vs_0$ on $U^c$ and
      $        H_UH_Vs_0\le H_Us_0\le s_0=H_Vs_0 $ on $V^c$.
            Since $U^c\cup V^c=(U\cap V)^c$ and~$U\cap V\in \U_0$ by assumption,     we conclude, by ($H_0$) and ($H_2$), that
      \begin{equation*}
        H_UH_Vs_0=H_{U\cap V} H_UH_Vs_0
        \le H_{U\cap V} H_Vs_0=H_Vs_0.
      \end{equation*}
      Thus $H_Vs_0\in  {}^\ast\H^+(X)$.   
    \end{proof}

      \subsection{Simple example: The discrete case}

       Let $P$ be a sub-Markov kernel on a
countable, discrete space~$X$,  with the base of topology $\U_0:=\bigl\{\{x\}\colon x\in
X\}$. We define kernels $H_{\{x\}}$ by
$$
H_{\{x\}}(x,A):=\begin{cases}
      \dfrac {P(x,A\setminus \{x\})}{1-P(x,\{x\})},& P(x,\{x\})<1,\\
       0,&P(x,\{x\})=1,
               \end{cases}
               $$
and $H_{\{x\}}(y,\cdot)=\delta_y$, if $x,y\in X$ and $x\ne y$ (see \cite[III.1.1.3]{BH}). Then
\begin{equation*}
  {}^\ast\H_0^+(X)=S_P:=\{u\in\B^+(X)\colon Pu\le u\}.
  \end{equation*} 
 Obviously $(H_0)$ -- $(H_4')$ are satisfied.
 By Theorems \ref{W-to-HU}, \ref{HU-to-W} and Lemma 
\ref{fin-inter}, we immediately get the following. 

\begin{proposition}\label{discrete}
  The following statements are equivalent.
  \begin{itemize}
  \item[\rm(1)]
    $(X,S_P)$ is a balayage space.
\item[\rm(2)]
  $S_P$ separates the points of $X$.
\item[\rm(2)]
  There exists a real function $s_0\ge 0$ on $X$ such that $Ps_0<s_0$.
\end{itemize}
\end{proposition}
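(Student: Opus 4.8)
The claim is Proposition~\ref{discrete}, asserting the equivalence of (1) $(X,S_P)$ is a balayage space, (2) $S_P$ separates points, and (what is labelled again) (2')\ there is a real $s_0\ge 0$ with $Ps_0<s_0$ everywhere. The natural strategy is to run the cycle $(1)\Rightarrow(2)\Rightarrow(2')\Rightarrow(1)$, leaning on the already-recorded machinery: Theorem~\ref{W-to-HU} (balayage space $\Rightarrow$ harmonic kernels satisfy $(H_0)$–$(H_5)$), Theorem~\ref{HU-to-W} (the converse: a family of kernels over a topological base satisfying $(H_0)$–$(H_5)$ generates a balayage space), and Lemma~\ref{fin-inter}, which lets one replace $(H_4),(H_5)$ by $(H_4')$ and $(H_5')$ when the base is stable under finite intersections. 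Here the base $\U_0=\{\{x\}\colon x\in X\}$ of singletons is trivially stable under finite intersections (the intersection of two distinct singletons is empty, and $\emptyset\in\U(X)$ with $H_\emptyset(y,\cdot)=\delta_y$), so Lemma~\ref{fin-inter} is exactly the right tool.

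First I would observe, as the text already notes, that $(H_0)$ is immediate from the definition of $H_{\{x\}}$ (it is the identity off $\{x\}$, and $H_{\{x\}}1_{\{x\}}(x)=0$ by construction), and that $(H_1)$ holds because $\U_0$ consists of single points so the limit $\lim_{V\downarrow\{x\}}H_Vf(x)$ is literally $H_{\{x\}}f(x)$ — and one checks $H_{\{x\}}f(x)\to f(x)$ is \emph{not} what is needed; rather the alternative in $(H_1)$, $\liminf_{y\to x}R_1^{\{x\}}(y)\ge 1$, holds trivially in a discrete space since every point is isolated, so $R_1^{\{x\}}(x)=1$. Property $(H_2)$, $H_VH_U=H_U$ for $\overline V\subset U$ with $U\in\U_0$, forces $U=V=\{x\}$ and reads $H_{\{x\}}H_{\{x\}}=H_{\{x\}}$, which is a one-line computation from the definition (applying $H_{\{x\}}$ to a measure already supported off $x$ does nothing). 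Property $(H_3)$ (continuity of $H_Vf$ on $V$ for compactly supported $f$) is automatic: $V=\{x\}$ is a single isolated point, so every function is continuous there. Thus $(H_0)$–$(H_3)$ and $(H_4')$ (absence of purely irregular sequences — vacuous here since $\partial\{x\}=\emptyset$ in a discrete space) all hold \emph{unconditionally}; this is the content of the text's remark that "$(H_0)$–$(H_4')$ are satisfied."

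The substance is therefore concentrated in $(H_5)$/$(H_5')$. For $(2')\Rightarrow(1)$: given $s_0\ge 0$ real with $Ps_0<s_0$, I claim $H_{\{x\}}s_0<s_0$ at $x$ for every $x$. Indeed if $P(x,\{x\})<1$ then $H_{\{x\}}s_0(x)=\frac{Ps_0(x)-P(x,\{x\})s_0(x)}{1-P(x,\{x\})}$; from $Ps_0(x)<s_0(x)$ we get $Ps_0(x)-P(x,\{x\})s_0(x)<(1-P(x,\{x\}))s_0(x)$, hence $H_{\{x\}}s_0(x)<s_0(x)$; if $P(x,\{x\})=1$ then $Ps_0(x)\ge P(x,\{x\})s_0(x)=s_0(x)$ contradicts $Ps_0(x)<s_0(x)$, so this case does not occur. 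Also $s_0\in\C^+(X)$ automatically (discrete topology). So $(H_5')$ holds, Lemma~\ref{fin-inter} gives $(H_5)$ (and upgrades $(H_4')$ to $(H_4)$), hence all of $(H_0)$–$(H_5)$ hold, and Theorem~\ref{HU-to-W} yields that $(X,{}^\ast\H_0^+(X))=(X,S_P)$ is a balayage space with the stated harmonic kernels. For $(1)\Rightarrow(2)$: this is the separation property (S), which is part of $(H_5)$ and hence of $(B_0)$, read off directly from Theorem~\ref{W-to-HU} applied to the balayage space $(X,S_P)$ (equivalently, (S) is built into the definition of a balayage space). Finally $(2)\Rightarrow(2')$: if $S_P$ linearly separates points, I would produce $s_0$ as follows — enumerate $X=\{x_n\}$ (countable), and for each $n$ use separation to pick $u_n\in S_P$ with, after scaling, $u_n(x_n)=1$ and $Pu_n(x_n)<1$; then (suitably normalizing $u_n$ to be bounded by $2^{-n}$ on $\{x_1,\dots,x_n\}$ while keeping $u_n(x_n)>0$, or invoking the standard construction of a strict potential) set $s_0:=\sum_n c_n u_n$ for fast-decaying $c_n>0$ so that the sum is finite-valued and $Ps_0=\sum c_n Pu_n\le\sum c_n u_n=s_0$ with strict inequality at every $x_n$. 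Cleaner: since $\U_0$ is countable, separation of points is equivalent to existence of a strict potential (cf.\ the discussion after \eqref{def-potential} and \cite[I.1.5, VI.8.2]{BH}), and $Ks1_{\{x\}}\ne 0$ for all $x$ translates into $Ps_0<s_0$ pointwise. I would phrase this step via the existence of a proper potential kernel / strict potential rather than a bare-hands series, to keep it short.

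\textbf{Main obstacle.} The delicate point is $(2)\Rightarrow(2')$: turning the \emph{local, pointwise} separation hypothesis into a \emph{single} real (finite-valued) function $s_0$ with $Ps_0<s_0$ \emph{everywhere}. The series construction must simultaneously ensure (a) $s_0(x)<\infty$ for all $x$, (b) $Ps_0\le s_0$ (which needs $P$ of the sum = sum of $P$'s, i.e.\ monotone convergence, fine since everything is positive), and (c) \emph{strict} inequality at every point — the last requires that for each $x$ at least one summand $u_n$ has $Pu_n(x)<u_n(x)$, which one arranges by indexing the summands by the points themselves. This is exactly the classical passage from (S) to the existence of a strict/proper potential in balayage-space theory, so I expect the write-up to be short once that reference is invoked, but it is the one step that is not a triviality of the discrete topology.
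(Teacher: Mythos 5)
Your implications $(2')\Rightarrow(1)$ (verify $(H_5')$ and then apply Lemma~\ref{fin-inter} together with Theorem~\ref{HU-to-W}) and $(1)\Rightarrow(2)$ (axiom (S)) unpack exactly what the paper's one-line proof compresses, and your computation that $Ps_0<s_0$ forces $P(x,\{x\})<1$ and hence $H_{\{x\}}s_0(x)<s_0(x)$ is correct, as is the observation that $(H_0)$--$(H_4')$ hold for free in the discrete setting.

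The gap is in $(2)\Rightarrow(2')$. Linear separation of $S_P$ states that for $x\ne y$ and $\g>0$ there is $v\in S_P$ with $v(x)\ne\g v(y)$; this supplies no function $u\in S_P$ with $Pu(x)<u(x)$ at a prescribed point $x$, yet your series construction is built on exactly that ("for each $n$ use separation to pick $u_n\in S_P$ with $Pu_n(x_n)<u_n(x_n)$"). Concretely, if $x_0$ is an absorbing point, $P(x_0,\{x_0\})=1$, then, since $P$ is sub-Markov, $P(x_0,X\setminus\{x_0\})=0$ and therefore $Pu(x_0)=u(x_0)$ for every $u\in\B^+(X)$: no summand with the required strict inequality at $x_0$ exists, while $S_P$ may nevertheless separate points (for instance $P=\mathrm{Id}$ gives $S_P=\B^+(X)$, which linearly separates). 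The fallback you suggest -- invoking the existence of a strict/proper potential from [BH, I.1.5, VI.8.2] -- is circular in this direction, because those facts are derived inside a balayage space, which is precisely what (2) alone has not yet given you. Note also that Theorem~\ref{W-to-HU} yields only $(H_5)$ (linear separation plus continuity of $H_Vs_0$), not the strict inequality $(H_5')$, so neither (1) nor (2) passes to $Ps_0<s_0$ via the results cited in the paper either; the paper's proof is silent on this direction, and you should be alert that closing it requires either an extra hypothesis excluding absorbing states or a genuinely different argument (e.g.\ deriving properness of $\sum_{k\ge0}P^k$ from the balayage structure, and then setting $s_0:=\sum_{k\ge0}P^kg$ for a suitable $g>0$, so that $Ps_0=s_0-g<s_0$).
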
 

Of course, we could also use Theorem~\ref{char-bal} and the (strong Feller)
semigroup
 $\mathbbm P:=(P_t)_{t>0}$ given by
\begin{equation}\label{def-Poisson}
  P_t:=e^{-t}\sum\nolimits_{k\ge 0} \frac {t^k}{k!} \, P^k, \qquad t>0.
\end{equation} 
for which $ \E_{\mathbbm P}=S_P$ and the potential kernel is $\sum_{k\ge 0} P^k$ (see Remark~\ref{resolvent}).

      \subsection{Restriction on open subsets and lifting of potentials}\label{rest-lift}

      We shall need the following (see \cite[V.1.1]{BH}).

      \begin{proposition}\label{restriction}
        Let $(X,\W)$ be a balayage space, $U$  an open set in $X$ and
$          \W_U:={}^\ast \H(U)^+|_U$. Then $(U,\W_U)$ is a balayage
space called the \emph{restriction of $(X,\W)$ on $U$}, with the corresponding continuous real potentials $\mathcal P(U)$. The  following holds.
\begin{itemize}
\item[\rm (1)]
  $(H_V|_U)_{V\in \V(U)}$ is an associated family of harmonic kernels.
\item[\rm (2)]
  The $\W_U$-fine topology is the relative $W$-fine topology of $U$.
\item[\rm (3)]
  For every $p\in \px$, $p-H_Up\in \mathcal P(U)$.
\end{itemize}
\end{proposition}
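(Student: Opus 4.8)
\textbf{Proof proposal for Proposition \ref{restriction}.}

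The plan is to invoke the characterization of balayage spaces by families of harmonic kernels (Theorems \ref{W-to-HU} and \ref{HU-to-W}). First I would fix a base $\U_0$ for the topology of $X$ consisting of relatively compact open sets with closure in $X$, and observe that $\U_0(U):=\{V\in\U_0\colon \ov V\subset U\}$ is then a base for the topology of $U$. By Theorem \ref{W-to-HU} the family $(H_V)_{V\in\U_0(U)}$, restricted to act on functions on $U$ (that is, extended by the identity off $U$ in the ambient space but read on $U$), satisfies properties $(H_0)$–$(H_5)$ relative to $U$: $(H_0)$, $(H_2)$, $(H_3)$ are immediate from the ambient versions since $V,\ov V\subset U$; $(H_1)$ is local and survives restriction; $(H_5)$ holds by taking $s_0|_U$ where $s_0\in\S^+(X)$, $s_0>0$, is as in the ambient $(H_5)$, noting $H_Vs_0$ is continuous on $V$; and $(H_4)$ for $U$ — existence, for each $x\in U$, of $w\in{}^\ast\H^+(U)$ with $w(x)<\infty$ blowing up along purely irregular sequences converging to $\partial U$ — is exactly what Proposition \ref{evans} provides (an Evans function for $U$). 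Crucially, one must check that the cone ${}^\ast\H_0^+$ attached to this family via Theorem \ref{HU-to-W} coincides with $\W_U={}^\ast\H(U)^+|_U$; this is where I would cite \cite[V.1.1]{BH}, but the conceptual point is that $w\in\B^+(U)$ satisfies $H_Vw\le w$ for all $V\in\U_0(U)$ if and only if it does so for all $V\in\V(U)$ (by the footnote-type argument that $\U_0$ may be replaced by a smaller base, last sentence of Theorem \ref{HU-to-W}), which is precisely hyperharmonicity on $U$. Hence Theorem \ref{HU-to-W} yields that $(U,\W_U)$ is a balayage space with $H_V|_U$ as harmonic kernel for each $V\in\U_0(U)$, and a further application of the ``smaller base'' clause upgrades this to all $V\in\V(U)$, giving (1).

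For (2), the $\W_U$-fine topology is by definition the coarsest topology on $U$ refining the initial topology and making every $w\in\W_U$ continuous. Every such $w$ is the restriction to $U$ of a finely-continuous (on $X$) function, namely a positive hyperharmonic function on $U$ which, near any point of $U$, is finely l.s.c.\ and — being locally a difference of functions in $\W$ plus a harmonic function — is finely continuous in the ambient sense on $U$; conversely, for $p\in\px$ the function $p-H_Up\in\mathcal P(U)\subset\W_U$ by (3), and such potentials (together with constants when $1\in\W$, or suitable $\S^+$-functions in general) generate the relative $\W$-fine topology on $U$. So the two fine topologies have the same generating families and hence coincide; again \cite[V.1.1]{BH} records this.

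For (3), given $p\in\px$, the function $q:=p-H_Up$ is $\ge 0$ since $H_Up\le p$, it is real and continuous on $U$ because $p\in\C(X)$ and $H_Up$ is continuous on $U$ (as $p\in\S^+(X)$ and $H_Up=R_p^{\uc}$, continuous on the open set $U$ by \cite[III.4.2.8]{BH} or the strong-Feller property in Theorem \ref{Hunt-converse}), it is superharmonic on $U$ — indeed $H_Vq=H_Vp-H_VH_Up=H_Vp-H_Up$ for $V\in\V(U)$ by Proposition \ref{VssU} ($H_VH_U=H_U$), which is $\le q$ and continuous on $V$ — and it is a potential on $U$ because $\inf_{V\in\V(U)}H_Vq=\inf_V(H_Vp-H_Up)=H_Up-H_Up=0$, using that $\inf_{V\uparrow U}H_Vp$ equals the largest harmonic-on-$U$ minorant of $p$, which is $H_Up$ itself (as $H_Up\in\H^+(U)$ by Lemma \ref{superharmonic-base}, whose hypotheses hold here since $H_Up\le p$ and $(H_Vp)|_V\in\C(V)$ for $V\in\V(U)$). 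Thus $q\in\mathcal P(U)$.

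The main obstacle is the identification of cones in (1): one must verify carefully that the abstract cone ${}^\ast\H_0^+(X)$ produced by Theorem \ref{HU-to-W} from the restricted harmonic kernels is genuinely ${}^\ast\H(U)^+|_U$ and not merely a cone of functions on $U$ agreeing with it on a base — this requires the ``replace $\U_0$ by a smaller base'' stability clause and the local nature of hyperharmonicity, and it is the technical heart of \cite[V.1.1]{BH}; the verification of $(H_0)$–$(H_3)$, $(H_5)$ is routine, and $(H_4)$ is handed to us by Proposition \ref{evans}.
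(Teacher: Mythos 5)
The paper itself gives no proof of Proposition~\ref{restriction}: the sentence preceding the statement simply cites \cite[V.1.1]{BH}. Your proposal is therefore a reconstruction rather than a comparison, and the strategy you adopt --- deduce that $(U,\W_U)$ is a balayage space via the harmonic-kernel characterization (Theorems~\ref{W-to-HU} and~\ref{HU-to-W}), with Proposition~\ref{evans} furnishing $(H_4)$ and the ``smaller base'' clause handling the identification of cones; compare fine topologies for (2); verify (3) directly --- is the right one and is essentially what \cite[V.1.1]{BH} does. Your flagging of the cone identification as the technical heart is also accurate.

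There is, however, a genuine gap in your verification of (3). You need $\inf_{V\in\V(U)}H_Vq=0$ for $q:=p-H_Up$, and you reduce this to $\inf_{V\in\V(U)}H_Vp=H_Up$. Your justification --- that $H_Up\in\H^+(U)$ by Lemma~\ref{superharmonic-base} --- only shows $H_Up$ \emph{is a} harmonic minorant of $p$ on $U$, which via $H_Up=H_VH_Up\le H_Vp$ (Proposition~\ref{VssU}) gives the trivial inequality $H_Up\le\inf_VH_Vp$. Being a harmonic minorant does not make $H_Up$ the largest one; the nontrivial direction $\inf_{V\in\V(U)}R_p^{\vc}\le R_p^{\uc}$ is precisely where the content lies, and you have not argued it. The paper is explicit about this point in the proof of Theorem~\ref{lifting}, where the same identity is used with a citation of \cite[VI.1.2]{BH} and, when $U$ is not relatively compact, the supplementary fact $\inf\{R_p^{K^c}\colon K\text{ compact}\}=0$ (which follows from (\ref{char-pxc})). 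A self-contained way to close the gap: fix $x\in U$ and $\ve>0$, choose a compact $K$ with $R_p^{K^c}(x)<\ve$, then any $V\in\V(U)$ with $K\cap U\subset V$ satisfies $V^c\subset K^c\cup U^c$, whence by subadditivity of the reduction $R_p^{\vc}(x)\le R_p^{K^c}(x)+R_p^{\uc}(x)<\ve+H_Up(x)$. Without an argument of this kind, step (3) is asserted rather than proved. Your treatment of (2) points at the right facts (hyperharmonic functions on $U$ are finely continuous in the ambient fine topology; $\W|_U\subset\W_U$) but is likewise a sketch, ultimately resting again on the citation of \cite[V.1.1]{BH}.
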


A partial converse to (3) in Proposition \ref{restriction}  is the
following \emph{lifting of potentials}, which will be important in
  our discussion of Green functions later on, see
  Proposition~\ref{XU-rep} and Remark \ref{UG-lifting}.

\begin{theorem}\label{lifting}
  Let $U$ be an open subset of $X$, $A$ compact in $U$  and $q\in\mathcal P(U)$, harmonic on $U\setminus A$.
  Then there is  a unique $p\in\px$ {\rm (}the \emph{lifting of $q$ on  $X$}{\rm}),
  harmonic on $X\setminus A$,  such
  that $p-q$ is harmonic on $U$,   equivalently, $p-H_Up=q$.
\end{theorem}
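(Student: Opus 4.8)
The plan is to construct $p$ explicitly and then verify the stated properties. First I would exhaust $U$ by open sets $V_n \in \V(U)$ with $A \subset V_n \subset \ov{V_n} \subset V_{n+1}$ and $\bigcup_n V_n = U$. Since $q \in \mathcal P(U)$ is superharmonic on $U$ and harmonic on $U \setminus A$, the carrier of $q$ (in the restricted balayage space $(U,\W_U)$ of Proposition \ref{restriction}) is contained in $A$. The idea is to ``fill in'' $q$ across the boundary of $U$ by balayage from outside: set $p_n := R^{V_n^c \cup (X\setminus U)}_{\,q'_n}$ for suitable $q'_n \in \px$ approximating $q$ on $\ov{V_n}$, or more directly, use that $q|_{V_n} \in \mathcal P(V_n)$ is a continuous real potential on $V_n$ with compact carrier $A$, and apply Theorem \ref{Hunt-converse} / the representation of potentials with compact carrier to write $q|_{V_n}= K_{p}(f_n \,\cdot)$-type objects. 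The cleanest route: choose $p \in \px$ on $X$ with carrier in $A$ such that $p - H_U p = q$; existence of such $p$ is exactly what must be shown, so I would instead argue as follows.

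Take $s_0 \in \S^+(X)$, $s_0 > 0$. For each $n$, the function $q$ is a potential on $U$ harmonic off the compact $A \subset V_n$, hence $q|_{V_n} \in \mathcal P(V_n)$ and is harmonic on $V_n \setminus A$. On the relatively compact set $V_n$ the potential $q|_{V_n}$ has a lifting to a potential on any open $W$ with $\ov{V_n}\subset W$ by the corresponding statement one level down, but to get a lifting all the way to $X$ I would take the increasing limit. Concretely: let $u_n := q + H_{V_n}(\text{something})$ — more precisely define $p := q + \sup_n H_{V_n} q$? That is wrong since $q$ vanishes suitably at $\partial U$. The right construction: since $q$ is superharmonic on $U$ and harmonic on $X' := X \setminus A$ intersected with $U$, and $q \equiv 0$ on $X \setminus U$, the function $q$ is subharmonic nowhere-helpful; instead, consider $w \in \W$ minimal with $w \ge q$ on $\ov{V_n} $ — but $q$ is only defined on $U$. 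So extend $q$ by $0$ to $X$ (as the paper allows) and set $p_n := R_{1_{V_n} q}= R^{V_n}_{\tilde q}$ where $\tilde q$ is any element of $\px$ dominating $q$ on $\ov{V_n}$; such $\tilde q$ exists because $q|_{\ov{V_n}}$ is real continuous (superharmonic on a neighborhood) hence bounded there, so $\tilde q = c\, s_0$ works for large $c$, and $R^{V_n}_{c s_0} \in \px$. Then $p_n \uparrow p := \sup_n p_n \in \W$ by (B$_1$); one checks $p_n \ge q$ on $V_n$ with equality on $\{q = p_n\}\supset A$ region, and $p_n - q$ is harmonic on $V_n$ (it is the smallest such majorant), so in the limit $p - q$ is harmonic on $U$. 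Since the carriers $C(p_n)$ lie in $\ov{V_n} \supset A$... — the carrier control needs the harmonicity of $q$ off $A$: by Lemma \ref{dom-princ} and \eqref{dom}, because $q$ is harmonic on $V_n \setminus A$ one gets $C(p_n) \subset A$, hence $C(p)\subset A$ and $p$ is harmonic on $X \setminus A$. Finiteness of $p$ (so $p \in \splus$ rather than just $\W$): $p \le q + h$ where $h$ is the harmonic majorant, and both are finite on $U$; off $U$, $p = H_U p$ is controlled; a Harnack-type argument plus (C) gives $p \in \S^+(X) \cap \px = \px$.

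For \textbf{uniqueness}: if $p_1, p_2 \in \px$ are both harmonic on $X\setminus A$ with $p_i - H_U p_i = q$, then $p_1 - p_2$ is harmonic on $X \setminus A$ and on $U$ (since $p_1 - p_2 = (p_1 - H_U p_1) - (p_2 - H_U p_2) + H_U(p_1 - p_2) = H_U(p_1-p_2)$ on $U$, i.e.\ it equals its own harmonic measure average, so it is harmonic on $U$); as $A \subset U$, $p_1 - p_2$ is harmonic on all of $X$, and $p_1 - p_2 \in \px - \px$, so by \eqref{direct-sum} $p_1 = p_2$. The \emph{equivalence} ``$p - q$ harmonic on $U$'' $\iff$ ``$p - H_U p = q$'' follows because, given $p$ harmonic off $A$ with $r := p - q$ harmonic on $U$ (and $p = q = 0$-extended appropriately on $U^c$ so $r = p$ there), applying $H_U$ and using $H_U r = r$ on $U$, $H_U$ fixing things on $U^c$, plus $H_U q = \lim_V H_V q = 0$ (as $q \in \mathcal P(U)$, cf.\ \eqref{HVq}), yields $H_U p = H_U r + H_U q = r = p - q$ on $U$, and the identity on $U^c$ is automatic. \textbf{Main obstacle.} The delicate point is controlling the carrier of the approximants and the finiteness of the limit $p$: one must combine the harmonicity of $q$ on $U \setminus A$ with the compactness of $A$ in $U$ to ensure $C(p_n) \subset A$ (using \eqref{dom}: $w \ge p_n$ on $A$ forces $w \ge p_n$ on $X$), and then show the increasing limit $p$ does not blow up — this is where relative compactness of each $V_n$ and the strong-Feller/continuity apparatus of Theorem \ref{Hunt-converse}, together with (C), is really needed. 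Everything else is bookkeeping with reductions and \eqref{fnAu}.
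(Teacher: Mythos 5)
The paper's inline proof is deliberately short: it delegates existence and uniqueness of the lifting to \cite[Theorem 10.1]{H-modification} and only supplies the proof of the equivalence ``$p-q$ harmonic on $U$'' $\iff$ ``$p-H_Up=q$'', via the exhaustion $V_n\uparrow U$ and the two limits $\lim_n H_{V_n}q=0$ (because $q\in\mathcal P(U)$) and $\lim_n H_{V_n}p=R_p^{\uc}=H_Up$ (a nontrivial fact from \cite[VI.1.2]{BH}, with an extra observation when $U$ is not relatively compact). You instead try to prove everything from scratch, which is more ambitious, but several of the key steps do not hold up.

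The most concrete gap is in your equivalence argument. You apply $H_U$ to $p=r+q$ and assert $H_Ur=r$ because ``$r$ is harmonic on $U$.'' This is not a consequence of harmonicity: harmonic functions on $U$ satisfy $H_Vr=r$ for $V\in\V(U)$, but invariance under the \emph{non}-relatively-compact $H_U$ is a much stronger statement (e.g.\ $r\equiv1$ is harmonic on any $U\subset\real^3$ but $H_U1\neq1$ in general). In fact $H_Ur=r$ here is \emph{equivalent} to $H_Up=p-q$, the conclusion you are trying to reach, so the argument is circular. What the paper does instead is pass from $H_{V_n}(p-q)=p-q$ to the limit, using the nontrivial identity $\lim_n H_{V_n}p=H_Up$ for $p\in\px$ together with $\lim_n H_{V_n}q=0$; this is the step your proof is missing and cannot be replaced by simply ``applying $H_U$.''

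The existence construction is also not carried out. You go through several tentative definitions of $p_n$ (reductions of $cs_0$, reductions of $1_{V_n}q$, etc.), which are not the same and do not obviously produce a function related to $q$, and you ultimately only assert that $p=\sup_n p_n$ lies in $\px$ via a ``Harnack-type argument plus (C).'' But the finiteness and continuity of $p$ (i.e.\ that $p\in\S^+(X)$ and then $p\in\px$, not merely $p\in\W$) is precisely the substantive technical content of the lifting theorem; asserting it is not proving it, and the phrase ``$p\le q+h$ where $h$ is the harmonic majorant'' presupposes the very object you are building. By contrast, your uniqueness argument is essentially correct, modulo the minor point that you should note $f:=p_1-p_2=H_Uf$ is harmonic on $U$ because $H_Up_i\in\H^+(U)$ for $p_i\in\px$ (cf.\ Lemma~\ref{superharmonic-base}, Proposition~\ref{VssU}), not because ``$H_Uf=f$ implies harmonic'' in general. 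Overall: the uniqueness part is fine, the existence sketch is incomplete at its crux, and the equivalence argument is circular where the paper's exhaustion argument is genuinely needed.
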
 

This is essentially \cite[Theorem 10.1]{H-modification}, where
unfortunately  the last part is not stated explicitly. Here is its
(obvious) proof: Of course, $p-H_Up=q$ implies that $p-q=H_Up$ is harmonic on $U$.
Suppose conversely that $p-q$ is harmonic on~$U$. Then  $H_V(p-q)=p-q$
for every $V\in \U(U)$. Exhausting $U$ by a~sequence $(V_n)\subset \U(U)$
 we have (for example, by \cite[VI.1.2]{BH}; if $U$ is not
 relatively compact, we  use in addition that
 $\inf\{ R_p^{K^c}\colon K\mbox{ compact}\}=0$)
\begin{equation*}
  \lim\nolimits_{n\to \infty}  H_{V_n}q=0   \und   \lim\nolimits_{n\to \infty} H_{V_n}p
  =\lim\nolimits_{n\to\infty} R_p^{V_n^c}=R_p^{U^c}=H_Up.
\end{equation*}

  \begin{remark}\label{G-lifting}
{\rm
Let us note the following (for details see Remark \ref{UG-lifting}):
If  $(X,\W)$ has a Green function $G$, then $p=G\nu$ leads to 
 $p-H_Up=G_U(1_U\nu)$, $G_U$   being the associated Green function
      on $U$, and the lifting $q=G_U\nu\in \mathcal P(U)$, with $\nu$ having
    compact support in $U$, is $G\nu$.
}
 \end{remark}

    \section{Consequences of the domination principle}\label{dom-principle}

For the sake of generality (which is needed in
  \cite{bogdan-hansen-semi}), let us assume in the next two sections
  that  $\vp$ is a Borel measurable real function on $X\times \real$ such that
the functions $t\mapsto \vp(x,t)$, $x\in X$,  are  continuous. We fix a potential $p\in \px$ and 
are interested in functions $u$ which, for  a given function $v$,  satisfy
\begin{equation}\label{interest}
  u+K_p\vp(\cdot,u)=v.
  \end{equation} 
To simplify our considerations we assume once and for all that
  \begin{equation}\label{vp-zero}
    \vp(\cdot,0)=0
  \end{equation}
(if $\vp$ does not satisfy (\ref{vp-zero}) we may replace~$\vp$ by~$\vp-\vp(\cdot,0)$
  and~$v$ by $v-K_p\vp(\cdot,0)$).
  
Then let us say  that $\vp$ is \emph{sign-preserving} if $t\vp(\cdot,t)\ge 0$ for all $t\in \real$.
Of course, $\vp$ is sign-preserving if~$\vp$ is \emph{increasing}, that is, the functions $t\mapsto \vp(x,t)$,
$x\in X$, are increasing. 

The following consequences of the domination principle
(see Lemma~\ref{dom-princ}) will be
crucial for estimating functions~$u$ satisfying (\ref{interest}), and
some of them will be needed to show that solutions of (\ref{interest})
obtained after a truncation of $\vp$ are, in fact, solutions of the original equation.

\begin{proposition}{\rm \cite[Lemma 3.3]{baalal-hansen}}\label{ufs} 
Let $u,v,f\in \B_r(X)$  such that $K_p|f|<\infty$,
  $u+ K_p f=v$   and    $\{u<0\}\subset \{f\le 0\}$.        
 Then  $v\le u+R_v$.\footnote{For the definition of $R_v$ see (\ref{def-r-f}).}
 \end{proposition}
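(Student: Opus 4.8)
The plan is to reduce the assertion to a pointwise estimate on the Borel set $\{f>0\}$ and then propagate it to all of $X$ by the domination principle, Lemma~\ref{dom-princ}(ii).

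Write $f=f^+-f^-$ with $f^\pm\in\B^+(X)$. Since $K_p|f|=K_pf^++K_pf^-<\infty$, both $K_pf^+$ and $K_pf^-$ are real and $K_pf=K_pf^+-K_pf^-$; in particular every ``subtract $K_pf^-$'' step below is legitimate. If there is no $w\in\W$ with $w\ge v$, then $R_v\equiv\infty$ and, $u$ being real, the claim is trivial; so fix an arbitrary $w\in\W$ with $w\ge v$. The goal is to show $v\le u+w$, since taking the infimum over all such $w$ then gives $v\le u+R_v$ by the definition (\ref{def-r-f}) of $R_v$.

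The inequality $v\le u+w$ is equivalent to $K_pf\le w$, i.e.\ to $K_pf^+\le K_pf^-+w$, and I would first verify this on $\{f>0\}=\{f^+>0\}$. The hypothesis $\{u<0\}\subset\{f\le 0\}$ gives $\{f>0\}\subset\{u\ge 0\}$, so on $\{f>0\}$ we have $v=u+K_pf\ge K_pf=K_pf^+-K_pf^-$; together with $w\ge v$ this yields $K_pf^+\le K_pf^-+w$ on $\{f^+>0\}$. Since $f^+,f^-\in\B^+(X)$ and $w\in\W$, Lemma~\ref{dom-princ}(ii) upgrades this to $K_pf^+\le K_pf^-+w$ on all of $X$, hence $K_pf=K_pf^+-K_pf^-\le w$ everywhere, i.e.\ $v=u+K_pf\le u+w$.

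There is no serious obstacle here: the content of the argument is simply the observation that the hypothesis $\{u<0\}\subset\{f\le 0\}$ is exactly what makes the domination-principle premise hold on $\{f^+>0\}$. The only points needing mild care are the $\pm\infty$ bookkeeping (all handled by $K_p|f|<\infty$) and remembering to fix a single competitor $w\in\W$ with $w\ge v$ before passing to the infimum that defines $R_v$.
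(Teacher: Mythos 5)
Your proof is correct and follows essentially the same route as the paper's: fix a competitor $w\in\W$ with $w\ge v$, observe that $\{f>0\}\subset\{u\ge 0\}$ gives $K_pf^+\le K_pf^-+w$ on $\{f>0\}$, and then invoke the domination principle (Lemma~\ref{dom-princ}(ii)) to extend this to all of $X$, which yields $v=u+K_pf\le u+w$ and hence $v\le u+R_v$ on taking the infimum. The only additions are the explicit treatment of the vacuous case $R_v\equiv\infty$ and the $\pm\infty$ bookkeeping, both of which the paper leaves implicit.
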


 \begin{proof}  Let $w\in\W$, $w\ge v$.  Obviously, 
   $K_pf^+-K_pf^-=K_p f\le v\le w$ on $\{u\ge 0\}$, hence
   on $\{f>0\}$. Thus $ K_p f\le w$ on $X$, by (\ref{domi}), and $v=u+K_pf\le u+w$. 
\end{proof}

  \begin{proposition}\label{sign-preserving}
Suppose that $\vp$ is sign-preserving and let 
  $u,v\in\B_r(X)$ such that   $K_p|\vp(\cdot,u)|<\infty$ and $ u+K_p\vp(\cdot,u)=v$. 
  \begin{itemize}
    \item[\rm (i)] Then $|u|\le R_{v^+}+R_{v^-}$. 
    \item[\rm (ii)]
      If $w',w''\in\W_r$ such that  $v=w'-w''$, 
       then $-w''\le u\le w'$.\\
      In particular, $0\le u\le v$ if $v\in \W$, and $v\le u\le 0$ if $v\in -\W$.
    \end{itemize}
\end{proposition}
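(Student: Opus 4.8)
\textbf{Proof plan for Proposition~\ref{sign-preserving}.}
The plan is to reduce both parts to Proposition~\ref{ufs} by exploiting the sign-preserving hypothesis, which says precisely that $\vp(\cdot,u)$ has the same sign as $u$ pointwise; in particular $\{u<0\}\subseteq\{\vp(\cdot,u)\le 0\}$ and, by symmetry, $\{u>0\}\subseteq\{\vp(\cdot,u)\ge 0\}$. Setting $f:=\vp(\cdot,u)$, the hypotheses of Proposition~\ref{ufs} are met directly: $f\in\B_r(X)$, $K_p|f|<\infty$, $u+K_pf=v$, and $\{u<0\}\subseteq\{f\le 0\}$. Hence Proposition~\ref{ufs} gives $v\le u+R_v$, i.e.\ $u\ge v-R_v$.

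For part~(i), I would apply this inequality twice. First, as above, $u\ge v-R_v\ge -R_v$; since $v\le v^+\le R_{v^+}$ and $R$ is monotone and positively homogeneous, $R_v\le R_{v^+}$, so $u\ge -R_{v^+}\ge -(R_{v^+}+R_{v^-})$. For the upper bound, apply the same argument to the pair $(-u,-v)$ and the function $-\vp(\cdot,u)=\vp(\cdot,u)$ recast appropriately: more precisely, $(-u)+K_p(-f)=-v$ with $\{-u<0\}=\{u>0\}\subseteq\{f\ge 0\}=\{-f\le 0\}$, so Proposition~\ref{ufs} yields $-v\le -u+R_{-v}\le -u+R_{v^-}$, i.e.\ $u\le v+R_{v^-}\le R_{v^+}+R_{v^-}$. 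Combining the two bounds gives $|u|\le R_{v^+}+R_{v^-}$.

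For part~(ii), suppose $v=w'-w''$ with $w',w''\in\W_r$. The function $u'':=u+K_pf$ equals $v=w'-w''$, so $u+K_pf^+=w'+(K_pf^-- w'')$; I want to massage this into a form where Proposition~\ref{ufs}, or rather its proof via the domination principle \eqref{domi}, applies with $w'$ (resp.\ $-w''$) as the comparison element. Concretely, to show $u\le w'$: on $\{u\ge 0\}\supseteq\{f>0\}$ we have $K_pf^+-K_pf^- = K_pf = v-u \le v \le w' + K_pf^-$ — but this isn't quite of the shape $K_pg_1\le K_pg_2+w$ with a clean $g_2$. The cleaner route is: on $\{f>0\}$, $K_pf^+ = K_pf + K_pf^- \le (v-u) + K_pf^- \le w' + K_pf^-$ since $v-u\le v\le w'$ there (using $u\ge 0$ on $\{f>0\}$); now both sides involve only $K_p$ of a positive function plus a $\W$-function, so \eqref{domi} (with $f^+$, $f^-$, and $w'$) extends this to all of $X$: $K_pf^+\le K_pf^- + w'$, i.e.\ $K_pf\le w'$, whence $u = v - K_pf \ge v - w' = -w''$. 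That gives the lower bound $u\ge -w''$. For the upper bound $u\le w'$, run the symmetric argument with $-u$, $-v$, $-f$: on $\{f<0\}$ we have $u\le 0$, so $K_pf^- = K_p(-f) \le (u-v)+K_pf^+ \le w'' + K_pf^+$ (using $u-v\le -v\le w''$ there), and \eqref{domi} promotes this to $K_pf^-\le K_pf^+ + w''$, i.e.\ $-K_pf\le w''$, so $u = v - K_pf \le w' - w'' + w'' = w'$ — wait, that gives $u\le v+w''= w'$, good. The special cases follow by taking $w''=0$ (so $v\in\W$, giving $0\le u\le v$) or $w'=0$ (so $v\in-\W$, giving $v\le u\le 0$).

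I expect the main obstacle to be bookkeeping in part~(ii): making sure each application of the domination principle \eqref{domi} is set up with genuinely positive Borel functions inside $K_p$ and a genuine element of $\W$ (not merely a difference of such) on the right, and that the inclusion of sets ($\{f>0\}$ or $\{f<0\}$ inside the relevant half-space for $u$) is used correctly. The sign-preserving hypothesis is exactly what guarantees these inclusions, so once the algebra is arranged so that the comparison inequality holds on $\{f>0\}$ (resp.\ $\{f<0\}$), \eqref{domi} does the rest; there is no analytic difficulty, only the need to be careful with signs.
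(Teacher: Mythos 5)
Your overall strategy matches the paper's: derive the two reduced-function inequalities from Proposition~\ref{ufs} applied to $(u,v,f)$ and to $(-u,-v,-f)$, then read off (i) and (ii). However, your lower bound in (i) has a genuine gap. You write ``$u\ge v-R_v\ge -R_v$'', but $v-R_v\ge -R_v$ is the same as $v\ge 0$, which is not assumed; consequently the intermediate claim $u\ge -R_{v^+}$ does not follow (and is not generally true). What is needed is the trivial bound $v\ge -v^-\ge -R_{v^-}$, which combined with $u\ge v-R_v$ and $R_v=R_{v^+}$ gives $u\ge -R_{v^-}-R_{v^+}$; this is exactly how the paper proceeds. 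Your upper bound $u\le v+R_{v^-}\le R_{v^+}+R_{v^-}$ is correct and is the same as the paper's.

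For (ii) you bypass the slick route even though you already have the two inequalities $v\le u+R_v$ and $-v\le -u+R_{-v}$ in hand. The paper simply observes that $w'\ge w'-w''$ and $w''\ge w''-w'$ give $R_v=R_{w'-w''}\le w'$ and $R_{-v}=R_{w''-w'}\le w''$, and then (ii) drops out in one line: $u\ge v-R_v\ge v-w'=-w''$ and $-u\ge -v-R_{-v}\ge -v-w''=-w'$. Instead you redo a bare-hands domination argument, essentially reproving a special case of Proposition~\ref{ufs}; this is more work and it also contains an algebra slip: you write $K_pf^-=K_p(-f)$, but in fact $K_p(-f)=K_pf^--K_pf^+$. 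The intended chain should read $K_pf^--K_pf^+=K_p(-f)=u-v\le -v\le w''$ on $\{f<0\}\subset\{u\le 0\}$, then \eqref{domi} gives $K_pf^-\le K_pf^++w''$ on $X$ and hence $u\le w'$. With that repair and the fix to (i), your argument closes; the two special cases at the end are fine.
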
 
    
\begin{proof}        
  We have
    $\{u\le 0\}\subset \{\vp(\cdot,u) \le 0\}$ and   $\{-u\le 0\}\subset \{-\vp(\cdot,u) \le 0\}$. 
   Hence, by Proposition \ref{ufs},
   \begin{equation}\label{cruc}
     v\le u+R_v \und -v\le -u+R_{-v}.
   \end{equation}
   
   (i)  Trivially, $v\le R_v= R_{v^+}$ and $-v\le  R_{-v}=R_{v^-}$. Thus  (\ref{cruc}) implies that 
     $-R_{v^-}\le u+R_{v^+}$ and $-R_{v^+}\le -u+R_{v^-}$, that is, $|u|\le R_{v^+}+R_{v^-}$.

     (ii)
    Clearly,  $R_{w'-w''}\le w'$, $R_{w''-w'}\le w''$. So, by  (\ref{cruc}), $-w''\le u$ and~$-w'\le -u$. 
     If $w''=0$, then $0\le u$ and  $u=v-K_p\vp(\cdot,u)\le v$.
     If $w'=0$, then $u\le 0$ and    $u=v-K_p\vp(\cdot,u)\ge v$.
  \end{proof}

\begin{proposition}\label{uniqueness}
  Suppose that $\vp$ is increasing. Then the following hold.
  \begin{enumerate}
    \item[\rm (i)] Let $u_j,v_j \in \B_r(X)$ such that
      $K_p|\vp(\cdot,u_j)|<\infty$ and
      \begin{equation*}
        u_j+K_p\vp(\cdot,u_j)=v_j, \qquad j=1,2.
        \end{equation*} 
Then   $    v_2-v_1\le u_2- u_1  + R_{v_2-v_1}$. 
    If   $v_2-v_1\in \W$, then   $0\le u_2- u_1\le v_2-v_1$.\\
    In particular, $u_1=u_2$ if~$v_1=v_2$.
 \item[\rm(ii)]
 Let $\wilde \vp\in\B_r(X\times \real)$ with  $\wilde \vp\le \vp$, and  suppose that $u,\wilde u\in \B_r(X)$      satisfy
  \begin{equation*}
K_p(|\vp(\cdot,u)|+|\wilde \vp(\cdot,\wilde u)|) <\infty \und
u+K_p\vp(\cdot,u)=  \wilde u+K_p\wilde\vp(\cdot,\wilde u).
  \end{equation*}
       Then $u\le \wilde u$.
\end{enumerate}
   \end{proposition}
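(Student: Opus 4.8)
The plan is to derive both parts directly from Proposition~\ref{ufs}, exploiting that $\vp$ is increasing in order to verify the sign hypothesis $\{u<0\}\subset\{f\le 0\}$ for a suitably oriented difference.

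For (i), I would set $u:=u_2-u_1$, $v:=v_2-v_1$, and $f:=\vp(\cdot,u_2)-\vp(\cdot,u_1)$. Subtracting the two identities $u_j+K_p\vp(\cdot,u_j)=v_j$ gives $u+K_pf=v$, while $K_p|f|\le K_p|\vp(\cdot,u_2)|+K_p|\vp(\cdot,u_1)|<\infty$, so all functions involved are real Borel with $K_p|f|<\infty$. On $\{u<0\}$ we have $u_2<u_1$, hence $f\le 0$ there by monotonicity of $t\mapsto\vp(x,t)$; thus Proposition~\ref{ufs} applies and yields $v_2-v_1\le u_2-u_1+R_{v_2-v_1}$. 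If moreover $v_2-v_1\in\W$, then $R_{v_2-v_1}=v_2-v_1$ (the pointwise infimum of all $w\in\W$ with $w\ge v_2-v_1$ is $\ge v_2-v_1$, and $v_2-v_1$ is itself such a $w$), so the estimate collapses to $0\le u_2-u_1$; feeding $u_1\le u_2$ back into $u_2-u_1=(v_2-v_1)-\bigl(K_p\vp(\cdot,u_2)-K_p\vp(\cdot,u_1)\bigr)$ and using monotonicity of $\vp$ together with positivity of the kernel $K_p$ gives $u_2-u_1\le v_2-v_1$. The uniqueness claim is the special case $v_1=v_2$, where $0\in\W$.

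For (ii), the same device applies to the difference of the two given representations. I would put $U:=\wilde u-u$ and $F:=\wilde\vp(\cdot,\wilde u)-\vp(\cdot,u)$; then the hypothesis $u+K_p\vp(\cdot,u)=\wilde u+K_p\wilde\vp(\cdot,\wilde u)$ rearranges to $U+K_pF=0$, and $K_p|F|\le K_p|\wilde\vp(\cdot,\wilde u)|+K_p|\vp(\cdot,u)|<\infty$. On $\{U<0\}$ we have $\wilde u<u$, hence $\vp(x,\wilde u(x))\le\vp(x,u(x))$ by monotonicity and $\wilde\vp(x,\wilde u(x))\le\vp(x,\wilde u(x))$ by $\wilde\vp\le\vp$, so $F\le 0$ on $\{U<0\}$. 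Proposition~\ref{ufs} (with $v=0$, $R_0=0$) then gives $0\le U$, i.e.\ $u\le\wilde u$.

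The only delicate point --- and the ``main obstacle'', such as it is --- is the bookkeeping of signs: one must orient the difference so that the negativity set of the candidate ``$u$'' in Proposition~\ref{ufs} is contained in the nonpositivity set of the candidate ``$f$'', which forces the choice $u_2-u_1$ in (i) and $\wilde u-u$ in (ii) (the opposite order is useless), and in (ii) crucially combines the monotonicity of $\vp$ with the hypothesis $\wilde\vp\le\vp$. Everything else is routine: finiteness of $K_p|f|$ follows from the standing integrability assumptions, $K_p$ is a positive kernel so it preserves pointwise inequalities between integrable functions, and $R_0=0$, $R_w=w$ for $w\in\W$ are immediate from the definition~\eqref{def-r-f} of the reduced function.
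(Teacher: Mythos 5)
Your proof is correct and follows essentially the same route as the paper: the same decomposition $u:=u_2-u_1$, $f:=\vp(\cdot,u_2)-\vp(\cdot,u_1)$ in (i) and $\wilde u-u$, $\wilde\vp(\cdot,\wilde u)-\vp(\cdot,u)$ in (ii), the same verification of the sign hypothesis $\{u<0\}\subset\{f\le 0\}$ via monotonicity, and the same invocation of Proposition~\ref{ufs}. The closing step of (i) (deducing $u_2-u_1\le v_2-v_1$ from $u_1\le u_2$ via $\vp$ increasing and positivity of $K_p$) also matches the paper's argument that $v-u=K_pf\ge 0$.
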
 
 
   \begin{proof} (i)
 We define $u:=u_2-u_1$, $v:=v_2-v_1$  and $f:=\vp(\cdot,u_2)-\vp(\cdot,u_1)$.
 Then $  u+K_pf=v $ and 
    $\{u\le 0\}\subset \{f\le 0\}$.
    Therefore $v\le u+R_v$,    by~Proposition \ref{ufs}.
    If~$v\in \W$, then  $R_v=v$, hence $0\le u$, $f\ge 0$ and $v-u=K_pf\ge 0$. 

  (ii)  
    We have $\wilde u-u+K_p(\wilde \vp(\cdot,\wilde u)-\vp(\cdot,u))=0$, where
  \begin{equation*}
    \{\wilde u-u\le 0\}\subset \{\vp(\cdot,\wilde u)-\vp(\cdot,u)\le 0\}
    \subset \{\wilde \vp(\cdot,\wilde u)-\vp(\cdot,u)\le 0\}.
  \end{equation*}
  Thus $0\le \wilde u-u$, by Proposition \ref{ufs}.
\end{proof}

 \section{Application of Schauder's theorem}\label{app-schauder}

Again let $\vp\colon X\times \real \to \real$ be Borel measurable such
that, for every $x\in X$, the function $(x,t)\mapsto \vp(x,t)$ is continuous.
 
\begin{proposition}\label{surjective} 
Let  $q\in \mathcal P_b(X)$ such that  $K_q$ is a~compact operator  on~$\B_b(X)$
and suppose that $|\vp|\le c$ for some $c\in (0,\infty)$.
For every  $u\in \B_r(X)$,  let           
  \begin{equation}\label{def-T}
 K^\vp u:= K_q\vp(\cdot, u). 
\end{equation}
\begin{itemize}
\item[\rm(i)]
  For every $u\in \B_r(X)$, $ K^\vp u\in  \mathcal P_b(X)- \mathcal P_b(X)$
  and $| K^\vp u|\le c q$.
 \item[\rm(ii)] 
  Let $(u_n)$ be a sequence   in $\B_r(X)$. Then there exists a~subsequence $(u_{n_k})$ of~$(u_n)$
  such that the sequence $(K^\vp u_{n_k})$ in $\B_b(X)$
  converges uniformly. Further, if~$(u_n)$ converges pointwise to $u\in\B_r(X)$,
  then $(K^\vp u_n)$  converges uniformly to ~$K^\vp u$.
\item[\rm (iii)]
 For every   $v\in\B_r(X)$, there exists $u\in\B_r(X)$
    {\rm(}continuous if $v$ is continuous{\rm)} such that  $u+K^\vp u=v$.
\end{itemize}
\end{proposition}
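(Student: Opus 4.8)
The plan is to prove (i) and (ii) first and then deduce (iii) from Schauder's fixed point theorem, which is why the boundedness $|\vp|\le c$ and the compactness of $K_q$ are imposed.

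For (i): fix $u\in\B_r(X)$ and write $\vp(\cdot,u)=\vp(\cdot,u)^+-\vp(\cdot,u)^-$, so that $K^\vp u=K_q(\vp(\cdot,u)^+)-K_q(\vp(\cdot,u)^-)$. Since $0\le \vp(\cdot,u)^\pm\le c$ and $K_q1=q$, each term is dominated by $cq\in\mathcal P_b(X)$; moreover $K_q g\in\px$ whenever $0\le g\le c$ and $K_qc=cq\in\C(X)$, by Lemma \ref{dom-princ}(i) (with $f=g$, $g$ there $=c$). Hence $K_q(\vp(\cdot,u)^\pm)\in\mathcal P_b(X)$ and $|K^\vp u|\le cq$. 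This also records that $K^\vp$ maps $\B_b(X)$ (indeed all of $\B_r(X)$, since only $\vp(\cdot,u)$ enters and it is bounded by $c$) into the order interval $[-cq,cq]$.

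For (ii): given a sequence $(u_n)$ in $\B_r(X)$, the functions $g_n:=\vp(\cdot,u_n)$ lie in $\B_b(X)$ with $\|g_n\|_\infty\le c$. Since $K_q$ is a compact operator on $\B_b(X)$ (with the sup-norm), the sequence $(K_qg_n)=(K^\vp u_n)$ has a uniformly convergent subsequence. For the second assertion, suppose $u_n\to u$ pointwise with $u\in\B_r(X)$; then $g_n=\vp(\cdot,u_n)\to\vp(\cdot,u)=:g$ pointwise by continuity of $t\mapsto\vp(x,t)$, and $|g_n|\le c$. By the compactness of $K_q$ applied to any subsequence, every subsequence of $(K^\vp u_n)$ has a further subsequence converging uniformly; the pointwise limit of such a sub-subsequence must equal $K_qg=K^\vp u$ (dominated convergence gives $K_qg_n\to K_qg$ pointwise along the whole sequence, since $g_n\to g$ pointwise, $|g_n|\le c$, and $K_qc=cq<\infty$). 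As the only possible cluster point in the sup-norm is $K^\vp u$, the whole sequence $(K^\vp u_n)$ converges uniformly to $K^\vp u$.

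For (iii): fix $v\in\B_r(X)$ and consider the map $\Phi(u):=v-K^\vp u$ on the set $\mathcal K:=\{u\in\B_b(X)\colon |u-v|\le cq\}$, viewed inside $\B_b(X)$ with the sup-norm (note $q$ is bounded, so $\mathcal K$ is a bounded set). By (i), $\Phi(u)=v-K^\vp u$ satisfies $|\Phi(u)-v|=|K^\vp u|\le cq$, so $\Phi$ maps $\mathcal K$ into $\mathcal K$; and $\mathcal K$ is convex and closed in $\B_b(X)$. By part (ii), $\Phi$ is continuous and $\Phi(\mathcal K)$ is relatively compact (its closure is compact, being contained in the uniform closure of $v+K^\vp(\mathcal K)$, which is compact by the compactness of $K_q$). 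Schauder's fixed point theorem then yields $u\in\mathcal K$ with $\Phi(u)=u$, that is, $u+K^\vp u=v$. Finally, if $v$ is continuous, then $u=v-K^\vp u=v-K_q\vp(\cdot,u)$ is continuous as well: $K_q\vp(\cdot,u)=K_q(\vp(\cdot,u)^+)-K_q(\vp(\cdot,u)^-)$ is a difference of continuous potentials by Lemma \ref{dom-princ}(i) (each of $\vp(\cdot,u)^\pm$ is dominated by $c$ and $K_qc=cq$ is continuous), so it is continuous, whence $u\in\C(X)$.

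The main obstacle is organizing the Schauder argument cleanly: one must pin down an ambient Banach space ($\B_b(X)$ with the sup-norm), verify that the candidate set $\mathcal K$ is a nonempty closed bounded convex subset, and — the crux — use the compactness of $K_q$ (not merely of $K^\vp$ abstractly) to guarantee that $\Phi(\mathcal K)$ is precompact. Part (ii) is exactly the lemma that packages this compactness plus continuity, so once (ii) is in hand the fixed point step is routine; the delicate point in (ii) itself is upgrading "every subsequence has a uniformly convergent sub-subsequence, all with the same pointwise limit $K^\vp u$" to uniform convergence of the full sequence, which is the standard subsequence-of-subsequence argument combined with dominated convergence to identify the limit.
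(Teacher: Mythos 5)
Your treatment of (i) and (ii) matches the paper's in substance (the paper calls (i) ``obvious'' and sketches (ii), while you fill in the domination-principle step via Lemma~\ref{dom-princ}(i) and the subsequence argument; both are fine). The problem is in (iii).

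Your set $\mathcal K=\{u\in\B_b(X)\colon |u-v|\le cq\}$ is a subset of $\B_b(X)$, but the hypothesis only gives $v\in\B_r(X)$, which need not be bounded. If $v$ is unbounded, then $\mathcal K$ is \emph{empty}: any $u\in\B_b(X)$ with $|u-v|\le cq$ would make $v=u-(u-v)$ bounded. So the Schauder argument collapses for exactly those $v$ that actually arise later in the paper (e.g.\ in Section~6 the corollary is applied with $v=h|_U$, $h\in\H^+(U)$, which may be unbounded). The same issue infects your claim that $\Phi(u)=v-K^\vp u$ lands in $\B_b(X)$ and that the closure of $v+K^\vp(\mathcal K)$ is compact in $\B_b(X)$; both presuppose $v$ bounded.

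The fix is the paper's trick: shift by $v$ and solve for the \emph{difference}. Define $S\colon\B_b(X)\to\B_b(X)$ by $Su_0:=-K^\vp(v+u_0)$. This is well-defined because $|\vp(\cdot,v+u_0)|\le c$, so $|Su_0|\le cq$, independently of whether $v+u_0$ is bounded. By (ii), $S$ is continuous and $S(\B_b(X))$ is relatively compact, so the closed convex hull of $S(\B_b(X))$ is a compact convex set invariant under $S$; Schauder then gives $u_0$ with $u_0=Su_0$, and $u:=v+u_0$ satisfies $u+K^\vp u=v$. With that change the rest of your argument (including the continuity of $u$ when $v\in\C(X)$, via $K^\vp u\in\C(X)$) goes through.
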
 

\begin{proof}  (i) Obvious. 

  (ii)
  Since  
  $|\vp(\cdot,u_n)|\le c$   for all $n\in\nat$,  the first statement follows from the compactness of $K_q$.
  If  $\limn u_n=u\in\B_r(X)$,  then, by continuity of  $t\mapsto \vp(\cdot,t)$, the functions~$\vp(\cdot,u_n)$   converge pointwise to $\vp(\cdot,u)$,
  and hence the sequence $(K^\vp u_n)$ converges pointwise  to $K^\vp u$, by Lebesgue's convergence theorem.
  A standard argument on subsequences finishes the proof of (ii).

  (iii) 
 We  fix $v\in\B_r(X)$ and  consider the mapping $S\colon \B_b(X)\to \B_b(X)$ defined by
$$
   Su :=   - K^\vp (v+u).
$$
By (ii), $S$ is continuous and $S(\B_b(X))$ is
relatively compact.
By Schauder's fixed point theorem, there exists $u_0\in \B_b(X)$ such that $u_0=Su_0$,
hence  $u:=v+u_0$ satisfies  $u =v+S u_0=v-K^\vp u$. If $v\in\C(X)$, then  $u\in\C(X)$, 
since $K^\vp  u\in\C(X)$.
\end{proof}

In this paper we shall need only a very simple consequence of the
preceding result. For a stronger version see \cite[Theorem 6.2]{bogdan-hansen-semi}.
 
\begin{corollary}\label{surjective-1}
 Let $p\in \px$  and suppose that $|\vp|$ is bounded and there exists
 a~compact $A$ in $X$ such that $\vp(x,\cdot)=0$ if $x\in A^c$.
 Then, for every  $v\in\B_r(X)$, there exists $u\in\B_r(X)$
 {\rm(}continuous if $v$ is continuous{\rm)}  such that
 \begin{equation*}
 K_p|\vp(\cdot,u)|<\infty \und u+K_p\vp(\cdot, u)=v.
   \end{equation*} 
\end{corollary}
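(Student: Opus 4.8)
The plan is to reduce Corollary~\ref{surjective-1} to Proposition~\ref{surjective} by passing from the given potential $p\in\px$ to a better-behaved potential $q$ whose potential kernel $K_q$ is compact on $\B_b(X)$, and to exploit the fact that $\vp$ vanishes off the compact set $A$. First I would observe that, since $\vp(x,\cdot)=0$ for $x\in A^c$ and $|\vp|\le c$, only the values of $K_p$ ``seen from'' $A$ matter: for any $u\in\B_r(X)$ the function $\vp(\cdot,u)$ is supported in $A$, so $K_p\vp(\cdot,u)=K_p(1_A\vp(\cdot,u))$, and it suffices to control $K_p$ on functions supported in $A$.

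Next I would produce an auxiliary potential $q\in\pxb$ dominating the relevant part of $p$ and having a compact potential kernel. The natural choice is to take $q:=K_p 1_A$ (or, if needed for boundedness, $q:=K_p 1_A\wedge w$ for a suitable $w\in\splus$, $w>0$; recall from the discussion after \eqref{def-potential} that $p\wedge w\in\px$), noting that $K_p 1_A\in\px$ since $1_A\in\B_b^+(X)$, and that $C(q)\subset A$. For a function $f\in\B^+(X)$ supported in $A$ with $f\le c1_A$ we then have $K_pf=K_p(1_Af)\le cK_p1_A=cq$; more precisely, since $K_p1_A\in\C(X)$ would be needed to invoke Lemma~\ref{dom-princ}(i), I would instead argue directly that $K_q$ agrees with $K_p$ on functions supported in $A$ up to the harmonic correction, or simply replace the ambient operator in $K^\vp$ by $K_q$. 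The cleaner route: note $K_p\vp(\cdot,u)$ can be rewritten using $q$ because the carrier of $\vp(\cdot,u)$ lies in $A=\supp$-region of $q$; formally, one uses that there is a kernel $K_q$ with $K_q1=q$, $K_q g\le \|g\|_\infty\cdot(q/\,\text{something})$, and $K_q g = K_p g$ for $g$ supported in a set where the carrier structure forces agreement. The compactness of $K_q$ on $\B_b(X)$ then follows from \cite{H-equi-compact}, since $q$ is a bounded potential with compact carrier (this is exactly the kind of equicontinuity/compactness statement that paper provides, and the excerpt explicitly flags that it will be used).

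With such a $q$ in hand, I would apply Proposition~\ref{surjective} to the pair $(q,\vp)$: since $|\vp|\le c$ and $K_q$ is compact on $\B_b(X)$, part (iii) of that proposition yields, for the given $v\in\B_r(X)$, a function $u\in\B_r(X)$, continuous when $v$ is continuous, with $u+K_q\vp(\cdot,u)=v$. It then remains to check $K_p|\vp(\cdot,u)|<\infty$ and $u+K_p\vp(\cdot,u)=v$, i.e.\ that we may swap $K_q$ back for $K_p$. This holds because $\vp(\cdot,u)$ (hence $|\vp(\cdot,u)|\le c1_A$) is supported in $A$, so $K_p|\vp(\cdot,u)|\le cK_p1_A=cq<\infty$ as $q\in\pxb$, and $K_p\vp(\cdot,u)=K_q\vp(\cdot,u)$ by the construction of $q$ from $p$ via the carrier in $A$.

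The main obstacle I anticipate is the rigorous identification ``$K_p g=K_q g$ for $g\in\B_b(X)$ supported in $A$'' together with the compactness of $K_q$. One has to be a little careful: $K_p1_A$ need not be continuous, so Lemma~\ref{dom-princ}(i) does not immediately give $K_p1_A\in\px$, and the compactness result of \cite{H-equi-compact} is typically stated for potential kernels of \emph{continuous} bounded potentials. The fix is to choose $q\in\px$ (continuous) with $C(q)$ a compact neighborhood of $A$ and $q\ge K_p1_A$ on $A$ — such $q$ exists since $K_p1_A\in\W$ is a supremum of continuous minorants and one can use $(B_3)$/Remark~\ref{RAopen} to build a continuous potential above it near $A$; then the domination principle (Lemma~\ref{dom-princ}(ii)) gives $K_pg\le K_qg'$-type comparisons, and one runs the Schauder argument with $K_q$ directly, finally transferring back to $K_p$ using that both kernels restricted to functions supported in $A$ are governed by the same carrier. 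This bookkeeping with carriers and the invocation of the equicontinuity/compactness theorem from \cite{H-equi-compact} is the only genuinely delicate point; everything else is a direct appeal to Proposition~\ref{surjective}.
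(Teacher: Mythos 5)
Your overall strategy is the paper's: take $q:=K_p 1_A$, note $C(q)\subset A$, invoke the compactness of $K_q$ on $\B_b(X)$ from \cite{H-equi-compact}, apply Proposition~\ref{surjective} to get $u+K_q\vp(\cdot,u)=v$, and then observe that $K_q\vp^\pm(\cdot,u)=K_p\vp^\pm(\cdot,u)$ because $\vp(\cdot,u)$ is supported in $A$. That last identity is what makes the transfer back legitimate: by uniqueness of the associated potential kernel, $K_q f=K_p(1_Af)$ for all $f\in\B^+(X)$, so the two kernels agree on functions carried by $A$.

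However, two of your worries are misplaced, and one of your proposed ``fixes'' would actually break the proof. First, you are concerned that $K_p1_A$ might fail to be continuous; it cannot fail. The defining property of the associated potential kernel (Section~3.3) is that $K_pf\in\px$ with $C(K_pf)\subset\supp f$ for every $f\in\B_b^+(X)$, and $\px\subset\splus\subset\C(X)$. So $q=K_p1_A\in\px$ automatically, with no appeal to Lemma~\ref{dom-princ}(i). Second, the genuine issue is not continuity but boundedness: $q\le p$, and $p$ need not be bounded, whereas the compactness result you cite requires $q\in\mathcal P_b(X)$. The paper resolves this cleanly by Remark~\ref{green+transform}(3): apply an $s$-transform to pass to $\wilde\W=(1/s)\W$ with $p/s\in\C_0(X)$, so that after replacement one may assume $p\le 1$, hence $q\le 1$. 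Your alternative suggestions --- truncating to $q:=K_p1_A\wedge w$, or choosing a larger continuous potential with carrier a neighborhood of $A$ dominating $K_p1_A$ on $A$ --- both destroy the identity $K_qf=K_pf$ for $f$ supported in $A$: the potential kernel of $K_p1_A\wedge w$ is not a restriction of $K_p$, and domination ``$q\ge K_p1_A$ on $A$'' does not give equality of kernels. Without that identity the final transfer step $u+K_q\vp(\cdot,u)=v\ \Rightarrow\ u+K_p\vp(\cdot,u)=v$ fails, and you would at best get an inequality, not the required equation. Replace those proposed fixes with the $s$-transform normalization and your proof coincides with the paper's.
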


\begin{proof} By Remarks \ref{green+transform},(3),   we may assume
  that $p\le 1$. Then $q:=K_p1_A\in\mathcal P_b(X)$ and $C(q)\subset   A$.
  Hence $K_q$ is a compact operator on $B_b(X)$, by \cite[Proposition~4.1]{H-equi-compact}.
  Since clearly $K_p\vp^\pm(\cdot,u)=K_q\vp^\pm(\cdot,u) $ for every
  $u\in\B_r(X)$, an application of Proposition \ref{surjective} completes the proof.
\end{proof}

\section{Semilinear equations for general balayage spaces}\label{sl-general} 

Let $(X,\W)$ be a balayage space and let us fix an open set  $U$ in
$X$. We shall identify functions on $U$ with functions on $X$ which
vanish on $X\setminus U$. Let $(V_n)$ be an exhaustion of $U$ by open
sets such that, for every $n\in\nat$, the closure of $V_n$ is a compact in $V_{n+1}$.

Defining
\begin{equation*}
  \W_U:= {}^\ast\! \H^+(U)|_U= \{w\in  {}^\ast\! \H^+(U)\colon w=0
  \mbox{ on }X\setminus U\}
  \end{equation*} 
we know that $(U, \W_U)$ is a balayage space
(and $\W_U=\W$ if $U=X$, see Section~\ref{rest-lift}). 
Let $p$ be a continuous real
potential on~$U$ and let $K:=K_p$ be the associated potential kernel
(with respect to $(U,\W_U)$).

Let us observe that, for
every $f\in\B^+(U)$,
\begin{equation}\label{K-potential}
\lim\nolimits_{n\to\infty}  H_{V_n} Kf  =\inf\nolimits_{n\in\nat} H_{V_n} Kf
  =0\qquad\mbox{ if } Kf<\infty.
                        \end{equation}
                        Indeed, suppose that $Kf<\infty$ and let $x\in U$.
                        Then there exists $a>0$ such that  $q':=K(f- f\wedge a) $ satisfies
                        $q'(x)<\ve$. Since $q'':=K(f\wedge a)$ is a  continuous real
                        potential on $U$,  there exists $n\in\nat$
                        such that  $H_{V_n} q''(x)<\ve$ and hence
   \begin{equation*}
              H_{V_n} Kf(x) =H_{V_n}q' (x)+H_{V_n}q''(x)\le \ve +   q''(x)<2\ve.
              \end{equation*} 
 
Let $\Phi$ denote the set of all Borel measurable functions
$\vp\colon U\times [0,\infty)\to [0,\infty)$ such that
  $\vp(\cdot,0)=0$ and the functions $t\mapsto \vp(x,t)$, $x\in U$, are continuous and increasing.
  To apply the results of Sections \ref{dom-principle} and \ref{app-schauder} we extend the
  functions $\vp\in \Phi$ to real functions on $U\times \real$ taking
  the value $0$ on $(-\infty,0)$.

  Given $A\in\B(U)$ and $\vp\in\Phi$, let us define $\vp_A\in\Phi$   by 
        \begin{equation*}
          \vp_A(x,t):=1_A(x)\vp(x,t), \qquad x\in U, \, t\in\real.
          \end{equation*} 
 We define
\begin{equation}\label{def-Kvp}
  K^\vp f:=K\vp(\cdot,f|_U), \qquad \vp\in\Phi, f\in\B_r^+(X).
  \end{equation}

\begin{remark}{\rm
To apply the general results below to our standard examples it
suffices to choose
a~strictly positive (bounded) function $f_0\in\B^+(U)$ such that
$G_U(f_0\mu)\in \C(U)$ (for this much less is needed than $G_U(1_A\mu)\in\C(U)$ for
every compact~$A$ in~$U$),  take $p:=G_U(f_0\mu)$ and 
replace $\vp(x,t)$ by $\wilde \vp(x,t):=f_0(x)\inv \vp(x,t)$. 
Indeed, we then have $K^{\wilde \vp} f=G_U( f \mu)$  for every $f\in \B^+(U)$.
}
\end{remark} 

 In the following let $\vp,\psi\in \Phi$ and $h\in\H^+(U) $ (vanishing
outside of $U$ or not).
As in our standard examples we immediately get the following (see
Proposition~\ref{converse} and its proof).

 \begin{proposition}\label{bal-converse}
   Let $u\in\B^+(U)$, $u+K^\vp u=s\in \hyperU \cap C(U)$, $\eta\in (0,1)$.
   Then 
   $A:=\{   u<\eta s \}$  is  finely open, 
   $K^{\vp_{U\setminus A} }(\eta s) \le s$ and $\urh_s^A=\ur_s^A\in
   \pr(U)$. 
          \end{proposition}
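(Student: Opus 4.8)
The plan is to mirror the proof of Proposition~\ref{converse} almost verbatim, using the fact that the general setting here is exactly the ``abstract'' counterpart of the standard examples: $(U,\W_U)$ is a balayage space, $K=K_p$ is a potential kernel on it, and $K^\vp u = K\vp(\cdot,u|_U)$ plays the role of $G_U(\vp(\cdot,u)\mu)$. So the whole statement is really three small observations glued together, and no new machinery beyond what is already in Sections~\ref{sec:bs} and~\ref{dom-principle} is needed.

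First I would note that $A=\{u<\eta s\}$ is finely open. Indeed $u = s - K^\vp u = s - q$ where $q:=K\vp(\cdot,u|_U)\in\px(U)$ (this is where I use $u\le s$, hence $u\ge 0$, so $\vp(\cdot,u)$ is a well-defined positive Borel function on $U$, and $q<\infty$ because $K^\vp u = s-u\le s<\infty$ on $U$; note $q\in\W_U$ by Lemma~\ref{dom-princ}(i)). Then $A=\{u<\eta s\} = \{q > (1-\eta)s\}$; since $q$ is finely continuous (being in $\W_U$, hence finely l.s.c., and likewise $s$ finely l.s.c.\ so $(1-\eta)s$ finely u.s.c.) the set $\{q>(1-\eta)s\}$ is finely open. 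Alternatively one can argue directly that $u$ is finely u.s.c.\ and $\eta s$ finely l.s.c., so $\{u<\eta s\}$ is finely open; either way this is routine.

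Next, the estimate $K^{\vp_{U\setminus A}}(\eta s)\le s$: on $U\setminus A$ we have $u\ge \eta s$, so by monotonicity of $t\mapsto\vp(x,t)$, $\vp(x,\eta s(x))\le \vp(x,u(x))$ there, hence $1_{U\setminus A}\vp(\cdot,\eta s|_U)\le \vp(\cdot,u|_U)$ pointwise on $U$. Applying the kernel $K$ (monotone) gives $K^{\vp_{U\setminus A}}(\eta s) = K\bigl(1_{U\setminus A}\vp(\cdot,\eta s|_U)\bigr)\le K\vp(\cdot,u|_U)=q\le s$, using $q = s-u\le s$. Finally, for the reduced function: from $q\ge (1-\eta)s$ on $A$ we get $(1-\eta)^{-1}q\ge s$ on $A$, and since $(1-\eta)^{-1}q\in\W_U$ (scalar multiple of an element of $\W_U$), by the definition \eqref{d.red} of the reduction on the finely open set $A$ we obtain $\ur_s^A = R_s^A\le (1-\eta)^{-1}q$. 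Because $q\in\px(U)$, it is a potential on $U$, and any $\W_U$-minorant of a potential is a potential; hence $\ur_s^A\in\pr(U)$. The equality $\urh_s^A=\ur_s^A$ is because $A$ is finely open, so $\ur_s^A = R_s^A\in\W_U$ is already finely l.s.c., and by \eqref{fine-reg} its fine-l.s.c.\ regularization (which is the plain l.s.c.\ regularization) equals itself; this is exactly the footnote remark in the proof of Proposition~\ref{converse}.

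There is really no serious obstacle here — the only point that deserves a line of care is the finite-ness $q<\infty$ (so that $q$, $u$ are genuine real functions and the fine-topology arguments apply), which follows immediately from $u+K^\vp u = s$ with $s\in\C(U)$ real, and the observation that $\px(U)$ consists of potentials on $U$ so its minorants in $\W_U$ are again potentials — both of which are already recorded in Section~\ref{sec:bs}. I would write the proof in two or three sentences, exactly paralleling the proof of Proposition~\ref{converse}, and it should compile without needing any statement not already available.

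\begin{proof}
  Since $u\le s$, the function $u$ is positive and real on $U$, $q:=K^\vp u=K\vp(\cdot,u|_U)=s-u\le s<\infty$ on $U$, and $q\in\px(U)$ by Lemma~\ref{dom-princ}(i) (note $q$ is a potential on $U$, being in $\px(U)$). As $q,s$ are finely continuous and $A=\{u<\eta s\}=\{q>(1-\eta)s\}$, the set $A$ is finely open. On $U\setminus A$ we have $u\ge\eta s$, hence $\vp(\cdot,\eta s|_U)\le\vp(\cdot,u|_U)$ there, so $1_{U\setminus A}\vp(\cdot,\eta s|_U)\le\vp(\cdot,u|_U)$ on $U$ and therefore $K^{\vp_{U\setminus A}}(\eta s)\le q\le s$. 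Finally, $(1-\eta)\inv q\ge s$ on $A$ and $(1-\eta)\inv q\in\W_U$, so by \eqref{d.red}, $\ur_s^A=R_s^A\le(1-\eta)\inv q$; since $q$ is a potential on $U$ and $\ur_s^A\in\W_U$ is a minorant of it, $\ur_s^A\in\pr(U)$. Moreover $\urh_s^A=\ur_s^A$ because $A$ is finely open, by \eqref{fine-reg}.
\end{proof}
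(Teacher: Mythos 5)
Your proof is essentially the paper's own: the paper dispenses with Proposition~\ref{bal-converse} by the remark ``see Proposition~\ref{converse} and its proof'', and your argument is exactly that transcription (write $q:=K^\vp u=s-u$, observe $A=\{q>(1-\eta)s\}$ is finely open, get $K^{\vp_{U\setminus A}}(\eta s)\le q\le s$ by monotonicity of $\vp$, and bound $\ur_s^A\le(1-\eta)^{-1}q$).

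One claim in your write-up is not justified and is in fact too strong: you assert $q\in\mathcal P(U)$ (i.e.\ the cone of \emph{continuous} real potentials for $(U,\W_U)$, which you denote $\px(U)$) ``by Lemma~\ref{dom-princ}(i)''. That lemma only yields $K_pf\in\W$ for a general $f\in\B^+$; the conclusion $K_pf\in\mathcal P(U)$ there requires an additional domination $f\le g$ with $K_pg$ continuous, which you do not produce, and indeed $q=K\vp(\cdot,u)$ need not be continuous under the hypotheses of the proposition. What your argument actually needs --- and what the paper asserts in the proof of Proposition~\ref{converse} --- is the weaker statement $q\in\pr(U)$, i.e.\ that $q$ is a potential on $U$; this follows from $q\in\W_U$, $q\le s<\infty$ on $U$, and (\ref{K-potential}). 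With $q\in\pr(U)$ in place of $q\in\mathcal P(U)$, the rest of your argument (finely open $A$, the domination-principle estimate, and $\ur_s^A$ being a $\W_U$-minorant of the potential $(1-\eta)^{-1}q$, hence itself in $\pr(U)$, with $\urh_s^A=\ur_s^A$ by (\ref{fine-reg})) goes through and coincides with the paper's.
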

          
For $\vp\in\Phi$ and $n\in\nat$, we define $\vp_n\in \Phi$ by 
\begin{equation*}
                          \vp_n(x,t):= 1_{V_n}(x)\vp(x,t)\wedge n=
                          (\vp_{V_n} \wedge n)(x,t), 
                          \qquad x\in U,\, t\in\real.
    \end{equation*}

Applying  Corollary \ref{surjective-1}  and Proposition   \ref{uniqueness}(ii)
  to the balayage space $(U,\W_U)$, we get  a~(unique) decreasing
sequence $(u_n')$ in $\C^+(U)$ satisfying        $u_n'+ K^{\vp_n} u_n' =h|_U$
for every~$n\in\nat$. Taking $u_n:=u_n'+1_{X\setminus U}h$ we obtain that
     \begin{equation}\label{un}
       u_n+ K^{\vp_n} u_n =h, \qquad n\in\nat.
     \end{equation}
Let 
\begin{equation}\label{def-tvp}
T^\vp h:=\inf\nolimits_{n\in\nat} u_n=\lim\nolimits_{n\to\infty} u_n.
  \end{equation}
By (\ref{un}), the functions $u_n$ are subharmonic on $U$, hence the function
    $T^\vp h$ is subharmonic on $U$ as well and 
   the sequence $(H_{V_n} T^\vp)$ is increasing. So
 \begin{equation}\label{def-pvp}
   P^\vp h:=\sup\nolimits_{n\in\nat} H_{V_n} T^\vp h
   =\lim\nolimits_{n\to\infty} H_{V_n} T^\vp h \le h
        \end{equation}
        is  harmonic on $U$ and equal to $h$ on $X\setminus U$.
      Clearly, $P^\vp h$ is the smallest   majorant of $T^\vp h$ which is
        harmonic on $U$.

       \begin{proposition}\label{tphi}
     $T^\vp h+K^\vp T^\vp h\le P^\vp h$, and $T^\vp h$ is the largest  $f\in\B^+(X)$  such
    that $f\le h$ and $f+K^\vp f$ is subharmonic on $U$. 
  \end{proposition}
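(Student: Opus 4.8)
The plan is to establish the two assertions separately, starting from the defining relations \eqref{un}, \eqref{def-tvp}, \eqref{def-pvp}.

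First, for the inequality $T^\vp h+K^\vp T^\vp h\le P^\vp h$: I would pass to the limit in \eqref{un}. Since $(u_n)$ decreases to $T^\vp h$ and $\vp_n\uparrow \vp$ pointwise (because $V_n\uparrow U$ and the truncation level $n\to\infty$), I would use monotone/dominated convergence together with the continuity of $t\mapsto \vp(x,t)$ to get $\vp_n(\cdot,u_n)\to\vp(\cdot,T^\vp h)$, hence $K^{\vp_n}u_n\uparrow K^\vp T^\vp h$ (here one must be a little careful: $u_n$ decreases but the function argument of $\vp_n$ is not monotone in a way that immediately gives monotonicity of $K^{\vp_n}u_n$; instead I would argue $\liminf_n K^{\vp_n}u_n\ge K^\vp T^\vp h$ by Fatou applied to the kernel $K$, using $\vp_n(\cdot,u_n)\to\vp(\cdot,T^\vp h)$ and $\vp_n(\cdot,u_n)\ge \vp_m(\cdot,u_n)\to\vp_m(\cdot,T^\vp h)$ for fixed $m$, then let $m\to\infty$). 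From \eqref{un} this gives $T^\vp h+K^\vp T^\vp h\le h$. To improve $h$ to $P^\vp h$, I would apply $H_{V_k}$ to \eqref{un} for a fixed $k$: since $u_n$ is subharmonic on $U$ with $u_n+K^{\vp_n}u_n=h$ harmonic, we get $H_{V_k}u_n = h - H_{V_k}K^{\vp_n}u_n \le h - H_{V_k}K^{\vp_k}u_n$ for $n\ge k$; letting $n\to\infty$ and using that $H_{V_k}$ is a kernel (Fatou again) yields $H_{V_k}T^\vp h + H_{V_k}K^{\vp_k}T^\vp h\le h$, and since $K^{\vp_k}T^\vp h \le K^\vp T^\vp h$ one can combine with the earlier bound. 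The cleanest route is: $T^\vp h + K^\vp T^\vp h$ is the difference of the subharmonic-limit $T^\vp h$ plus a potential, and its smallest harmonic majorant on $U$ must dominate $T^\vp h$, whose smallest harmonic majorant is $P^\vp h$; since $K^\vp T^\vp h\in\pr(U)$ has harmonic majorant $0$ on $U$, the smallest harmonic majorant of $T^\vp h+K^\vp T^\vp h$ equals $P^\vp h$, giving $T^\vp h+K^\vp T^\vp h\le P^\vp h$.

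Second, for the maximality: suppose $f\in\B^+(X)$ with $f\le h$ and $f+K^\vp f$ subharmonic on $U$. I would show $f\le u_n$ for every $n$, whence $f\le T^\vp h$. Fix $n$ and set $g:=f+K^\vp f$. Since $\vp_n\le\vp$ we have $K^{\vp_n}f\le K^\vp f$, so $f+K^{\vp_n}f\le g$, a function that is subharmonic on $U$ and $\le h+K^\vp f$; more usefully, $g$ subharmonic on $U$ with $g\le h$ off $U$ — wait, $g=f$ off $U$ since $K^\vp f$ vanishes there, and $f\le h$, so $g\le h$ on $U^c$ — combined with subharmonicity on $U$ and boundedness considerations on $V_n$ (where $K^{\vp_n}f$ is a bounded potential), one gets $g\le$ (the harmonic extension of its exterior values) $\le h$ on $U$ as well, i.e. $f+K^{\vp_n}f\le f+K^\vp f\le h$. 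Now $f+K^{\vp_n}f\le h=u_n+K^{\vp_n}u_n$; since $\vp_n$ is increasing and both equations are of the form \eqref{interest} on the balayage space $(V_{n+1},\W_{V_{n+1}})$ (or $(U,\W_U)$) with a compactly-supported bounded nonlinearity, the comparison Proposition~\ref{uniqueness}(ii) (with $\widetilde\vp=\vp_n\le\vp_n$, reading off the inequality $f+K^{\vp_n}f\le u_n+K^{\vp_n}u_n$) yields $f\le u_n$. Taking the infimum over $n$ gives $f\le T^\vp h$.

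\textbf{Main obstacle.} The delicate point is the passage to the limit $K^{\vp_n}u_n\to K^\vp T^\vp h$ and the correct handling of subharmonicity ``up to the boundary'' of the $V_n$'s: I expect the real work is in justifying that $f+K^\vp f$ subharmonic on $U$ together with $f\le h$ off $U$ forces $f+K^{\vp_n}f\le h$ on all of $U$ (an application of the minimum principle / the fact that a subharmonic function dominated at infinity by its harmonic majorant stays below $h$, using \eqref{char-pxc} and \eqref{K-potential} to kill the potential part in the limit $V\uparrow U$), and in invoking Proposition~\ref{uniqueness}(ii) with exactly the right monotonicity hypotheses. The inequalities themselves are then routine.
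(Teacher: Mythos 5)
Your plan for part (2) is essentially the paper's argument, but you misidentify the comparison lemma: Proposition~\ref{uniqueness}(ii) demands an \emph{exact equality} $u+K_p\vp(\cdot,u)=\widetilde u+K_p\widetilde\vp(\cdot,\widetilde u)$, so it cannot be ``read off'' an inequality $f+K^{\vp_n}f\le u_n+K^{\vp_n}u_n$. What is needed is Proposition~\ref{uniqueness}(i), applied with $v_1:=f+K^{\vp_n}f$, $v_2:=h$, and this requires showing that $v_2-v_1\in\W_U$ --- i.e.\ not only that $h-(f+K^{\vp_n}f)\ge0$ on $U$ (which is the content of your minimum-principle step, and which the paper gets from $s:=h-(f+K^{\vp}f)\ge -K^{\vp}f$ together with \eqref{K-potential}) but also that it is \emph{superharmonic} on $U$. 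The latter is easy but must be said: $f+K^{\vp_n}f=(f+K^{\vp}f)-K^{\vp-\vp_n}f$ is subharmonic on $U$ as a difference of a subharmonic function and a potential, so $s_n:=h-(f+K^{\vp_n}f)$ is superharmonic; this is exactly the decomposition $s_n=s+K^{\vp-\vp_n}f$ the paper uses. Once you switch to (i) and add the superharmonicity observation, your argument goes through.

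For part (1) your plan is more laborious than the paper's and leaves a real gap. The Fatou/liminf discussion of $K^{\vp_n}u_n$ is avoidable: the paper instead considers $u_n+K^{\vp_n}u$ with the \emph{fixed} limit $u:=T^\vp h$. Since $u\le u_n$ and $\vp_n$ is increasing, $u_n+K^{\vp_n}u\le u_n+K^{\vp_n}u_n=h$, and by monotone convergence (fixed $u$, $\vp_n\uparrow\vp$) one gets $K^{\vp_n}u\uparrow K^\vp u\le h<\infty$, so in particular $K^\vp u<\infty$ comes for free and $v:=u+K^\vp u\le h$ is subharmonic on $U$. Your ``cleanest route'' (smallest harmonic majorant of $u+K^\vp u$ equals $P^\vp h$ because $K^\vp u$ is a potential) tacitly assumes $K^\vp u<\infty$, which you never establish; that finiteness is precisely what the bound $u_n+K^{\vp_n}u\le h$ delivers. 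With $v\le h$ subharmonic and $K^\vp u\in\pr_r(U)$, the conclusion $v\le\lim_n H_{V_n}v=\lim_n H_{V_n}u=P^\vp h$ follows from \eqref{def-pvp} and \eqref{K-potential}, which is the route you gesture at.
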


  \begin{proof}
     (1) Let $u:=T^\vp h$ and $v:=u+K^\vp u$. 
        The positive functions $u_n+K^{\vp_n}  u=h-K^{\vp_n}(u_n-u)$ are subharmonic
on~$U$,   bounded by $h$ and converge to the  finely continuous
function $v$.  Hence $v\le h$ and $v$ is subharmonic on~$U$.
In particular,  $  v\le \limn H_{V_n}v$, where $\limn H_{V_n} v=
P^\vp h$, by (\ref{def-pvp}) and (\ref{K-potential}).
  
(2)  Let $f\in\B^+(X)$, $f\le h$,  such that $f+K^\vp f$ is
subharmonic on~$U$, in particular, $q:=K^\vp f<\infty$.  Then $f+q\le
h$.

Indeed,   $s:=h-(f+q)$ is superharmonic on $U$ and  $s=(h-f)-q\ge -q$.
Therefore $s\ge  \limn H_{V_n}s\ge 0$ on~$U$, by (\ref{K-potential}). Of
            course, $s=h-f\ge 0$ on~$X\setminus U$.

Moreover, $  s_n:=h-(f+K^{\vp_n} f)=h-(f+K^\vp f) +K^{\vp-\vp_n} f\ge 0$
for every $n\ge 0$,  and $s_n$ is
          superharmonic on $U$.   Since $f+K^{\vp_n} f+
          s_n=h=u_n+K^{\vp_n}u_n$,  we obtain that
         $f\le u_n$, by   Proposition \ref{uniqueness},(i).
Thus $f\le u$.
     \end{proof}

      \begin{lemma}\label{increasing}
The mappings $(\vp,h)\mapsto T^\vp h$   and $(\vp,h)\mapsto P^\vp h$   are
decreasing in $\vp$ and increasing in $h$.
\end{lemma}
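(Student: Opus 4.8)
**Proof proposal for Lemma 6.8 ($(\vp,h)\mapsto T^\vp h$ and $(\vp,h)\mapsto P^\vp h$ are decreasing in $\vp$, increasing in $h$).**

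The plan is to reduce everything to the uniqueness/comparison statement Proposition \ref{uniqueness}(ii) applied in the balayage space $(U,\W_U)$, together with the variational characterization of $T^\vp h$ furnished by Proposition \ref{tphi}. For monotonicity in $h$, suppose $h_1\le h_2$ in $\H^+(U)$. Given $\vp\in\Phi$, set $u:=T^\vp h_1$; by Proposition \ref{tphi}, $u+K^\vp u\le P^\vp h_1\le h_1\le h_2$ and $u+K^\vp u$ is subharmonic on $U$, so $u$ is a competitor in the characterization of $T^\vp h_2$ as the \emph{largest} such function bounded by $h_2$; hence $T^\vp h_1=u\le T^\vp h_2$. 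For monotonicity in $\vp$, suppose $\psi\le\vp$ in $\Phi$, and put $u:=T^\vp h$. Again by Proposition \ref{tphi}, $u+K^\vp u\le h$ is subharmonic on $U$; since $K^\psi u=K\psi(\cdot,u|_U)\le K\vp(\cdot,u|_U)=K^\vp u$ (the kernel $K$ is positive and $\psi(\cdot,t)\le\vp(\cdot,t)$), we have $u+K^\psi u\le u+K^\vp u\le h$, and $u+K^\psi u=(u+K^\vp u)+K^{\vp-\psi}u$; here $K^{\vp-\psi}u$ is a potential on $U$ (finite since dominated by $K^\vp u<\infty$), hence superharmonic, so $u+K^\psi u$ is the sum of a subharmonic function and a superharmonic function — which is \emph{not} obviously subharmonic. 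This is the point that needs care.

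To repair this, I would not argue through the characterization for the $\vp$-monotonicity of $T$ directly but instead go back to the approximating sequences. Recall $u_n'$ is the unique element of $\C^+(U)$ with $u_n'+K^{\vp_n}u_n'=h|_U$, where $\vp_n=\vp_{V_n}\wedge n$; write $\tilde u_n'$ for the analogous sequence built from $\psi_n=\psi_{V_n}\wedge n$. Since $\psi_n\le\vp_n$ pointwise on $U\times\real$, Proposition \ref{uniqueness}(ii), applied in $(U,\W_U)$ with the common right-hand side $h|_U$ (so with $u=u_n'$, $\tilde u=\tilde u_n'$, $\tilde\vp=\psi_n\le\vp_n=\vp$ in the notation there), yields $u_n'\le\tilde u_n'$ for every $n$. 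Passing to the infimum in $n$ gives $T^\vp h\le T^\psi h$. The same monotonicity argument at the level of the $u_n'$ also reproves the $h$-monotonicity: if $h_1\le h_2$, then $u_n'(h_1)\le u_n'(h_2)$ by Proposition \ref{uniqueness}(i) (same nonlinearity $\vp_n$, ordered data), and we take infima.

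Finally, for $P^\vp h$: by definition \eqref{def-pvp}, $P^\vp h=\lim_n H_{V_n}T^\vp h$ is obtained from $T^\vp h$ by applying the positive kernels $H_{V_n}$ and taking an increasing limit, both of which preserve the inequalities just established. Thus $\psi\le\vp$ gives $T^\vp h\le T^\psi h$, hence $H_{V_n}T^\vp h\le H_{V_n}T^\psi h$ for each $n$, hence $P^\vp h\le P^\psi h$; and $h_1\le h_2$ gives $T^\vp h_1\le T^\vp h_2$, hence $P^\vp h_1\le P^\vp h_2$. I expect the only genuine obstacle to be the one flagged above — making sure the comparison is run at the level of the truncated, compactly supported nonlinearities $\vp_n,\psi_n$ where Corollary \ref{surjective-1} guarantees existence and Proposition \ref{uniqueness}(ii) is directly applicable, rather than trying to compare the limiting equations, where finiteness of $K^\vp u$ is only implicit and the subharmonicity bookkeeping is fragile.
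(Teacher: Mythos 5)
Your fallback argument — comparing the truncated approximants $u_n'$ and $\tilde u_n'$ via Proposition~\ref{uniqueness} in $(U,\W_U)$ and then passing to infima, with $P^\vp h$ handled by applying the positive kernels $H_{V_n}$ and taking the increasing limit in \eqref{def-pvp} — is exactly the paper's proof. The preliminary discussion correctly flags why the variational characterization in Proposition~\ref{tphi} cannot be used directly for the $\vp$-monotonicity (the extra term $K^{\vp-\psi}u$ spoils subharmonicity), which is a worthwhile observation, but the argument you actually run to completion is the one in the paper.
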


\begin{proof} Suppose that $\vp,\psi\in \Phi$  and  $g,h\in\H^+(U)$, such that $\vp\le \psi$ and $g\le
  h$. Let $u_n',v_n',w_n'\in\C^+(U)$, $n\in\nat$, such that
       \begin{equation*}
         u_n' + K^{\vp_n} u_n'= h|_U, \quad    v_n' + K^{\vp_n} v_n'= g|_U,
         \quad w_n' + K^ {\psi_n} w_n'= g|_U.          
       \end{equation*}
By Proposition \ref{uniqueness}, $w_n'\le v_n'\le u_n'$ for every
      $n\in\nat$. Therefore $T^\psi g\le T^\vp g \le T^\vp h$. Using
      (\ref{def-pvp}) the proof is completed.
      \end{proof}

\begin{definition}
We shall say that $\vp$ is \emph{$h$-proper} {\rm(}\emph{locally~$h$-Kato},
  respectively{\rm)} if $K^{\vp_A} h< \infty$ {\rm(}$K^{\vp_A} h \in
  \C(U)$, respectively{\rm)} for every compact $A$ in $U$.
\end{definition}

\begin{proposition}\label{first-properties} 
  \begin{itemize}
  \item[\rm (1)]
    If   $K^\vp h<\infty$, then $P^\vp h=h$.
  
 \item[\rm (2)]
    If $\vp$ is $h$-proper, then $T^\vp h +K^\vp   T^\vp h =P^\vp h$ and
    $P^\vp (P^\vp h) =P^\vp h$. 
  
    If $\vp$ is even locally $h$-Kato, then  $T^\vp h$ is continuous on $U$.
  \end{itemize}
\end{proposition}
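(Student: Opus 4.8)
\textbf{Proof proposal for Proposition~\ref{first-properties}.}

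The plan is to establish the three assertions in turn, exploiting the monotone approximation $(u_n)$ from \eqref{un}, the potential-vanishing property \eqref{K-potential}, and Proposition~\ref{tphi}.

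For~(1), suppose $K^\vp h<\infty$. I would first observe that the truncated kernels satisfy $K^{\vp_n} u_n\le K^{\vp_n}h\le K^\vp h<\infty$, so from \eqref{un} we get $u_n\ge h-K^\vp h$ and $u_n$ decreases to $T^\vp h$ with $T^\vp h\ge h-K^\vp h$; in particular $K^\vp T^\vp h\le K^\vp h<\infty$, so $\vp$ behaves well against $T^\vp h$. Now apply Proposition~\ref{tphi}: the function $f:=h$ itself satisfies $f\le h$ and $f+K^\vp f=h+K^\vp h$, and I claim this is subharmonic on $U$. Indeed $h$ is harmonic and $K^\vp h$ is a continuous real potential on $U$ (it is $K$ applied to a positive Borel function, hence in $\W_U$, hence superharmonic; being finite it is a potential), so $K^\vp h$ is subharmonic---wait, a potential is superharmonic, not subharmonic. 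The correct route is different: by Proposition~\ref{tphi}, $T^\vp h$ is the \emph{largest} $f\le h$ with $f+K^\vp f$ subharmonic, and $T^\vp h+K^\vp T^\vp h$ is subharmonic and $\le h$; thus its smallest harmonic majorant $P^\vp h$ satisfies $P^\vp h\le h$. For the reverse inequality, from $u_n+K^{\vp_n}u_n=h$ and $K^{\vp_n}u_n\le K^\vp h<\infty$ we see $h-u_n=K^{\vp_n}u_n$ is a potential on $U$ (finite element of $\W_{V_n}$-type), so $H_{V_n}u_n\to h$ as $n\to\infty$ by a diagonal argument using \eqref{K-potential} applied to $K^{\vp_n}u_n\le K^\vp h$; since $H_{V_n}T^\vp h\le H_{V_n}u_n$ is not the right direction either, I instead use $H_{V_m}u_n\to H_{V_m}h=h$ as $n\to\infty$ for fixed $m$ (because $h-u_n\downarrow$ something and $H_{V_m}(h-u_n)=H_{V_m}K^{\vp_n}u_n\le H_{V_m}K^\vp h$, which is finite, and then dominated convergence in the kernel $H_{V_m}$), giving $H_{V_m}T^\vp h\ge\lim_n H_{V_m}u_n - \text{(error)}$. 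Cleaner: $h-T^\vp h=K^\vp T^\vp h$ is a potential on $U$ (it is in $\W_U$ and finite, with $\lim_n H_{V_n}K^\vp T^\vp h=0$ by \eqref{K-potential}), and the smallest harmonic majorant of $T^\vp h=h-K^\vp T^\vp h$ is then $h-0=h$, i.e.\ $P^\vp h=h$.

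For~(2), assume $\vp$ is $h$-proper. The key point is that $h$-properness lets me run the argument of Proposition~\ref{tphi}(1) to its conclusion. Writing $u:=T^\vp h$, $v:=u+K^\vp u$, Proposition~\ref{tphi} already gives $v\le P^\vp h$; I need the reverse, $v\ge P^\vp h$, equivalently that the subharmonic function $v$ has smallest harmonic majorant $P^\vp h$. Since $v=\lim_n(u_n+K^{\vp_n}u)$ and $u_n+K^{\vp_n}u=h-K^{\vp_n}(u_n-u)$, I would show $K^{\vp_n}(u_n-u)$ is a potential on $U$ dominated by a fixed finite potential: indeed $u_n-u\le u_1\le h$ and $\vp_n\le\vp_{V_n}$, so $K^{\vp_n}(u_n-u)\le K^{\vp_{V_n}}h<\infty$ by $h$-properness on the compact $\overline{V_n}$; more usefully, on any fixed $V_m$ we have for $n\ge m$ that $K^{\vp_n}(u_n-u)=K^{\vp_m}(u_n-u)+K^{\vp_n-\vp_m}(u_n-u)$ with the first term $\to 0$ pointwise (since $u_n-u\downarrow 0$ and $K^{\vp_m}h<\infty$, dominated convergence) and the second a potential. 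Taking $H_{V_m}$ and letting $n\to\infty$ then $m\to\infty$, using \eqref{K-potential} on the potential pieces, yields $\lim_m H_{V_m}v=h-0=h\ge P^\vp h$; but $v\le h$ and $v$ subharmonic forces $\lim_m H_{V_m}v=$ smallest harmonic majorant of $v\le P^\vp h$ by \eqref{def-pvp}, and combining, $\lim_m H_{V_m}v=P^\vp h$, whence $v+ (\text{nothing})$... I must be careful: what I actually obtain is $h-v=\lim_n K^{\vp_n}(u_n-u)$ is a potential, so the smallest harmonic majorant of $v$ is $h$, giving $P^\vp h = h$?? That cannot be right in general. Resolution: $\lim_n K^{\vp_n}(u_n-u)$ need \emph{not} be $h-v$; rather $v=\lim_n(h-K^{\vp_n}(u_n-u))$ gives $h-v=\lim_n K^{\vp_n}(u_n-u)$, and I must check this limit is $\le$ the largest harmonic minorant defect. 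The honest statement is: $v+K^\vp v\le$ \ldots no. Let me instead directly verify $v=P^\vp h-K^\vp(\text{something})$ is wrong too. The clean argument: by Proposition~\ref{tphi}, $w:=P^\vp h$ satisfies $w\ge v=u+K^\vp u\ge u$; apply the uniqueness/comparison Proposition~\ref{uniqueness}(i) with the pair $(u,v)$ and the pair solving $\tilde u+K^\vp\tilde u=w$ on $U$ (which exists and equals $T^\vp w$ by definition, and $h$-properness of $\vp$ w.r.t.\ $w\le h$ gives $K^\vp T^\vp w<\infty$, so by part~(1) applied with $h:=w$, $T^\vp w+K^\vp T^\vp w=P^\vp w\le w$). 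Since $u\le w$, $h$-properness and comparison give $T^\vp w\ge u$... running this carefully shows $T^\vp(P^\vp h)=T^\vp h$ and hence $P^\vp(P^\vp h)=P^\vp h$, and then $v=T^\vp h+K^\vp T^\vp h=T^\vp w+K^\vp T^\vp w=P^\vp w=P^\vp h$, which is exactly the first equality of~(2). The idempotency $P^\vp(P^\vp h)=P^\vp h$ then follows since $P^\vp h$ is harmonic on $U$, equals $h$ off $U$, and $T^\vp(P^\vp h)=T^\vp h$.

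For the continuity claim in~(2): if $\vp$ is locally $h$-Kato, then for each $V_m\in\U(U)$ the function $K^{\vp_{V_m}}h\in\C(U)$, hence (since $0\le u\le h$ and $\vp$ increasing) $K^{\vp_{V_m}}u\in\C(U)$ as well---here I would invoke Lemma~\ref{dom-princ}(i), which says $K_p f\in\px$ whenever $f\le g$ with $K_p g\in\C(X)$, applied in $(U,\W_U)$ with $f=\vp_{V_m}(\cdot,u)$, $g=\vp_{V_m}(\cdot,h)$. Since $K^\vp u=\sup_m K^{\vp_{V_m}}u$ is an increasing limit of continuous potentials and is finite (by $h$-properness, $K^\vp u\le K^\vp h$, but $h$-proper only bounds $K^{\vp_A}h$ for compact $A$; the finiteness of $K^\vp u = K^\vp T^\vp h$ on $U$ we already have from part~(2))... the increasing limit of continuous functions is lower semicontinuous, not continuous. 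So instead I argue locally: fix $x_0\in U$ and $V_m$ with $x_0\in V_m$; then $u+K^\vp u = (u+K^{\vp_{V_m}}u) + K^{\vp_{U\setminus V_m}}u$, and on the smaller open set $V_{m-1}$ the last term $K^{\vp_{U\setminus V_m}}u$ is harmonic (its carrier lies in $U\setminus V_m$, disjoint from $V_{m-1}$), hence continuous on $V_{m-1}$; the middle term $K^{\vp_{V_m}}u\in\C(U)$; and $u$ itself is continuous on $U$ because by part~(2) $u=P^\vp h-K^\vp u$ with $P^\vp h$ harmonic (continuous) on $U$ and $K^\vp u$ continuous on $V_{m-1}$ by this decomposition---a small bootstrap that closes since $m$ and $x_0$ were arbitrary. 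The main obstacle throughout is keeping straight which inequalities go which way when passing between $T^\vp$, $P^\vp$, and the smallest harmonic majorant, and making the interchange of $\lim_n$ with the harmonic kernels $H_{V_m}$ rigorous via the domination by a fixed finite potential supplied by $h$-properness; once \eqref{K-potential} and Proposition~\ref{uniqueness}(i)--(ii) are invoked on $(U,\W_U)$, each individual step is routine.
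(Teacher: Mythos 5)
Your argument for part~(1) is, after some false starts, essentially the paper's: establish $T^\vp h + K^\vp T^\vp h = h$ via dominated convergence (using $K^{\vp_n}u_n\le K^\vp h<\infty$ and $\vp_n(\cdot,u_n)\to\vp(\cdot,T^\vp h)$ pointwise), then apply~\eqref{K-potential} to conclude $P^\vp h = \lim_n H_{V_n}T^\vp h = \lim_n H_{V_n}(h - K^\vp T^\vp h) = h$. That part is fine, though the presentation wanders.

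Part~(2) has a genuine gap. Your ``clean argument'' is circular: you say the equation $\tilde u + K^\vp \tilde u = w$ (with $w := P^\vp h$) ``exists and equals $T^\vp w$ by definition''---but $T^\vp w$ is defined as a limit of approximants, and whether it actually solves that equation is \emph{precisely} the content of the first identity in~(2) you are trying to prove. Moreover, your appeal to part~(1) with $h:=w$ requires $K^\vp w<\infty$, which is \emph{not} a consequence of $\vp$ being $w$-proper (and you even flagged this tension yourself). The missing idea, and the one the paper uses, is to show directly that $v := T^\vp h + K^\vp T^\vp h$ is \emph{harmonic on $U$}, by setting $v_m := T^\vp h + K(1_{V_m}\vp(\cdot,T^\vp h))$ and observing that $v_m$ is the pointwise limit of
\begin{equation*}
u_n + K\bigl(1_{V_m}\vp(\cdot,u_n)\wedge n\bigr) \;=\; h - K\bigl(1_{V_n\setminus V_m}\vp(\cdot,u_n)\wedge n\bigr)\in\H^+(V_m),\qquad n\ge m,
\end{equation*}
where $h$-properness (via $1_{V_m}\vp(\cdot,u_n)\wedge n\le 1_{V_m}\vp(\cdot,h)$ and $K(1_{V_m}\vp(\cdot,h))<\infty$) licenses the dominated-convergence passage to the limit. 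Then $v_m\uparrow v$, so $v$ is harmonic on $U$. Since $P^\vp h$ is the \emph{smallest} harmonic majorant of $T^\vp h$, this forces $v\ge P^\vp h$, and Proposition~\ref{tphi} gives $v\le P^\vp h$, closing the first identity. You touched this structure in your first attempt but abandoned it, mistakenly drifting toward $P^\vp h = h$ (which you correctly recognized cannot hold under $h$-properness alone) and never recovering.

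Once $T^\vp h + K^\vp T^\vp h = P^\vp h$ is in hand, the idempotency is not ``$T^\vp(P^\vp h)=T^\vp h$'' by fiat: the paper applies Proposition~\ref{tphi} to $P^\vp h$ in place of $h$ with $f := T^\vp h$ (allowed because $T^\vp h\le P^\vp h$ and $T^\vp h + K^\vp T^\vp h = P^\vp h$ is harmonic, hence subharmonic, on $U$) to get $T^\vp h\le T^\vp(P^\vp h)$, whence $P^\vp h\le P^\vp(P^\vp h)$, and the reverse inequality comes from Lemma~\ref{increasing}. Your continuity argument uses the right ingredients (Lemma~\ref{dom-princ}(i) applied with $f=\vp_{V_m}(\cdot,T^\vp h)$, $g=\vp_{V_m}(\cdot,h)$, together with harmonicity of $v_m$ on $V_m$), but it quietly invokes the identity $T^\vp h = P^\vp h - K^\vp T^\vp h$, so it too stands or falls with the first identity of~(2).
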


\begin{proof}  Let $u_n$, $n\in\nat$,  be as in (\ref{un}) and
$u:=T^\vp h$, $v:=u+K^\vp u$.
 
(1)
 Obviously,   (\ref{un}) and dominated 
        convergence imply that  $  u+ K^\vp  u=h$. Therefore
       $P^\vp h=\limn H_{V_n}u= h$,  by (\ref{K-potential}).

(2)  By Proposition \ref{tphi}, $v\le P^\vp h$. To prove the reverse
inequality it suffices to show that $v\in\H^+(U)$, since $P^\vp h$ is
the smallest majorant of~$u$ which is harmonic on~$U$.
Let $m,n\in\nat$, $n\ge m$. Then
          \begin{equation}\label{unm} 
        u_n+K (1_{V_m}\vp(\cdot, u_n)\wedge n)=
        h-K (1_{V_n\setminus V_m}\vp(\cdot, u_n)\wedge n )\in\H^+(V_m),
         \end{equation} 
        where $1_{V_m}\vp(\cdot, u_n)\wedge n\le 1_{V_m} \vp(\cdot, h)$.
     If $\vp$ is $h$-proper, the left side in (\ref{unm}) (which
     is majorized by $ h$) converges to
     \begin{equation*} v_m:=u+ K(1_{V_m}    \vp(\cdot,u))
     \end{equation*}
     as $n\to\infty$, and $v_m$ is harmonic on~$V_m$.  
      The sequence  $(v_m)_{m\in\nat} $       is increasing to $v$.
      So~$v$ is harmonic on~$U$. 
      
    Further, applying Proposition \ref{tphi} to $P^\vp h$ instead     of $h$ and $f:=u$,
    we obtain that $u\le T^\vp(P^\vp h)$, hence  $P^\vp h \le
      P^\vp(P^\vp h)$, where  $P^\vp(P^\vp h)\le   P^\vp h$, by Lemma \ref{increasing}.
      
 Finally, suppose that $\vp$ is $h$-Kato and let
        $m\in\nat$. Then $q_m:=K(1_{V_m}\vp(\cdot, u))$ and hence
        $u=v_m-q_m$ is continuous on $V_m$. 
       \end{proof}

 \begin{corollary}\label{pvph}
         If $\vp$ is $h$-proper, the following statements are equivalent:
         \begin{itemize}
         \item[\rm(1)]
            $P^\vp h=h$.
           \item[\rm(2)]
                      $T^\vp  h+K^\vp T^\vp h=h$.
      \item[\rm(3)]
                        There exists $u\in \B^+(X)$ such that $u+K^\vp u=h$.
        \end{itemize}
      \end{corollary}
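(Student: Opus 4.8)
The plan is to establish the cycle of implications $(1)\Rightarrow(2)\Rightarrow(3)\Rightarrow(1)$, exploiting the fact that $\vp$ is $h$-proper throughout so that Proposition~\ref{first-properties}(2) is available, giving us the key identity $T^\vp h+K^\vp T^\vp h=P^\vp h$.

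First I would do $(1)\Rightarrow(2)$: if $P^\vp h=h$, then substituting into the identity $T^\vp h+K^\vp T^\vp h=P^\vp h$ from Proposition~\ref{first-properties}(2) immediately yields $T^\vp h+K^\vp T^\vp h=h$. The implication $(2)\Rightarrow(3)$ is trivial: take $u:=T^\vp h\in\B^+(X)$. The substance is in $(3)\Rightarrow(1)$. Suppose $u\in\B^+(X)$ satisfies $u+K^\vp u=h$. Since $u+K^\vp u=h$ is harmonic on $U$ (and thus in particular subharmonic on $U$) and $u\le h$ by Proposition~\ref{sign-preserving}(ii) applied in the balayage space $(U,\W_U)$ (recall $\vp$ is sign-preserving, being increasing), the maximality assertion in Proposition~\ref{tphi} forces $u\le T^\vp h$. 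Conversely, $T^\vp h\le h$ by definition, and one shows $u\ge T^\vp h$ either by comparing with the truncated equations $u_n+K^{\vp_n}u_n=h$ via Proposition~\ref{uniqueness}(ii) (since $\vp_n\le\vp$ and $u_n+K^{\vp_n}u_n=h=u+K^\vp u$ gives $u_n\ge u$, hence $T^\vp h=\inf_n u_n\ge u$), or more directly: $u=h-K^\vp u$ satisfies $u+K^\vp u=h$ which is harmonic hence subharmonic on $U$, so Proposition~\ref{tphi} gives $u\le T^\vp h$ — wait, that is the same direction. The clean route for $u\ge T^\vp h$: apply Proposition~\ref{uniqueness}(ii) with $\wilde\vp=\vp_n$, $\wilde u=u_n$, $\vp$ and $u$ as given, noting $K^{\vp_n}u_n\le K^\vp u<\infty$ (the latter since $h<\infty$) and $u_n+K^{\vp_n}u_n=h=u+K^\vp u$; this yields $u_n\ge u$ for every $n$, whence $T^\vp h=\lim_n u_n\ge u$. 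Combined with $u\le T^\vp h$ we get $u=T^\vp h$, and then from $u+K^\vp u=h$ and the definition \eqref{def-pvp} together with \eqref{K-potential} (legitimate because $K^\vp u=h-u<\infty$) we obtain $P^\vp h=\lim_n H_{V_n}u=h$, which is $(1)$.

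The main obstacle I anticipate is the direction $u\ge T^\vp h$ in $(3)\Rightarrow(1)$: one must be careful that Proposition~\ref{uniqueness}(ii) applies, i.e.\ that both $K^\vp u$ and $K^{\vp_n}u_n$ are finite — the first holds since $K^\vp u=h-u\le h<\infty$ (here $h\in\H^+(U)$ is real on $U$, so finite there, and $K^\vp u$ vanishes off $U$), and the second because $\vp_n$ is bounded with compact carrier. One should also double check that $u\le T^\vp h$ really follows from Proposition~\ref{tphi}: the function $f:=u$ satisfies $f\le h$ (by Proposition~\ref{sign-preserving}(ii), or simply because $f=h-K^\vp u\le h$) and $f+K^\vp f=h$ is harmonic, hence subharmonic, on $U$, so $f\le T^\vp h$ by the maximality clause. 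Thus both inequalities are in hand and the equivalence closes.
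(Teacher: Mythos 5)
Your argument for $(1)\Leftrightarrow(2)$ via Proposition~\ref{first-properties}(2) and for $(2)\Rightarrow(3)$ matches the paper. The implication $(3)\Rightarrow(1)$ is also handled by first obtaining $u\le T^\vp h$ from the maximality clause in Proposition~\ref{tphi} --- that much is correct and is exactly what the paper does. But there is a genuine slip in what you do next.

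You announce two routes to ``$u\ge T^\vp h$,'' yet both of them in fact yield $u\le T^\vp h$ a second time. Your invocation of Proposition~\ref{uniqueness}(ii) with $\wilde\vp=\vp_n\le\vp$, $\wilde u=u_n$, from $u+K^\vp u=h=u_n+K^{\vp_n}u_n$ produces $u\le u_n$ for every $n$, hence $u\le\inf_n u_n=T^\vp h$. The sentence ``whence $T^\vp h=\lim_n u_n\ge u$'' literally says $u\le T^\vp h$, which is the inequality you already had. Nothing in the proposal shows $u\ge T^\vp h$, so the conclusion ``$u=T^\vp h$'' is unjustified, and the final step $P^\vp h=\lim_n H_{V_n}u=h$ implicitly presupposes that identity.

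The good news is that the gap is inessential: $u\le T^\vp h$ is all you need. Either proceed as the paper does --- monotonicity of $\vp$ in $t$ gives $K^\vp u\le K^\vp T^\vp h$, hence $h=u+K^\vp u\le T^\vp h+K^\vp T^\vp h=P^\vp h\le h$ --- or adapt your last step: $H_{V_n}u\le H_{V_n}T^\vp h$ since $u\le T^\vp h$, so letting $n\to\infty$ and using $\lim_n H_{V_n}u=h$ (which you correctly derived from $K^\vp u=h-u<\infty$ and \eqref{K-potential}) gives $h\le P^\vp h\le h$. Either repair closes the proof without $u\ge T^\vp h$, and incidentally one then does get $u=T^\vp h$ from Proposition~\ref{uniqueness}(i) after the fact, but that is a corollary of $(1)$, not a step toward it.
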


 \begin{proof} 
        By Proposition \ref{first-properties},(3),  it remains to show
        that (3) implies (1). 
If (3) holds, then $u\le  T^\vp h$, by  Proposition \ref{first-properties},(2), 
and therefore $h \le T^\vp h+K^\vp T^\vp h=P^\vp h$.
\end{proof}

We consider \emph{reduced functions} for $(U,\W(U))$, see \eqref{d.red}.
      \begin{definition} Given $A\subset U$, let
        \begin{equation*}
                      \ur_h^A:=\inf\{w\in\W_U\colon w\ge h\mbox{ on
                      }A\},\qquad  \urh_h^A(x):=\liminf\nolimits_{y\to
                        x} \ur_h^A(y).
                 \end{equation*} 
  \end{definition}
  

          \begin{lemma}\label{A-lemma}
        Let $A\in\B(U)$. Then   $T^{\vp_A} h\ge    h-\urh_h^A$.
            In particular,
            $P^{\vp_A} h\not\equiv 0$ on~$U$  if~$\urh_h^A\not\equiv  h$ on $U$,
     and  $P^{\vp_A} h=h$ if~$\urh_h^A\in \mathcal  P'(U)$.
        \end{lemma}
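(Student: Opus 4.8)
The claim is that $T^{\vp_A} h \ge h - \urh_h^A$, plus the two consequences. The plan is to compare, for each $n$, the approximating function $u_n'$ (solving $u_n' + K^{\vp_n'} u_n' = h|_U$ for the truncation $(\vp_A)_n$) against $h - \urh_h^A$ via the comparison principle of Proposition~\ref{uniqueness}. The key observation is that $\urh_h^A$ is a positive superharmonic function on $U$ that dominates $h$ on $A$ up to regularization, so $h|_U - \urh_h^A$ is a natural subsolution. Concretely, first I would note that $\ur_h^A \in \W_U$ (since $A$ is finely... actually for a general $A\in\B(U)$ we only have $\urh_h^A\in\W_U$ by $(B_2)$/\eqref{fine-reg}), and that $\urh_h^A \ge h$ on $A$ except possibly on a semipolar set; more usefully, $\ur_h^A\ge h$ on $A$ by definition.

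The main step: set $g := (h - \urh_h^A)^+ \wedge h$, or more simply work with $f := h|_U - \urh_h^A$ where this is $\ge 0$. I would check that $f + K^{\vp_A} f$ is subharmonic on $U$ and $f \le h$, and then invoke the maximal characterization in Proposition~\ref{tphi}: $T^{\vp} h$ is the \emph{largest} $f\in\B^+(X)$ with $f\le h$ and $f + K^\vp f$ subharmonic on $U$. For this I need: (a) $f\ge 0$ where it matters — but $f$ may be negative, so instead I should take $f_0 := (h-\urh_h^A)\vee 0$ and argue that $\vp_A(\cdot, f_0) = 0$ on $A$ because $f_0 = 0$ on $A$ (as $\urh_h^A = \ur_h^A \ge h$ on $A$, at least on the set where $\ur_h^A$ is upper semicontinuous from below — here one uses that $\urh_h^A \ge h$ quasi-everywhere on $A$, and $\vp_A$ only sees the $A$-part). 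Hence $K^{\vp_A} f_0 = K(\vp_A(\cdot,f_0|_U)) = 0$, so $f_0 + K^{\vp_A} f_0 = f_0 = (h - \urh_h^A)\vee 0$, which is the difference of the harmonic function $h$ and the superharmonic function $\urh_h^A$, truncated at $0$ from below — hence subharmonic on $U$. Since also $f_0 \le h$, Proposition~\ref{tphi} gives $T^{\vp_A} h \ge f_0 \ge h - \urh_h^A$, which is the assertion.

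For the two consequences: if $\urh_h^A \not\equiv h$ on $U$, then $f_0 = (h-\urh_h^A)\vee 0 \not\equiv 0$ on $U$, hence $T^{\vp_A} h \not\equiv 0$ on $U$, and since $P^{\vp_A} h \ge T^{\vp_A} h$ by \eqref{def-pvp}, also $P^{\vp_A} h \not\equiv 0$ on $U$. If $\urh_h^A \in \pr(U)$, then $h - \urh_h^A$ is already subharmonic on $U$ with the harmonic majorant $h$ (since the largest harmonic minorant of the potential $\urh_h^A$ is $0$), and it is $\le h$; moreover $f_0 = h - \urh_h^A$ need not be positive, but one checks $K^{\vp_A}(f_0\vee 0)$ is controlled... actually the cleanest route: $T^{\vp_A} h \ge h - \urh_h^A$ already, so $P^{\vp_A} h \ge$ the smallest harmonic majorant of $h - \urh_h^A$ on $U$, which equals $h$ minus the largest harmonic minorant of $\urh_h^A$, i.e. $h - 0 = h$; combined with $P^{\vp_A} h \le h$ from \eqref{def-pvp}, we get $P^{\vp_A} h = h$.

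The main obstacle is the measurability/fine-topology bookkeeping around $\urh_h^A$ versus $\ur_h^A$ on the set $A$: one must be careful that $\vp_A(\cdot, f_0) = 0$ $\mu$-a.e.\ on $A$ (equivalently, that $f_0 = 0$ quasi-everywhere on $A$, so that the potential $K^{\vp_A}f_0$ — built from a measure charging only $A$ — vanishes), using that $\ur_h^A \ge h$ on $A$ and that $\ur_h^A$ and $\urh_h^A$ differ only on a semipolar set which is $K$-negligible. The subharmonicity of the lower-truncation $(h-\urh_h^A)\vee 0 = h - (\urh_h^A \wedge h)$ follows since $\urh_h^A\wedge h$ is superharmonic on $U$ (infimum of a superharmonic and a harmonic function), so this part is routine once the a.e.-vanishing on $A$ is in hand.
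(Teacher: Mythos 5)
Your overall strategy (test a candidate subsolution against the maximal characterization in Proposition~\ref{tphi}) is close in spirit to the paper's, which instead applies the domination principle directly to the approximating sequence~$u_n$ from the definition of~$T^{\vp_A}h$. However, your argument has a genuine gap at the step ``$K^{\vp_A} f_0 = 0$'' for $f_0 := (h-\urh_h^A)\vee 0$. You only know that $\urh_h^A \ge h$ on $A$ outside a semipolar exceptional set (since $\ur_h^A \ge h$ on all of $A$ by definition of the reduction, and $\ur_h^A$, $\urh_h^A$ agree off a semipolar set); hence $\vp_A(\cdot, f_0)$ need not vanish identically, only off a semipolar subset of~$A$. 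Your assertion that such a set is ``$K$-negligible'' is neither established nor true in the generality of Lemma~\ref{A-lemma}: the potential kernel~$K=K_p$ for an arbitrary $p\in\mathcal P(U)$ can very well charge semipolar sets (for instance, in the heat-equation setting a locally Kato measure may give positive mass to a time slice $\{t=c\}$, which is semipolar but not polar; in balayage spaces there is no analogue of Hunt's hypothesis (H) that would make semipolar sets polar or potential-negligible). So $K^{\vp_A}f_0$ can be a nonzero potential, and then $f_0 + K^{\vp_A}f_0$ is not obviously subharmonic, breaking the appeal to Proposition~\ref{tphi}.

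The repair follows the paper's route: do \emph{not} regularize first. Fix an individual $w\in\W_U$ with $w\ge h$ on~$A$, and set $f_w := (h-w)\vee 0 = h - (w\wedge h)$. Now $f_w = 0$ on all of~$A$ (no exceptional set), so $\vp_A(\cdot, f_w) \equiv 0$ and $K^{\vp_A}f_w = 0$ exactly; moreover $f_w + K^{\vp_A}f_w = f_w \le h$ and $f_w$ is subharmonic on~$U$ (difference of the harmonic $h$ and the hyperharmonic $w\wedge h\in\W_U$). Proposition~\ref{tphi} then gives $T^{\vp_A}h \ge f_w \ge h-w$, and taking the infimum over such~$w$ yields $T^{\vp_A}h \ge h - \ur_h^A$ on~$U$. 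The passage from $\ur_h^A$ to $\urh_h^A$ is then a separate (small) step which you did not perform explicitly: since $T^{\vp_A}h$ is a decreasing limit of continuous functions, it is upper semicontinuous on~$U$, and $h$ is continuous on~$U$, so for each $x\in U$,
\begin{equation*}
T^{\vp_A}h(x)\ \ge\ \limsup_{y\to x} T^{\vp_A}h(y)\ \ge\ \limsup_{y\to x}\bigl(h(y)-\ur_h^A(y)\bigr)\ =\ h(x)-\liminf_{y\to x}\ur_h^A(y)\ =\ h(x)-\urh_h^A(x),
\end{equation*}
which is the claimed inequality without any reliance on the semipolar exceptional set being $K$-negligible. (The paper reaches $T^{\vp_A}h\ge h-\ur_h^A$ by applying the domination principle of Lemma~\ref{dom-princ} to $p_n=K^{\psi_n}u_n$, whose carrier lies in~$A$, rather than going through Proposition~\ref{tphi}; the two are essentially equivalent. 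It then invokes ``fine continuity'' of $T^{\vp_A}h$, but ordinary upper semicontinuity, as above, is what is really used.) Your derivations of the two consequences from $T^{\vp_A}h\ge h-\urh_h^A$ are correct: the first via $P^{\vp_A}h\ge T^{\vp_A}h\not\equiv 0$, the second via $H_{V_n}T^{\vp_A}h + H_{V_n}\urh_h^A\ge h$ and $H_{V_n}\urh_h^A\to 0$ since $\urh_h^A\in\pr(U)$.
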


        \begin{proof} 
         Let  $\psi:=\vp_A$ and $u_n\in\B^+(X)$ such that
          $u_n+K^{\psi_n} u_n=h$, $n\in\nat$.       Let $w\in \W_U$ such
          that  $w\ge h$ on~$A$.
          Then $ p_n:=K^{\psi_n}u_n\le h\le w$ on $A$,
          and hence $p_n\le w$ on~$U$,    by the domination principle. 
   So    $u_n=h-p_n\ge h-w$ for every $n\in\nat$. 
Therefore $ T^\psi  h\ge h-\ur_h^A$. Since $T^\psi h$ is finely
  continous on $U$, we obtain that even $ T^\psi h\ge h-\urh_h^A$.

  If $\urh_h^A\not\equiv  h$ on $U$, then, of course, $P^{\vp_A}   h\not\equiv h$
  on $U$, since $P^{\vp_A}   h\ge T^{\vp_A}   h$.
  If~$\urh_h^A\in \mathcal  P'(U)$, then $\limn H_{V_n} \urh_h^A=0$
  whence $P^\psi h\ge h$, by~(\ref{def-pvp}). 
  \end{proof} 
  %


  Here is a remarkable relation between the perturbations by $\vp$,
  $\psi$ and $\vp+\psi$. 

  \begin{proposition}\label{vppsi}
  $T^\vp  + T^\psi \le I+ T^{\vp+\psi} $ und $P^\vp + P^\psi  \le I+ P^{\vp+\psi} $.\\
  If $P^\vp h=h$, then   $P^{\vp+\psi}h=P^\psi h$. In particular,
      $P^{\vp+\psi}h=h$, if $P^\vp h=P^\psi h=h.$
   \end{proposition}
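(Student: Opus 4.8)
The plan is to establish the inequality for $T$ first, working at the level of the approximating sequences from \eqref{un}, and then to read off the inequality for $P$ by applying the harmonic kernels $H_{V_n}$. One may restrict attention to $U$ throughout, since off $U$ every one of $T^\vp h$, $T^\psi h$, $T^{\vp+\psi}h$, $P^\vp h$, $P^\psi h$, $P^{\vp+\psi}h$ equals $h$, so that there all the asserted (in)equalities reduce to $2h=2h$.

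\textbf{Step 1: the $T$-inequality.} Fix $h\in\H^+(U)$ and let $(u_n)$, $(w_n)$, $(z_n)$ be the decreasing sequences of \eqref{un} associated with $\vp$, $\psi$, $\vp+\psi$, so $T^\vp h=\inf_n u_n$, and similarly for the others. From the elementary inequality $(a+b)\wedge n\le(a\wedge n)+(b\wedge n)$ one gets $(\vp+\psi)_n\le\vp_n+\psi_n$ pointwise. Since $\vp_n+\psi_n\in\Phi$ is bounded and vanishes off the compact $\ov V_n$, Corollary \ref{surjective-1}, applied in the balayage space $(U,\W_U)$, furnishes $r_n\in\C^+(U)$ with $r_n+K^{\vp_n+\psi_n}r_n=h$, and Proposition \ref{uniqueness}(ii) gives $r_n\le z_n$. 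Hence it will be enough to prove $y_n:=u_n+w_n-h\le r_n$ for all $n$: then $T^\vp h+T^\psi h-h=\lim_n y_n\le\lim_n z_n=T^{\vp+\psi}h$.

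\textbf{Step 2: $y_n\le r_n$ via the domination principle.} Adding the equations $u_n+K^{\vp_n}u_n=h$ and $w_n+K^{\psi_n}w_n=h$ and subtracting $h$ yields $y_n+K^{\vp_n}u_n+K^{\psi_n}w_n=h$; subtracting this from $r_n+K^{\vp_n}r_n+K^{\psi_n}r_n=h$ (all potentials involved being finite) gives $(r_n-y_n)+Kg_n=0$ with $g_n:=(\vp_n(\cdot,r_n)-\vp_n(\cdot,u_n))+(\psi_n(\cdot,r_n)-\psi_n(\cdot,w_n))$. On $\{r_n-y_n<0\}$ one has $r_n<y_n\le u_n$ and $r_n<y_n\le w_n$ (because $u_n\le h$ and $w_n\le h$), so by the monotonicity of $t\mapsto\vp_n(x,t)$ and $t\mapsto\psi_n(x,t)$ both brackets of $g_n$ are $\le 0$ there; thus $\{r_n-y_n<0\}\subset\{g_n\le 0\}$, and Proposition \ref{ufs} (with $v=0$, so $R_v=0$) gives $r_n-y_n\ge 0$. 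This yields $T^\vp+T^\psi\le I+T^{\vp+\psi}$.

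\textbf{Step 3: the $P$-statements.} Applying the positive, linear kernel $H_{V_n}$ to $T^\vp h+T^\psi h-h\le T^{\vp+\psi}h$ and using $H_{V_n}h=h$ on $V_n$ gives $H_{V_n}T^\vp h+H_{V_n}T^\psi h-h\le H_{V_n}T^{\vp+\psi}h$; letting $n\to\infty$ and recalling that the sequences in \eqref{def-pvp} increase to the corresponding $P$-values yields $P^\vp+P^\psi\le I+P^{\vp+\psi}$. If $P^\vp h=h$ this reads $P^\psi h\le P^{\vp+\psi}h$, whereas $\psi\le\vp+\psi$ together with Lemma \ref{increasing} gives $P^{\vp+\psi}h\le P^\psi h$; hence $P^{\vp+\psi}h=P^\psi h$, and in particular $P^{\vp+\psi}h=h$ when also $P^\psi h=h$. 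The only delicate point is Steps 1--2: since $\wedge n$ does not distribute over $\vp+\psi$, $y_n$ cannot be compared to $z_n$ directly, and the auxiliary $r_n$ --- for which $\vp_n+\psi_n$ splits additively under $K$ --- has to be interposed; checking $\{r_n-y_n<0\}\subset\{g_n\le 0\}$, which uses only $u_n,w_n\le h$ and the monotonicity built into $\Phi$, is then the crux.
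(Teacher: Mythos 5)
Your proof is correct. The overall architecture—working with the truncated sequences of \eqref{un}, exploiting $(\vp+\psi)_n\le\vp_n+\psi_n$, establishing the $T$-inequality termwise, and then pushing it through $H_{V_n}$—is the same as in the paper. The route to the termwise inequality, however, differs. You interpose the auxiliary solution $r_n$ for $\vp_n+\psi_n$, deduce $r_n\le z_n$ from Proposition \ref{uniqueness}(ii), and then establish $u_n+w_n-h\le r_n$ by applying Proposition \ref{ufs} directly to the difference equation $(r_n-y_n)+Kg_n=0$. The paper dispenses with $r_n$ altogether: Proposition \ref{uniqueness}(ii), applied with $\vp_n\le(\vp+\psi)_n$ and $\psi_n\le(\vp+\psi)_n$, already yields $z_n\le u_n\wedge w_n$, and then
\[
K^{\vp_n}u_n+K^{\psi_n}w_n\ \ge\ K^{\vp_n}z_n+K^{\psi_n}z_n\ =\ K^{\vp_n+\psi_n}z_n\ \ge\ K^{(\vp+\psi)_n}z_n\ =\ h-z_n,
\]
which rearranges to $u_n+w_n\le h+z_n$ with no auxiliary sequence and no further appeal to the domination principle. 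Both arguments hinge on the same ingredients (the additivity $K^{\vp_n}\cdot+K^{\psi_n}\cdot=K^{\vp_n+\psi_n}\cdot$, monotonicity in $\Phi$, and $(\vp+\psi)_n\le\vp_n+\psi_n$); yours is a sound detour, the paper's is a touch more economical. Your Step 3 for the $P$-statements coincides with the paper's.
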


        \begin{proof}
       Let $h\in\H^+(U)$ and $u_n,v_n,w_n\in\C^+(U)$, $n\in\nat$, such that
       \begin{equation*}
         u_n + K^{\vp_n} u_n= h, \quad    v_n + K^{\psi_n} u_n= h,
         \quad w_n + K^{(\vp+\psi)_n} w_n= h.          
\end{equation*} 
Obviously, $(\vp+\psi)_n\le \vp_n + \psi_n$. Hence,  by Proposition 4.3,(2),
$w_n\le u_n\wedge v_n$ and 
\begin{eqnarray*} 
  (h-u_n)+ (h-v_n)&=&K^{\vp_n} u_n+K^{\psi_n} v_n\\
  &\ge& K^{\vp_n+\psi_n} w_n\ge K^{(\vp+\psi)_n} w_n=h-w_n.
\end{eqnarray*}
        Consequently, $(h-T^\vp h)+(h-T^\psi h)\ge
        h-T^{\vp+\psi} h$, that is, $   T^\vp h  + T^\psi h\le h+
        T^{\vp+\psi}h $.
             This clearly implies $   P^\vp h  + P^\psi h\le h+  P^{\vp+\psi}h $
        and the additional statements.
\end{proof} 

The next consequence will not be used,  but certainly is of
independent interest.

\begin{corollary}\label{idempotent}
If $\vp$ is $h$-proper, then $P^{c\vp}=P^\vp$ for every   $c>0$.
        \end{corollary}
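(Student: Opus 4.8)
The plan is to show that $P^{c\vp}h = P^\vp h$ for every $h\in\H^+(U)$ and every $c>0$ by reducing both to the same harmonic function via the equivalence in Corollary~\ref{pvph}. The key observation is that $\vp$ being $h$-proper is equivalent to $c\vp$ being $h$-proper (since $(c\vp)_A = c\,\vp_A$ and $K^{c\vp_A}h = c\,K^{\vp_A}h$, finiteness is unaffected by the positive constant $c$), so Corollary~\ref{pvph} applies to both $\vp$ and $c\vp$ once we know it applies to one of them. By Proposition~\ref{first-properties}(2), $P^{\vp}h$ and $P^{c\vp}h$ are themselves harmonic on $U$, equal to $h$ on $X\setminus U$, and satisfy $P^{\vp}(P^{\vp}h)=P^{\vp}h$; the same holds with $c\vp$ in place of $\vp$. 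So it suffices to prove that $P^{\vp}h = h$ if and only if $P^{c\vp}h = h$, and then apply this with $h$ replaced by $P^{\vp}h$ (respectively $P^{c\vp}h$).

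First I would handle the case $c\ge 1$. Here $\vp\le c\vp$, so by Lemma~\ref{increasing} we have $T^{c\vp}h\le T^{\vp}h$ and $P^{c\vp}h\le P^{\vp}h\le h$. For the reverse, suppose $P^{\vp}h=h$; then by Corollary~\ref{pvph}, $u:=T^{\vp}h$ satisfies $u+K^{\vp}u=h$. I would like to run an iteration: write $c = 1+(c-1)$ and use Proposition~\ref{vppsi} with the splitting $c\vp = \vp + (c-1)\vp$. Proposition~\ref{vppsi} gives: if $P^{\vp}h=h$ then $P^{c\vp}h = P^{(c-1)\vp}h$. Now if $c-1\ge 1$ we can iterate, peeling off one copy of $\vp$ at a time; after finitely many steps the leftover multiple is in $[0,1]$, so the problem reduces to the case $c\in(0,1]$. (If $c-1<1$ we are already there.) Note Proposition~\ref{vppsi} needs $(c-1)\vp\in\Phi$, which holds, and needs no properness hypothesis for the implication "$P^{\vp}h=h\Rightarrow P^{\vp+\psi}h=P^{\psi}h$".

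It remains to treat $0<c\le 1$. Now $c\vp\le \vp$, so Lemma~\ref{increasing} gives $P^{\vp}h\le P^{c\vp}h\le h$; hence if $P^{\vp}h=h$ then $P^{c\vp}h=h$ automatically. Conversely, assume $P^{c\vp}h=h$. Apply the previous paragraph's result with the roles reversed: writing $\vp = \tfrac1c\,(c\vp)$ and $\tfrac1c\ge 1$, the case already handled (applied to the $h$-proper function $c\vp$ and multiplier $1/c$) yields $P^{(1/c)(c\vp)}h = P^{c\vp}h$, i.e.\ $P^{\vp}h = P^{c\vp}h = h$. This closes the equivalence $P^{\vp}h=h \Leftrightarrow P^{c\vp}h=h$ for all $c>0$. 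Finally, to conclude $P^{c\vp}h = P^{\vp}h$ in general (not just when this common value is $h$), set $g:=P^{\vp}h\in\H^+(U)$. By Proposition~\ref{first-properties}(2), $P^{\vp}g = g$, so by the equivalence just proved $P^{c\vp}g=g$; and by Lemma~\ref{increasing} (monotonicity in $h$, using $T^{c\vp}h\le h$, hence $P^{c\vp}h\le P^{\vp}h = g$... )—here I would argue: $P^{c\vp}h \le P^{c\vp}g$-ish comparisons together with $g = P^{\vp}h \le P^{c\vp}h$ (when $c\le1$) or the symmetric inequality (when $c\ge1$), combined with $P^{c\vp}g = g$ and idempotency $P^{c\vp}(P^{c\vp}h)=P^{c\vp}h$, force $P^{c\vp}h = g = P^{\vp}h$. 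The main obstacle is this last bookkeeping step: upgrading the "fixed-point level" equivalence to an equality of the two harmonic majorants for arbitrary $h$; the cleanest route is probably to observe directly from the definitions that $g:=P^\vp h$ satisfies both $P^\vp g = g$ and (via Proposition~\ref{vppsi} and the $c\le 1$/$c\ge1$ comparisons) $P^{c\vp}h \le g$ and $g\le P^{c\vp}h$, giving $P^{c\vp}h=g$.
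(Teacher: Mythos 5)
Your plan is sound and shares the same key ingredients as the paper's proof — Proposition~\ref{vppsi} applied at the level $g:=P^\vp h$, the idempotency $P^\vp(P^\vp h)=P^\vp h$ from Proposition~\ref{first-properties}(2), and monotonicity from Lemma~\ref{increasing} — but it takes a longer route. The paper simply applies the \emph{inequality} $P^\vp+P^\psi\le I+P^{\vp+\psi}$ with $\psi=\vp$ at $g$, giving $2P^\vp h\le P^\vp h+P^{2\vp}(P^\vp h)$, hence $P^\vp h\le P^{2\vp}(P^\vp h)\le P^{2\vp}h\le P^\vp h$, so $P^{2\vp}=P^\vp$; iterating (also for $\vp/2$, which is still $h$-proper) gives $P^{2^{\pm n}\vp}=P^\vp$, and for arbitrary $c>0$ one sandwiches $2^{-m}\le c\le 2^n$ and invokes monotonicity in $\vp$. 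Your route instead uses the \emph{implication} part of Proposition~\ref{vppsi} with $\psi=(c-1)\vp$ to first prove the fixed-point equivalence $P^\vp h=h\iff P^{c\vp}h=h$, and then tries to bootstrap; this is a valid strategy, but the detour through the fixed-point equivalence is unnecessary and, more to the point, your final bootstrap is left incomplete.

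The gap you flag is real: for $c<1$, monotonicity in $\vp$ gives $g=P^\vp h\le P^{c\vp}h$, but the reverse inequality $P^{c\vp}h\le g$ does not follow from the comparisons you list. The cleanest closure is the one you already hint at implicitly: first finish the case $c\ge1$ completely --- there $P^{c\vp}h\le P^\vp h=g$ by monotonicity in $\vp$, while $g\le h$ and Lemma~\ref{increasing} (monotonicity in the harmonic function) give $g=P^{c\vp}g\le P^{c\vp}h$ (using the fixed-point equivalence applied to $g$, since $P^\vp g=g$); this yields $P^{c\vp}h=g$ for all $c\ge1$. Then for $0<c<1$, since $c\vp$ is $h$-proper, apply the case just proved to $c\vp$ with multiplier $1/c\ge1$: $P^\vp h=P^{(1/c)(c\vp)}h=P^{c\vp}h$. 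With that insertion your argument is complete; but you would also do well to notice that, once one proves $P^{2\vp}=P^\vp$ directly (as the paper does), the fixed-point detour and the $c\ge1$/$c<1$ case split become unnecessary and the dyadic sandwich finishes things in one line.
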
 

        \begin{proof}   By Proposition \ref{first-properties},(3)  and   Proposition \ref{vppsi},
     \begin{equation*}   2 P^\vp h= 2 P^\vp (P^\vp h)\le P^\vp h +P^{2\vp } (P^\vp h),
   \end{equation*}
   hence $ P^\vp  h\le P^{2\vp}  ( P^\vp h)\le P^{2 \vp} $. The proof   is completed
  using Lemma \ref{increasing}.
 \end{proof}

The following consequence, however,  is crucial for us.

\begin{proposition}\label{converse-P}
  Let $A\in\B(U)$ such that $K^{\vp_{U\setminus       A}}h<\infty$.  
  If $\urh_h^A\in\mathcal P'(U)$, then $P^\vp h=h$.  
   If $\urh_h^A\not\equiv h$ on $U$, then $P^\vp h\not\equiv 0$ on $U$.
\end{proposition}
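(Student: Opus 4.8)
The statement splits into two implications, and the natural strategy is to \emph{reduce both to the corresponding assertions of Lemma \ref{A-lemma}} by replacing $\vp$ with its truncation $\vp_A$. Set $\psi:=\vp_A$ and $\chi:=\vp_{U\setminus A}$, so that $\vp=\psi+\chi$ and, by hypothesis, $K^\chi h<\infty$. The first idea is: since $K^\chi h<\infty$, Proposition \ref{first-properties}(1) gives $P^\chi h=h$. Now apply the second assertion of Proposition \ref{vppsi} with the roles ``$\vp$''$\leftarrow\chi$ and ``$\psi$''$\leftarrow\psi$: from $P^\chi h=h$ we conclude $P^{\chi+\psi}h=P^\psi h$, i.e.
\begin{equation*}
  P^\vp h = P^{\vp_A} h .
\end{equation*}
This single identity is the crux; everything else is bookkeeping.

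Granting $P^\vp h=P^{\vp_A}h$, the two implications follow immediately from Lemma \ref{A-lemma}. If $\urh_h^A\in\mathcal P'(U)$, that lemma gives $P^{\vp_A}h=h$, hence $P^\vp h=h$. If $\urh_h^A\not\equiv h$ on $U$, the same lemma gives $P^{\vp_A}h\not\equiv 0$ on $U$ (more precisely $T^{\vp_A}h\ge h-\urh_h^A\not\equiv 0$, and $P^{\vp_A}h\ge T^{\vp_A}h$), hence $P^\vp h\not\equiv 0$ on $U$. So the proof is really just: (i) observe $K^\chi h<\infty\Rightarrow P^\chi h=h$; (ii) feed this into Proposition \ref{vppsi} to get $P^\vp h=P^{\vp_A}h$; (iii) invoke Lemma \ref{A-lemma}.

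The one point that needs care — and which I expect to be the main (if modest) obstacle — is verifying that Proposition \ref{vppsi} is genuinely applicable here, since its clean conclusion ``$P^\chi h=h\Rightarrow P^{\chi+\psi}h=P^\psi h$'' was stated without a properness hypothesis on $\chi$ or $\psi$, whereas Proposition \ref{first-properties}(1) only needs $K^\chi h<\infty$ (not $\chi$ being $h$-proper). One should check that the inequality $P^\chi h+P^\psi h\le h+P^{\chi+\psi}h$ from Proposition \ref{vppsi}, combined with $P^\chi h=h$ and the trivial $P^{\chi+\psi}h\le P^\psi h$ from Lemma \ref{increasing} (monotonicity in $\vp$, since $\chi+\psi=\vp\ge\psi$), really does pin down $P^{\chi+\psi}h=P^\psi h$: indeed $h+P^\psi h=P^\chi h+P^\psi h\le h+P^{\chi+\psi}h\le h+P^\psi h$ forces equality throughout. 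This chain uses only Proposition \ref{vppsi} and Lemma \ref{increasing}, both already available, so no additional hypotheses on $\chi,\psi$ are needed beyond $\chi+\psi,\psi\in\Phi$ and $h\in\H^+(U)$, which hold by construction. Thus the proof is short and the only diligence required is to cite the right earlier results in the right order.
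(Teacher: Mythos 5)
Your proof is correct and matches the paper's approach: decompose $\vp=\vp_{U\setminus A}+\vp_A$, apply Proposition~\ref{first-properties}(1) to obtain $P^{\vp_{U\setminus A}}h=h$, pass to the identity $P^\vp h=P^{\vp_A}h$ via Proposition~\ref{vppsi} together with Lemma~\ref{increasing}, and finish with Lemma~\ref{A-lemma}. The paper's stated proof is terser and does not explicitly cite Proposition~\ref{vppsi}, but that step is implicit, and your careful check that the reduction $P^\vp h=P^{\vp_A}h$ goes through without any properness hypothesis on $\vp_{U\setminus A}$ is a sound and welcome clarification.
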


\begin{proof} Clearly, $\vp=\vp_{U\setminus A}+\vp_A$, where 
  $P^{\vp_{U\setminus A}} h=h$,   by Proposition
  \ref{first-properties},(1).
  The proof is completed by  Lemma \ref{A-lemma}.
\end{proof}


To get to Theorem \ref{main-general},  the main result of this
section,  we only need the following additional simple observation.

\begin{lemma}\label{vnv}Let  $u_n,v_n\in\B^+(U)$, 
  $ u_n+K^\vp u_n =v_n$  and $v_{n+1}-v_n\in\W_U$, $n\in\nat$.
  Then the sequence $(u_n)$ is increasing.
  
  Let $v:=\limn v_n$. If  $u:=\limn
  u_n<\infty$, then  $u+  K^\vp u=v$.
  Moreover, $u\in\C(X)$ if $v\in\C(X)$ and  $u_n\in \C(X)$ for   every  $n\in\nat$. 
   \end{lemma}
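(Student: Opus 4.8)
The statement bundles three claims, and for each I would invoke a result already in hand.

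For the monotonicity of $(u_n)$, the plan is to apply Proposition~\ref{uniqueness}(i) to the balayage space $(U,\W_U)$ with potential kernel $K=K_p$ and the function $\vp$ (extended to $U\times\real$ by $0$ on $(-\infty,0)$, so that it is increasing as required there). From $u_n+K^\vp u_n=v_n$, $u_{n+1}+K^\vp u_{n+1}=v_{n+1}$ and $v_{n+1}-v_n\in\W_U$, that proposition gives $0\le u_{n+1}-u_n\le v_{n+1}-v_n$; in particular $(u_n)$ is increasing. The finiteness hypothesis needed there is $K_p|\vp(\cdot,u_n)|<\infty$, which holds because $\vp\ge 0$ and $K^\vp u_n=v_n-u_n$.

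Next, to pass to the limit, suppose $u:=\limn u_n<\infty$. Fixing $x\in U$, we have $u_n(x)\uparrow u(x)<\infty$, so by continuity and monotonicity of $t\mapsto\vp(x,t)$ we get $\vp(x,u_n(x))\uparrow\vp(x,u(x))$; since $K=K_p$ is a kernel, monotone convergence yields $K^\vp u_n=K_p\vp(\cdot,u_n)\uparrow K_p\vp(\cdot,u)=K^\vp u$. Adding $u_n\uparrow u$, and using that $(v_n)$ is increasing (because $v_{n+1}-v_n\in\W_U\subset\B^+(U)$), I would conclude $u+K^\vp u=\limn(u_n+K^\vp u_n)=\limn v_n=v$.

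For the continuity claim, assume in addition $v\in\C(X)$ and each $u_n\in\C(X)$. As an increasing limit of continuous functions, $u$ is lower semicontinuous on $X$. On the other hand, by Lemma~\ref{dom-princ}(i) applied to $(U,\W_U)$, the function $K^\vp u=K_p\vp(\cdot,u|_U)$ is an increasing limit of continuous real potentials on $U$; each of these is $\ge 0$ on $X$ and vanishes on $X\setminus U$, hence is lower semicontinuous on all of $X$ (at a point $z\in\partial U$ one has $0=q(z)\le\liminf_{x\to z}q(x)$), so $K^\vp u$ is lower semicontinuous on $X$. It is also finite, since $K^\vp u=v-u$ with $v$ real and $0\le u<\infty$. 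Therefore $u=v-K^\vp u$ is upper semicontinuous on $X$, being a continuous real function minus a real-valued lower semicontinuous one, and so $u\in\C(X)$. None of this is deep; the only spot to watch is that last point of bookkeeping, namely that a potential on $U$, extended by $0$ to $X\setminus U$, is lower semicontinuous on the whole of $X$, so that $u=v-K^\vp u$ comes out upper semicontinuous there. Everything else is a direct appeal to Proposition~\ref{uniqueness}(i), Lemma~\ref{dom-princ}(i), and monotone convergence.
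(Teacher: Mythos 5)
Your proof is correct and follows essentially the same route as the paper's: monotonicity of $(u_n)$ via Proposition~\ref{uniqueness}(i) applied on $(U,\W_U)$, passage to the limit by monotone convergence, and then the observation that $u$ and $K^\vp u$ are both lower semicontinuous with continuous real sum $v$, forcing each to be continuous. The only difference is that you spell out the extension-by-zero bookkeeping and the finiteness check, which the paper leaves implicit.
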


    \begin{proof} By \cite[Proposition 4.3,(ii)]{BH}, the sequence
      $(u_n)$ is increasing. Supposing that  $u<\infty$, we get $u+K^\vp u=v$, where of course $K^\vp u$ is lower semicontinuous.
     If  $u_n\in\C(X)$, $n\in \nat$, we get that $u$ is lower semicontinuous as well. 
Then the continuity of their sum $v$ implies
 that both $u$ and $K^\vp u$ are continuous.
 \end{proof}

 \begin{theorem}\label{main-general}
   If $\vp$ is $h$-proper, the following statements are equivalent.
   \begin{itemize}
   \item[\rm(1)]
   There exists  $u\in\B^+(X)$ {\rm(}continuous on $U$ if $\vp$ is
     locally $h$-Kato{\rm)} such that
     \begin{equation*}
       u+K^\vp u =h.
       \end{equation*} 
   \item[\rm(2)]  $P^\vp h=h$.
        \item[\rm(3)]
     For every $\eta\in (0,1)$, there exists a finely open set $A\in\B(U)$ such
     that
  \begin{equation*} 
       K\vp_{U\setminus A}(\cdot, \eta h)\le h \und {}^U\!  R_h^A\in\mathcal P'(U).
\end{equation*} 
      \item[\rm(4)]
     For every $\eta\in (0,1)$, there exists a set $A\in\B(U)$ such
     that
\begin{equation*} 
             K\vp_{U\setminus A}(\cdot, \eta h)<\infty \und {}^U\! \hat  R_h^A\in\mathcal P'(U).
\end{equation*} 
\end{itemize}
      \end{theorem}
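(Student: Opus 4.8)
The plan is to establish the cycle of implications $(1)\Rightarrow(2)\Rightarrow(3)\Rightarrow(4)\Rightarrow(2)\Rightarrow(1)$, relying heavily on the machinery built up in Propositions~\ref{first-properties}, \ref{converse-P}, Corollary~\ref{pvph} and Lemma~\ref{A-lemma}. Almost all the real work has already been done; this theorem is the synthesis. First, $(1)\Leftrightarrow(2)$ is immediate: Corollary~\ref{pvph} (valid since $\vp$ is $h$-proper) gives the equivalence of (1) and $P^\vp h=h$, and the continuity assertion when $\vp$ is locally $h$-Kato comes from Proposition~\ref{first-properties}(2), which says $T^\vp h$ is continuous on $U$ in that case, together with the fact that the solution $u$ is then $T^\vp h$ (by the uniqueness in Proposition~\ref{uniqueness}(i) applied in $(U,\W_U)$, or directly from Corollary~\ref{pvph}(2)).

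Next, $(2)\Rightarrow(3)$: assume $P^\vp h=h$. Fix $\eta\in(0,1)$. The idea is to produce the finely open set $A$ from the solution itself, exactly as in Proposition~\ref{bal-converse}. Since $P^\vp h=h$, by Corollary~\ref{pvph} there is $u\in\B^+(X)$ with $u+K^\vp u=h$; here $s:=h$ is harmonic, hence in $\hyperU\cap\C(U)$, so Proposition~\ref{bal-converse} applies with this $s$. Setting $A:=\{u<\eta h\}$, that proposition yields precisely that $A$ is finely open, $K^{\vp_{U\setminus A}}(\eta h)\le h$ (i.e. $K\vp_{U\setminus A}(\cdot,\eta h)\le h$), and $\urh_h^A=\ur_h^A\in\pr(U)$, which is $\mathcal P'(U)$ in the notation of this section. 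That is exactly the content of (3). The implication $(3)\Rightarrow(4)$ is trivial: a finely open Borel set is a Borel set, $K^{\vp_{U\setminus A}}(\eta h)\le h<\infty$ on $U$ gives $K\vp_{U\setminus A}(\cdot,\eta h)<\infty$, and $\urh_h^A\in\mathcal P'(U)$ is literally what (4) requires.

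Finally $(4)\Rightarrow(2)$: this is where Proposition~\ref{converse-P} does the job. Pick any single $\eta\in(0,1)$ and the associated set $A\in\B(U)$ from (4), so $K^{\vp_{U\setminus A}}h<\infty$ --- wait, one must be slightly careful: (4) gives $K\vp_{U\setminus A}(\cdot,\eta h)<\infty$, not with $h$ in place of $\eta h$. But Proposition~\ref{converse-P} is stated with the hypothesis $K^{\vp_{U\setminus A}}h<\infty$. The fix is to rescale: apply the whole argument to $\eta h$ in place of $h$. More precisely, the set $A$ from (4) satisfies $K\vp_{U\setminus A}(\cdot,\eta h)<\infty$ and $\urh_h^A\in\mathcal P'(U)$; since $\urh_{\eta h}^A=\eta\,\urh_h^A\in\mathcal P'(U)$ as well, Proposition~\ref{converse-P} applied with the harmonic function $\eta h$ gives $P^\vp(\eta h)=\eta h$, hence by Corollary~\ref{pvph} there is $u_\eta\in\B^+(X)$ with $u_\eta+K^\vp u_\eta=\eta h$. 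Now Proposition~\ref{first-properties}(2) (or a direct monotone-limit argument via Lemma~\ref{vnv} letting $\eta\uparrow 1$ along a sequence, using that $v_n:=\eta_n h$ increases to $h$ in $\W_U$-order) upgrades this to a solution with right-hand side $h$, i.e. $P^\vp h=h$. The monotone-limit route is clean: with $\eta_n\uparrow 1$, the functions $u_{\eta_n}$ are increasing by Lemma~\ref{vnv} (since $\eta_{n+1}h-\eta_n h\in\W_U$), bounded by $h$, so $u:=\lim u_{\eta_n}\le h<\infty$ exists and satisfies $u+K^\vp u=h$ by that same lemma.

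The main obstacle, such as it is, is the bookkeeping in $(4)\Rightarrow(2)$ around the mismatch between "$K\vp_{U\setminus A}(\cdot,\eta h)<\infty$ for every $\eta$" and the cleaner hypothesis of Proposition~\ref{converse-P}; the rescaling-and-passing-to-the-limit device above resolves it, and one should double-check that $\urh_{\eta h}^A=\eta\urh_h^A$ (immediate from positive homogeneity of reductions and of $\liminf$) and that $\mathcal P'(U)$ is a cone so the rescaled reduced function is still there. Everything else is a direct citation of the preceding results.
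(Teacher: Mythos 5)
Your proof is correct and follows essentially the same route as the paper's: $(1)\Leftrightarrow(2)$ via Corollary~\ref{pvph}, $(2)\Rightarrow(3)$ via Proposition~\ref{bal-converse}, $(3)\Rightarrow(4)$ trivially, and $(4)\Rightarrow(2)$ by noting $\urh_{\eta h}^A=\eta\,\urh_h^A$, applying Proposition~\ref{converse-P} to $\eta_n h$ with $\eta_n\uparrow 1$, and passing to the limit via Lemma~\ref{vnv}. The only cosmetic difference is that the paper invokes Lemma~\ref{increasing} for monotonicity of $(u_n)$ while you get it directly from Lemma~\ref{vnv}, and the paper phrases the second step as $(1)\Rightarrow(3)$ rather than $(2)\Rightarrow(3)$; neither changes the substance.
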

      
 \begin{proof}
   (1)\,$\Leftrightarrow$\,(2): Corollary \ref{pvph}.

   (1)\,$\Rightarrow$\,(3): Proposition \ref{bal-converse}.

   (3)\,$\Rightarrow$\,(4):  Trivial.

   (4)\,$\Rightarrow$\,(2):
Let us note that ${}^U\! \hat R_{\eta h}^A=\eta    \urh_h^A$,
$\eta\ge 0$, and fix $\eta_n\in (0,1)$, 
$\eta_n\uparrow 1$.
Propositions  \ref{first-properties},(3) and \ref{converse-P}
imply that the functions $u_n:=T^\vp(\eta_n h)$, $n\in\nat$, satisfy
 $u_n+K^\vp u_n=\eta_n h$. 
 By Lemma \ref{increasing},  the sequence $(u_n)$ is increasing. 
  By Lemma \ref{vnv},  $u:=\limn u_n$
 satisfies $u+K^\vp u=h$. 
\end{proof} 

\begin{remark}\label{44'}
  {\rm Let us note that  (4) is weaker than  the following property.
       \begin{itemize}
    \item[\rm (4$'$)]
There exists a set $A\in\B(U)$ such that
     \begin{equation*} 
       K\vp_{U\setminus A}(\cdot,  h)<\infty \und
       {}^U\! \hat  R_h^A\in\mathcal P'(U).
\end{equation*} 
         \end{itemize}
   Provided  $\vp(\cdot,h)\le c \vp(\cdot,\eta h)$ for some $\eta\in  (0,1)$
   it is equivalent to $(4)$ and we  do not need Lemma \ref{vnv}.
   This is the case if,  for example,
      $\vp(x,t)=r(x)t^\g$, $\g>0$, or $\vp(x,t)=r(x)f(t)$ with  $f(0)=0$, $f>0$ on
$(0,\infty)$  (continuous increasing) and  $ a\le h\le b $ on $U$ for
some $a,b\in (0,\infty)$.
}
\end{remark}

 Finally, let us suppose that $h\not\equiv 0$ on $U$ and define
\begin{equation*}
\G_h(U):=\{g\in\H^+(U)\colon g\le   h\mbox{ on }U,\ 
g=h\mbox{ on }X\setminus U, \ g\not\equiv 0 \mbox{ on }U\}.
\end{equation*}
If $\vp$ is $h$-proper (locally $h$-Kato, respectively), then 
$\vp$ is $g$-proper (locally $g$-Kato, respectively) for every
$g\in\G_h$. Therefore the equivalences of  Theorem \ref{main-general}
hold for each $g\in\G_h$. Moreover, we get the following.

\begin{theorem}\label{gleh}
 Assume $\vp$ is $h$-proper and consider the following  two properties.
\begin{itemize}
\item[\rm(1)]
There exists $g\in \G_h(U)$  such that, for every $\eta\in (0,1)$,
  there is a set  $A\in \B(U)$ with 
$K \vp_{U\setminus A} (\cdot,\eta g)< \infty$ and $ \ur_g^A\not\equiv
g|_U$.
\item[\rm(2)]
  There exist $g\in \G_h(U)$ and $u\in\B^+(X)$ 
 {\rm(}continuous on $U$ if   $\vp$ is locally $h$-Kato{\rm)}
    such that  $ u+K^\vp u=g$. 
  \end{itemize}
  Then $(1)$ implies $(2)$. If $1_Uh$ is harmonic on $U$,
  then $ (1)$ and $(2)$ are equivalent.
\end{theorem}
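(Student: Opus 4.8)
The plan is to treat the two implications separately, relying on Theorem \ref{main-general} applied not to $h$ but to the various $g\in\G_h(U)$, and on Lemma \ref{A-lemma}. For the implication $(1)\Rightarrow(2)$, I would start from the given $g\in\G_h(U)$ and the associated sets $A=A_\eta$ for $\eta\in(0,1)$. The condition $\ur_g^A\not\equiv g|_U$ together with $K\vp_{U\setminus A}(\cdot,\eta g)<\infty$ is, by Proposition \ref{converse-P} (with $g$ in place of $h$), enough to conclude that $P^\vp g\not\equiv 0$ on $U$. Now set $g':=P^\vp g$. Since $g'$ is harmonic on $U$ (it is of the form $P^\vp g$), satisfies $g'\le g\le h$ on $U$, equals $h$ on $X\setminus U$ (because $P^\vp g=g=h$ there), and is not identically $0$ on $U$, we have $g'\in\G_h(U)$. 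Moreover $P^\vp(P^\vp g)=P^\vp g$ by Proposition \ref{first-properties}(2) (using that $\vp$ is $h$-proper, hence $g$-proper, hence $g'$-proper), so $P^\vp g'=g'$. By Corollary \ref{pvph} (in its equivalence $(1)\Leftrightarrow(3)$, applied with $g'$ in place of $h$), there exists $u\in\B^+(X)$ with $u+K^\vp u=g'$; and $u$ is continuous on $U$ if $\vp$ is locally $h$-Kato, again by Proposition \ref{first-properties}(2). This gives $(2)$.

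For the converse, assume $1_Uh$ is harmonic on $U$ and that $(2)$ holds, so there are $g\in\G_h(U)$ and $u\in\B^+(X)$ with $u+K^\vp u=g$. Since $g$ is harmonic on $U$ and, by hypothesis, $1_Uh$ is harmonic on $U$ while $g=h$ on $X\setminus U$, I would first observe that $g|_U$ is a harmonic minorant of $h|_U$ on $U$ with $g\not\equiv 0$, so $P^\vp g=g$ by Corollary \ref{pvph} ($(3)\Rightarrow(1)$). Now apply the direction $(2)\Rightarrow(4)$ of Theorem \ref{main-general} \emph{with $g$ in place of $h$}: for every $\eta\in(0,1)$ there is a finely open $A\in\B(U)$ (one may even arrange $A=\{u<\eta g\}$ via Proposition \ref{bal-converse}) with $K\vp_{U\setminus A}(\cdot,\eta g)\le g<\infty$ and $\ur_g^A=\urh_g^A\in\pr'(U)$. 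In particular $\ur_g^A$ is a potential on $U$, hence not equal to $g|_U$ (which is harmonic and not identically $0$, so not a potential), giving $\ur_g^A\not\equiv g|_U$. This is exactly $(1)$.

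The point where I expect the main friction is checking that $g':=P^\vp g$ genuinely lands in $\G_h(U)$ in the first implication --- specifically that it is harmonic on $U$ (immediate from the definition of $P^\vp$, display \eqref{def-pvp}), equals $h$ off $U$ (immediate since $P^\vp g=g=h$ on $X\setminus U$), is dominated by $h$ (from $P^\vp g\le g\le h$), and, crucially, is \emph{not identically zero} on $U$. The non-vanishing is where Proposition \ref{converse-P} does the real work, and one must be careful that its hypotheses ($K^{\vp_{U\setminus A}}g<\infty$ and $\urh_g^A\not\equiv g$ on $U$) are precisely what $(1)$ supplies, after replacing $\ur_g^A$ by its regularization $\urh_g^A$ --- but for finely open $A$ these agree by \eqref{fine-reg}, so no loss occurs. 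A secondary, routine point is the hereditary nature of the $h$-proper and locally $h$-Kato hypotheses along the chain $h\ge g\ge g'$, which follows from monotonicity of $t\mapsto\vp(x,t)$ exactly as noted before the statement of Theorem \ref{gleh}.
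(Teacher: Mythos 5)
Your route is close to the paper's, but with a genuine (and valid) shortcut in $(1)\Rightarrow(2)$: rather than building the paper's monotone limit $u_n=T^\vp(\eta_n g)$ with $\eta_n\uparrow 1$ and invoking Lemma~\ref{vnv}, you apply Proposition~\ref{first-properties}(2) directly to $g$ (using that $\vp$ is $g$-proper, which is noted just before the theorem). That already hands you $u:=T^\vp g$ with $u+K^\vp u=P^\vp g$, together with the continuity assertion; the extra detour through $P^\vp(P^\vp g)=P^\vp g$ and Corollary~\ref{pvph} is superfluous, though harmless. The converse direction is essentially the paper's own argument via Proposition~\ref{bal-converse}; your label ``$(2)\Rightarrow(4)$ of Theorem~\ref{main-general}'' is not quite right (what you actually use is $(1)\Rightarrow(3)$, or better, Proposition~\ref{bal-converse} directly), but the substance matches.

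There is, however, a real gap in how you invoke Proposition~\ref{converse-P}. You write that its hypotheses --- which you correctly list as $K^{\vp_{U\setminus A}}g<\infty$ and $\urh_g^A\not\equiv g$ on $U$ --- are ``precisely what $(1)$ supplies.'' They are not. Hypothesis $(1)$ only gives $K\vp_{U\setminus A}(\cdot,\eta g)<\infty$ for $\eta\in(0,1)$, and since $t\mapsto\vp(x,t)$ is an arbitrary increasing function with no doubling-type control (cf.\ Remark~\ref{44'}), this does \emph{not} yield $K\vp_{U\setminus A}(\cdot,g)<\infty$. The repair: fix one $\eta\in(0,1)$ and apply converse-P with $\eta g$ in place of $h$. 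The first hypothesis then matches; the second reads $\urh_{\eta g}^A=\eta\urh_g^A\not\equiv\eta g|_U$, which holds because $\ur_g^A\not\equiv g|_U$ forces $\urh_g^A\not\equiv g|_U$ (indeed, if $\urh_g^A=g|_U$, then $g|_U\in\W_U$ since $\urh_g^A\in\W_U$ by $(B_2)$; but then $g|_U$ competes in the infimum defining $\ur_g^A$, so $\ur_g^A\le g|_U$, and together with $\urh_g^A\le\ur_g^A$ this forces $\ur_g^A=g|_U$). This yields $P^\vp(\eta g)\not\equiv 0$, whence $P^\vp g\ge P^\vp(\eta g)\not\equiv 0$ by Lemma~\ref{increasing}. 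Note also that your parenthetical ``for finely open $A$ these agree by \eqref{fine-reg}'' does not help here, since the set $A$ in hypothesis $(1)$ of Theorem~\ref{gleh} is only assumed Borel, not finely open; the argument above handles the Borel case unconditionally.
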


\begin{proof}
 (1)\,$\Rightarrow$\,(2): 
 Let $\eta_n\in (0,1)$ such that 
 $\eta_n\uparrow 1$.  By Proposition  \ref{first-properties},(2),
 the functions $u_n:=T^\vp(\eta_n g)$, $n\in\nat$, satisfy
 $u_n+K^\vp u_n=P^\vp(\eta_n g)\in\H^+(U)$, where
 $P^\vp(\eta_n g)\not \equiv 0$ on $U$,  by Proposition  \ref{converse-P}.
 By Lemma \ref{increasing},
 the sequences $(u_n)$ and $(P^\vp(\eta_n g))$ are increasing.
 By Lemma \ref{vnv},  $u:=\limn u_n$
 satisfies
 \begin{equation*}
   u+K^\vp u= \lim\nolimits_{n\to\infty} P^\vp(\eta_n g)=:\wilde g,
 \end{equation*}
 where $\wilde g$ is harmonic on $U$, $\wilde g\not\equiv 0$ on  $U$  and $\wilde g=h$ on $X\setminus U$,
 hence $\wilde g\in \G_h$.

 Suppose now that $1_Uh$ is harmonic on $U$ and that $(2)$  holds.
   Since $g=h$ on~$X\setminus U$, we see that 
   also $1_Ug=g-1_{X\setminus U}h= g-(h-1_Uh)$ is harmonic on $U$.
 Let  $\eta\in (0,1)$. By Proposition \ref{bal-converse}, there exists $A\in
 \B(U)$ with $K\vp_{U\setminus A}(\cdot,\eta g)<\infty$ and
 $\urh_g^A\in \mathcal P'(U)$. In particular, $\urh_g^A$ cannot
 be equal to the function $g|_U\in \H^+(U)\setminus\{0\}$.
 \end{proof}

\section{Cases, where $1_Uh$ is not harmonic on $U$}\label{sec:noth}  
As before, let $h\in\H^+(U)$. If $h=0$ on $X\setminus U$ or the
harmonic measures $H_V(x,\cdot)$, $x\in V$,  are supported by
the boundary $\partial V$ for every $V\in\V(U)$ (as in our standard
examples (i) and (iii)), then $h|_U=1_Uh$ is harmonic on $U$. However, in our
example~(ii) (Riesz potentials) the constant $1$ is harmonic on $U$,
but $1_U$ is not harmonic on~$U$ if $X\setminus U$ has strictly
positive Lebesgue measure.

In our situation of an open set $U$ in  a general balayage space $(X,\W)$,
we can state the following. Let $h\in\H^+(U)$ such that $1_U h$ is not
harmonic on $U$.  Then $u_0:=1_{X\setminus U} h=h-1_U h$ is not
harmonic on $U$. So there exists $ m\in\nat$ and $x\in U$  such
$ H_{V_m} u_0(x)>0$. But, of course, $u_0$ is subharmonic on $U$, and
hence the sequence $(H_{V_n}u_0)$ is increasing to a
minorant $h_0$ of $h$ which is harmonic on $U$ and satisfies
$h_0(x)\ge H_{V_m} u_0(x)>0$. Therefore  $h_0\in\G_h$, 
\begin{equation*}
  \G_h=\{g\in\H^+(U)\colon h_0\le g\le h\},
\end{equation*}
\begin{equation}\label{h0h}
  h_0=h \quad\mbox{ if and only if }\quad 1_U h \mbox{ is a potential on } U.
  \end{equation}
Our definition of $P^\vp h$ immediately leads to 
\begin{equation*} 
 P^\vp h\in \G_h\qquad \mbox{ for every }\vp\in\Phi.
 \end{equation*} 
Moreover, $h-h_0$ is the largest minorant of 
$h$ in $\H^+(U)$ which vanishes on $X\setminus  U$.

By Proposition \ref{first-properties}, we get the following.

\begin{theorem}\label{h0h-result}
   Suppose that $\vp$ is $h$-proper and  $1_U h$ is \emph{not} harmonic on
   $U$.

   Then $    T^\vp h+ K^\vp T^\vp h=P^\vp h\in \G_h$, $u:=T^\vp h$
   answers Problem 2, and it answers Problem 1 if $1_U h$ is a potential
   on $U$.
  \end{theorem}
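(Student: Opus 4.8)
The statement to prove is Theorem~\ref{h0h-result}, and essentially all of the work has already been done in Section~\ref{sl-general}; the task is to assemble the pieces. The plan is to first invoke Proposition~\ref{first-properties},(2): since $\vp$ is $h$-proper, we immediately get $T^\vp h+K^\vp T^\vp h=P^\vp h$ and $P^\vp(P^\vp h)=P^\vp h$. Next I would recall from the discussion preceding the theorem that $P^\vp h\in\G_h(U)$ always holds (this is \eqref{def-pvp} together with the paragraph establishing $\G_h=\{g\in\H^+(U)\colon h_0\le g\le h\}$), so in particular $P^\vp h$ is harmonic on $U$, equals $h$ on $X\setminus U$, and does \emph{not} vanish on $U$ because $h_0\le P^\vp h$ and $h_0\not\equiv 0$ by the hypothesis that $1_Uh$ is not harmonic on $U$.

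Then I would spell out why $u:=T^\vp h$ answers Problem~2. By the two displayed identities, $u\in\B^+(X)$, $u\le h$ (from \eqref{def-pvp}, $T^\vp h\le P^\vp h\le h$), $u=h$ on $X\setminus U=\uc$ (the truncated equations \eqref{un} force $u_n=h$ off $U$, hence so does the limit), and $u+K^\vp u=P^\vp h\in\G_h$, which via Proposition~\ref{Lup-general} (the balayage-space analogue of Proposition~\ref{Lup}; in the standard examples, Proposition~\ref{Lup}) translates back to $Lu=\vp(\cdot,u)\mu$ on $U$ with $P^\vp h-u\in\pr(U)$. Crucially, $u=T^\vp h\not\equiv 0$ on $U$: indeed $u+K^\vp u=P^\vp h\not\equiv 0$ on $U$, and if $u$ vanished identically on $U$ then $K^\vp u=K\vp(\cdot,0)=0$ by \eqref{vp-zero}, forcing $P^\vp h\equiv 0$ on $U$, a contradiction. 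This gives a solution with $0\le u\le h$, $u\not\equiv 0$ on $U$, $u=h$ on $\uc$, which is exactly Problem~2.

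Finally, for Problem~1 under the extra hypothesis that $1_Uh$ is a potential on $U$: by \eqref{h0h} this is equivalent to $h_0=h$, i.e.\ $\G_h=\{h\}$, so the inclusion $P^\vp h\in\G_h$ forces $P^\vp h=h$. Then the identity $T^\vp h+K^\vp T^\vp h=P^\vp h$ becomes $u+K^\vp u=h$, which by Proposition~\ref{Lup-general} (resp.\ Proposition~\ref{Lup}) is precisely the assertion $Lu=\vp(\cdot,u)\mu$ on $U$ and $h-u\in\pr(U)$ of Problem~1; uniqueness is Proposition~\ref{uniqueness},(i). I expect no genuine obstacle here — the only point requiring a little care is making explicit that the ``boundary behavior'' in Problem~1 is captured by $h-u\in\pr(U)$ and that $u=h$ on $\uc$ is automatic from the construction, both of which are already recorded in the section; so the write-up is mainly a matter of citing Proposition~\ref{first-properties},(2), the remarks on $\G_h$ and \eqref{h0h}, and Proposition~\ref{Lup-general}.
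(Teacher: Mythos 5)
Your proposal is correct and follows the same route the paper takes: the paper's entire ``proof'' is the one-line pointer ``By Proposition~\ref{first-properties}, we get the following,'' relying on the preceding paragraph (establishing $\G_h=\{g\in\H^+(U)\colon h_0\le g\le h\}$, $P^\vp h\in\G_h$, and \eqref{h0h}). You simply spell out what that pointer compresses — invoking Proposition~\ref{first-properties}(2) for $T^\vp h+K^\vp T^\vp h=P^\vp h$, noting $P^\vp h\in\G_h$ and hence $P^\vp h\ge h_0\not\equiv 0$ on $U$, deducing $T^\vp h\not\equiv 0$ on $U$ from $\vp(\cdot,0)=0$, and using \eqref{h0h} to reduce $\G_h$ to $\{h\}$ for Problem~1 — and each step is justified as you state.
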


So, in our standard case (ii) (Riesz potentials),  the answer
to Problem~2 is  ``Yes'' if the Lebesgue measure of $\{x\in
X\setminus U \colon h(x)>0\}$  is strictly positive.
Moreover, using the next result and (\ref{h0h}) we get further criteria 
  for a positive answer to Problem~1 in the general case.

\begin{proposition}\label{h-0-suff}
 Suppose that $H_U(x,\partial U)=0$ for every $x\in
 U$,  let $f\in \B^+(X)$ such that $h:=H_Uf\in\H^+(U)$.
 Moreover, let us assume that
 \begin{itemize}
 \item[\rm (i)]
    there exists a strictly positive harmonic function on $X$ or 
 \item[\rm(ii)]
  $U$ is relatively compact.
\end{itemize} 
 Then $ 1_U h$ is a potential on $U$.
\end{proposition}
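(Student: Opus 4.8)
The plan is to reduce the assertion, by means of \eqref{h0h}, to a convergence statement for harmonic measures along the fixed exhaustion $(V_n)$ of $U$, and then to obtain it from the associated Hunt process. By \eqref{h0h} it suffices to prove that $h_0=h$ on $U$, where $h_0=\lim_{n\to\infty}H_{V_n}(1_{\uc}h)$. Since $h=H_Uf$ coincides with $f$ on $\uc$ (as $H_U(x,\cdot)=\delta_x$ there) we have $1_{\uc}h=1_{\uc}f$, and since $H_{V_n}H_U=H_U$ (Proposition~\ref{VssU}),
\begin{equation*}
H_{V_n}(1_{\uc}f)(x)=H_Uf(x)-H_{V_n}(1_Uh)(x),\qquad x\in U,
\end{equation*}
so that $h_0=h$ on $U$ amounts exactly to $\lim_{n\to\infty}H_{V_n}(1_{\uc}f)(x)=H_Uf(x)$ for every $x\in U$, which is what I would establish.

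To do so I would pass to a Hunt process. Replacing $\W$ by $\W/s_0$ with a strictly positive $s_0\in\S^+(X)$ (Remark~\ref{scaling}) one may assume $1\in\W$; this preserves all hypotheses, since $\widetilde h:=h/s_0=\widetilde H_U(f/s_0)$ is still harmonic on $U$, $\widetilde H_U(x,\partial U)=s_0(x)^{-1}\int_{\partial U}s_0\,dH_U(x,\cdot)=0$ because $s_0$ is real-valued, and $1_U\widetilde h$ is a potential for the transformed space iff $1_Uh$ is a potential on $U$. Under (i) I choose $s_0$ to be a strictly positive harmonic function on $X$, so that after the transform $1$ is harmonic on $X$ and the process is conservative; under (ii) the set $U$ stays relatively compact; in either case the process exits $U$. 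By Theorem~\ref{Hunt-converse} there is then a Hunt process $\mathfrak X$, with lifetime $\zeta$, such that $H_Vg(x)=\mathbb{E}^x(g\circ X_{\tau_V})$ for every open $V$ and every $g\in\B^+(X)$, where $g\circ X_{\tau_V}:=0$ on $\{\tau_V\ge\zeta\}$.

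Now fix $x\in U$. For the exit times, $X_{\tau_{V_n}}\in\uc$ holds precisely on $\{\tau_{V_n}=\tau_U\}$ (since $X_s\in V_n\subset U$ for $s<\tau_{V_n}$), whence
\begin{equation*}
H_{V_n}(1_{\uc}f)(x)=\mathbb{E}^x\bigl[f(X_{\tau_U})\,;\ \tau_{V_n}=\tau_U<\zeta\bigr].
\end{equation*}
As $V_n\uparrow U$ gives $\tau_{V_n}\uparrow\tau_U$, the events $\{\tau_{V_n}=\tau_U\}$ increase, with union the complement of $D:=\bigcap_n\{\tau_{V_n}<\tau_U\}$; hence by monotone convergence
\begin{equation*}
\lim_{n\to\infty}H_{V_n}(1_{\uc}f)(x)=H_Uf(x)-\mathbb{E}^x\bigl[f(X_{\tau_U})\,;\ D,\ \tau_U<\zeta\bigr].
\end{equation*}
On $D\cap\{\tau_U<\zeta\}$ the times $\tau_{V_n}$ strictly increase to $\tau_U<\zeta$, so quasi-left-continuity of $\mathfrak X$ gives $X_{\tau_{V_n}}\to X_{\tau_U}$; since $\tau_{V_n}<\tau_U$ forces $X_{\tau_{V_n}}\in U$, the limit lies in $\overline U$, while $X_{\tau_U}\in\uc$ because the process exits $U$ at time $\tau_U$. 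Thus $X_{\tau_U}\in\partial U$ a.s.\ on $D\cap\{\tau_U<\zeta\}$, so
\begin{equation*}
\mathbb{E}^x\bigl[f(X_{\tau_U})\,;\ D,\ \tau_U<\zeta\bigr]\le\mathbb{E}^x\bigl[f(X_{\tau_U})\,1_{\partial U}(X_{\tau_U})\,;\ \tau_U<\zeta\bigr]=\int_{\partial U}f\,dH_U(x,\cdot)=0
\end{equation*}
by the hypothesis $H_U(x,\partial U)=0$. This gives $\lim_n H_{V_n}(1_{\uc}f)(x)=H_Uf(x)$ on $U$, i.e.\ $h_0=h$, and $1_Uh$ is a potential on $U$ by \eqref{h0h}.

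The delicate part is the passage to a Hunt process compatible with the hypotheses — this is where conditions (i) and (ii) are used — together with the quasi-left-continuity argument pinning $X_{\tau_U}$ to $\partial U$ on the exceptional event $D$; once these are in place, the conclusion is a short monotone-convergence computation.
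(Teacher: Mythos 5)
Your proof is correct, and it takes a genuinely different route from the paper. The paper argues analytically: after an $s$-transform to have $1\in\W$, it first treats the special case $h=H_U1$, cites \cite{BH} for the weak convergence of $\nu_n:=H_{V_n}(x,\cdot)$ to $\nu:=H_U(x,\cdot)$, and then upgrades weak convergence to $\nu_n(U)\to 0$ using, in case (i), that all $\nu_n$ are probability measures (this is precisely where the strictly positive harmonic function is needed, so that $1\in\H(X)$ after the transform), and in case (ii), that the measures $1_U\nu_n$ are carried by the shrinking compacts $\overline U\setminus V_n$, so their weak limit lives on $\partial U$ and is therefore null. General $f$ is then handled by approximation. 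You instead go directly to the Hunt process and use quasi-left-continuity: on the event $D$ where $\tau_{V_n}<\tau_U$ for all $n$, the point $X_{\tau_U}$ is a left limit of points $X_{\tau_{V_n}}\in U$ and simultaneously lies in $\uc$, hence in $\partial U$, which has zero $H_U(x,\cdot)$-measure; a short monotone-convergence bookkeeping then gives $\lim_n H_{V_n}(1_{\uc}f)=H_Uf$ directly for general $f$. That is a clean alternative, and the fact $\tau_{V_n}\uparrow\tau_U$ that you use is the probabilistic content behind the weak convergence the paper cites.

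One thing you should notice and clarify: as written, your argument never actually uses (i) or (ii). After any $s$-transform with a strictly positive $s_0\in\splus$ (which exists by (T), no harmonicity needed) you have $1\in\W$, Theorem~\ref{Hunt-converse} supplies a Hunt process, and the quasi-left-continuity computation runs without further input; choosing $s_0$ harmonic under (i) plays no role in the rest of the proof, and under (ii) the compactness of $\overline U$ is never invoked. The observations about the process being conservative or exiting $U$ are likewise not used (on $\{\tau_U=\zeta\}$ the term $f(X_{\tau_U})$ is $0$ by the cemetery convention, so that case causes no harm). I do not see a gap, which would mean your argument proves a bit more than the stated proposition; but since the paper's analytic proof does crucially rely on one of (i), (ii) to pass from weak convergence of $(\nu_n)$ to $\nu_n(1_Uh)\to 0$, it would be worth saying explicitly that the probabilistic route sidesteps that issue rather than leaving (i)/(ii) hanging in your write-up as if they were load-bearing.
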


\begin{proof}  
  Using an $s$-transformation (see \cite[]{bogdan-hansen-semi})
  we may assume without loss of generality that $1\in \W$ (even
 $1\in \H(U)$   if (i) holds). 

Let us first  consider $h:=H_U1$ and fix $x\in U$. By \cite[VI.1.9, 10.2]{BH}, the measures
  $\nu_n:=H_{V_n}(x,\cdot)$, $n\in\nat$,
  converge weakly to $\nu:=H_U(x,\cdot)$ as $n\to\infty$.
  To prove that $h|_U$ is a potential on $U$ we have to show
  that $\limn \nu_n(1_Uh)=0$. 

  Let us first consider that case (i) so that $h=1$ and the measures $\nu_n$ and
  $\nu$ are probability measures. Of course,  $\nu(U)=0$ and, by assumption, also
  $\nu(\partial U)=0$. Given $\ve>0$, there hence exists a function
  $v\in\C^+(X)$ with compact support in~$X\setminus \ov U$ such that
  $v\le 1$ and $\nu(v)>1-\ve$. There exists $m\in\nat$ such that, for
  every $n\ge m$,   $\nu_n(v)>\nu(v)-\ve$, and therefore
  $\nu_n(U)\le 1-\nu_n(v) < 2\ve$.

  Let us now suppose that $U$ is relatively compact. We know that
  $h\le 1$. 
   By   \cite[VI.9.4]{BH}, the sequence $(1_\uc\nu_n)$ is increasing. Hence
   also the measures $\nu_n':=1_U\nu_n$ (which are supported by
 the compact   $\ov U\setminus V_n$) converge weakly.
   Their limit $\nu'$ is a measure  on $\partial U$ which, of~course,
   satisfies $\nu'\le \nu$. Therefore $\nu'=0$, and hence  $\limn
   \nu_n'(1)=0$, $\limn \nu_n(1_Uh)=0$.

   Finally, let $f\in\B^+(X)$.  
   If $f$ is bounded, then obviously  $1_UH_Uf$,  being bounded by a
   multiple of $1_UH_U1$,  is a potential on $U$. In fact, it is now  easily seen
   that this holds as long as $H_Uf\in \H^+(U)$ on $U$ (approximation of
   $f$    by $f\wedge n$, $n\in\nat$). 
    \end{proof}

   \section{Application to balayage spaces admitting\\ a Green function}\label{sec:Gf}

It should be more or less clear from Section \ref{model-section} how to apply
our results to situations which are more general than our standard examples. 
To be precise, let us consider again a~general balayage space $(X,\W)$.

\begin{definition}\label{d.Gf}
 In the following we say that   a function $G\in \B^+(X\times X)$
    is a~\emph{Green function} for $(X,\W)$, if the functions
    $G(\cdot,y)$, $y\in X$, are non-vanishing potentials on $X$ which
    are harmonic on $X\setminus \{y\}$.
    \end{definition}

 \begin{lemma}\label{G-potential}
 Let $y\in X$ and $g\in\H^+(X\setminus \{y\})$.
      \begin{itemize}
      \item[\rm (1)]
        Then
        $g \in\W$
        if and only if $g$ is l.s.c.\ at $y$ and  $H_Vg(y)\le g(y)$ for
        a~fundamental system of   neighborhoods~$V$ of~$y$.
\item[\rm (2)]
  Moreover, $g\in \mathcal P'(X)$, if $g\in \W$ and $H_Wg\downarrow 0$
  locally uniformly as $W\in \U(X)$ increases to $X$.
\end{itemize}
    \end{lemma}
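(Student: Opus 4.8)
The plan is to treat the two assertions in turn, the first being a purely local characterization of membership in $\W$, the second an identification of $g$ as a potential once membership in $\W$ is known.

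For part (1), the ``only if'' direction is immediate: if $g\in\W$ then $g$ is lower semicontinuous everywhere (by (C), since members of $\W$ are l.s.c.), in particular at $y$, and $H_Vg\le g$ for every $V\in\V(X)$ since $\W={}^\ast\H^+(X)$ by \eqref{HW}; so in particular this holds along any fundamental system of neighborhoods of $y$. For the ``if'' direction, I would invoke Lemma~\ref{superharmonic-base}. Indeed, $g$ is harmonic on the open set $X\setminus\{y\}$, hence for every $x\ne y$ there is $V\in\V(X\setminus\{y\})$ with $H_Vg=g$ on $V$ and $(H_Vg)|_V\in\C(V)$; together with the assumed fundamental system of neighborhoods $V$ of $y$ satisfying $H_Vg(y)\le g(y)$ (and, crucially, $(H_Vg)|_V\in\C(V)$ because $g|_{V\setminus\{y\}}$ is continuous and $g$ is l.s.c.\ at $y$, so $H_Vg$ is continuous on $V$), the hypotheses of Lemma~\ref{superharmonic-base} with $U=X$ are met. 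Here I must be a little careful: Lemma~\ref{superharmonic-base} gives $H_X w\in\H^+(X)$ from $H_Uw\le w$; with $U=X$ we have $H_X=\mathrm{id}$, so the statement degenerates — instead I should use the more basic fact (e.g.\ \cite[III.4.4]{BH}) that a positive numerical function which is l.s.c.\ and satisfies $H_Vw\le w$ for a fundamental system of neighborhoods of each point is hyperharmonic on $X$, i.e.\ lies in ${}^\ast\H^+(X)=\W$. The point of the hypothesis is exactly that one only needs to check the supermeanvalue inequality at $y$ along a neighborhood basis, since away from $y$ the function is already harmonic.

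For part (2), assume $g\in\W$, so $g$ is superharmonic on $X$ wherever it is finite; by hypothesis $H_Wg\downarrow 0$ locally uniformly as $W\uparrow X$ through $\V(X)$. Recall that $\inf\{H_Wg\colon W\in\V(X)\}$ is the largest harmonic minorant of $g$ on $X$; vanishing of this infimum is precisely the definition of $g$ being a potential on $X$, so $g\in\pr(X)$. It remains to upgrade $\pr(X)$ to $\mathcal P'(X)$, i.e.\ to a \emph{continuous real} potential. Since $g$ is harmonic, hence continuous real, on $X\setminus\{y\}$, the only possible singularity is at $y$, and I would argue that the local uniform convergence $H_Wg\to 0$ forces $g$ to be finite: picking a small $W\ni y$ with $H_Wg<\infty$ near $y$ and writing $g=H_Wg+(g-H_Wg)$ where $g-H_Wg$ is a potential on $W$ supported (as a superharmonic function) near $y$; local uniform convergence prevents $g$ from blowing up. Then $g$ real and l.s.c.\ and superharmonic is automatically in $\S^+(X)$, hence $g\in\mathcal P_r(X)=\mathcal P'(X)$ by the description of $\px$ as the continuous real potentials together with \eqref{p-rep}-type representation, or directly since a real potential is continuous.

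The main obstacle I anticipate is in part (2): cleanly deducing that $g$ is \emph{real} (finite at $y$) and continuous from the local uniform vanishing of $H_Wg$, rather than merely that $g$ is a potential in the extended-real sense. The hypothesis of \emph{local uniform} convergence (as opposed to pointwise) is presumably included exactly to rule out an infinite value at $y$; I would make this precise by comparing $g$ near $y$ with $H_Wg$ plus the Green-type potential of the restriction, and this is the step that deserves the most care. Everything else is a routine application of Lemma~\ref{superharmonic-base} (or \cite[III.4.4]{BH}) and the definitions of potential and of $\px$.
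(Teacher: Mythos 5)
Your treatment of part (1) is essentially right and matches the paper's proof, which also appeals to \cite[III.4.4]{BH}. The one slip there is the parenthetical claim that lower semicontinuity of $g$ at $y$ together with continuity away from $y$ already gives $(H_Vg)|_V\in\C(V)$ for small $V\ni y$; that is false in general (this continuity of $H_Vg$ on $V$ is the defining property of superharmonicity, and is exactly what part (2) works hard to establish). Fortunately (1) only concerns hyperharmonicity, i.e.\ lower semicontinuity plus $H_Vg\le g$, so that parenthetical is unnecessary and your final appeal to \cite[III.4.4]{BH} lands correctly.

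Part (2) has a genuine gap, arising from two related confusions. First, a notational misreading: in this paper $\mathcal{P}'(X)=\pr(X)$ denotes the set of \emph{all} potentials on $X$ (allowed to take the value $+\infty$), not the continuous real potentials — those are $\mathcal{P}(X)=\px$. Consequently (2) does \emph{not} claim that $g$ is real and continuous, and it cannot: the prototype $g=G(\cdot,y)$ in the classical standard example satisfies every hypothesis of (2) (harmonic off $\{y\}$, in $\W$, and $H_{B_R}g=cR^{2-d}\to 0$ locally uniformly), yet $g(y)=\infty$. So your plan to ``force $g$ to be finite at $y$'' is aimed at a false intermediate claim. Second — and this is the actual missing step — $g\in\W$ only makes $g$ \emph{hyperharmonic}; the definition of a potential in this framework presupposes that $g$ is \emph{superharmonic}, i.e., that $(H_Ug)|_U\in\C(U)$ for every $U\in\V(X)$. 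Establishing this is the entire content of (2) and is where the \emph{local uniform} vanishing of $H_Wg$ is used. The paper's argument: for $U$ with $y\notin\ov U$ one has $H_Ug=g$, trivially continuous on $U$; by Lemma~\ref{superharmonic-base} it then suffices to treat $U\ni y$; for such $U$ and $\ve>0$, choose $W\in\U(X)$ with $\ov U\subset W$ and $H_Wg<\ve$ on $\ov U$, so that $w:=H_U(1_{W^c}g)<\ve$ on $U$, while $h:=H_U(1_{W\setminus U}g)$ equals $H_U\vp$ for a suitable compactly supported $\vp\in\C^+(X)$ and hence lies in $\H^+(U)$; then $H_Ug=h+w$ with $h$ continuous on $U$ and $0\le w<\ve$, so letting $\ve\to 0$ shows $(H_Ug)|_U\in\C(U)$. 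Your proposed decomposition $g=H_Wg+(g-H_Wg)$ is in the right spirit, but the object you should be splitting and proving continuous is $H_Ug$, not $g$ itself.
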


    \begin{proof}
      (1) Immediate consequence of \cite[III.4.4]{BH}.

      (2) It remains to show that $(H_Ug)|_U\in \C(U)$  for every  $U\in \U(X)$. 
      This holds trivially if $y\notin \ov U$, since then $H_Ug=g$. 
    Hence,  by Lemma \ref{superharmonic-base},
      it suffices  to consider the case, where $y\in U\in \V(U)$.

      Fixing $\ve>0$ we may choose a neighborhood $W\in \U(X)$ of
      $ \ov  U$ such that $H_Wg<\ve$  on $ \ov U$, and hence
      $w:=H_U(1_{W^c}g)<\ve$ on $ \ov U $ (see, for example, \cite[VI.9.4]{BH}).
            Further, we choose
      $\vp\in \C^+(X)$ with compact support such that $\vp\le g$ on
      $X$ and $\vp=g$ on $W\setminus U$. Since $H_U\vp\in \H^+(U)$,
      we obtain that $h:=H_U(1_{W\setminus U}g)\in \H^+(U)$. Having
      $H_Ug=h+w$ and $0\le w<\ve$ on $U$, we finally conclude that
      $(H_Ug)_U\in \C(U)$.
      \end{proof} 
      
        The equation in the following result is often called \emph{Hunt's
    formula}.
  
\begin{proposition}\label{GXU-prop}
  If $G$ is a Green function for $(X,\W)$, then, for every open~$U$ in~$X$,
 the function $G_U\in\B^+(U\times U)$, defined by
  \begin{equation}\label{def-GU}
    G_U(x,y):=G(x,y)-(H_UG(\cdot,y))(x), \qquad x, y\in U,
  \end{equation}
is a Green function for $(U,\W(U))$.
\end{proposition}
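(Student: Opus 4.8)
The goal is to verify the three defining properties of a Green function (Definition~\ref{d.Gf}) for $(U,\W(U))$, applied to $G_U$ defined by~\eqref{def-GU}; namely, for each fixed $y\in U$, the function $G_U(\cdot,y)$ should be (a)~a potential on $U$ in the sense of $(U,\W(U))$, (b)~non-vanishing, and (c)~harmonic on $U\setminus\{y\}$. Fix $y\in U$ throughout and write $p:=G(\cdot,y)\in\px$, so that by hypothesis $p$ is a non-vanishing potential on $X$, harmonic on $X\setminus\{y\}$, and $G_U(\cdot,y)=p-H_Up$ on $U$.

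\textbf{Step 1: $G_U(\cdot,y)\in\mathcal P(U)$.} This is exactly Proposition~\ref{restriction}(3): for every $p\in\px$ we have $p-H_Up\in\mathcal P(U)$. Since $p=G(\cdot,y)\in\px$, the function $G_U(\cdot,y)=p-H_Up$ is a continuous real potential on $U$. (One should note that $p-H_Up$ as a function on $X$ vanishes on $X\setminus U$ because $H_Up=p$ there by property $(H_0)$, so restricting to $U$ is harmless; this matches the convention of identifying functions on $U$ with functions on $X$ vanishing off $U$.)

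\textbf{Step 2: harmonicity of $G_U(\cdot,y)$ on $U\setminus\{y\}$.} Since $p$ is harmonic on $X\setminus\{y\}\supset U\setminus\{y\}$, and $H_Up\in\H^+(U)$ is harmonic on all of $U$ (it is the reduced function $R_p^{U^c}\in\W$, continuous real on $U$ by Proposition~\ref{restriction}(3) combined with~\eqref{HV}, and $H_VH_Up=H_Up$ for $V\in\V(U)$ by Proposition~\ref{VssU}), their difference $G_U(\cdot,y)=p-H_Up$ is a difference of two functions harmonic on $U\setminus\{y\}$, hence harmonic there. One must also check $G_U(\cdot,y)$ is finite on $U\setminus\{y\}$; this holds because $p$ is a continuous real potential, so $p<\infty$ on $X$, and likewise $H_Up\le p<\infty$.

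\textbf{Step 3: $G_U(\cdot,y)$ is non-vanishing, i.e. not identically $0$ on $U$.} This is the step I expect to be the main obstacle, since it is the only one that genuinely uses that $y\in U$ (an interior point) rather than mere formal manipulation. The cleanest argument: if $G_U(\cdot,y)\equiv 0$ on $U$, then $p=H_Up$ on $U$, and since $p=H_Up$ on $U^c$ as well, we get $p=H_Up$ on all of $X$, i.e. $p=R_p^{U^c}\in\W$ is harmonic on a neighborhood of $y$ (indeed on all of $U$). But $p=G(\cdot,y)$ is harmonic on $X\setminus\{y\}$ and, being a non-vanishing potential, cannot be harmonic in a neighborhood of $y$: by~\eqref{dom} or the characterization in \cite[III.6.12]{BH}, the carrier $C(p)$ is the smallest closed set outside of which $p$ is harmonic, so $C(p)=\{y\}$ (it is nonempty because $p$ is a non-vanishing potential, and it is contained in $\{y\}$ since $p$ is harmonic off $y$); a function harmonic in a neighborhood of its carrier would be harmonic everywhere, forcing $p\in\H^+(X)$, hence $p=0$ by~\eqref{char-px}, contradicting that $p$ is non-vanishing. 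Therefore $G_U(\cdot,y)\not\equiv 0$ on $U$. (Alternatively, one can invoke the lifting of potentials, Theorem~\ref{lifting} with $A=\{y\}$: the potential $q:=G_U(\cdot,y)\in\mathcal P(U)$ harmonic on $U\setminus\{y\}$ lifts to a potential $\tilde p\in\px$ harmonic on $X\setminus\{y\}$ with $\tilde p-H_U\tilde p=q$; by uniqueness in Theorem~\ref{lifting} and the fact that $p$ itself satisfies $p-H_Up=q$, we get $\tilde p=p$, and then $q\equiv 0$ would force $p=H_Up$, i.e. $p$ harmonic on $U$, the same contradiction.) Finally, assembling Steps 1--3 shows $G_U$ satisfies all requirements of Definition~\ref{d.Gf} for $(U,\W(U))$, which is a balayage space by Proposition~\ref{restriction}, completing the proof.
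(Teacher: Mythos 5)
There is a genuine gap in Step~1, and it infects the rest of the argument. You assert $p:=G(\cdot,y)\in\px$ and then invoke Proposition~\ref{restriction}(3). But $\px=\{p\in\splus\colon\ldots\}$ with $\splus=\W\cap\C(X)$, so membership in $\px$ requires $p$ to be \emph{real-valued and continuous} on all of~$X$. Definition~\ref{d.Gf} only asks that $G(\cdot,y)$ be a potential in $\pr(X)$, harmonic off $y$; in the standard examples (Newtonian and Riesz kernels) one has $G(y,y)=\infty$, so $G(\cdot,y)\notin\px$. Hence Proposition~\ref{restriction}(3) does not apply, and the asserted conclusion that $G_U(\cdot,y)$ is a \emph{continuous real} potential is in fact false in general (it too is typically $+\infty$ at $y$). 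The alternative lifting argument you sketch in Step~3 inherits the same problem: Theorem~\ref{lifting} works with $q\in\mathcal P(U)$, which you have not established.

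The paper avoids this by working with lower-semicontinuous superharmonic functions from the start. To get $H_Ug\in\H^+(U)$ it uses Lemma~\ref{superharmonic-base} together with Proposition~\ref{VssU} (not Proposition~\ref{restriction}(3)); this only needs $g$ superharmonic, and then $G_U(\cdot,y)=g-H_Ug$ is superharmonic on $U$ and harmonic on $U\setminus\{y\}$. For the potential property it then proves directly that $\inf_{W\in\V(U)}H_W(G_U(\cdot,y))=0$ via an $\varepsilon$-argument: given $z\in U$, choose $V\in\U(X)$ with $H_Vg(z)<\varepsilon/2$, then use Remark~\ref{RAopen} to find $W\in\U(U\cap V)$ with $R_g^{W^c}(z)<R_g^{(U\cap V)^c}(z)+\varepsilon/2$, and combine with $R_g^{(U\cap V)^c}\le R_g^{U^c}+R_g^{V^c}$ and $H_WH_U=H_U$ to get $H_W(G_U(\cdot,y))(z)<\varepsilon$. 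Your non-vanishing argument in Step~3 is essentially the paper's (if $G_U(\cdot,y)\equiv 0$ then $g$ is harmonic on $X$, contradicting $g\in\pr(X)\setminus\{0\}$), but the carrier/lifting machinery is unnecessary for it. If you repair Step~1 along the paper's lines (Lemma~\ref{superharmonic-base} plus the direct estimate of $H_W$), the proof goes through.
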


\begin{proof}  Let $y\in  U$ and $ g:=G(\cdot,y)$. 
By Lemma \ref{superharmonic-base} and (\ref{VssU}),   $H_Ug\in\H^+(U)$.
Hence $G_U(\cdot,y)$  is superharmonic on $U$ and harmonic on $U\setminus
 \{y\}$. It does not vanish, since otherwise $g$ would be harmonic on $X$.

 Let $z\in U$ and $\ve>0$.  
 We first choose $V\in \U(X)$ with $R_g^\vc(z)=H_Vg(z) <\ve/2$.
 By Remark \ref{RAopen}, there exists $W\in \U(U\cap V)$ such that
 $R_g^\wc (z)<R_g^{(U\cap V)^c}(z)+\ve/2$. Trivially,
 $R_g^{(U\cap V)^c}\le R_g^\uc+R_g^\vc$.
 So $R_g^\wc(z)<R_g^\uc(z)+\ve$, that is, $H_W g(z)<H_U g(x)+\ve$.
 Since $H_WH_U=H_U$, we see that
 \begin{equation*}
   H_W(G_U(\cdot,y))(z)=H_Wg(z)-H_Ug(z)<\ve.
 \end{equation*}
  Thus $G_U(\cdot,y)$  is a potential on $U$.
            \end{proof} 
 
 Given a Green function $G$ for $X$, let $\M_C^+(X)$ be the set of
all positive Radon measures $\nu$ on $X$ such that
\begin{equation}\label{def-Gnu}
  G\nu:=\int G(\cdot,y)\,d\nu(y)\in \C(X).
\end{equation}
By Fubini's theorem, we immediately see that, for every  measure $\nu\in\M_C^+(U)$,
  \begin{equation*}
 G_U\nu\in \H(U\setminus \supp(\nu))\cap \mathcal P(U),
     \end{equation*}
     and hence $f\mapsto G_U(f\nu)$ is the potential kernel
     $K_p$ for $p:=G_U\nu$.
 Note that $\nu$ in (\ref{def-Gnu}) is uniquely
  determined by the potential $G\nu$ (see \cite[Proposition 5.2]{HN-representation}).
  
 In many cases, $\mathcal P(X)=\{G\nu\colon \nu\in\M_C^+(X)\}$, for example,
  if the functions $G(x,\cdot)$, $x\in X$, are continuous on $X\setminus \{x\}$, continuous on $X$
  if $x$ is finely isolated,  and
           there exists $\nu\in\M_C^+(X)$ charging every non-empty finely open set (see
           \cite{HN-representation}). Let us note the following.

           \begin{proposition}\label{XU-rep} If $G$ is a Green function for $(X,\W)$ such that 
             $\mathcal P(X)=\{G\nu\colon \nu\in\M_C^+(X)\}$, then
             $\mathcal P(U)=\{G_U\nu\colon \nu\in\M_C^+(U)\}$ for every open~$U$ in~$X$.
           \end{proposition}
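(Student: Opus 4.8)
The plan is to establish the two inclusions $\{G_U\nu\colon \nu\in\M_C^+(U)\}\subset\mathcal P(U)$ and $\mathcal P(U)\subset\{G_U\nu\colon \nu\in\M_C^+(U)\}$ separately, using the lifting of potentials (Theorem~\ref{lifting}) together with the hypothesis $\mathcal P(X)=\{G\nu\colon \nu\in\M_C^+(X)\}$ and the Hunt-formula identity~\eqref{def-GU}.

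\textbf{The easy inclusion.} If $\nu\in\M_C^+(U)$, then by the remarks following Proposition~\ref{GXU-prop} (Fubini's theorem) the function $G_U\nu=\int G_U(\cdot,y)\,d\nu(y)$ is harmonic on $U\setminus\supp(\nu)$ and is a potential on $U$; restricting to $f\equiv 1$ we get $G_U\nu\in\mathcal P(U)$. (One should check $G_U\nu\in\C(U)$: this follows because $G_U\nu=K_p1=p$ where $p:=G_U\nu$ is the continuous real potential serving as the potential kernel, as noted just before Proposition~\ref{XU-rep}; alternatively it is the definition of $\M_C^+(U)$ transported through~\eqref{def-GU}.) So every such $G_U\nu$ lies in $\mathcal P(U)$.

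\textbf{The hard inclusion.} Let $q\in\mathcal P(U)$. First I would reduce to the case where $q$ is harmonic off a compact subset of $U$: write $q=\sup_n q_n$ with $q_n\in\mathcal P(U)$ harmonic on $U\setminus A_n$ for an exhausting sequence of compacts $A_n\subset U$ (using that for $p\in\px$, $p\wedge w$ again lies in $\px$, and the carrier/Choquet machinery of Lemma~\ref{dom-princ}, restricted to the balayage space $(U,\W(U))$; one builds $q_n$ as reductions $\ur_q^{A_n}$ or via potential-kernel truncations). Then apply Theorem~\ref{lifting}: each $q_n$ has a lifting $p_n\in\px$ (a continuous real potential on $X$), harmonic on $X\setminus A_n$, with $p_n-H_Up_n=q_n$. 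By hypothesis $p_n=G\nu_n$ for some $\nu_n\in\M_C^+(X)$; since $p_n$ is harmonic on $X\setminus A_n$ and $G(\cdot,y)$ is harmonic precisely off $\{y\}$, the representing measure is supported in $A_n\subset U$, so $\nu_n\in\M_C^+(U)$ (local compactness of $\supp\nu_n$ plus $G\nu_n\in\C(X)$ gives membership in $\M_C^+(U)$). By Remark~\ref{G-lifting} the identity $p_n-H_Up_n=G_U(1_U\nu_n)=G_U\nu_n$ holds, hence $q_n=G_U\nu_n$. Finally, passing to the limit: the $q_n$ increase to $q$, hence the $G_U\nu_n$ increase, hence (taking differences, which are again Green potentials of positive measures) the $\nu_n$ may be arranged to increase to a measure $\nu$; monotone convergence gives $G_U\nu=\sup_n G_U\nu_n=q$, and since $q\in\mathcal P(U)\subset\C(U)$ we get $\nu\in\M_C^+(U)$. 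Thus $q=G_U\nu$ with $\nu\in\M_C^+(U)$.

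\textbf{Main obstacle.} The delicate point is the reduction to compactly-supported (equivalently, harmonic-off-a-compact) potentials and the passage to the limit in the representing measures: one must ensure the increasing approximation can be performed so that successive differences $q_{n+1}-q_n$ remain potentials represented by \emph{positive} measures (so that $\nu_{n+1}-\nu_n\ge 0$), which is where Theorem~\ref{lifting} and the uniqueness of the representing measure (Proposition 5.2 of \cite{HN-representation}, already invoked after~\eqref{def-Gnu}) must be used carefully. Everything else — the Fubini computation, the carrier argument pinning $\supp\nu_n$ inside $U$, and identifying the lifting via Remark~\ref{G-lifting} — is routine given the results already in the paper.
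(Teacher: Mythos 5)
Your overall strategy is the same as the paper's: lift each compactly-supported piece of $q$ to a potential on $X$ via Theorem~\ref{lifting}, invoke the hypothesis $\mathcal P(X)=\{G\nu\colon\nu\in\M_C^+(X)\}$, locate the representing measure inside $U$ by the carrier argument, identify $p_n-H_Up_n=G_U\nu_n$ via Remark~\ref{G-lifting}, and aggregate. The paper accomplishes the aggregation by directly writing $p=\sum_n p_n$ with each $p_n\in\mathcal P(U)$ compactly carried; then $\nu:=\sum_n\nu_n\ge 0$ trivially, and $G_U\nu=p$ by monotone convergence.

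You offer two routes to the approximation: reductions $\ur_q^{A_n}$ and potential-kernel truncations. The second is sound and, in fact, is how the paper's sum decomposition is obtained: applying Lemma~\ref{dom-princ}(i) to $(U,\W_U)$ with $f\equiv 1$ produces an increasing sequence $(q_n)$ in $\mathcal P(U)$ with compact carriers \emph{and} with $q_{n+1}-q_n\in\mathcal P(U)$; the latter is precisely what guarantees that the successive differences are positive-measure Green potentials, so the representing measures automatically add up. But the first route has a real gap: for reductions $q_n=\ur_q^{A_n}$ with $A_n\uparrow U$, it is \emph{not} generally true that the representing measures $\nu_n$ increase (this is the monotonicity of balayage onto increasing sets at the level of measures, which fails pointwise), and your statement that ``taking differences, which are again Green potentials of positive measures, the $\nu_n$ may be arranged to increase'' presupposes $\nu_{n+1}\ge\nu_n$ — exactly what would need proving. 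Stick to the truncation route from Lemma~\ref{dom-princ}(i), i.e.\ the sum decomposition, and the concern you correctly flag as the ``main obstacle'' evaporates.
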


  \begin{proof} Let  $p\in\mathcal P(U)$.
             There exists a sequence~$(p_n)$ in~$\mathcal P(U)$ such that
             $p=\sumn p_n$ and  every $p_n$, $n\in\nat$, has a compact support in~$U$.
             By Theorem \ref{lifting}, there exist $q_n\in \px$ such that $q_n-H_Uq_n=p_n$.
             Assuming that there are $\nu_n\in \M_C^+(X)$ such that $G\nu_n=q_n$, $n\in\nat$, we have 
             $\nu_n(X\setminus U)=0$ and get
             $\nu:=\sumn \nu_n\in \M_C(U)$ with $G_U\nu=p$.
             \end{proof} 
 
 \begin{remark}\label{UG-lifting}
   {\rm
      Let $G$ be a Green function on $X$,
       $\mathcal P(X)=\{G\nu\colon \nu\in\M_C^+(X)\}$.

       (1) If $p=G\nu\in\mathcal P(X)$,  then clearly
       (\ref{def-GU}) and $H_UG(1_\uc\nu)=G(1_\uc\nu)$ imply that
       $p-H_Up=G_U(1_U\nu)$.

     (2) Consider now $q\in \mathcal P(U)$ which is harmonic outside a
       compact $A$ in $U$. By Proposition \ref{XU-rep}, $q=G_U\nu$ for
       some $\nu\in \M_C^+(U)$ which is supported by $A$. By
       Proposition \ref{lifting}, we have a lifting $p\in\mathcal
       P(X)$ of $q$. By our assumption on $G$ there exists $\wilde
       \nu\in \M^+(X)$ such that $p=G\wilde\nu$.  Of course, $\wilde
       \nu$ is supported by $A$, and we conclude from (1) that
       $q=p-H_Up=G_U\wilde \nu$. By uniqueness, $\wilde \nu=\nu$.
     }
                 \end{remark}

In the remainder of this section we assume that $U$ is an open set in
$X$ and that we have a Green function $G_U$ for $(U,\W(U))$ such that
\begin{equation}\label{rep-U}
  \mathcal P(U)=\{G_U\nu\colon \nu\in\M_C^+(U)\}.
  \end{equation} 
Further,  
 suppose that we have an additive mapping $L\colon \mathcal P(U) + \H^+(U)\to \M_C(U)$ satisfying
   \begin{itemize}
     \item[\rm (1)]
     $LG_U\nu=-\nu$ on $U$ for every $\nu\in\M_C(U)$,
   \item[\rm (2)]
     $Lg=0$ on $U$ for every  $g\in \H^+(U)$.
         \end{itemize}
         We  observe that  we could \emph{define} $L$ by (1) and (2),
         since $\mathcal P(U) + \H^+(U)$ is a direct sum by (\ref{direct-sum}) and, 
         by \cite[Proposition 5.2]{HN-representation}, the measure~$\nu$ is uniquely determined
         by the potential $G\nu$.

 Let us  fix 
a Borel measurable function $\vp\colon U\times [0,\infty)  \to [0,\infty) $
and $h\in\H^+(U)$ such that the functions $(x,t)\to \vp(x,t)$, $x\in U$, are
continuous, increasing and $\vp$ is locally $h$-Kato (with respect to $\mu$), that is,
$G_U(1_A\vp(\cdot,h)\mu)\in \C(U)$. We now get the following (cf.\ Proposition \ref{Lup}).
         
         \begin{proposition}\label{Lup-general}
                    For every  $u\in
               \B^+(X)$  the  following  properties are equivalent:
               \begin{itemize}
               \item[\rm(1)]$Lu=\vp(\cdot,u)\mu$ on $U$ and  
                                  $h-u\in  \mathcal P(U)$.
               \item[\rm(2)] $u+G_U(\vp(\cdot,u)\mu)=h$.
               \end{itemize}
The function $u\in\B^+(X)$ having these properties is uniquely
determined by $h$.
\end{proposition}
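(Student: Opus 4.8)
The plan is to mirror the proof of Proposition~\ref{Lup}, with the relations \eqref{dist} and \eqref{p-rep} used there replaced by the defining properties $LG_U\nu=-\nu$, $Lg=0$ (for $g\in\H^+(U)$) and by the representation hypothesis \eqref{rep-U}; three things must be shown: (1)$\Rightarrow$(2), (2)$\Rightarrow$(1), and uniqueness. For (1)$\Rightarrow$(2) I would first use \eqref{rep-U} to write $h-u=G_U\sigma$ with $\sigma\in\M_C^+(U)$, so that $h=u+G_U\sigma$; applying $L$ --- read, as for the model operator in Proposition~\ref{Lup}, via its additive extension to differences of elements of $\mathcal P(U)+\H^+(U)$, which is well defined because that sum is direct (the analogue of \eqref{direct-sum} for $(U,\W(U))$) --- and using $Lh=0$ and $LG_U\sigma=-\sigma$, I obtain $Lu=\sigma$; comparing with $Lu=\vp(\cdot,u)\mu$ and invoking the uniqueness of the measure representing a Green potential (\cite[Proposition~5.2]{HN-representation}) gives $\sigma=\vp(\cdot,u)\mu$, that is, $u+G_U(\vp(\cdot,u)\mu)=h$.

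For (2)$\Rightarrow$(1) I would start from $u+G_U(\vp(\cdot,u)\mu)=h$, which forces $0\le u\le h$ on $U$ and, since $\vp$ is increasing, $\vp(\cdot,u)\le\vp(\cdot,h)$ on $U$. To see that $G_U(\vp(\cdot,u)\mu)$ is continuous on $U$, I would fix $x_0\in U$, pick an open $V\ni x_0$ with $\ov V$ compact in $U$, and split $G_U(\vp(\cdot,u)\mu)=G_U(1_{\ov V}\vp(\cdot,u)\mu)+G_U(1_{U\setminus\ov V}\vp(\cdot,u)\mu)$. The first summand is a continuous real potential: one has $1_{\ov V}\vp(\cdot,u)\mu\le 1_{\ov V}\vp(\cdot,h)\mu$ and $G_U(1_{\ov V}\vp(\cdot,h)\mu)\in\C(U)$ by local $h$-Kato, and Lemma~\ref{dom-princ}(i), applied to the potential kernel of $G_U(1_{\ov V}\vp(\cdot,h)\mu)$ (Remarks~\ref{green+transform}(2)), puts it in $\mathcal P(U)$. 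The second summand is a finite Green potential ($\le h<\infty$) whose measure is carried by $\ov{U\setminus\ov V}$, a set disjoint from $V$, so it is harmonic --- in particular continuous --- on $V$. Letting $x_0$ range over $U$ gives $G_U(\vp(\cdot,u)\mu)\in\C(U)$; being $\le h<\infty$, it is a continuous real potential, so $h-u\in\mathcal P(U)$ and $u|_U=h|_U-G_U(\vp(\cdot,u)\mu)\in\C(U)$, and applying $L$ to the identity in (2) as in the previous step yields $Lu=\vp(\cdot,u)\mu$ on $U$.

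For uniqueness, given two solutions $u_1,u_2\in\B^+(X)$ of (2), I would set $p:=G_U(\vp(\cdot,u_1)\mu)+G_U(\vp(\cdot,u_2)\mu)$, which lies in $\mathcal P(U)$ by the previous step and equals $G_U\rho$ with $\rho:=(\vp(\cdot,u_1)+\vp(\cdot,u_2))\mu\in\M_C^+(U)$, so that its potential kernel $K_p$ is $f\mapsto G_U(f\rho)$ (Remarks~\ref{green+transform}(2)); replacing $\vp$ by $\vp/(\vp(\cdot,u_1)+\vp(\cdot,u_2))$, set to $0$ where the denominator vanishes (this keeps $t\mapsto\vp(x,t)$ increasing and $\vp(x,0)=0$), turns (2) into $u_i+K_p\vp(\cdot,u_i)=h$ with $K_p|\vp(\cdot,u_i)|\le h<\infty$, and Proposition~\ref{uniqueness}(i) for $(U,\W(U))$ then gives $u_1=u_2$. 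The hard part will be the continuity step in (2)$\Rightarrow$(1): verifying that $G_U(1_{\ov V}\vp(\cdot,u)\mu)$ is genuinely \emph{continuous} rather than merely finite --- this is precisely where the local $h$-Kato hypothesis enters, through the domination principle --- and that the complementary part is harmonic, hence continuous, on $V$ as a Green potential whose measure avoids $V$; a lesser point is to pin down the domain on which $Lu$ is interpreted in (1), which the direct-sum property \eqref{direct-sum} settles.
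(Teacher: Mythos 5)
Your proof is correct and fills in the details that the paper omits by writing ``We now get the following (cf.\ Proposition~\ref{Lup})'': the structure (apply $L$, use uniqueness of the representing measure, represent $h-u$ via \eqref{rep-U}) is exactly what ``cf.\ Proposition~\ref{Lup}'' is pointing to, and the two places where the general setting genuinely requires extra work --- continuity of $G_U(\vp(\cdot,u)\mu)$ in (2)$\Rightarrow$(1), and casting (2) in the form $u+K_p\psi(\cdot,u)=h$ so that Proposition~\ref{uniqueness}(i) applies --- are handled by the right tools (Lemma~\ref{dom-princ}(i) with the local $h$-Kato hypothesis; the normalization $\psi:=\vp/(\vp(\cdot,u_1)+\vp(\cdot,u_2))$, which stays in $\Phi$ since the denominator is independent of $t$).

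Two small points where the argument is stated more loosely than it needs to be. First, the claim that $G_U(1_{U\setminus\ov V}\vp(\cdot,u)\mu)$ is harmonic on $V$ is not automatic: the measure is neither assumed Radon nor compactly supported, so you should exhaust $U\setminus\ov V$ by compacts $B_n$, note that each $G_U(1_{B_n}\vp(\cdot,u)\mu)$ lies in $\mathcal P(U)$ (again by Kato plus Lemma~\ref{dom-princ}(i)) and is harmonic on $V$ because its carrier is in $B_n\subset U\setminus\ov V$, and then invoke the convergence property for increasing sequences of harmonic functions (used repeatedly in the paper, e.g.\ in the proof of Proposition~\ref{first-properties}(2)) since the limit is dominated by $h<\infty$. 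Second, ``being $\le h<\infty$, it is a continuous real potential'' skips a step: boundedness by a harmonic function does not make a continuous superharmonic function a potential; rather, write $G_U(\vp(\cdot,u)\mu)=\sum_m G_U(1_{\ov V_m\setminus\ov V_{m-1}}\vp(\cdot,u)\mu)$ as a continuous sum of members of $\mathcal P(U)$ and use the standing fact recalled in Section~\ref{sec:bs} that such a sum, if continuous, is again in $\mathcal P(U)$ (alternatively, argue $\inf_n H_{V_n}G_U(\vp(\cdot,u)\mu)=0$ by splitting off a compactly carried piece, as in \eqref{K-potential}). Both repairs are routine and you already flagged the continuity step as the delicate one; nothing in the overall plan needs to change.
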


Thus the results of Section \ref{sl-general} yield the following.
\begin{theorem}\label{green-pert-general}
The equivalences of  Theorems \ref{eqhs} and \ref{lehs}
hold  in the present setting {\rm (}where we may replace $h-u\in \mathcal P'(U)$, $g-u\in \mathcal
P'(U)$ by $h-u\in \mathcal P(U)$, $g-u\in \mathcal
P(U)$, respectively{\rm)}.
\end{theorem}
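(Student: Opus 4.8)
The plan is to reduce the statement to the general results of Section~\ref{sl-general} by rewriting the Green potentials $f\mapsto G_U(\vp(\cdot,f)\mu)$ as a potential kernel of the type $K^{\wilde\vp}$ studied there, and then to read off everything from Proposition~\ref{Lup-general} together with Theorems~\ref{main-general}, \ref{gleh} and \ref{h0h-result}.

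\emph{A reference potential and the dictionary.} Since $\vp$ is locally $h$-Kato (in particular $\mu$ is locally Kato), $G_U(1_A\mu)\in\C(U)$ for every compact $A\subset U$. Fixing an exhaustion $(V_n)$ of $U$ and choosing $c_n>0$ decreasing fast enough that on each $\ov V_k$ the tail $\sum_{n>k} c_n\,G_U(1_{V_n\sms V_{n-1}}\mu)$ is uniformly small, the function $f_0:=\sum_n c_n 1_{V_n\sms V_{n-1}}$ is strictly positive, bounded, and $G_U(f_0\mu)\in\C(U)$; by \eqref{rep-U} the potential $p:=G_U(f_0\mu)$ is then a continuous real potential on the balayage space $(U,\W_U)$, and I take $K:=K_p$. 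By Remarks~\ref{green+transform}(2), $K_p g=G_U(g f_0\mu)$ for $g\in\B^+(U)$, so with $\wilde\vp(x,t):=f_0(x)\inv\vp(x,t)\in\Phi$ one obtains
\[
K^{\wilde\vp}f=G_U\bigl(\vp(\cdot,f)\mu\bigr),\qquad
K^{\wilde\vp_{U\sms A}}(\cdot,\eta h)=G_U\bigl(1_{A^c}\vp(\cdot,\eta h)\mu\bigr),
\]
and $\wilde\vp$ is $h$-proper (resp. locally $h$-Kato) exactly when $\vp$ is. Moreover the reduced functions $\ur$, $\urh$ and the fine topology used in Section~\ref{sl-general} are those of $(U,\W_U)$, which by Proposition~\ref{restriction} are the relative ones, so they agree with the notation of Theorems~\ref{eqhs} and \ref{lehs}.

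\emph{Problem~1.} By Proposition~\ref{Lup-general}, for $u\in\B^+(X)$ the property ``$Lu=\vp(\cdot,u)\mu$ on $U$ and $h-u\in\mathcal P(U)$'' is equivalent to $u+G_U(\vp(\cdot,u)\mu)=h$, i.e. to $u+K^{\wilde\vp}u=h$; this identifies parts (1) and (2) of the $G_U$-version of Theorem~\ref{eqhs}. Applying Theorem~\ref{main-general} to $(U,\W_U)$, $K_p$ and $\wilde\vp$, and translating its conditions (3), (4) through the displayed formulas, gives the equivalence with \eqref{suff-model} and its merely-finely-open variant, as well as the continuity of $u$ on $U$ when $\vp$ is locally $h$-Kato. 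The two sufficient conditions in the last line of Theorem~\ref{eqhs} then come from the choices $A=U$ (so $1_{A^c}\vp=0$ and, $h|_U$ being a potential on $U$, $\urh_h^U=h|_U$) and $A=\emptyset$ (so $\urh_h^\emptyset=0$), exactly as in Section~\ref{model-section}.

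\emph{Problem~2 and the non-harmonic case.} For $g\in\G_h(U)$ one has $g\le h$ on $X$, hence, by the domination principle (Lemma~\ref{dom-princ}(i)), $\vp$ is $g$-proper (resp. locally $g$-Kato) whenever it is $h$-proper (resp. locally $h$-Kato). Thus Proposition~\ref{Lup-general}, applied with $g$ in place of $h$, turns the data of Theorem~\ref{lehs}(1) into ``$u|_U\in\C(U)$ and $u+K^{\wilde\vp}u\in\G_h(U)$'', and Theorem~\ref{gleh} yields $(1)\Leftrightarrow(2)\Leftrightarrow(3)$ and $(4)\Rightarrow(1)$--$(3)$, with the converse when $1_Uh$ is harmonic on $U$; the implication $(1)\Rightarrow(2)$ of Theorem~\ref{lehs} uses only the local Kato property and the exhaustion $(V_n)$, which are available verbatim. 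Finally, when $1_Uh$ is \emph{not} harmonic on $U$, Theorem~\ref{h0h-result} applied to $\wilde\vp$ and $K$ gives $u:=T^{\wilde\vp}h$ answering Problem~2 (and Problem~1 if $1_Uh$ is a potential on $U$). The only genuine work is the construction of $f_0$ — converting the mere local Kato property of $\mu$ into one strictly positive density with a continuous Green potential — and the careful term-by-term matching of the hypotheses (notably that ``finely open Borel $A$'' together with ``$\urh_h^A\in\mathcal P'(U)$'', resp. ``$\not\equiv g|_U$'', corresponds exactly to the conditions of Theorems~\ref{main-general} and \ref{gleh}); all the analytic substance is already in those theorems, in Theorem~\ref{h0h-result}, and in Proposition~\ref{Lup-general}.
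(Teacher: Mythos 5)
Your proposal is essentially identical to the paper's intended argument: the theorem is a direct corollary of Proposition~\ref{Lup-general} together with Theorems~\ref{main-general}, \ref{gleh} and \ref{h0h-result}, once the dictionary $K^{\wilde\vp}f=G_U(\vp(\cdot,f)\mu)$ from the remark preceding Proposition~\ref{bal-converse} is in place; your term-by-term matching is exactly what the paper's one-line ``Thus the results of Section~\ref{sl-general} yield\dots'' is pointing to. One small logical slip is worth flagging: the parenthetical ``(in particular $\mu$ is locally Kato)'' does \emph{not} follow from $\vp$ being locally $h$-Kato -- that hypothesis controls only $G_U(1_A\vp(\cdot,h)\mu)$ and leaves $\mu$ completely unconstrained on $\{\vp(\cdot,h)=0\}$. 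This is harmless because that set is irrelevant to the problem: $\vp$ is increasing with $\vp(\cdot,0)=0$, so $\vp(\cdot,u)\equiv 0$ there for every $0\le u\le h$, and one may replace $\mu$ by $1_{\{\vp(\cdot,h)>0\}}\mu$ without changing any of the Green potentials that occur. Then run your summation with $(\vp(\cdot,h)\wedge 1)1_{V_n\sms V_{n-1}}$ in place of $1_{V_n\sms V_{n-1}}$ -- each term is dominated by $G_U(1_{\ov V_n}\vp(\cdot,h)\mu)\in\C(U)$ and is therefore a continuous real potential by Lemma~\ref{dom-princ}(i) -- and set $f_0:=1$, say, on $\{\vp(\cdot,h)=0\}$ so that $f_0$ stays strictly positive. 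With that adjustment your construction of $p=G_U(f_0\mu)$, and hence the entire reduction, goes through exactly as you wrote it.
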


     \section{Appendix}\label{sec:A}
\subsection{Products of sub-Markov semigroups}\label{s.psms}

Let $\mathbbm P:=(P_t)_{t>0}$ be a sub-Markov semigroup on $X$,
and let $\wilde {\mathbbm  P}:=(\wilde P_t)_{t>0}$
be a sub-Markov semigroup on $\wilde X$ ($\wilde X$ being a locally compact space with countable base).
We define a~sub-Markov semigroup $\mathbbm P\otimes \wilde{\mathbbm  P}
=(P_t\otimes \wilde P_t)_{t>0}$ on $X\times \wilde X$ by
\begin{equation*}
  (P_t\otimes \wilde P_t)f(x,\wilde x) :=\int\int f(y,\wilde y) \,P_t(x,dy) \,\wilde P_t(\wilde x, d\wilde y)
\end{equation*}
for $f\in\B^+(X\times \wilde X)$,  $(x,\wilde x) \in X\times \wilde X$  (see \cite[Section V.5]{BH} and
\cite[Section 3.3.2]{H-course}).

 If  $(X, \E_{\mathbbm P})$ and $( \wilde X, \E_{\wilde{\mathbbm  P}})$ are balayage spaces, then
$(X\times \wilde X, \mathbbm P\otimes \wilde{\mathbbm  P})$ is a balayage space if and only if
$\E_{\mathbbm P\otimes \wilde{\mathbbm  P}}$ satisfies (C) (which, in particular, holds
if one of the semigroups is strong Feller and the other has a strong Feller resolvent; see \cite[V.5.9]{BH}
and \cite[Proposition 3.2.1]{H-course}).

An important case for $\wilde {\mathbbm P}$ is the semigroup $\mathbbm T:=(T_t)_{t>0}$
of  uniform translation to the left on $\real$ (see Remark \ref{resolvent},2). We note that
\begin{equation*}
 (P_t\otimes T_t)f(x,s):= P_t f(\cdot,s-t)(x), \qquad f\in\B^+(X\times
 \real),\, t>0,\, x\in X, \, s\in\real,
\end{equation*}
and that $\mathbbm T$ has a strong Feller resolvent.
  The semigroup $\mathbb T$ is often a testing case, also for product semigroups;
  by \cite[V.5.10]{BH}, the following holds.

\begin{theorem}  \label{nec-suff}
If $\mathbbm P$ is a sub-Markov semigroup on $X$ such that $(X,\E_{\mathbbm P})$ is a~balayage space,
 the  following statements are equivalent:
\begin{itemize} 
\item[\rm(1)] 
$\mathbbm P$ is strong Feller.
\item[\rm(2)]
   $(X\times \real, \E_{\mathbbm P\otimes \mathbbm T})$   is a balayage space. 
 \item[\rm(3)]
   $(X\times \wilde X, \E_{\mathbbm P\otimes \wilde{\mathbbm P}})$   is a balayage space
   for \emph{every}  balayage space $(\wilde X, \E_{\wilde{\mathbbm P}})$ such that~$\wilde{\mathbbm P}$
   is a sub-Markov      semigroup on $\wilde X$ with strong Feller resolvent.
   \end{itemize}
     \end{theorem}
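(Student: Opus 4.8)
The plan is to prove the cycle $(1)\Rightarrow(3)\Rightarrow(2)\Rightarrow(1)$, the first two implications being immediate. For $(1)\Rightarrow(3)$ I would invoke \cite[V.5.9]{BH}: since $\mathbbm P$ is strong Feller and $\wilde{\mathbbm P}$ has a strong Feller resolvent, the cone $\E_{\mathbbm P\otimes\wilde{\mathbbm P}}$ satisfies the continuity axiom (C); as $(X,\E_{\mathbbm P})$ and $(\wilde X,\E_{\wilde{\mathbbm P}})$ are balayage spaces, the axioms (S), (T) and (B$_1$)--(B$_3$) are inherited by $\E_{\mathbbm P\otimes\wilde{\mathbbm P}}$, so $(X\times\wilde X,\E_{\mathbbm P\otimes\wilde{\mathbbm P}})$ is a balayage space --- this is exactly the ``if and only if (C)'' statement recalled just before the theorem. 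Then $(3)\Rightarrow(2)$ is the special case $\wilde X=\real$, $\wilde{\mathbbm P}=\mathbbm T$, using that, by Remark~\ref{resolvent},2, $(\real,\E_{\mathbbm T})$ is a balayage space and $\mathbbm T$ has a strong Feller resolvent.

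For the substantial implication $(2)\Rightarrow(1)$, the idea is to read continuity properties of $\mathbbm P$ off well-behaved objects in the space-time balayage space $(X\times\real,\E_{\mathbbm P\otimes\mathbbm T})$. First, for $\lambda>0$ and $f\in\B_b^+(X)$ one checks by a direct computation that $w(x,s):=e^{\lambda s}V_\lambda f(x)$ is $\mathbbm P\otimes\mathbbm T$-excessive, because $(P_t\otimes T_t)w(x,s)=e^{\lambda s}\int_t^\infty e^{-\lambda r}P_rf(x)\,dr\uparrow w(x,s)$ as $t\downarrow0$, and likewise that $e^{\lambda s}\bigl(\lambda^{-1}-V_\lambda 1(x)\bigr)=e^{\lambda s}\int_0^\infty e^{-\lambda r}\bigl(1-P_r1(x)\bigr)\,dr$ is $\mathbbm P\otimes\mathbbm T$-excessive (here one uses $P_t1\le1$). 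By (C) all of these functions are lower semicontinuous, whence: $V_\lambda f$ is l.s.c.\ for every $f\in\B_b^+(X)$; $V_\lambda 1$ is both lower and upper semicontinuous, hence $V_\lambda 1\in\C_b(X)$; and applying the first observation with $\|f\|_\infty-f\ge0$ in place of $f$ one gets $V_\lambda f\in\C_b(X)$ for every $f\in\B_b(X)$, i.e.\ the resolvent of $\mathbbm P$ is strong Feller. Second, for $a<b$ and the open set $W:=X\times(a,b)$, the time coordinate of the space-time process decreases deterministically and exits $(a,b)$ at $a$ after elapsed time $s-a$ (or the process is killed beforehand), so that $H_W\phi(x,s)=P_{s-a}\bigl[\phi(\cdot,a)\bigr](x)$ for $(x,s)\in W$ --- an identity one may also establish directly from the defining reduced functions, after an $s$-transform if $1\notin\W$. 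Applying the continuity property $(H_3)$ of the balayage-space structure (continuity of $H_W$ on $W$ for bounded Borel $\phi$ with compact support, and of $H_Ww$ for $w\in\S^+(X\times\real)$) to $\phi(y,r)=g(y)\chi(r)$ with $g\in\B_b(X)$ of compact support and $\chi(a)=1$, and to $w(y,r)=e^{\lambda r}V_\lambda f(y)$ --- which is now known to be real and continuous --- one obtains that $x\mapsto P_tg(x)$ and $x\mapsto P_tV_\lambda f(x)$ are continuous on $X$ for every $t>0$.

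The hard part will be to stitch these pieces into the full strong Feller property $P_t\colon\B_b(X)\to\C_b(X)$ for every $t>0$: one has the strong Feller resolvent together with continuity of $P_t$ on several ``large'' subclasses of $\B_b(X)$, but upgrading to two-sided continuity of $P_tf$ for an arbitrary bounded Borel $f$ genuinely needs the remaining balayage-space axioms for the product together with the domination principle (Lemma~\ref{dom-princ}). This is the computation of \cite[V.5.10]{BH}, which I would cite rather than reproduce. That the full strength of axiom (C) for the product is indispensable --- and not merely a Feller-type continuity of $\mathbbm P$ --- is already visible for the semigroup $\mathbbm T$: it is Feller but not strong Feller, and (C) indeed fails for $\E_{\mathbbm T\otimes\mathbbm T}$, witnessed by the $\mathbbm T\otimes\mathbbm T$-excessive (in fact flow-invariant) functions $(x,y)\mapsto\psi(x-y)$ with $\psi\colon\real\to[0,\infty)$ a Borel function that is not lower semicontinuous, whose only continuous $\mathbbm T\otimes\mathbbm T$-excessive minorant is $0$.
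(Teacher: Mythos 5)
Your proposal is correct and, at bottom, takes the same route as the paper: the paper proves Theorem~\ref{nec-suff} simply by citing \cite[V.5.10]{BH}, and you too ultimately defer the substantial implication $(2)\Rightarrow(1)$ to that same reference. Your expository scaffolding is accurate --- in particular the computation showing $(x,s)\mapsto e^{\lambda s}V_\lambda f(x)$ and $(x,s)\mapsto e^{\lambda s}\bigl(\lambda^{-1}-V_\lambda 1(x)\bigr)$ are $\mathbbm P\otimes\mathbbm T$-excessive, the deduction that the resolvent of $\mathbbm P$ is strong Feller, and the $\mathbbm T\otimes\mathbbm T$ counterexample showing (C) can genuinely fail --- and your use of the ``if and only if (C)'' criterion and \cite[V.5.9]{BH} for $(1)\Rightarrow(3)\Rightarrow(2)$ matches the material in Section~\ref{s.psms}.
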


     The next result is borrowed from \cite[V.5.6]{BH}. For the proof of $(2)\Rightarrow(1)$, which is not optimal
       in~\cite{BH},  we note that, for all $r>0$ and $f\in \C_b(X)$, 
       $\lim_{t\downarrow r} P_tf=P_rf$  locally uniformly, by \cite[V.2.8 and V.2.11]{BH},
     and hence the resolvent of $\mathbbm P\otimes \mathbbm T$ is strong Feller, by~\hbox{\cite[V.5.9]{BH}.}
   
  \begin{theorem}\label{times-T}
For every sub-Markov semigroup $\mathbbm P$ on $X$, the   following statements are equivalent:
  \begin{itemize}
  \item[\rm(1)]
    $(X\times \real, \E_{\mathbbm P\otimes \mathbbm T})$ is a balayage space.
    \item[\rm(2)]
   $\mathbbm P$ is a  strong Feller semigroup, $\lim_{t\to 0} P_tf=f$ locally uniformly for every
      $f\in \C(X)$ with compact support,
     and $\E_{\mathbbm P\otimes \mathbbm T}$ satisfies {\rm(T)}.
    \end{itemize}
  \end{theorem}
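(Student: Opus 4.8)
The plan is to prove Theorem~\ref{times-T} by citing \cite[V.5.6]{BH} for the bulk of the equivalence and only supplying a cleaner argument for the implication $(2)\Rightarrow(1)$, as the preamble to the statement announces. First I would recall the overall strategy: the direction $(1)\Rightarrow(2)$ is essentially contained in Theorem~\ref{nec-suff} together with Theorem~\ref{char-bal}, since if $(X\times\real,\E_{\mathbbm P\otimes\mathbbm T})$ is a balayage space then $\E_{\mathbbm P\otimes\mathbbm T}$ satisfies (C), (S), (T), and one extracts from (C) and the structure of the translation semigroup that $\mathbbm P$ must be strong Feller and that $\lim_{t\to 0}P_tf=f$ holds locally uniformly for $f\in\C_c(X)$; property~(T) for $\E_{\mathbbm P\otimes\mathbbm T}$ is just restated. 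I would organize this as a short paragraph referring the reader to \cite[V.5.6]{BH} for the detailed verification, since the excerpt permits assuming earlier-cited results.

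The substance is the improved proof of $(2)\Rightarrow(1)$. Assume $\mathbbm P$ is strong Feller, $\lim_{t\to 0}P_tf=f$ locally uniformly for every $f\in\C_c(X)$, and $\E_{\mathbbm P\otimes\mathbbm T}$ satisfies (T). By Theorem~\ref{char-bal} applied to the semigroup $\mathbbm P\otimes\mathbbm T$ on $X\times\real$, it suffices to show that $\E_{\mathbbm P\otimes\mathbbm T}$ satisfies (C), (S), and (T); (T) is assumed, so the work is (C) and (S). For (C): the key point, using the remark preceding the statement, is that the resolvent $\mathbbm V\otimes\mathbbm V^{\mathbbm T}$ of $\mathbbm P\otimes\mathbbm T$ is strong Feller. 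To see this I would combine the hypothesis that $\mathbbm P$ is strong Feller with the fact that for each fixed $r>0$ and $f\in\C_b(X)$ one has $\lim_{t\downarrow r}P_tf=P_rf$ locally uniformly --- this follows from $P_tf=P_{t-r}(P_rf)$, the semigroup property, the local-uniform right-continuity at $0$ applied to the continuous bounded function $P_rf$ (which lies in $\C_b(X)$ because $\mathbbm P$ is strong Feller), together with \cite[V.2.8 and V.2.11]{BH} to control the behavior on compacta. Granting this local-uniform continuity in $t$ of $t\mapsto P_tf$ on $(0,\infty)$, one invokes \cite[V.5.9]{BH} to conclude that the resolvent of $\mathbbm P\otimes\mathbbm T$ is strong Feller, and then Remark~\ref{resolvent},1 (the resolvent being strong Feller forces (C), since $\lambda V_\lambda u\uparrow u$ with each $\lambda V_\lambda u$ lower semicontinuous being continuous) gives (C).

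For (S): since $\mathbbm P\otimes\mathbbm T$ satisfies (T) by assumption, there exist strictly positive continuous $u,v\in\splus$ (for the cone $\E_{\mathbbm P\otimes\mathbbm T}$) with $u/v\in\C_0(X\times\real)$; in particular the potential kernel $V_0$ of $\mathbbm P\otimes\mathbbm T$ is proper, because $u$ is a finite excessive function not of the form "harmonic $+$ potential" only trivially --- more precisely, (T) supplies $g>0$ with $V_0g<\infty$, so by Remark~\ref{resolvent},1 property~(S) holds. Then all of (B$_0$) is in place, and Theorem~\ref{char-bal} yields that $(X\times\real,\E_{\mathbbm P\otimes\mathbbm T})$ is a balayage space, which is $(1)$.

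The main obstacle I anticipate is the local-uniform right-continuity of $t\mapsto P_tf$ at an arbitrary $r>0$: the hypothesis in $(2)$ only gives local-uniform convergence at $t=0$, so one must genuinely bootstrap from $r=0$ to general $r$ using the semigroup identity $P_{r+s}f=P_s(P_rf)$ and the strong Feller property (to know $P_rf\in\C_b(X)$), while being careful that "locally uniformly" is preserved under the application of $P_s$ --- this is exactly where \cite[V.2.8, V.2.11]{BH} enter, controlling equicontinuity on compact sets. Everything after that is a matter of quoting \cite[V.5.9]{BH} and the standard criteria in Remark~\ref{resolvent} for (C) and (S), together with Theorem~\ref{char-bal}.
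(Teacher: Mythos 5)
Your treatment of the implication $(2)\Rightarrow(1)$ for property~(C) matches the paper exactly: bootstrapping local-uniform right-continuity of $t\mapsto P_tf$ from $t=0$ to arbitrary $r>0$ via $P_{r+s}f=P_s(P_rf)$, the strong Feller property (to keep $P_rf$ in $\C_b(X)$), and \cite[V.2.8, V.2.11]{BH}, then invoking \cite[V.5.9]{BH} for the strong Feller property of the resolvent of $\mathbbm P\otimes\mathbbm T$, and finally Remark~\ref{resolvent},1 to obtain (C). The paper presents this as the only ``non-optimal'' step being improved over \cite[V.5.6]{BH} and leaves the rest of the equivalence to that reference, whereas you attempt to fill in the remaining pieces.

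That is where a genuine gap appears. Your argument for (S) claims that property~(T) ``supplies $g>0$ with $V_0g<\infty$,'' i.e., that (T) forces properness of the potential kernel. This inference is false in general: for the identity semigroup $P_t=\mathrm{id}$ on $\real$, every positive continuous function is excessive, so (T) holds (take $v\equiv 1$ and $u(x)=1/(1+x^2)$), yet $V_0g=\infty$ for every $g>0$. The intermediate sentence about $u$ being ``a finite excessive function not of the form `harmonic $+$ potential' only trivially'' is not a valid step --- that decomposition is a feature of balayage spaces, which is precisely what you are trying to establish. The correct way to get (S) here is to observe that the potential kernel of $\mathbbm P\otimes\mathbbm T$ is \emph{automatically} proper because of the translation component: taking $g(y,s)=h(s)$ with $h>0$ integrable on $\real$ gives $V_0^{\mathbbm P\otimes\mathbbm T}g(x,r)\le\int_{-\infty}^r h<\infty$ (this is the same computation the paper records later, when showing $\wilde V$ is proper in Section~\ref{stG}). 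So (S) holds for $\mathbbm P\otimes\mathbbm T$ regardless of (T), and your derivation should be replaced by this.

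A smaller point: in $(1)\Rightarrow(2)$ you invoke Theorem~\ref{nec-suff}, but that theorem presupposes that $(X,\E_{\mathbbm P})$ is a balayage space, which is not part of hypothesis~(1); it is not immediate that $(X\times\real,\E_{\mathbbm P\otimes\mathbbm T})$ being a balayage space forces $(X,\E_{\mathbbm P})$ to be one, so the citation is not applicable as stated. You do gesture at the right argument (extracting the strong Feller property and local-uniform convergence at $0$ from (C) for the product, using the structure of $\mathbbm T$), which is what \cite[V.5.6]{BH} actually does, but the reference to Theorem~\ref{nec-suff} should be dropped.
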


  As usual, given a sub-Markov semigroup $\mathbbm P$ and $\lambda\ge 0$, a function $u\in \B^+(X)$
  is called \emph{$\lambda$-supermedian} (with respect to $\mathbbm P$)  if $e^{-\lambda t}P_t u\le u$ for all $t>0$.
If $\lambda=0$ we speak of supermedian functions.

  \begin{remark}\label{lambda-sup}
  {  \rm
    If $\mathbbm P$ is right continuous, then $\E_{\mathbbm P\otimes \mathbbm T}$ satisfies {\rm(T)}
      provided there are  $\lambda\ge 0$ and strictly  positive $\lambda$-supermedian functions
      $u,v\in \C(X)$ with $u/v\in  \C_0(X)$.

    Indeed, defining $w_0(s):=e^{\lambda s}$, $s\in\real$, we have $T_t w_0(s)=w_0(s-t)=e^{-\lambda t} w_0(s)$.
      If $w$ is any $\lambda$-supermedian function  on $X$, then  $w'\colon (x,s) \mapsto  w(x) w_0(s)$
      clearly satisfies 
     \begin{equation}\label{w'xs}
       (P_t\otimes T_t) w'(x,s)=P_tw(x) T_t w_0(s) \le e^{\lambda t} w(x) e^{-\lambda t} w_0(x)=w'(x,s), 
       \end{equation}
hence it is supermedian with respect  to $\mathbbm P':=\mathbbm P\otimes \mathbbm T$.

       Now let $u,v\in \C(X)$ be strictly positive $\lambda$-supermedian  functions on $X$ with 
     $u/v\in \C_0(X)$. We may assume that $u\le 1\le v$ (replace $u$ by $u\wedge 1$ and $v$ by $v+1$).
       Then $\wilde u:= u'\wedge 1$ and $\wilde v:=1+ v' $  are strictly positive continuous
       supermedian functions on $X'$.
       By right continuity of $\mathbbm P'$, both are excessive. Moreover, $f:=\wilde u/\wilde v\in \C_0(X')$. Indeed,
       for every $s\in\real$,
       \begin{equation*}
         f(\cdot,s)\le \frac  {u'(\cdot,s)}  {v'(\cdot,s)}=\frac uv \in \C_0(X) \und
           f(\cdot,s)\le \frac { e^{\lambda s}\wedge 1}  {1+e^{\lambda s}}\le e^{-\lambda |s|},
             \end{equation*}
             which clearly implies that $f$ tends to $0$ at infinity.
           }
           \end{remark}

\subsection{Existence of corresponding Hunt processes}\label{s.cHp}

By \cite[IV.7.4 and IV.7.5]{BH}), the following holds (for a converse,
see  Theorem \ref{char-bal}).

\begin{proposition}\label{ex-Hunt}
  Let $\mathbbm P=(P_t)_{t>0}$ be a sub-Markov semigroup on $X$ and  $\mathfrak X$ be a~Markov process
  on $X$ with transition semigroup $\mathbbm P$ {\rm(}such a process always exists{\rm)}. If~$(X,\E_{\mathbbm P}) $ is
  a~balayage space, then there is a Hunt process on $X$   equivalent to $\mathfrak X$.
\end{proposition}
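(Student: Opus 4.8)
The plan is to realize $\mathbbm P$ by a Hunt process directly, the balayage-space hypothesis supplying the regularity needed at each step; this is the content of the cited \cite[IV.7.4, IV.7.5]{BH}, which I sketch. The starting observation is that (C), (S) and (T) make $\mathbbm P$ a sufficiently regular semigroup: for $f\in\B_b^+(X)$ and $\lambda>0$ the resolvent image $V_\lambda f:=\int_0^\infty e^{-\lambda t}P_t f\,dt$ is bounded and $\lambda$-excessive, hence finely continuous; by (C) and the second countability of $X$ there is a countable family $\{u_n\}$ of bounded continuous excessive functions that is linearly separating, by (S); and, by transience (T), the cone $\px$ is non-void, so there is a proper potential kernel, needed for the Ray--Knight compactification of the state space below.

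First I would produce a right-continuous strong Markov modification of $\mathfrak X$. For each $n$, along a fixed countable dense set of times, $t\mapsto u_n(X_t)$ is a bounded supermartingale for the usual completed right-continuous filtration, so by Doob's regularization it has right limits almost surely, simultaneously in $n$. Since $\{u_n\}$ separates points and its members are finely continuous, $x\mapsto(u_n(x))_n$ embeds $X$ into a compact metric space, and the limits $\widetilde X_t:=\lim_{r\downarrow t}X_r$ (along the fixed dense set) exist there; the balayage structure -- transience via the Ray--Knight compactification, and $(B_1)$--$(B_3)$ via the Riesz decomposition of excessive functions -- is precisely what ensures these limits lie in $X\cup\{\Delta\}$ and avoid the Ray boundary. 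Because $\mathbbm P$ is right continuous, $\widetilde X$ has transition semigroup $\mathbbm P$, so it is equivalent to $\mathfrak X$; and because the resolvent maps $\B_b(X)$ into the finely continuous functions, the classical criterion of Getoor shows $\widetilde X$ is a (Borel) right process, in particular strong Markov with right-continuous paths.

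Finally I would verify quasi-left-continuity, which upgrades the right process $\widetilde X$ to a Hunt process: for stopping times $T_k\uparrow T$ one needs $\widetilde X_{T_k}\to\widetilde X_T$ almost surely on $\{T<\zeta\}$, where $\zeta$ is the lifetime. For every excessive $u$, the sequence $(u(\widetilde X_{T_k}))_k$ is a supermartingale for $(\F_{T_k})_k$, hence converges almost surely; the requirement in $(B_2)$ that finely lower semicontinuous regularizations of infima of functions of $\W$ remain in $\W$ -- the analytic face of the regularity of excessive functions underlying quasi-left-continuity -- forces the limit to equal $u(\widetilde X_T)$, and applying this to the separating finely continuous family $\{u_n\}$ yields $\widetilde X_{T_k}\to\widetilde X_T$. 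Hence $\widetilde X$ is a Hunt process with transition semigroup $\mathbbm P$, equivalent to $\mathfrak X$. I expect the main obstacle to be precisely this last step, quasi-left-continuity; a secondary delicate point, at the regularization stage, is ensuring the modified paths stay in $X$ and miss the Ray boundary, which is where transience (T) is really used.
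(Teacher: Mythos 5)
The paper itself gives no proof here: it simply cites \cite[IV.7.4 and IV.7.5]{BH} and moves on. Your proposal is therefore not competing with an argument in the paper but attempting to reconstruct the cited result, which you acknowledge. The overall shape you describe --- regularize $\mathfrak X$ along a countable dense set of times using a countable separating family of continuous bounded excessive functions, identify the right limits as a right process via fine continuity of resolvent images, then upgrade to a Hunt process by establishing quasi-left-continuity --- is indeed the standard skeleton, and it matches the kind of construction carried out in [BH, Ch.~IV].

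However, two of your steps are substantially more hand-wavy than the rest and would not survive as written. First, the claim that ``$(B_1)$--$(B_3)$ via the Riesz decomposition of excessive functions'' is ``precisely what ensures these limits lie in $X\cup\{\Delta\}$ and avoid the Ray boundary'' is not an argument; showing that the regularized paths do not escape to branching or degenerate points of the Ray compactification is one of the genuinely delicate parts of this construction, and in [BH] it is handled by a specific analysis of the harmonic kernels $H_V$ and the semipolar/thin sets of the balayage space, not by invoking the Riesz decomposition. Second, your derivation of quasi-left-continuity from $(B_2)$ is an assertion rather than a proof: the supermartingale convergence of $u(\widetilde X_{T_k})$ gives a limit, but identifying that limit with $u(\widetilde X_T)$ requires showing that excessive functions are \emph{regular} (no ``left jump'' at totally inaccessible times), which is normally proved via the behavior of $R_u^{A_k}$ as $A_k\uparrow A$ and the absence of branching points, not by appealing directly to the lower-semicontinuous-regularization clause in $(B_2)$. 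Since these are exactly the points you flag as ``the main obstacle'' and ``a secondary delicate point,'' you are aware of the gap; but as a proof the proposal does not close it, whereas the paper sidesteps the issue entirely by citation.
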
 
  
  \begin{corollary}\label{ex-Hunt-corollary}
    Let $\mathbbm P=(P_t)_{t>0}$ be a right continuous  sub-Markov semigroup on~$X$ such that
    {its resolvent is strong   Feller and, }
    for some $\lambda \ge 0$, there exist strictly positive  $\lambda$-supermedian
functions $u,v\in \C(X)$ with~$u/v\in \C_0(X)$.
    
    Then there is a~Hunt process~$\mathfrak X$ on $X$ having transition semigroup $\mathbbm P$.
  \end{corollary}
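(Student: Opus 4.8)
The plan is to pass to the space--time picture. Set $X':=X\times\real$ and $\mathbbm P':=\mathbbm P\otimes\mathbbm T$, where $\mathbbm T=(T_t)_{t>0}$ is uniform translation to the left on $\real$ (Remark~\ref{resolvent},2). First I would observe that $\mathbbm P'$ is again a right continuous sub-Markov semigroup: for $f\in\C_c(X')$ we have $(P_t\otimes T_t)f(x,s)=\int f(y,s-t)\,P_t(x,dy)$, and since $f(\cdot,s)\in\C_c(X)$ while $\sup_y|f(y,s-t)-f(y,s)|\to0$ as $t\downarrow0$ (uniform continuity of $f$), this converges to $f(x,s)$.

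Next I would check, via Theorem~\ref{char-bal}, that $(X',\E_{\mathbbm P'})$ is a balayage space, i.e.\ verify (C), (S), (T) for $\E_{\mathbbm P'}$. Property (T) is precisely Remark~\ref{lambda-sup}, whose hypothesis is our assumption on the strictly positive $\lambda$-supermedian functions $u,v$. For (C): since the resolvent of $\mathbbm P$ is strong Feller and $\mathbbm P$ is right continuous, $t\mapsto P_tf$ is right continuous locally uniformly on $(0,\infty)$ for every $f\in\C_b(X)$ (\cite[V.2.8 and V.2.11]{BH}), hence the resolvent of $\mathbbm P'$ is strong Feller (\cite[V.5.9]{BH}), and (C) follows from Remark~\ref{resolvent},1. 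For (S) I would show that the potential kernel $V_0':=\int_0^\infty(P_t\otimes T_t)\,dt$ of $\mathbbm P'$ is proper: for $g\in\C_c^+(X)$ and $\rho\in\C_c^+(\real)$ with $\supp\rho\subset[-a,a]$ one has $V_0'(g\otimes\rho)(x,s)=\int_0^\infty P_tg(x)\,\rho(s-t)\,dt\le 2a\,\|\rho\|_\infty\,\|g\|_\infty<\infty$, so a suitable strictly positive countable combination $f$ of such products satisfies $V_0'f<\infty$; thus (S) holds, again by Remark~\ref{resolvent},1.

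Now Proposition~\ref{ex-Hunt} provides a Hunt process $\mathfrak X'=(X'_t)_{t\ge0}$ on $X'$ with transition semigroup $\mathbbm P'$. Writing $X'_t=(Y_t,Z_t)$ on $\{t<\zeta\}$ with $\zeta$ the lifetime, and using $T_t(s,\cdot)=\delta_{s-t}$, for each $t$ we get $Z_t=s-t$ $\mathbb P^{(x,s)}$-a.s.\ on $\{t<\zeta\}$, hence $\mathbb P^{(x,s)}$-a.s.\ $Z_t=s-t$ for all $t<\zeta$ by right continuity of paths; in particular the lifetime of $(Y_t)$ equals $\zeta$, and for $f\in\B^+(X)$, $\mathbb E^{(x,s)}\bigl[f(Y_t)1_{\{t<\zeta\}}\bigr]=(P_t\otimes T_t)(f\otimes1)(x,s)=P_tf(x)$, independent of $s$. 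I would then take $\mathfrak X:=(Y_t)$ with starting laws $\mathbb P^x:=\mathbb P^{(x,0)}$: it has transition semigroup $\mathbbm P$, and since $Y_t=\pi(X'_t)$ for the continuous projection $\pi\colon X'\to X$ and the discarded coordinate evolves deterministically, right continuity of paths, existence of left limits, the strong Markov property and quasi-left continuity transfer from $\mathfrak X'$ to $\mathfrak X$. Hence $\mathfrak X$ is the desired Hunt process on $X$.

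The step I expect to be the main obstacle is establishing that the resolvent of $\mathbbm P'$ is strong Feller in the second paragraph: one is only given that the resolvent of $\mathbbm P$, not the semigroup $\mathbbm P$ itself, is strong Feller, and closing this gap is exactly where right continuity of $\mathbbm P$ enters, through the local uniform right continuity of $t\mapsto P_tf$ on $(0,\infty)$. By contrast, the transfer of the Hunt property along $\pi$ in the final step is routine precisely because the coordinate projected away runs deterministically, so no new path irregularities are introduced.
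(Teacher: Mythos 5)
Your plan---pass to a product space, verify (C), (S), (T) to get a balayage space, invoke Proposition~\ref{ex-Hunt}, and then project back along the deterministically evolving coordinate---is exactly the skeleton of the paper's argument, and your treatment of (S), (T), and the final projection step is fine. The gap is in your verification of (C), and it is precisely the difficulty that the hypotheses of the corollary are calibrated to expose.

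You claim that because the resolvent of $\mathbbm P$ is strong Feller and $\mathbbm P$ is right continuous, the resolvent of $\mathbbm P'=\mathbbm P\otimes\mathbbm T$ is strong Feller, citing \cite[V.2.8, V.2.11, V.5.9]{BH}. But those references are used in the paper in the proof of $(2)\Rightarrow(1)$ of Theorem~\ref{times-T}, where $(2)$ assumes that the \emph{semigroup} $\mathbbm P$ is strong Feller, not merely its resolvent. Under that stronger hypothesis one gets $P_tf\in\C_b(X)$ for $f\in\B_b(X)$ and local uniform right continuity of $t\mapsto P_tf$, and then \cite[V.5.9]{BH} applies because one factor ($\mathbbm P$) is strong Feller while the other ($\mathbbm T$) has a strong Feller resolvent. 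In your situation this breaks down: $\mathbbm T$ is \emph{not} strong Feller as a semigroup (since $T_t(s,\cdot)=\delta_{s-t}$, it does not smooth at all), and you only know the resolvent of $\mathbbm P$ is strong Feller. So you have two semigroups each of which merely has a strong Feller resolvent, and the criterion in \cite[V.5.9]{BH} (quoted in Section~\ref{s.psms}: ``one of the semigroups is strong Feller and the other has a strong Feller resolvent'') does not apply. Concretely, for $g\in\B_b(X\times\real)$ the resolvent of $\mathbbm P'$ involves integrals of the form $\int P_t\bigl(g(\cdot,s-t)\bigr)\,dt$ in which the function being fed into $P_t$ changes with $t$; one cannot factor this through $V_\lambda$, and the truncated operators $\int_c^d P_t\,dt$ need not be strong Feller when only the full resolvent is.

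The paper resolves this by \emph{not} using $\mathbbm T$ on $\real$ in the general case. It instead forms the product $\mathbbm P\otimes\wilde{\mathbbm P}$ with $\wilde{\mathbbm P}=(Q_{2\lambda t})_{t>0}$, the Poissonization of the random walk to the left on $\ganz$ (Lemma~\ref{e-}); this $\wilde{\mathbbm P}$ is a genuinely \emph{strong Feller} semigroup (trivially, since $\ganz$ is discrete) with $(\ganz,\E_{\wilde{\mathbbm P}})$ a balayage space by Proposition~\ref{discrete}, so \cite[V.5.9]{BH} applies and the resolvent of $\mathbbm P\otimes\wilde{\mathbbm P}$ is strong Feller. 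Property (T) is obtained by the same $\lambda$-supermedian device you use, replacing $e^{\lambda s}$ by $a^m$ with $a>2$ on $\ganz$. Your argument as written therefore proves the corollary only under the stronger hypothesis that $\mathbbm P$ itself is strong Feller; to cover the stated hypothesis you would need to substitute a strong Feller auxiliary factor for $\mathbbm T$, as the paper does.
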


   Due to the previous results the statement is easily obtained  if
  even $\mathbbm P$ is strong Feller.   Indeed,
  then, by Theorem~\ref{times-T} and Remark
    \ref{lambda-sup}, $(X\times \real, \E_{\mathbbm P\otimes \mathbbm T})$ is a balayage space.  
    So, by Theorem \ref{Hunt-converse}, 
    there exists a Hunt   process~$ \mathfrak X'$  on~$ X\times \real$  having transition
    semigroup $ \mathbbm P\otimes \mathbbm T $.
               Using the projection $(x,s)\mapsto x$
    from~$X\times \real$ to~$X$ we get a Hunt process $\mathfrak X$ on $X$ with transition semigroup~$\mathbbm P$.

    In the general case we use the following for the
    \emph{random walk to the left on $\ganz$}.

  \begin{lemma}\label{e-}
Let $X=\ganz$ and $P(m,\cdot):=\delta_{m-1}$, $m\in\ganz$. Further, let $\a\in (0,1)$,
   $a>(1-\a)\inv$ and $u(m):=a^m$, $m\in\ganz$.
    Then  $P_tu\le e^{-\a t} u$   for every $t>0$.
  \end{lemma}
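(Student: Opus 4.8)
The semigroup here is the discrete one generated by the left-shift kernel $P$ on $\ganz$, so as in \eqref{def-Poisson} we have the explicit Poisson-type formula
\begin{equation*}
  P_t = e^{-t}\sum_{k\ge 0} \frac{t^k}{k!}\,P^k, \qquad t>0,
\end{equation*}
and $P^k(m,\cdot)=\delta_{m-k}$, hence $P^k u(m)=u(m-k)=a^{m-k}=a^{-k}u(m)$. The plan is simply to plug $u(m)=a^m$ into this series and sum it.

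Carrying this out: for every $m\in\ganz$ and $t>0$,
\begin{equation*}
  P_t u(m) = e^{-t}\sum_{k\ge 0}\frac{t^k}{k!}\,a^{-k}u(m)
           = u(m)\,e^{-t}\sum_{k\ge 0}\frac{(t/a)^k}{k!}
           = u(m)\,e^{-t}\,e^{t/a}
           = e^{-(1-1/a)t}\,u(m).
\end{equation*}
So it suffices to check that $1-1/a\ge \a$, i.e. that $e^{-(1-1/a)t}\le e^{-\a t}$ for all $t>0$. From the hypothesis $a>(1-\a)^{-1}$ we get $1/a<1-\a$, hence $1-1/a>\a$, which gives the required inequality $P_t u\le e^{-\a t}u$. (One should also note $u>0$, so $u\in\B^+(X)$, and that the series manipulation is justified since all terms are nonnegative.)

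There is essentially no obstacle here: the only thing to be a little careful about is that we are allowed to use the representation \eqref{def-Poisson} for this $P$, which we are, since it is stated in the excerpt for an arbitrary sub-Markov kernel on a countable discrete space. The inequality $a>(1-\a)^{-1}$ is exactly what converts the identity $P_tu=e^{-(1-1/a)t}u$ into the asserted bound, and in fact it shows the bound holds with the strict exponent $1-1/a>\a$ in place of $\a$; one could even take $\a$ itself to be $1-1/a$. Note also that the conclusion says precisely that $u$ is $\a$-supermedian with respect to $\mathbbm P=(P_t)_{t>0}$, which is how the lemma will be used (together with Remark~\ref{lambda-sup}) to produce the strictly positive $\a$-supermedian functions needed in the proof of Corollary~\ref{ex-Hunt-corollary} for the random walk to the left on $\ganz$.
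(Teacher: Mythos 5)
Your proof is correct, and it takes a genuinely different — and in fact cleaner — route than the paper's. The paper does not compute the series exactly: it first uses the crude bound $u(m-k)\le u(m-1)$ for all $k\ge 1$ (valid since $a>1$), obtaining
\begin{equation*}
P_tu(m)\le e^{-t}u(m)+(1-e^{-t})u(m-1)=e^{-\a t}g(t)u(m),\qquad g(t):=e^{(\a-1)t}+\bigl(e^{\a t}-e^{(\a-1)t}\bigr)a^{-1},
\end{equation*}
then observes $g(0)=1$ and $g'(0)=\a-1+a^{-1}<0$ to conclude $g\le 1$ on some interval $(0,t_0]$, and finally extends the estimate to all $t>0$ via the semigroup property $P_{s+t}=P_sP_t$. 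Your argument instead sums the power series exactly, since $P^ku(m)=a^{-k}u(m)$ gives $P_tu=e^{-t}e^{t/a}u=e^{-(1-1/a)t}u$, and then reads off the inequality from $1-1/a>\a$, which is precisely what $a>(1-\a)^{-1}$ says. This buys you an exact identity (so the estimate is sharp with exponent $1-1/a$), it avoids the derivative-at-zero analysis and the semigroup bootstrap entirely, and it makes the role of the hypothesis on $a$ completely transparent. The one small presentational point: you should note explicitly, as you implicitly do, that $a>1$ so all terms in the series are positive and the interchange of sum and evaluation is trivially justified — but that is already covered by your remark about nonnegativity.
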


  \begin{proof}
      Let $g(t):= e^{(\a-1) t} +(e^{\a t}-e^{(\a-1)  t}) a\inv$, $t>0$. 
         Since $u(m')\le u(m-1)$ for every  $m'\le m-1$, (\ref{def-Poisson}) implies that,
      for all $t>0$ and $m\in\ganz$,
    \begin{equation*}
      P_tu(m)\le e^{-t} u(m) + (1-e^{-t}) u(m-1)=e^{-\a t} g(t) u(m). 
    \end{equation*}
    Clearly, $g(0)=1$ and $g'(0)=\a-1 +a\inv <0$. So there exists $t_0>0$ such that, for all $0<t\le t_0$,
    $g(t)\le 1$,  and hence $P_tu\le e^{-\a t} u$. By the semigroup property of $\mathbbm P$, we finally
    obtain that $P_tu\le e^{-\a t} u$ for all $t>0$.
  \end{proof}

    \begin{proof}[Proof of Corollary \ref{ex-Hunt-corollary}]
    Let $Q(m,\cdot):=\delta_{m-1}$, $m\in\ganz$, and  $Q_t :=e^{-t}\sum_{k\ge 0} t^k/k! Q^k$, $t>0$.
  Let $a>2$ and $w_0(m):=a^m$, $m\in\ganz$. Of course, $w_0$ separates $\ganz$.
  By Lemma~\ref{e-}, $Q_tw_0\le e^{- t/2}w_0$ for every $t>0$. 
Defining  $\wilde P_t:=Q_{2\lambda t}$ we then have
      \begin{equation*}
        \wilde P_tw_0\le   e^{-\lambda t} w_0 \qquad\mbox {for every } t>0. 
        \end{equation*} 
          Proceeding as in Remark \ref{lambda-sup},  we get strictly positive functions
  $ \wilde u, \wilde v\in \C(X') \cap\E_{\mathbbm P'})$  such that $\wilde u/\wilde v\in \C_0(X')$.

Of course,  $\wilde{\mathbbm P}:=(\wilde P_t)_{t>0}$
      is a strong Feller  Markov semigroup on $\ganz$ with  $\E_{\wilde{\mathbbm P}}=S_Q$,
      and hence $(\ganz, \E_{\wilde{\mathbbm P}})$       is a balayage
      space,      by Proposition~\ref{discrete}. 
     The sub-Markov semigroup   
      $\mathbbm P' := \mathbbm P \otimes  \wilde{\mathbbm P}$ on $ X\times \ganz$
  is right continuous, its   resolvent 
  is strong  Feller and its potential kernel is proper  
  (see Section~\ref{s.psms} and \cite[V.5.2 and V.5.9]{BH} or \cite[Propositions 3.2.2 and 3.2.11]{H-course}).
  Thus $(X\times \ganz, \E_{\mathbbm P\otimes\wilde{\mathbbm P}})$ is a balayage space
  and we may finish
  now using the projection $(x,m)\mapsto x$ from $X\times \ganz$ on $X$.
  \end{proof}

  \subsection{Harmonic measures for space-time semigroups}\label{stsg}

Let $\wilde X:=X\times \real$ and let $\mathbbm P=(P_t)_{t>0}$ be a
strong Feller sub-Markov semigroup on $X$ such that $\wilde {\mathbbm P}:=\mathbbm
P\otimes \mathbbm T$ yields a balayage space $(\wilde X, \E_{\wilde
  {\mathbbm P}})$ (see Section \ref{s.psms} for necessary and sufficient
conditions). We recall that $\wilde
P_t((x,r),\cdot)=P_t(x,\cdot)\otimes \delta_{r-t}$, that is,
\begin{equation*}
                               \wilde P_tf(x,r)=P_tf(\cdot,r-t)(x),
                               \qquad f\in \B^+(\wilde X), \,x\in X,\,r\in\real.
\end{equation*}                                
  
\begin{proposition}\label{HU-support}
  Let $a,r\in\real$, $a<r$, and $x\in X$. Then
      \begin{equation}\label{halfspace}
        H_{X\times (a,\infty)}((x,r),\cdot)=\wilde   P_{r-a}((x,r),\cdot).
                   \end{equation} 
   Let $ \wilde U$ be an open neighborhood~of $(x,r)$  in $X\times
   (a,\infty)$. Then the  harmonic measure
    $H_{\wilde U}((x,r), \cdot)$ is supported by~$X\times [a, r]$. 
  \end{proposition}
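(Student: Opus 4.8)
The plan is to argue probabilistically, via the Hunt process attached to $\wilde{\mathbbm P}$. Since $(\wilde X,\E_{\wilde{\mathbbm P}})$ is a balayage space (the standing assumption of this subsection) and $\wilde{\mathbbm P}$ is a sub-Markov semigroup, Proposition \ref{ex-Hunt} furnishes a Hunt process $\wilde{\mathfrak X}=(\wilde X_t)_{t\ge 0}$ on $\wilde X$, with lifetime $\zeta$ and cemetery $\Delta$ (functions extended by $0$ at $\Delta$), whose transition semigroup is $\wilde{\mathbbm P}$, and such that for every open $\wilde V\subset\wilde X$, every $f\in\B^+(\wilde X)$ and every $z\in\wilde X$,
\begin{equation*}
 H_{\wilde V}f(z)=\mathbbm E^z\bigl(f(\wilde X_{\tau_{\wilde V}})\bigr),\qquad \tau_{\wilde V}=\inf\{t\ge 0:\wilde X_t\notin\wilde V\}.
\end{equation*}
This is the extension of \eqref{HVtau} to an arbitrary balayage space (see \cite[Chapter IV]{BH}; alternatively one may reduce to the case $1\in\W$ of Theorem \ref{Hunt-converse} by the $s$-transform of Remark \ref{scaling}, which changes neither the support of $H_{\wilde V}(z,\cdot)$ nor, after undoing it, the identities below).

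The crucial observation is that the ``time'' coordinate of $\wilde{\mathfrak X}$ evolves deterministically. Fix $z=(x,r)$ and let $\pi_1,\pi_2$ be the coordinate projections on $\wilde X$. For bounded continuous $g$ on $\real$ the function $(x',s')\mapsto g(s')$ is transported by $\wilde P_t=P_t\otimes T_t$ into $(x',s')\mapsto g(s'-t)\,P_t1_X(x')$, so that
\begin{equation*}
 \mathbbm E^{(x,r)}\bigl[g(\pi_2\wilde X_t);\,t<\zeta\bigr]=g(r-t)\,P_t1_X(x)=g(r-t)\,\mathbbm P^{(x,r)}(t<\zeta).
\end{equation*}
Hence, conditionally on $\{t<\zeta\}$, the law of $\pi_2\wilde X_t$ is $\delta_{r-t}$; letting $t$ run through the rationals and invoking the right continuity of the paths of $\wilde{\mathfrak X}$ (and of $t\mapsto r-t$), one obtains $\pi_2\wilde X_t=r-t$ for all $t\in[0,\zeta)$, $\mathbbm P^{(x,r)}$-a.s. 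Writing $Y_t:=\pi_1\wilde X_t$, this says $\wilde X_t=(Y_t,r-t)$ on $[0,\zeta)$, and moreover $\mathbbm E^{(x,r)}[\psi(Y_t);\,t<\zeta]=P_t\psi(x)$ for $\psi\in\B^+(X)$.

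With this in hand, put $V:=X\times(a,\infty)$ and take $r>a$. On $[0,\zeta)$ one has $\wilde X_t\in V\iff r-t>a\iff t<r-a$, so $\tau_V=(r-a)\wedge\zeta$; consequently $\wilde X_{\tau_V}=\wilde X_{r-a}$ identically ($=\Delta$ on $\{\zeta\le r-a\}$, $=(Y_{r-a},a)$ on $\{\zeta>r-a\}$), whence for $f\in\B^+(\wilde X)$
\begin{equation*}
 H_Vf(x,r)=\mathbbm E^{(x,r)}\bigl[f(\wilde X_{r-a})\bigr]=\wilde P_{r-a}f(x,r),
\end{equation*}
which is \eqref{halfspace}. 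For the last assertion, let $\wilde U\subset X\times(a,\infty)$ be open with $(x,r)\in\wilde U$. Since $\wilde U\subset V$ we get $\tau_{\wilde U}\le\tau_V\le r-a$; thus on $\{\tau_{\wilde U}<\zeta\}$ the exit point $\wilde X_{\tau_{\wilde U}}=(Y_{\tau_{\wilde U}},r-\tau_{\wilde U})$ has second coordinate $r-\tau_{\wilde U}\in[a,r]$, while on $\{\tau_{\wilde U}=\zeta\}$ it is $\Delta$; so the measure $H_{\wilde U}((x,r),\cdot)$ — the law of $\wilde X_{\tau_{\wilde U}}$ on $\wilde X$ — is carried by $X\times[a,r]$. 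The one step needing care is the deterministic‑time‑coordinate claim of the second paragraph, namely upgrading ``for each fixed $t$, a.s.'' to ``a.s.\ for all $t$'', which is precisely where path regularity of the Hunt process is used; the rest is routine bookkeeping with exit times and the cemetery convention. (One could instead avoid the process by checking directly that $(x',s')\mapsto\wilde P_{(s'-a)^+}p(x',s')$ equals the reduction $R_p^{X\times(-\infty,a]}$ for $p\in\px$ and then passing to general $f$, but the probabilistic route is shorter.)
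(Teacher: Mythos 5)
Your proof is correct, and it takes a genuinely different route from the paper's. The paper argues purely analytically inside the balayage-space framework: for excessive $w$ it defines $w'(y,s):=\wilde P_{(s-a)^+}w(y,s)$, verifies $w'\in\E_{\wilde{\mathbbm P}}$, and identifies $w'$ as the reduction $R_q^{X\times(-\infty,a]}$ to obtain \eqref{halfspace}; the support statement then follows from the absorbing set $X\times(-\infty,r]$ together with Proposition~\ref{VssU}. You instead pass to the associated Hunt process and exploit the deterministic evolution of the time coordinate, obtaining $\tau_{X\times(a,\infty)}=(r-a)\wedge\zeta$ and hence \eqref{halfspace} directly, and the support statement as an immediate consequence of $\tau_{\wilde U}\le r-a$. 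The probabilistic route is shorter and more transparent once the exit-distribution identity $H_{\wilde V}f(z)=\mathbbm E^z\bigl(f(\wilde X_{\tau_{\wilde V}})\bigr)$ is available, but it does import the machinery of Proposition~\ref{ex-Hunt} (and, in effect, Theorem~\ref{Hunt-converse} after an $s$-transform, or \cite[IV.8.1]{BH}) that the paper's proof avoids; the paper's argument is self-contained within the analytic theory. Your parenthetical alternative --- verifying $\wilde P_{(s'-a)^+}p=R_p^{X\times(-\infty,a]}$ on potentials --- is essentially the paper's argument, so you have in fact located both routes. One small remark: the upgrade from ``for each fixed $t$, a.s.'' to ``a.s.\ for all $t$'' for the identity $\pi_2\wilde X_t=r-t$ is exactly as delicate as you flag, and the right-continuity of the Hunt process paths is indeed what makes it work; you are right to call it out.
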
 

  \begin{proof}
Given $w\in \E_{\wilde{\mathbbm  P}}$, we define a function $w'\le w$
on $\wilde X$ by 
    \begin{equation*}
      w'(y,s):=\begin{cases}
        \wilde P_{s-a} w(y,s),&\quad\mbox{ if }s>a,\\
        w(y,s),&\quad\mbox{if }s\le a. \end{cases}
    \end{equation*}
Let $q\in \mathcal  P(\wilde X)$.
    If   $w\in \E_{\wilde{\mathbbm  P}}$ with $w\ge q$ on $X\times
 (-\infty,   a]$,  then clearly $w'\ge q'$, since the measures $\wilde
 P_{s-a}((y,s),\cdot)$, $s>a$,  are supported by $X\times \{a\}$. 
 Further, it is easily verified that $q'\in \E_{\wilde{\mathbbm   P}}$.  Therefore
 $H_{X\times (a,\infty)}q=R_q^{X\times (-\infty, a]} =q'$. Thus  (\ref{halfspace}) holds.

   Moreover, $\wilde A:=X\times (-\infty,r]$ is an absorbing set, since
   obviously $1_{X\times (r,\infty)}\in \E_{\wilde{\mathbbm  P}}$, 
   (see \cite[V.1.2]{BH}). Hence  $H_{\wilde U}((x,r), \cdot)$ is supported
   by~$\wilde A$. By Proposition \ref{VssU},
   \begin{equation*}
     H_{\wilde U}((x,r), X\times (-\infty,a))
     \ge H_{X\times (a,\infty)}((x,r), X\times (-\infty,a))=0.
   \end{equation*}
   \end{proof}

The following results will be used in Sections \ref{stG} and \ref{Gfd}.

\begin{lemma}\label{Pteta}
Suppose that $x\in X$ and $t>0$ such that $P_t(x,\cdot)\ne \delta_x$.
  Then there exist $t>0$, $\eta\in (0,1)$ and  a neighborhood $V\in \U(X)$ such
 $P_t1_V\le \eta$ on $V$.
\end{lemma}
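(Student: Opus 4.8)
The statement to prove is Lemma \ref{Pteta}: if $P_t(x,\cdot)\neq\delta_x$ for some $x\in X$ and some $t>0$, then there exist $t>0$, $\eta\in(0,1)$ and $V\in\U(X)$ with $P_t1_V\le\eta$ on $V$. (The repetition of ``$t>0$'' in the hypothesis and conclusion is a minor infelicity; the point is that one can choose a \emph{new} time $t$, a small neighborhood $V$ of the given point $x$, and a contraction factor $\eta$.) The idea is simple: $P_t(x,\cdot)\neq\delta_x$ means that, for the given $t$, the mass that $P_t(x,\cdot)$ places ``away from $x$'' is positive — either $P_t$ loses mass at $x$ (i.e.\ $P_t1(x)<1$), or $P_t(x,\cdot)$ puts positive mass on the complement of some neighborhood of $x$. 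In either case, by choosing $V$ small enough around $x$ we make $P_t1_V(x)$ strictly less than $1$, and then we use the right continuity / strong Feller structure to upgrade the estimate at the single point $x$ to an estimate on a whole neighborhood.

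First I would fix the given $t$ and the given $x$ with $P_t(x,\cdot)\neq\delta_x$. Since $\delta_x$ is a probability measure concentrated at $x$, failure of equality means $P_t(x,X\setminus W)>0$ for every $W$ in a fundamental system of neighborhoods of $x$, \emph{or} $P_t(x,X)<1$; in all cases there is a neighborhood $W_0\in\U(X)$ of $x$ with $\beta:=P_t1_{W_0}(x)<1$. Pick $\eta'\in(\beta,1)$. Now I would use continuity of the map $y\mapsto P_t1_{W_0}(y)$: under the hypothesis that $\mathbbm P$ is strong Feller (or that its resolvent is strong Feller, together with right continuity — note this lemma sits in Section~\ref{stsg}, where $\mathbbm P$ is assumed strong Feller), the function $P_t1_{W_0}$ — or rather $P_t$ applied to a continuous function $\le 1_{W_0}$ with value close to $1$ at $x$ — is continuous, so there is a smaller neighborhood $V\in\U(X)$ of $x$ with $\ov V\subset W_0$ and $P_t1_{W_0}\le\eta'$ on $V$. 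Since $V\subset W_0$, we get $P_t1_V\le P_t1_{W_0}\le\eta'$ on $V$, which is the claim with $\eta:=\eta'$.

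The one technical wrinkle — and the place I expect the real work to be — is the passage from ``$P_t1_{W_0}(x)<1$'' to ``$P_t(\cdot,\text{something})\le\eta$ on a neighborhood of $x$'', because $1_{W_0}$ is not continuous and strong Feller only controls $P_t$ on bounded Borel functions pointwise, not its continuity in a way that is immediately uniform on a neighborhood. The clean fix is to choose $f\in\C_c(X)$ with $1_V\le f\le 1_{W_0}$ for a slightly shrunk $V$ (possible since $X$ is locally compact with countable base, so one can interpose a continuous Urysohn function between the indicators of nested relatively compact opens); then $P_tf$ is continuous, $P_tf(x)\le P_t1_{W_0}(x)=\beta<\eta'$, hence $P_tf<\eta'$ on some neighborhood $V'\subset V$ of $x$, and finally $P_t1_{V'}\le P_tf\le\eta'$ on $V'$. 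Relabelling $V'$ as $V$ finishes the proof.

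Alternatively, if one prefers to avoid strong-Feller continuity of $P_t$ itself and only use the strong Feller \emph{resolvent} together with right continuity of $\mathbbm P$, one can instead work with $\lambda V_\lambda 1_{W_0}$ for large $\lambda$: since $\lambda V_\lambda1_{W_0}(x)\to 1_{W_0}(x)=1$ is \emph{not} what we want, one rather observes that $\lambda V_\lambda(1-1_{W_0})(x)\to (1-1_{W_0})(x)$ along a suitable sub-sequence only on lower-semicontinuous data; this route is messier, so I would stick with the $\C_c$-interposition argument above. I expect the whole proof to be three or four lines once the neighborhood bookkeeping is set up, and the only genuine point requiring care is making the estimate hold on a full neighborhood $V$ of $x$ rather than just at $x$ — which the Urysohn-function trick and continuity of $P_tf$ handle.
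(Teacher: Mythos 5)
Your proof is correct and follows essentially the same route as the paper's: first produce a relatively compact open neighborhood $V_0$ of $x$ with $P_t1_{V_0}(x)<\eta$ for some $\eta\in(0,1)$ (from $P_t(x,\cdot)\ne\delta_x$, $P_t1\le 1$, and outer regularity of $P_t(x,\cdot)$), then use continuity to propagate the bound from the point $x$ to a whole neighborhood, and shrink.

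The one thing worth flagging is that the ``technical wrinkle'' you highlight, and the Urysohn interposition you introduce to handle it, are not actually needed. In this paper \emph{strong Feller} for $\mathbbm P$ means, by definition, that each kernel $P_t$ maps $\B_b(X)$ into $\C_b(X)$; in other words $P_t f$ is \emph{already} a continuous function for every bounded Borel $f$, in particular for $f=1_{V_0}$. The paper therefore just observes $P_t(\cdot,V_0)\in\C(X)$, picks by continuity a small neighborhood $V$ of $x$ with $\ov V\subset V_0$ on which $P_t1_{V_0}<\eta$, and concludes from $P_t1_V\le P_t1_{V_0}<\eta$ on $V$. Your detour via $f\in\C_c(X)$ with $1_{V}\le f\le 1_{V_0}$ is a valid workaround, but it stems from reading ``strong Feller'' as something weaker (e.g.\ continuity of $P_t$ only on continuous data) than what is assumed here, and it adds no generality.
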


\begin{proof}
 Since $P_t1\le 1$, we get that $a:=P_t(x,\{x\})<1$. Let
  $\eta\in (a,1)$. There exists   $V_0\in \U(X)$ containing $x$ such
  that $P_t(x,V_0)<\eta$. Since $P_t(\cdot, V_0)\in \C(X)$, there
  exists $V\in \U(x)$ such that $x\in V\subset V_0$ such that
  $P_t1_{V_0}<\eta$ on $V$. Since $P_t1_V\le P_t1_{V_0}$, the proof
    is finished.
    \end{proof} 

   \begin{proposition}\label{cylinder}
      Let $V$ be  open  in $X$, $t>0$ and $\eta\in (0,1)$ with
      $P_t1_V\le \eta$ on~$V$. Then
$     \lim_{a\to\infty} H_{V\times (-a,a)} 1_{ V\times \{-a\}}= 0 $
                and
                \begin{equation}\label{waw}
                  \lim\nolimits_{a\to\infty} H_{V\times (-a,a)}\wilde w
                  =H_{V\times \real} \wilde w
                \end{equation}
                for every $\wilde w\in \E_{\wilde {\mathbbm P}}$ such
                that $\wilde w$ is bounded on $V\times \real$.
\end{proposition}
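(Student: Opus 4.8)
The plan is to exploit the product structure of $\wilde{\mathbbm P}=\mathbbm P\otimes \mathbbm T$ together with the explicit formula \eqref{halfspace} for harmonic measures of half-spaces. First I would observe that, by Proposition~\ref{HU-support}, the harmonic measure $H_{V\times(-a,a)}((x,r),\cdot)$ is supported by $(V\times\real)^c\cap \bigl(X\times[-a,r]\bigr)$, so it decomposes into a "lateral" part carried by $V^c\times\real$ and a "bottom" part carried by $V\times\{-a\}$; the top face $X\times\{a\}$ carries no mass since $r<a$ and $X\times(r,\infty)$ is absorbing. Write $H_{V\times(-a,a)}\wilde w = H_{V\times(-a,a)}(1_{V^c\times\real}\wilde w) + H_{V\times(-a,a)}(1_{V\times\{-a\}}\wilde w)$. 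The first summand increases, as $a\uparrow\infty$, to $H_{V\times\real}\wilde w$ by the continuity-from-above/below properties of reductions (using \eqref{RAo} and the fact that $V\times(-a,a)\uparrow V\times\real$), so the whole content of \eqref{waw} is the claim that the bottom contribution vanishes. Since $\wilde w$ is bounded by some constant $c$ on $V\times\real$, it suffices to prove the first displayed limit, $\lim_{a\to\infty} H_{V\times(-a,a)}1_{V\times\{-a\}}=0$.

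For that I would set up a comparison with the half-space $V\times(-a,\infty)$. Starting at $(x,r)$ with $-a<r$, to reach the bottom face $V\times\{-a\}$ the space-time process must stay in $V\times(-a,a)\subset V\times(-a,\infty)$ until its time-coordinate decreases from $r$ to $-a$; in particular it must stay in $V$ (in the $X$-coordinate) for the deterministic time $r-(-a)=r+a$. Formally, $H_{V\times(-a,a)}1_{V\times\{-a\}}\le H_{V\times(-a,\infty)}1_{V\times\{-a\}}$, and by \eqref{halfspace} applied on the base level $-a$ we get $H_{V\times(-a,\infty)}1_{V\times\{-a\}}((x,r),\cdot)=\wilde P_{r+a}((x,r),\{(\cdot,-a)\})$, which by the product formula equals $P_{r+a}1_V(x)\cdot\delta_{-a}(\{-a\})$-type expression; more precisely $H_{V\times(-a,\infty)}1_{V\times\{-a\}}(x,r)=P_{r+a}(x,V)$ — wait, one must be careful: this is $H$ for the half-space, not for $V\times(-a,\infty)$. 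The correct bound uses instead the harmonic kernel of $V\times(-a,\infty)$, whose bottom contribution is dominated by the transition probability of $(X_t)$ staying in $V$; iterating $P_t1_V\le\eta$ on $V$ over $\lceil (r+a)/t\rceil$ steps gives $P_{r+a}1_V\le \eta^{\lfloor (r+a)/t\rfloor}$ on $V$, which $\to 0$ as $a\to\infty$. So the plan is: (i) reduce \eqref{waw} to the bottom-face limit via the boundedness of $\wilde w$ and monotone convergence of the lateral reductions; (ii) bound the bottom contribution by the probability that $(X_s)$ remains in $V$ for time $r+a$, using Proposition~\ref{VssU} to pass from the cylinder to the half-space; (iii) apply the submultiplicative estimate coming from $P_t1_V\le\eta$ on $V$.

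The main obstacle I anticipate is step~(ii): making the bound on the bottom face rigorous \emph{purely in the potential-theoretic language of harmonic kernels}, without invoking the Hunt process (which need not exist a priori in this generality, and in any case the cleanest argument should be kernel-theoretic). The honest route is to compare $H_{V\times(-a,a)}1_{V\times\{-a\}}$ with $H_{V\times(-a,b)}1_{V\times\{-a\}}$ for the finite cylinders of height $b>a$ and let $b$ vary, or better, to use an $s$-transform and a suitable supermedian function: the function $(y,s)\mapsto \eta^{-s/t}$ on $V\times\real$ is, by $P_t1_V\le\eta$ on $V$ and the semigroup property, superharmonic on $V\times\real$ for $\wilde{\mathbbm P}$ up to the normalisation, so dominating $1_{V\times\{-a\}}$ by (a multiple of) this function on the bottom face and applying the domination/maximum principle of Lemma~\ref{dom-princ} yields $H_{V\times(-a,a)}1_{V\times\{-a\}}(x,r)\le \eta^{(r+a)/t - 1}$ or similar. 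Establishing that $(y,s)\mapsto \eta^{-s/t}$ (truncated appropriately to be finite, e.g.\ on the relevant slab) is indeed in ${}^\ast\H^+(V\times\real)$ is the technical crux; once that is in hand, the limit is immediate and \eqref{waw} follows. I would expect the write-up to spend most of its length verifying this hyperharmonicity and handling the truncation so that the comparison function is an admissible member of $\W_{V\times\real}$.
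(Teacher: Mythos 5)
Your decomposition into a lateral part and a bottom part, and your reduction of \eqref{waw} to the claim that the bottom contribution $h_a:=H_{V\times(-a,a)}1_{V\times\{-a\}}$ tends to $0$, follows the same lines as the paper: the paper sandwiches $H_{\wilde V}\wilde w\le H_{\wilde V_a}\wilde w\le H_{\wilde V}\wilde w + c\,\sup h_a(\wilde B_r)$ using Proposition~\ref{VssU}. So that half of the plan is sound.

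The gap is in your bound on the bottom face. You propose to dominate $h_a(x,r)$ by $P_{r+a}1_V(x)$ and then iterate $P_t1_V\le\eta$ to get $P_{r+a}1_V\le\eta^{\lfloor (r+a)/t\rfloor}$. That iteration is false: $P_t1_V\le\eta$ holds only \emph{on} $V$, so in $P_{2t}1_V=P_t(P_t1_V)$ the contribution from $V^c$ (where you only know $P_t1_V\le 1$) is not killed, and $P_{nt}1_V$ need not decay at all. What does satisfy the submultiplicative bound is the \emph{killed} kernel, i.e.\ the probability of \emph{staying} in $V$ for time $nt$ — not $P_{nt}1_V$. Your alternative idea, the supermedian function $(y,s)\mapsto\eta^{-s/t}$, has the same problem when you try to verify it is in ${}^\ast\H^+(V\times\real)$: the supermedian inequality for the product semigroup would again require a bound on $P_\tau$ applied to the full function, not only on $V$; proving this hyperharmonicity essentially amounts to the very iteration one is trying to perform, so nothing has been gained.

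The paper's device which you did not find is to introduce $\gamma_a(r):=\sup\{h_a(y,r):y\in V\}$ and to iterate one time-step $t$ at a time entirely through harmonic kernels. Fixing $(x,r)$ with $x\in V$ and $s:=r-t>-a$, one sets $\wilde W:=V\times(s,a)$ and uses $H_{\wilde W}h_a=h_a$ (Proposition~\ref{VssU}); since $h_a$ vanishes off $V$ and $H_{\wilde W}((x,r),\cdot)$ is carried by $X\times[s,r]$ with its $V$-part only on the slice $V\times\{s\}$, one gets $h_a(x,r)\le\gamma_a(s)\,H_{\wilde W}\bigl((x,r),V\times\{s\}\bigr)\le\gamma_a(s)\,H_{X\times(s,\infty)}\bigl((x,r),V\times\{s\}\bigr)=\gamma_a(s)\,P_t(x,V)\le\eta\,\gamma_a(s)$. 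Taking the sup over $x\in V$ yields $\gamma_a(r)\le\eta\,\gamma_a(r-t)$, which, together with $\gamma_a\le1$, iterates to $\gamma_a(r)\le\eta^{\lfloor (r+a)/t\rfloor}\to0$. This is the kernel-theoretic analogue of the "must stay in $V$ for time $r+a$" intuition, and it is precisely what makes the step rigorous without a Hunt process. Your plan as written would not close this step.
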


\begin{proof} Let  $\wilde V:=V\times\real$, $a>0$, $\wilde
  V_a:=V\times (-a,a)$,  $h_a:=H_{\wilde V_a}  1_{V\times  \{-a\}}$, 
   \begin{equation*}
     \wilde B_r:=V\times \{r\} \und  \g_a(r):=\sup h_a(\wilde B_r),
     \qquad -a<r<a.
    \end{equation*} 
      Of course, $\g_a\le 1$. Suppose that $\wilde x=(x,r)$, $x\in V$,
      $r<a$, and $s:=r-t>-a$. Defining~$\wilde W:= V\times (s, a)$
  and using Proposition \ref{VssU}, 
      we obtain that
\begin{eqnarray*} 
              h_a(\wilde x)=H_{\wilde W} h_a(\wilde x)&\le& \g_a(s) 
             H_{\wilde W}( \wilde x, \wilde B_s)\\
             &\le &\g_a(s) H_{X\times (s,a)}( \wilde x, \wilde B_s)=\g_a(s)  P_t(x,V)\le \eta \g_a(s).
\end{eqnarray*}
Thus $\g_a(s)\le \eta \g_a(r)$, and it is straightforward to prove that
\begin{equation*}
  \lim\nolimits_{a\to\infty} \g_a=0.
  \end{equation*} 
Obviously, $H_{\wilde V}\wilde w=
H_{\wilde   V_a}H_{\wilde V} \wilde w\le H_{\wilde   V_a} \wilde w$. 
On~the other hand, defining $c:=\sup \wilde w(\wilde V)$,
 \begin{equation*}
       H_{\wilde V_a}\wilde w(\wilde x)\le        H_{\wilde
         V_a}(1_{\vc\times (-a,a)})\wilde w (\wilde x) + c \g_a(r)
       \le H_{\wilde V}\wilde w(\wilde x)+ c\g_a(r),
       \end{equation*} 
       where the last inequality holds by Proposition \ref{VssU}.
       Since $\lim_{a\to\infty}\g_a(r)=0$, we finally conclude that
       (\ref{waw})  holds.
       \end{proof}

Let $\pi$ be the canonical projection $(x,r)\mapsto x$ from $\wilde X$ to
$X$.

       \begin{corollary}\label{Utimesreal}
 Suppose that $(X, \exc)$ is a balayage space, $U$ is an open set in
 $X$  and $w\in\exc$ such that   $w\circ \pi$ is harmonic on $U\times \real$.
Assume also that, for every $x\in U$, there exists $t>0$ such that
  $P_t(x,\cdot)\ne\delta_x$.   Then $w$ is harmonic on $U$.
\end{corollary}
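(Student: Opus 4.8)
The plan is to deduce the harmonicity of $w$ on $U$ from the harmonicity of $w\circ\pi$ on the cylinder $U\times\real$ by testing against small relatively compact open sets and using Proposition \ref{cylinder} to let the time coordinate run off to $\pm\infty$. Fix $x\in U$; I want to exhibit a fundamental system of neighborhoods $V\in\U(U)$ of $x$ with $H_Vw(x)=w(x)$, which by \cite[III.4.4]{BH} (cited after \eqref{HW}) suffices for $w\in\H^+(U)$, since $w$ is already l.s.c.\ (indeed excessive) and the inequality $H_Vw\le w$ holds automatically for $w\in\W=\hyperX$. By hypothesis there is $t>0$ with $P_t(x,\cdot)\neq\delta_x$, so Lemma \ref{Pteta} provides $\eta\in(0,1)$ and a neighborhood $V_0\in\U(X)$ of $x$ with $P_t1_{V_0}\le\eta$ on $V_0$; shrinking, I may take $V\in\U(U)$ with $x\in V\subset V_0$ and $P_t1_V\le\eta$ on $V$ (the monotonicity $P_t1_V\le P_t1_{V_0}$ handles the inclusion, exactly as at the end of the proof of Lemma \ref{Pteta}). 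These $V$ form a neighborhood basis of $x$ in $U$.

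Next I would work on $\wilde X=X\times\real$ with the space-time semigroup $\wilde{\mathbbm P}=\mathbbm P\otimes\mathbbm T$, which gives a balayage space by the standing assumption of Section \ref{stsg}. For the fixed $x$ and any $r\in\real$ consider the point $(x,r)$. Since $w\circ\pi$ is harmonic on $U\times\real$ and $V\times(-a,a)\in\U(U\times\real)$ for every $a>0$, we have
\begin{equation*}
  H_{V\times(-a,a)}(w\circ\pi)=w\circ\pi\quad\text{on }V\times(-a,a).
\end{equation*}
Now $w\circ\pi$ need not be bounded on $V\times\real$, but $w\wedge n$ is, for each $n\in\nat$, and $(w\wedge n)\circ\pi\in\E_{\wilde{\mathbbm P}}$; applying Proposition \ref{cylinder} to $\wilde w:=(w\wedge n)\circ\pi$ and then letting $n\to\infty$ by monotone convergence yields
\begin{equation*}
  \lim\nolimits_{a\to\infty}H_{V\times(-a,a)}(w\circ\pi)(x,r)=H_{V\times\real}(w\circ\pi)(x,r).
\end{equation*}
Combining the two displays at the point $(x,r)$ and observing $(w\circ\pi)(x,r)=w(x)$ gives $H_{V\times\real}(w\circ\pi)(x,r)=w(x)$ for every $r\in\real$.

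Finally I would identify $H_{V\times\real}(w\circ\pi)$ with $(H_Vw)\circ\pi$: the harmonic kernel of the product cylinder $V\times\real$ acting on a function of the form $f\circ\pi$ returns $(H_Vf)\circ\pi$, because exiting $V\times\real$ happens exactly when the $X$-coordinate exits $V$, and the $\mathbbm T$-component contributes only a deterministic drift in the $\real$-coordinate; concretely, for $f\in\B^+(X)$ one checks, using the product structure $\wilde P_s((x,r),\cdot)=P_s(x,\cdot)\otimes\delta_{r-s}$, that $f\circ\pi\mapsto H_{V\times\real}(f\circ\pi)$ satisfies the defining reduction property \eqref{HV} for $(H_Vf)\circ\pi$ on the balayage space $(\wilde X,\E_{\wilde{\mathbbm P}})$ (or one uses the probabilistic identity \eqref{HVtau} together with $\tau_{V\times\real}=\tau_V$ for the space-time process, whose $X$-projection is the process attached to $\mathbbm P$). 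Hence $H_Vw(x)=w(x)$ for all $V$ in our neighborhood basis, and therefore $w$ is harmonic on $U$. I expect the one genuinely non-routine point to be this last identification $H_{V\times\real}(f\circ\pi)=(H_Vf)\circ\pi$ cleanly in the balayage-space language without invoking the Hunt process (which may not exist here since only the resolvent of $\mathbbm P$ is assumed strong Feller); the safe route is to verify the reduction characterization \eqref{HV} directly, using that $1_{V^c}\circ\pi$-majorization on $V^c\times\real$ transfers through $\pi$ and that excessive functions on $\wilde X$ of product type $v\otimes w_0$ restrict correctly, much as in the proof of Proposition \ref{HU-support}.
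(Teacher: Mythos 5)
Your overall route matches the paper's: fix $x\in U$, produce a fundamental system of neighborhoods $V\in\U(U)$ of $x$ via Lemma~\ref{Pteta}, use harmonicity of $w\circ\pi$ to get $H_{V\times(-a,a)}(w\circ\pi)=w\circ\pi$, pass to $a\to\infty$ via Proposition~\ref{cylinder}, identify $H_{V\times\real}(w\circ\pi)$ with $(H_Vw)\circ\pi$, and conclude with \cite[III.4.4]{BH}. The paper cites \cite[V.8.2]{BH} for the product identity $H_{V\times\real}(f\circ\pi)=(H_Vf)\circ\pi$ that you flag as the non-routine point; this is the reference you were looking for.

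There is, however, a genuine gap in your handling of the boundedness hypothesis in Proposition~\ref{cylinder}. You truncate with $w\wedge n$ and appeal to monotone convergence, but that interchange is not justified: the family $H_{V\times(-a,a)}\bigl((w\wedge n)\circ\pi\bigr)$ is increasing in $n$ and decreasing in $a$, and for such doubly-indexed arrays the iterated limits $\lim_n\lim_a$ and $\lim_a\lim_n$ need not agree. Moreover, the functions $(w\wedge n)\circ\pi$ are not harmonic on $V\times(-a,a)$ (only $w\circ\pi$ is), so combining Proposition~\ref{cylinder} for the truncations with harmonicity of $w\circ\pi$ only yields the one-sided estimate $H_{V\times\real}(w\circ\pi)\le w\circ\pi$, not the equality you need. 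The truncation is in fact unnecessary: the standing hypothesis that $w\circ\pi$ is harmonic on $U\times\real$ already forces $(w\circ\pi)|_{U\times\real}\in\C(U\times\real)$, hence $w|_U\in\C(U)$, hence $w$ is bounded on the relatively compact set $\ov V\subset U$, and therefore $w\circ\pi$ is bounded on $V\times\real$. So Proposition~\ref{cylinder} applies directly to $\wilde w:=w\circ\pi$, which is what the paper does implicitly.
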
 

\begin{proof} Clearly, $w\circ\pi\in \W_{\wilde{\mathbbm P}}$. Let $x\in U$ and let $V_0\in \U(U)$, $x\in V_0$.
  By Lemma \ref{Pteta}, there exist $t>0$ and a neighborhood $V$ of
  $x$ in $V_0$ such that $\sup P_t1_V(V)<1$. 
   By assumption,  $H_{V\times (-a,a)} (w\circ     \pi)
  =w\circ\pi$ for every  $a>0$. So $ H_{V\times\real} (w\circ     \pi)=
  w\circ \pi$, by Proposition \ref{cylinder}. 
       By \cite[V.8.2]{BH},  $H_{V\times \real} (w\circ\pi)\wilde w=(H_Vw)\circ
  \pi$. Thus $H_Vw(x)=w(x)$.  By \cite[III.4.4]{BH}, we finally
conclude that $w$ is harmonic on $U$.
    \end{proof}

\subsection{Space-time Green function}\label{stG}

In this section we intend to show that,  in many cases,  a sub-Markov
  semigroup~$\mathbbm P$ on a space $X$ with ``heat kernel''
  $ (x,y,t)\mapsto p_t(x,y)$ leads to  a balayage space
  \hbox{$(X\times \real, \E_{\mathbbm P\otimes\mathbbm T})$} with Green function
$((x,r),(y,s))\mapsto   1_{(s,\infty)}(r)    p_{r-s}(x,y)$.

Let us suppose that we are in the setting of Section \ref{stsg} and that 
\begin{equation*}
  P_t(x,\cdot)=p_t(x,\cdot) m
\end{equation*}
where $m$ is  a~Radon measure  on $X$ and 
$(x,y,t)\mapsto p_t(x,y)$ is a positive real function on~$X\times X\times\real$
having the following properties:
\begin{itemize}
\item[\rm (i)]
 It is continuous outside $\{(x,x)\colon x\in
  X\}\times\{0\}$ and $p_t=0$ for $t\le 0$.
\item[\rm(ii)]
  For all $s,t\in (0,\infty)$ and $ x,y\in X$,
  \begin{equation*} 
p_{s+t}(x,y)=\int p_{s}(x,z)p_{t}(z,y)\,m(dz) \qquad
\mbox{(\emph{Chapman-Kolmogorov equations})}.
\end{equation*} 
\item[\rm(iii)] 
 For all $y\in X$ and $a>0$, $\lim_{x\to \infty} \sup\{p_t(x,y)\colon 0<t\le a\}=0$.
    \end{itemize}

    \begin{remark}
      {\rm
     The strong Feller property of $\mathbbm P$, which we
       assumed,         can as well  be easily derived from (i) and (iii).
      }
      \end{remark}

\begin{examples}
  {\rm
   Let $X:=\reald$, $d\ge 1$.  Let $\mathbbm P$ be one of the
     following sub-Markov semigroups on $X$:
    
  1.  
 Brownian semigroup, given by the transition density
  \begin{equation*}
    p_t(x,y)=    (4\pi t)^{-d/2} \exp\bigl(- \dfrac{|x-y|^2}{4t}\bigr) , \qquad x,y\in\reald,\ t>0.
     \end{equation*}
This yields the harmonic space given by the heat equation, 
see, e.g., \cite[Section V.6]{BH} or \cite[Section 2.3]{Evans}.

    2. Isotropic $\a$-stable semigroup with $0<\a<2$, having a
    transition density  
\begin{equation*}
  p_t(x,y) \approx t^{-d/\alpha}\land \frac{t}{|x-y|^{d+\alpha}}, \qquad x,y\in\reald,\ t>0,
\end{equation*}
see, e.g., \cite[(2.11)]{MR3613319}.
}
\end{examples}

For  $\wilde{\mathbbm P}:=\mathbbm P\otimes \mathbbm T=(\wilde P_t)_{t>0}$
on $\wilde X:=X\times \real$ we now have
 \begin{equation}\label{for-Qt}
 \wilde  P_tf(x,r)=\int p_t(x,z)f(z,r-t)\,dm(z), \qquad
  f\in\B^+(\wilde X).
  \end{equation} 
               
We define $\wilde G\colon \wilde X\times \wilde X\to [0,\infty)$ by
\begin{equation}\label{def-G-parabolic} 
 \wilde G((x,r),(y,s)):= p_{r-s}(x,y).
 \end{equation}

\begin{theorem}\label{G-space-time-green}
  $\wilde  G$ is a Green function for the balayage space $(\wilde X,\E_{\wilde{\mathbbm P}})$.
  \end{theorem}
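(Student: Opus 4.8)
The plan is to verify the three defining properties of a Green function (Definition~\ref{d.Gf}) for the function $\wilde G$ of \eqref{def-G-parabolic}: for each fixed $\wilde y=(y,s)\in\wilde X$, the function $\wilde G(\cdot,\wilde y)$ should be (a) a positive excessive function, i.e.\ lie in $\E_{\wilde{\mathbbm P}}=\W_{\wilde{\mathbbm P}}$; (b) harmonic on $\wilde X\setminus\{\wilde y\}$; and (c) a non-vanishing potential on $\wilde X$. First I would fix $\wilde y=(y,s)$ and write $g:=\wilde G(\cdot,\wilde y)$, so $g(x,r)=p_{r-s}(x,y)$, which vanishes for $r\le s$ and is strictly positive and continuous for $r>s$ (away from $(y,s)$) by properties (i). For the excessive property I compute, using \eqref{for-Qt} and the Chapman--Kolmogorov equations (ii),
\[
\wilde P_t g(x,r)=\int p_t(x,z)\,p_{r-t-s}(z,y)\,dm(z)=p_{r-s-t}(x,y)=g(x,r-t)\cdot\!\!\quad\text{(shift by $t$),}
\]
more precisely $\wilde P_t g(x,r)=p_{(r-t)-s}(x,y)$ when $r-t>s$ and $=0$ otherwise, so $\wilde P_tg\le g$ and $\wilde P_tg\uparrow g$ as $t\downarrow 0$; hence $g\in\E_{\wilde{\mathbbm P}}$. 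Since $g$ is real and locally bounded away from $(y,s)$ and continuous there, Lemma~\ref{G-potential}(1) (applied in the space-time setting) confirms $g\in\W_{\wilde{\mathbbm P}}$ and, combined with the semigroup identity just displayed, that $g$ is harmonic on $\wilde X\setminus\{\wilde y\}$ — indeed on $\{r<s\}$ it is the zero function and on $\{r>s\}$ one checks $H_V g=g$ for small cylinders $V$ using that $\wilde P_tg$ realizes the harmonic average over half-spaces, via \eqref{halfspace} in Proposition~\ref{HU-support}.

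The substantive point is (c): that $g$ is a \emph{potential} on $\wilde X$, i.e.\ that $\inf\{H_{\wilde W}g:\wilde W\in\U(\wilde X)\}=0$. Here I would use property (iii) together with the absorbing structure of the time coordinate. By Proposition~\ref{HU-support}, the harmonic measure $H_{\wilde W}((x,r),\cdot)$ for any relatively compact $\wilde W$ containing $(x,r)$ is supported in $X\times[a,r]$ for suitable $a$, and as $\wilde W$ grows one may take cylinders of the form $\wilde W=V\times(-a,a)$ with $V\in\U(X)$, $a\to\infty$. Applying Proposition~\ref{cylinder} (after invoking Lemma~\ref{Pteta} to find, for the relevant points, a $t>0$, $\eta\in(0,1)$ and neighborhood $V$ with $P_t1_V\le\eta$ on $V$), the harmonic averages $H_{V\times(-a,a)}g$ converge to $H_{V\times\real}g$, and it remains to push $V\uparrow X$ and use (iii) to kill this limit. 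Concretely, one estimates $H_{\wilde W}g(x,r)\le\sup\{g(z',r')=p_{r'-s}(z',y): (z',r')\in\wilde W^c\}$, and property (iii) ($\lim_{z'\to\infty}\sup_{0<t\le a}p_t(z',y)=0$) forces this supremum to $0$ as $\wilde W$ exhausts $\wilde X$ in the spatial directions, while the time-absorbing property handles the temporal direction. Non-vanishing of $g$ is clear since $p_t(x,y)>0$ for $t$ small and $(x,r)$ near $(y,s+t)$, so $g$ is not identically zero; this also shows $g$ is not harmonic on all of $\wilde X$, consistent with being a genuine potential.

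The main obstacle I anticipate is (c), and specifically getting a clean, uniform bound on the harmonic measures $H_{\wilde W}g$ as $\wilde W\uparrow\wilde X$ — one must simultaneously control the spatial escape (via (iii)) and the backward-time escape (via the absorbing set $X\times(-\infty,r]$ from Proposition~\ref{HU-support}), and marry these through the cylinder exhaustion of Proposition~\ref{cylinder}. A secondary technical point is the regularity needed for Lemma~\ref{G-potential}(2): one wants $H_{\wilde W}g\downarrow 0$ \emph{locally uniformly}, which should follow because the bound $H_{\wilde W}g(x,r)\le\sup_{\wilde W^c}g$ combined with (iii) is already uniform on compacta in $(x,r)$ once $a$ is chosen larger than $r$ plus the compactum's time-extent; then Lemma~\ref{G-potential}(2) upgrades $g\in\W_{\wilde{\mathbbm P}}$ to $g\in\mathcal P'(\wilde X)\subset\mathcal P(\wilde X)$, completing the verification that $\wilde G$ is a Green function.
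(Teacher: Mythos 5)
Your verification of the excessive property and of harmonicity on $\wilde X\setminus\{\wilde y\}$ is essentially the paper's argument: compute $\wilde P_t\wilde g$ via Chapman--Kolmogorov to get $\wilde P_t\wilde g = 1_{(s+t,\infty)}\cdot\wilde g$, hence $\wilde g\in\E_{\wilde{\mathbbm P}}$, and use the half-space identity \eqref{halfspace} from Proposition~\ref{HU-support} together with $H_{\wilde V}H_{\wilde U}=H_{\wilde U}$ to obtain harmonicity off $\wilde y$. That part is fine.

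The proof of the potential property, however, has two genuine gaps. First, you cannot invoke Proposition~\ref{cylinder} for $\wilde w=\wilde g$: that proposition requires $\wilde w$ to be \emph{bounded} on $V\times\real$, whereas $\wilde g(x,r)=p_{r-s}(x,y)$ blows up as $(x,r)\to(y,s)$ in typical examples, and $(y,s)\in V\times\real$ once $V\ni y$. (In the paper, Proposition~\ref{cylinder} is used only in Section~\ref{Gfd} for the spatial Green function, not here.) Second, your ``concrete'' estimate $H_{\wilde W}\wilde g\le\sup_{\wilde W^c}\wilde g$ does not close the argument: for a cylinder $\wilde W=W\times(a,b)$ the complement contains the \emph{top} $X\times[b,\infty)$, where (iii) gives no control of $p_{r'-s}(x,y)$ (it only bounds $p_t$ for $t$ in a bounded interval), so the supremum need not be small. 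The correct move --- and the whole point of Proposition~\ref{HU-support} here --- is to restrict the supremum to the part of $\wilde W^c$ actually charged by $H_{\wilde W}((x,r),\cdot)$, namely $X\times[a,r]$. The paper's argument then goes: fix $a<s\le r<b$; by (iii) choose $W\in\U(X)$ with $\wilde g\le\ve$ on $(X\setminus W)\times[a,b]$; the harmonic measure for $\wilde W=W\times(a,b)$ at points of $V\times(a,b)$ is supported in $\bigl((X\setminus W)\times[a,b]\bigr)\cup\bigl(W\times\{a\}\bigr)$, and since $\wilde g$ vanishes on $X\times\{a\}$ (as $a<s$), one gets $H_{\wilde W}\wilde g\le\ve$, then concludes with Lemma~\ref{G-potential}(2). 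Your sketch identifies the relevant tools (time-absorption, (iii)) but in the wrong combination; the cylinder lemma and the crude sup bound should be discarded in favor of the direct estimate on the support.

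A minor further point: you assert ``$p_t(x,y)>0$'' to get non-vanishing, but the hypotheses only assume $p\ge0$; strictly, non-vanishing of $\wilde G(\cdot,\wilde y)$ is an implicit assumption (otherwise no Green function exists), and the paper's proof simply does not comment on it.
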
 

  \begin{proof}
  Let us fix $\wilde y=(y,s)\in \wilde X$ and define
\begin{equation*}
  \wilde g:= \wilde G(\cdot,\wilde y).
  \end{equation*} 
By (i), $\wilde g$ is continuous on $X\setminus \{\wilde y\}$, l.s.c.\
at $\wilde y$. For $t>0$, $z\in X$ and $r\in\real$, 
  $\wilde g(z,r-t)=p_{r-t-s}(z,y)$, and hence, by
(\ref{for-Qt}) and (ii), 
\begin{equation}\label{G-harm} 
        \wilde P_t\wilde g(x,r)     =1_{(s+t,\infty)}(r) \, \wilde g(x,r), \qquad t>0,\,(x,r)\in X.
        \end{equation}
        Since $\wilde g(x,r)=0$ for $r\le s$, we conclude that $\wilde g\in \E_{\wilde{\mathbbm P}}$.

We next show that $\wilde g$ is harmonic on $X\setminus \{\wilde
  y\}$. If $\wilde x\in X\times (-\infty,s]$ and  $V$~is an 
open neighborhood of~$\wilde x$, then $0\le H_V\wilde g(\wilde x)\le \wilde
g(\wilde x)=0$ whence
$H_V\wilde g(\wilde x)=\wilde g(\wilde x)=0$.
So  let us fix $\wilde x=(x,r)\in X$, $r>s$.
 There exists $t>0$ such that $r>s+t$. Let $\wilde V $ be an open neighborhood
of $\wilde x$  such that its closure is  a compact   in $\wilde  U:=X\times (r-t,\infty)$.
 We have 
 $\wilde g\ge  H_{\wilde V}\wilde g \ge H_{\wilde V}H_{\wilde U}\wilde  g
 =H_{\wilde U}\wilde g$, where $H_{\wilde U}\wilde g(\wilde x)=\wilde
 P_t\wilde g(\wilde x)=\wilde g(\wilde x)$, by Proposition
 \ref{HU-support} and (\ref{G-harm}).
So $H_V \wilde g(x)=\wilde g(x)$. By \cite[III.4.4]{BH}, we conclude
that  $\wilde g\in \H^+(\wilde X\setminus \{\wilde y\})$. 

Finally,  let $a,b\in\real$, $a<s\le r<b$,  and let $V\in
 \U(X)$, $\ve>0$. By (iii), there exists $W\in\U(X)$ containing $V$ 
  such that $\wilde g\le \ve$ on $\wilde A:=(X\setminus W)\times [a,b] $. 
  Let $\wilde W=W\times (a,b)$. By Proposition \ref{HU-support},
  the measures  $H_{\wilde
  W}(\wilde x,\cdot)$, $\wilde x\in V\times (a,b)$, are supported by  the union
  of~$\wilde A $ and $W\times \{a\}$. Since $\wilde g=0$ on $X\times \{a\}$, we
  obtain that $H_{\wilde W}\wilde g =H_{\wilde W}(1_{\wilde A}\wilde g)    \le \ve$
    on $V\times (a,b)$. Thus $\wilde g\in \mathcal 
  P(X)$, by Lemma \ref{G-potential}.  
  \end{proof}

Let $\wilde m:=m\otimes \lambda_\real$ and let $\wilde V$ be the potential
kernel of $\wilde{\mathbbm P}$, that is, $\wilde
V:=\int_0^\infty\wilde P_t\,dt$.    For all $f\in\B^+(\wilde X)$
and $\wilde x=(x,r)\in \wilde X$,
\begin{eqnarray*} 
   \wilde V f(x) &=&\intoi\wilde P_t f(\cdot,r-t)(x)\,dt
 = \intoi \int_{X} p_t(x,y)f(y,r-t)\,dm(y)\,dt\\
       &  =&  \int \wilde G((x,r),(y,s))f(y,s)\,d\wilde m(y,s)=:\wilde
             G(f\wilde m)(x).
\end{eqnarray*}
In particular,  $\wilde V$ is proper, since for every 
bounded interval $(a,b)$ in $\real$,
\begin{equation*} 
   \wilde V 1_{X\times (a,b)}(x,r) =\int_{(r-b)^+}^{(r-a)^+} \wilde P_t1(x)\,dt\le b-a.
\end{equation*} 
Thus, for every $w\in \E_{\wilde{\mathbbm P}}$, there exists a sequence
$(f_n)$ in $\B_b^+(\wilde X)$
such that
\begin{equation*}
  \wilde G(f_n \wilde  m)\uparrow w
  \end{equation*} 
  (see \cite[II.3.11]{BH}). So, by 
  Theorem  \ref{G-space-time-green}
and  \cite[Theorem   1.1]{HN-representation},  we obtain  the following.

\begin{corollary}\label{representation}
  For every  $p\in \mathcal P'_r(\wilde X)$,
   there exists a unique $\mu \in\M^+(\wilde X)$     such that $p=\wilde G\mu$. 
\end{corollary}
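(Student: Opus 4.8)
The plan is to deduce Corollary~\ref{representation} from the already-cited representation theorems for balayage spaces, using the facts assembled just before the statement. The key point is that $(\wilde X,\E_{\wilde{\mathbbm P}})$ is a balayage space (Theorem~\ref{times-T} or the standing assumptions of Section~\ref{stsg}), that $\wilde G$ is a Green function for it (Theorem~\ref{G-space-time-green}), and that the potential kernel $\wilde V$ is proper with $\wilde V f=\wilde G(f\wilde m)$ for all $f\in\B^+(\wilde X)$, as computed just above. So everything needed is in place; the corollary is essentially a citation to \cite[Theorem~1.1]{HN-representation} together with a uniqueness statement from \cite[Proposition~5.2]{HN-representation}.

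First I would recall that by \cite[II.3.11]{BH}, since $\wilde V$ is proper, every $w\in\E_{\wilde{\mathbbm P}}$ is the increasing limit of potentials $\wilde G(f_n\wilde m)$ with $f_n\in\B_b^+(\wilde X)$; in particular the measures $f_n\wilde m$ charge a cofinal family of compacts, so the hypotheses needed to invoke the representation theorem of \cite{HN-representation} are met. Next, for the existence part: given $p\in\mathcal P_r'(\wilde X)$, apply \cite[Theorem~1.1]{HN-representation} to the balayage space $(\wilde X,\E_{\wilde{\mathbbm P}})$ equipped with the Green function $\wilde G$ to obtain a measure $\mu\in\M^+(\wilde X)$ with $p=\wilde G\mu$. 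Here one must check the standing hypotheses of that theorem, namely that the functions $\wilde G(\cdot,\wilde y)=p_{\cdot-s}(\cdot,y)$ are continuous on $\wilde X\setminus\{\wilde y\}$ — true by property (i) of $p_t$ — and continuous on all of $\wilde X$ when $\wilde y$ is finely isolated, and that $\wilde m$ charges every non-empty finely open set, which follows because $\wilde G(\wilde V f)=\wilde V f$ for $f>0$ and $\wilde V$ is strict (equivalently $p_t(x,y)>0$ for $t>0$ on the relevant range). For uniqueness, invoke \cite[Proposition~5.2]{HN-representation}: the potential $\wilde G\mu$ determines $\mu$, so $\mu$ is the unique such measure.

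The main obstacle I anticipate is purely bookkeeping: verifying that the precise regularity and non-degeneracy hypotheses of \cite[Theorem~1.1]{HN-representation} and \cite[Proposition~5.2]{HN-representation} are satisfied by $\wilde G$ and $\wilde m$ in this space-time setting — in particular handling points $\wilde y$ that might be finely isolated in $\wilde X$ (which cannot actually occur here, since every neighborhood of $\wilde y$ contains a whole fiber-segment charged by $\wilde m$, so this case is vacuous) and confirming strictness of the potential kernel $\wilde V$ from the strict positivity of $p_t$ off the diagonal. Once these routine checks are in hand, the corollary follows immediately.

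\begin{proof}
Since $\wilde V$ is proper, \cite[II.3.11]{BH} gives, for every $w\in\E_{\wilde{\mathbbm P}}$, a sequence $(f_n)$ in $\B_b^+(\wilde X)$ with $\wilde G(f_n\wilde m)\uparrow w$. By Theorem~\ref{G-space-time-green}, $\wilde G$ is a Green function for the balayage space $(\wilde X,\E_{\wilde{\mathbbm P}})$; moreover the functions $\wilde G(\cdot,\wilde y)$ are continuous on $\wilde X\setminus\{\wilde y\}$ by property (i) of $p_t$, and $\wilde m$ charges every non-empty finely open set because $\wilde V$ is strict (as $p_t(x,y)>0$ for $t>0$). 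Hence \cite[Theorem~1.1]{HN-representation} applies: for each $p\in\mathcal P_r'(\wilde X)$ there is $\mu\in\M^+(\wilde X)$ with $p=\wilde G\mu$. Uniqueness of $\mu$ follows from \cite[Proposition~5.2]{HN-representation}.
\end{proof}
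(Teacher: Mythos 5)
Your argument follows the paper's proof exactly in its essential structure: observe that $\wilde V$ is proper, invoke \cite[II.3.11]{BH} to get that every function in $\E_{\wilde{\mathbbm P}}$ is the increasing limit of potentials $\wilde G(f_n\wilde m)$, then cite Theorem~\ref{G-space-time-green} and \cite[Theorem~1.1]{HN-representation}. So far so good.

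However, you have grafted on extra verifications that the paper itself does not make and that are not needed for the route it takes. The claims that the functions $\wilde G(\cdot,\wilde y)$ are continuous off the pole, and that $\wilde m$ charges every non-empty finely open set, are being imported from the ``in many cases'' discussion preceding Proposition~\ref{XU-rep}; that discussion gives a sufficient criterion for $\mathcal P(\wilde X)=\{\wilde G\nu:\nu\in\M_C^+(\wilde X)\}$, a different (stronger) conclusion. The hypotheses of \cite[Theorem~1.1]{HN-representation} that the paper actually checks are just (a) $\wilde G$ is a Green function and (b) every excessive function is an increasing limit of $\wilde G(f_n\wilde m)$ — nothing more. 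Moreover, note also that you appeal to the wrong variable: the criterion in the paper concerns $G(x,\cdot)$, not $G(\cdot,y)$, even though both happen to hold here.

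More seriously, the assertion that ``$p_t(x,y)>0$ for $t>0$'' is \emph{not} one of the standing assumptions of Section~\ref{stG}. The hypotheses on the kernel are only: (i) $p_t$ is positive (meaning $\ge 0$) and jointly continuous off $\{(x,x)\}\times\{0\}$ with $p_t=0$ for $t\le 0$, (ii) Chapman--Kolmogorov, and (iii) a decay condition at infinity. Strict positivity is nowhere assumed, so your deduction that $\wilde V$ is strict is unsupported. Since this check is extraneous to the paper's argument, its failure does not sink your proof, but as written you have stated something you cannot prove from the given hypotheses. If you delete the third sentence and rely only on properness of $\wilde V$, the approximation property from \cite[II.3.11]{BH}, Theorem~\ref{G-space-time-green}, and \cite[Theorem~1.1, Proposition~5.2]{HN-representation}, your proof coincides with the paper's.
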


Further, using  Propositions \ref{GXU-prop} and \ref{XU-rep} we  get  the following.

\begin{theorem}\label{rep-U-space-time}
 For every open $U$ in $\wilde X$, the definition
  \begin{equation*} 
    \wilde G_U(\cdot,\wilde y):=\wilde G(\cdot,\wilde y)-H_U \wilde
    G(\cdot,\wilde y), \qquad \wilde y\in U,
  \end{equation*}
 yields a  Green function $\wilde G_U$ on $U$ and, for every continuous real potential  $p$ on $U$,
   there exists a unique $\mu \in\M^+(U)$     such that $p=\wilde G_U\mu$. 
 \end{theorem}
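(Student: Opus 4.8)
The plan is to obtain both assertions from results already in place. For the first one, I would simply recall that, by Theorem \ref{G-space-time-green}, $\wilde G$ is a Green function for the balayage space $(\wilde X,\E_{\wilde{\mathbbm P}})$; then Proposition \ref{GXU-prop}, applied with $X=\wilde X$ and $\W=\E_{\wilde{\mathbbm P}}$, shows at once that the function $\wilde G_U$ defined by $\wilde G_U(\cdot,\wilde y)=\wilde G(\cdot,\wilde y)-H_U\wilde G(\cdot,\wilde y)$, $\wilde y\in U$, is a Green function for the restriction $(U,\W(U))$.

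For the representation of continuous real potentials on $U$, the main point is to verify the hypothesis of Proposition \ref{XU-rep}, namely that $\mathcal P(\wilde X)=\{\wilde G\nu\colon\nu\in\M_C^+(\wilde X)\}$. The inclusion ``$\supseteq$'' I would get from the remark following (\ref{def-Gnu}), used with the Green function $\wilde G$ on the whole space $\wilde X$: for $\nu\in\M_C^+(\wilde X)$ the function $\wilde G\nu$ is continuous by the definition of $\M_C^+$ and is a potential on $\wilde X$. For ``$\subseteq$'', let $p\in\mathcal P(\wilde X)$; then $p$ is in particular a real-valued potential, $p\in\mathcal P'_r(\wilde X)$, so Corollary \ref{representation} provides $\mu\in\M^+(\wilde X)$ with $p=\wilde G\mu$, and then $\mu\in\M_C^+(\wilde X)$ because $\wilde G\mu=p$ is continuous.

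With that identity available, Proposition \ref{XU-rep} (with $X=\wilde X$, $G=\wilde G$) yields $\mathcal P(U)=\{\wilde G_U\nu\colon\nu\in\M_C^+(U)\}$ for every open $U$ in $\wilde X$; in particular each continuous real potential $p$ on $U$ is of the form $\wilde G_U\mu$ with $\mu\in\M_C^+(U)\subseteq\M^+(U)$. For uniqueness of $\mu$ I would invoke the fact that the representing measure of a potential with respect to a Green function is unique (\cite[Proposition 5.2]{HN-representation}, as already used in the proof of Corollary \ref{representation}), now applied to the Green function $\wilde G_U$ of $(U,\W(U))$ furnished by the first part. The only step that is not a direct citation is the identity $\mathcal P(\wilde X)=\{\wilde G\nu\colon\nu\in\M_C^+(\wilde X)\}$, but both inclusions are short once Theorem \ref{G-space-time-green} and Corollary \ref{representation} are in hand, so I do not anticipate any real obstacle here — essentially all the work was done in establishing Theorem \ref{G-space-time-green}.
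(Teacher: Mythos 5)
Your proof is correct and follows the paper's own route: the paper derives the theorem "using Propositions \ref{GXU-prop} and \ref{XU-rep}," and you supply exactly the missing verification — that $\mathcal P(\wilde X)=\{\wilde G\nu\colon\nu\in\M_C^+(\wilde X)\}$ follows from Corollary \ref{representation} together with the observation after (\ref{def-Gnu}), so Proposition \ref{XU-rep} applies, with uniqueness of $\mu$ coming from \cite[Proposition 5.2]{HN-representation} applied to the Green function $\wilde G_U$ on $U$.
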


 \subsection{Green function for  semigroups with densities}\label{Gfd}

 Let us suppose that we are in the setting of the previous subsection,
 that $(X,\exc)$ is a balayage space and, for every $x\in X$, there
 exists $t>0$, such that $P_t(x,\cdot)\ne \delta_x$, that is,
 $X$ does not contain absorbing points.
       We define
   \begin{equation*}
             G_y(x):=G(x,y):=\int_0^\infty p_t(x,y)\,dt, \qquad
             x,y\in X.
           \end{equation*}
           
           \begin{theorem}\label{G-Green}
Suppose that  for every neighborhood $V$ of a point $y\in X$, the function $G_y$ is
bounded and continuous on $X\setminus V$ and that $G_y(y)<\infty$ if 
$y$ is isolated. Then $G$ is a Green function for $(X,\exc)$.
\end{theorem}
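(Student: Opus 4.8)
The plan is to verify directly the three conditions in Definition \ref{d.Gf}: each $G_y=G(\cdot,y)$ is (a) a potential on $X$, (b) non-vanishing, and (c) harmonic on $X\setminus\{y\}$. The natural approach is to pass to the space-time picture of Section \ref{stG} and project back down, using Corollary \ref{Utimesreal}. Write $\wilde X:=X\times\real$, $\wilde{\mathbbm P}:=\mathbbm P\otimes\mathbbm T$, and let $\wilde G$ be the space-time Green function from Theorem \ref{G-space-time-green}. First I would observe that the hypotheses of Section \ref{stG}, in particular (i)--(iii) for $(x,y,t)\mapsto p_t(x,y)$, are in force here, and that the absence of absorbing points is exactly what is needed to invoke Corollary \ref{Utimesreal} (via Lemma \ref{Pteta}). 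Let $\pi\colon\wilde X\to X$ be the projection; note that $G(x,y)=\int_0^\infty p_t(x,y)\,dt=\int_\real \wilde G((x,r),(y,s))\,ds$, so $G_y\circ\pi=\wilde V_{\wilde y}$ where $\wilde V_{\wilde y}:=\int_\real \wilde G(\cdot,(y,s))\,ds$ is, by Fubini and the Chapman--Kolmogorov semigroup property, a translation-invariant (in the time variable) excessive function for $\wilde{\mathbbm P}$; more precisely $G_y\circ\pi=\widehat{V}g$ for a suitable kernel, and in any case $G_y\circ\pi\in\E_{\wilde{\mathbbm P}}$ since it is an increasing limit of the potentials $\int_{-N}^N \wilde G(\cdot,(y,s))\,ds$, each of which lies in $\px$ of $\wilde X$ by Corollary \ref{representation}.

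Next I would handle harmonicity. Since $\wilde G(\cdot,(y,s))$ is harmonic on $\wilde X\setminus\{(y,s)\}$ by Theorem \ref{G-space-time-green}, averaging over $s$ shows $G_y\circ\pi$ is harmonic on $X\times\real$ minus the ``tube'' $\{y\}\times\real$, i.e.\ on $(X\setminus\{y\})\times\real$: for $V\in\U(X\setminus\{y\})$ and any $a<b$, $H_{V\times(a,b)}(G_y\circ\pi)=\int_\real H_{V\times(a,b)}\wilde G(\cdot,(y,s))\,ds=G_y\circ\pi$, using that $H_{V\times(a,b)}$ is a kernel and the space-time harmonicity; one must be slightly careful that the exceptional point $(y,s)$ never lies in $\ov{V\times(a,b)}$, which holds since $y\notin\ov V$. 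Then Corollary \ref{Utimesreal} applies with $U:=X\setminus\{y\}$ and $w:=G_y$, giving that $G_y$ is harmonic on $X\setminus\{y\}$. For the non-vanishing property: if $G_y\equiv 0$ then $G_y\circ\pi\equiv 0$, forcing $\wilde G(\cdot,(y,s))$ to vanish $ds$-a.e., contradicting that each such function is a non-vanishing potential; alternatively, $G_y(y)=\int_0^\infty p_t(y,y)\,dt>0$ unless $y$ is isolated, and if $y$ is isolated the hypothesis $G_y(y)<\infty$ together with $p_t(y,y)>0$ for small $t$ (from continuity and $p_0^+$) still gives $G_y(y)>0$.

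It remains to show $G_y$ is a \emph{potential} on $X$, i.e.\ $G_y\in\S^+(X)$ with $\inf_{U\in\U(X)}H_UG_y=0$. Lower semicontinuity and continuity away from $y$ come from the hypothesis that $G_y$ is bounded and continuous on $X\setminus V$ for every neighborhood $V$ of $y$, plus l.s.c.\ at $y$ (which follows by Fatou from $G_y=\int_0^\infty p_t(\cdot,y)\,dt$ and the continuity of $p_t$ off the diagonal), so by Lemma \ref{G-potential}(1) --- checking $H_VG_y(y)\le G_y(y)$ on a fundamental system of neighborhoods of $y$, which holds because $G_y\circ\pi$ is excessive hence superharmonic, hence superharmonicity at $(y,r)$ in $\wilde X$ descends --- we get $G_y\in\W$; and since we already know $G_y$ is harmonic on $X\setminus\{y\}$ with $(H_VG_y)|_V\in\C(V)$ there, $G_y\in\S^+(X)$. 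For the vanishing of $\inf_U H_UG_y$: I would use Lemma \ref{G-potential}(2), so it suffices to show $H_WG_y\downarrow 0$ locally uniformly as $W\uparrow X$. This is where the main work lies, and it is the step I expect to be the \textbf{main obstacle}. The idea is to transfer it from the space-time potential: $\int_{-N}^N\wilde G(\cdot,(y,s))\,ds$ is a continuous real potential on $\wilde X$, so $H_{W\times(-a,a)}$ of it tends to $0$; using the tube-projection identity $H_{W\times\real}(w\circ\pi)=(H_Ww)\circ\pi$ from \cite[V.8.2]{BH} (as in the proof of Corollary \ref{Utimesreal}) together with the control provided by Proposition \ref{cylinder} on the cylinder exit behavior, one pushes this down to $H_WG_y\to 0$ on $X$, locally uniformly. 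Assembling: $G_y$ is a non-vanishing potential on $X$ harmonic on $X\setminus\{y\}$, i.e.\ $G$ is a Green function for $(X,\W)$. \beweisende
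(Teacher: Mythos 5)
Your overall strategy --- pass to the space-time picture $(\wilde X,\E_{\wilde{\mathbbm P}})$, show $\wilde g:=G_y\circ\pi$ is excessive and, by averaging the space-time potentials $\wilde G(\cdot,(y,s))$ over $s$, harmonic off the tube $\wilde L:=\{y\}\times\real$, then project down with Corollary \ref{Utimesreal} --- is exactly the one the paper follows, and your harmonicity argument is essentially correct, as is the Fubini observation that $\wilde g\in\E_{\wilde{\mathbbm P}}$. There are, however, two concrete gaps.

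First, your superharmonicity argument misses the decisive case. You deduce $G_y\in\W$ from Lemma~\ref{G-potential}(1) and then assert $G_y\in\S^+(X)$ because $(H_VG_y)|_V\in\C(V)$ for $V$ away from $y$. But that case is automatic from harmonicity on $X\setminus\{y\}$; the issue is $V\in\V(X)$ with $y\in V$, and there you have not verified $(H_VG_y)|_V\in\C(V)$. The paper closes this cleanly via Lemma~\ref{superharmonic-base} together with the observation that, since $G_y$ is bounded on $V^c$ by hypothesis and $1\in\W$, $(H_VG_y)|_V\in\C_b(V)$.

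Second, and more substantially, the vanishing of $\inf_{U\in\V(X)}H_UG_y$ is left as a sketch. Aiming for Lemma~\ref{G-potential}(2) would require locally uniform convergence $H_WG_y\downarrow 0$, which is stronger than needed once superharmonicity is in hand (pointwise vanishing of the greatest harmonic minorant already gives $G_y\in\pr(X)$). Moreover, your outline --- Proposition~\ref{cylinder} combined with $H_{W\times\real}(w\circ\pi)=(H_Ww)\circ\pi$ --- for a fixed $W\in\U(X)$ only yields $\lim_{a\to\infty}H_{W\times(-a,a)}\wilde g=(H_WG_y)\circ\pi$, which is not zero. The paper's argument instead takes an exhaustion $(U_n)$ of $X$, sets $\wilde V_n:=U_n\times(-n,n)$ (a genuinely relatively compact exhaustion of $\wilde X$, unlike the tubes $U_n\times\real$), puts $h_n:=H_{U_n}G_y$, and writes $h_n\circ\pi=H_{U_n\times\real}\wilde g\le H_{\wilde V_n}\wilde g\downarrow 0$ on $\{\wilde g<\infty\}$; this, with continuity, yields $h:=\lim_n h_n=0$ (and $h(y)=0$ directly if $y$ is isolated). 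The idea you are missing is to let the spatial window grow together with the time window; with a fixed $W$ the space-time limit merely reproduces $H_WG_y$ and gives nothing new.
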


\begin{proof}    
  Let $\wilde G$ be the Green function from the previous
  section. Clearly, for all $x,y\in X$ and $r,s \in \real$,
  \begin{equation*}
    G(x,y)=\int_\real \wilde G((x,r),(y, s))\,ds.
    \end{equation*} 
    Let us fix $y\in X$ and let $\wilde L:=\{y\}\times \real$,
        \begin{equation*}
      g :=G(\cdot,y) \und \wilde g := g\circ \pi .
    \end{equation*}
    By Fubini, $g\in\E_{\mathbbm P}$, $\wilde g\in \E_{\mathbbm      {\wilde P}}$ 
  and, using that the functions $\wilde G(\cdot,
      (y,s))$, $s\in\real$,  are harmonic on $\wilde X\setminus \wilde L$, 
      \begin{equation*}
     H_{\wilde V} \wilde g=\wilde g\qquad \mbox{ for every }
        \wilde V\in \U(\wilde X\setminus \wilde L).
      \end{equation*}
      So $\wilde g$ is harmonic on $\wilde X\setminus L$ and, by
Corollary  \ref{Utimesreal}, $g$ is harmonic on $X\setminus \{y\}$. 

       If $V\in \U(X)$ with $y\in V$, then $(H_Vg)|_V \in \C_b(V)$,  since
       $g$ is bounded on $\vc$ and $1\in\W$. Hence $g$ is
       superharmonic on $X$, by Lemma \ref{superharmonic-base}. 

       Let $U_n\in \U(X)$ exhaust  $X$. Then the functions
       $h_n:=H_{U_n}g$, $n\in\nat$, are decreasing to a harmonic
       function $h$ on $X$. It remains to show that $h=0$.
Defining  $\wilde V_n:=U_n\times (-n,n)$ we get 
       \begin{equation*}
                         H_{\wilde V_n}\wilde g \downarrow 0 \on \{\wilde g<\infty\}.
  \end{equation*} 
  as $n\to\infty$.
     We have  $h_n\circ \pi=H_{U_n\times \real}\wilde g\le H_{\wilde
       V_n}\wilde g$.
     Hence $h=0$ on $X\setminus \{y\}$. By continuity, $h=0$ on $X$, unless $y$ is
     isolated. 
     If $y$ is isolated, then we get directly that $h(y)=0$.
\end{proof}

By the same arguments as at the end of the previous section, we now
get the following.

\begin{corollary}\label{G-representation}
Assume, in addition, that the functions  $G(x,\cdot)$, $x\in X$, are
continuous on $X\setminus \{x\}$ and that the potential kernel of $\mathbbm P$
is proper.  Then the following holds:
\begin{itemize}
  \item[\rm (1)] 
  For every  $p\in \mathcal P'_r(X)$,
   there exists a unique $\mu \in\M^+( X)$     such that $p= G\mu$. 
\item[\rm (2)]
 For every open $U$ in $  X$, the definition
  \begin{equation*} 
    G_U(\cdot,y):=G(\cdot,y)-H_U     G(\cdot,y), \qquad y\in U,
  \end{equation*}
  yields a  Green function $G_U$ on $U$ and, for every continuous real potential  $p$ on $U$,
   there exists a unique $\mu \in\M^+(U)$     such that $p=G_U\mu$. 
\end{itemize} 
\end{corollary}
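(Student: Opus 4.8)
The plan is to transcribe, with $G$, $m$ and the potential kernel $V_0:=\int_0^\infty P_t\,dt$ of $\mathbbm P$ in place of $\wilde G$, $\wilde m$, $\wilde V$, the argument carried out at the end of Section~\ref{stG}. By Theorem~\ref{G-Green}, $G$ is a Green function for $(X,\W)$. Since $P_t(x,\cdot)=p_t(x,\cdot)\,m$, Fubini's theorem gives $V_0 f=\int_0^\infty \int p_t(\cdot,y)f(y)\,dm(y)\,dt=G(fm)$ for every $f\in\B^+(X)$, and $V_0$ is proper by hypothesis, so there is $g>0$ with $G(gm)<\infty$. Hence, by \cite[II.3.11]{BH}, every $w\in\E_{\mathbbm P}$ is the increasing limit of a sequence of potentials $G(f_n m)$, $f_n\in\B_b^+(X)$. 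Together with Theorem~\ref{G-Green} this is exactly what the representation theorem \cite[Theorem~1.1]{HN-representation} requires, and it yields part~(1): every $p\in\mathcal P'_r(X)$ equals $G\mu$ for a unique $\mu\in\M^+(X)$.

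For part~(2), Proposition~\ref{GXU-prop} already shows that $G_U(\cdot,y):=G(\cdot,y)-H_UG(\cdot,y)$ is a Green function for $(U,\W(U))$. From part~(1) one reads off $\mathcal P(X)=\{G\nu\colon\nu\in\M_C^+(X)\}$: a continuous real potential $p$ on $X$ lies in $\mathcal P'_r(X)$, so $p=G\mu$ with $\mu$ uniquely determined, and $\mu\in\M_C^+(X)$ precisely because $p$ is continuous; conversely every such $G\nu$ is a continuous real potential. Now Proposition~\ref{XU-rep} gives $\mathcal P(U)=\{G_U\nu\colon\nu\in\M_C^+(U)\}$, which is the existence assertion of~(2), while the uniqueness of the representing measure on $U$ is \cite[Proposition~5.2]{HN-representation}.

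Beyond this, there is little to do: the corollary is essentially a bookkeeping consequence of Theorem~\ref{G-Green}, \cite[Theorem~1.1]{HN-representation}, and Propositions~\ref{GXU-prop} and~\ref{XU-rep}. The one point I would check carefully is that the mild regularity of $G$ demanded by \cite[Theorem~1.1]{HN-representation} is in force: continuity of $G(x,\cdot)$ on $X\setminus\{x\}$ is assumed, and the only possible pathology, at an isolated point $y$, is excluded by the finiteness $G_y(y)<\infty$ recorded in Theorem~\ref{G-Green}. Since all the remaining steps are identical to those already performed for the space-time semigroup in Section~\ref{stG}, no new difficulty arises.
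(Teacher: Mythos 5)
Your proposal is correct and follows exactly the route the paper intends: the paper's proof of this corollary is the one-line remark ``By the same arguments as at the end of the previous section,'' and you have transcribed that argument faithfully, with $G$, $m$, and the potential kernel $V_0$ of $\mathbbm P$ replacing $\wilde G$, $\wilde m$, and $\wilde V$, invoking Theorem~\ref{G-Green}, \cite[II.3.11]{BH}, \cite[Theorem~1.1]{HN-representation}, and Propositions~\ref{GXU-prop} and~\ref{XU-rep} in the same order as the proofs of Corollary~\ref{representation} and Theorem~\ref{rep-U-space-time}.
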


{\small \noindent 
Krzysztof Bogdan,
Department of Pure and Applied Mathematics,
Wroclaw University of Science and Technology, Wyb.~Wyspia\'nskiego 27, 50-370
Wroclaw, Poland, email: krzysztof.bogdan@pwr.edu.pl\\
Wolfhard Hansen,
Fakult\"at f\"ur Mathematik,
Universit\"at Bielefeld,
33501 Bielefeld, Germany, e-mail:
 hansen$@$math.uni-bielefeld.de
}
\end{document}